\newtheoremstyle{newremark}
  {5pt}
  {5pt}
  {\rmfamily}
  {}
  {\rmfamily\bf}
  {.}
  {.5em}
  {}
\newtheorem{theorem}{Theorem}
\newtheorem{lemma}[theorem]{Lemma}
\newtheorem{corollary}[theorem]{Corollary}
\newtheorem{proposition}[theorem]{Proposition}
\theoremstyle{newremark}
\newtheorem{remark}[theorem]{Remark}
\newtheorem{definition}[theorem]{Definition}
\newtheorem*{definition*}{Definition} 
\newtheorem*{notations*}{Notations}
\numberwithin{theorem}{section}
\numberwithin{equation}{section}
\newcommand{\N}{\mathbb{N}} 
\newcommand{\R}{\mathbb{R}} 
\newcommand{\Z}{\mathbb{Z}} 
\newcommand{\C}{\mathbb{C}} 
\newcommand{\D}{\mathbb{D}} 
\def\XXint#1#2#3{{%
\setbox0=\hbox{$#1{#2#3}{\int}$}
\vcenter{\hbox{$#2#3$}}\kern-.5\wd0}}
\renewcommand{\leq}{\leqslant}
\renewcommand{\geq}{\geqslant}
\renewcommand{\subset}{\subseteq}
\newcommand{\LL}{\mathop{\hbox{\vrule height 6pt width .5pt depth 0pt
\vrule height .5pt width 3pt depth 0pt}}\nolimits}
\newcommand{\Om}{\Omega}
\newcommand{\eps}{\varepsilon}
\newcommand{\tom}{\widetilde\Omega}
\newcommand{\e}{{\rm e}}
\newcommand{\V}{\mathbf{V}^*_{n-1}(\tom)}
\newcommand{\de}{{\rm d}}
\begin{document}


\title[On a fractional Ginzburg-Landau equation]{ON A FRACTIONAL GINZBURG-LANDAU~EQUATION\\ 
AND 1/2-HARMONIC MAPS INTO SPHERES}

\author{Vincent Millot}
\address{Vincent Millot\\ Universit\'e Paris Diderot - Paris 7\\
 Laboratoire J.-L. Lions (CNRS UMR 7598)\\ Paris, France}
\email{millot@ljll.univ-paris-diderot.fr}

\author{Yannick Sire}
\address{Yannick Sire\\ Universit\'e Aix-Marseille \\  
Laboratoire d'Analyse, Topologie, Probabilit\'es (CNRS UMR 7353)\\ Marseille, France}
\email{sire@cmi.univ-mrs.fr}


\begin{abstract}
This paper is devoted to the asymptotic analysis of a fractional version of the Ginzburg-Landau equation in bounded domains, where the Laplacian is replaced by an integro-differential operator related to the square root Laplacian as defined in Fourier space. In the singular limit $\eps\to0$, we show that solutions with uniformly bounded energy converge weakly  to sphere valued 1/2-harmonic maps, {\it i.e.}, the fractional analogues of the usual harmonic maps. 
In addition, the convergence holds in smooth functions spaces away  from a countably $\mathscr{H}^{n-1}$-rectifiable closed  set of finite $(n-1)$-Hausdorff measure. The proof relies on the representation of the square root Laplacian as a Dirichlet-to-Neumann operator in one more dimension, and on the analysis of a boundary version of the Ginzburg-Landau equation.  Besides the analysis of the fractional Ginzburg-Landau equation, we also give a general partial regularity result for stationary 1/2-harmonic maps in arbitrary dimension.
\vskip10pt

\noindent{\it Keywords:} Fractional Ginzburg-Landau equations; Fractional harmonic maps; Square root Laplacian; Dirichlet-to-Neuman operator; Nonlinear boundary reactions; Partial regularity; Generalized varifolds. 
\end{abstract}

\maketitle


\section{Introduction}

Let $n\geq 1$ and $m\geq 2$ be given integers, and let $\omega\subset \R^n$ be a smooth bounded open set. In this paper we investigate the asymptotic behavior, as $\varepsilon \downarrow 0$, of weak solutions $v_\varepsilon:\R^n\to\R^m$ of the fractional Ginzburg-Landau equation 
\begin{equation}\label{GLIntro}
 (-\Delta)^{\frac{1}{2}} v_\varepsilon= \frac{1}{\varepsilon}(1-|v_\varepsilon|^2)v_\varepsilon \qquad \text{in $\omega$}\,,
 \end{equation}
possibly subject to an exterior Dirichlet condition
\begin{equation}\label{Dircondintro}
v_\eps= g \qquad\text{on $\R^n\setminus\omega$}\,,
\end{equation}
where $g:\R^n\to\R^m$ is a smooth function satisfying $|g|=1$ in $\R^n\setminus\omega$. The action of the integro-differential operator $ (-\Delta)^{\frac{1}{2}} $ on a smooth bounded function $v:\R^n\to\R^m$ is defined   by 
$$ (-\Delta)^{\frac{1}{2}}  v(x):={\rm p.v.} \left(\gamma_n\int_{\mathbb{R}^n}\frac{v(x)-v(y)}{|x-y|^{n+1}}\,\de y\right) \quad \text{with } \gamma_n:=\frac{\Gamma\big(\frac{n+1}{2}\big)}{\pi^{\frac{n+1}{2}}} \,,$$
and the notation ${\rm p.v.}$ means that the integral  is taken in the {\it Cauchy principal value} sense. 
In Fourier space\footnote{we consider here the ordinary frequency Fourier transform $v\mapsto \hat v$ given by
$\hat v(\xi):=\int_{\R^n}v(x) e^{-2 i \pi x\cdot\xi}\,\de\xi$.

}, this operator has symbol $2\pi|\xi|$, while $4\pi^2|\xi|^2$ is the symbol of $-\Delta$. In particular, $ (-\Delta)^{\frac{1}{2}} $ is the square root of the standard Laplacian when acting on the Schwartz class $\mathscr{S}(\R^n)$.

The weak sense for equation \eqref{GLIntro} will be understood  variationaly using the distributional formulation of the fractional Laplacian in the open set $\omega$.  More precisely, we define the action of $ (-\Delta)^{\frac{1}{2}}  v$ on an element $\varphi\in\mathscr{D}(\omega;\R^m)$ by setting
\begin{multline*}
\big\langle   (-\Delta)^{\frac{1}{2}}  v,\varphi\big\rangle_\omega:= \frac{\gamma_n}{2} \iint_{\omega\times\omega}\frac{\big(v(x)-v(y)\big)\cdot\big(\varphi(x)-\varphi(y)\big)}{|x-y|^{n+1}}\,\de x\de y\\
+\gamma_n \iint_{\omega\times(\R^n\setminus\omega)}\frac{\big(v(x)-v(y)\big)\cdot\big(\varphi(x)-\varphi(y)\big)}{|x-y|^{n+1}}\,\de x\de y\,.
 \end{multline*}
This formula turns out to define a distribution on $\omega$ whenever $v\in L^2_{\rm loc}(\R^n;\R^m)$ satisfies
\begin{equation}\label{defenergE}
\mathcal{E}(v,\omega):=\frac{\gamma_n}{4}  \iint_{\omega\times\omega}\frac{|v(x)-v(y)|^2}{|x-y|^{n+1}}\,\de x\de y
+\frac{\gamma_n}{2} \iint_{\omega\times(\R^n\setminus\omega)} \frac{|v(x)-v(y)|^2}{|x-y|^{n+1}}\,\de x\de y<\infty\,.
\end{equation}
In such a case we say that $v$ is admissible, and $ (-\Delta)^{\frac{1}{2}} v$ belongs to $H^{-1/2}(\omega;\R^m)$, {\it i.e.}, the topological dual space of $H^{1/2}_{00}(\omega;\R^m)$ (functions in $H^{1/2}(\R^n;\R^m)$ vanishing outside~$\omega$). When prescribing the exterior Dirichlet condition \eqref{Dircondintro}, the class of admissible functions  is given by the affine space $H^{1/2}_g(\R^n;\R^m):=g+H^{1/2}_{00}(\omega;\R^m)$. Note that weak and strong definitions of $ (-\Delta)^{\frac{1}{2}} $ are consistent, {\it i.e.}, if $v$ is smooth and bounded, then $v$ is admissible, and the pointwise and distributional definitions of $ (-\Delta)^{\frac{1}{2}} v$ agree, see {\it e.g.} \cite{SerV}.  
Then observe that the derivative at $v$ of $\mathcal{E}(\cdot,\omega)$ in the direction $\varphi\in \mathscr{D}(\omega;\R^m)$ is precisely given by  $\big\langle   (-\Delta)^{\frac{1}{2}}  v,\varphi\big\rangle_\omega$. Hence the energy $\mathcal{E}(\cdot,\omega)$, which is built on the $H^{1/2}$-seminorm, can be  thought as {\it Dirichlet 1/2-energy} in $\omega$ associated to the operator $ (-\Delta)^{\frac{1}{2}} $. Integrating  the potential in the right hand side of~\eqref{GLIntro} we obtain the {\it Ginzburg-Landau 1/2-energy} associated to the equation in $\omega$, 
\begin{equation}\label{defFGLenerg}
\mathcal{E}_\varepsilon(v,\omega):=\frac{1}{2}\int_{\omega} {\rm e}(v,\omega) + \frac{1}{2\varepsilon}(1-|v|^2)^2\,\de x\,,
\end{equation}
where we have set ${\rm e}(v,\omega)$ to be the {\it nonlocal energy density in $\omega$}  given by 
\begin{equation}\label{nonlocendens}
{\rm e}\big(v,\omega\big):=\frac{\gamma_n}{2}\int_{\omega}\frac{|v(x)-v(y)|^2}{|x-y|^{n+1}}\,\de y +\gamma_n\int_{\R^n\setminus \omega}\frac{|v(x)-v(y)|^2}{|x-y|^{n+1}}\,\de y \,.
\end{equation}
We define weak solutions of \eqref{GLIntro} as critical points  of $\mathcal{E}_\eps$ with respect to perturbations supported in $\omega$. 
\vskip5pt

If $-\Delta$ is used instead of its square root and $m=1$, equation \eqref{GLIntro} reduces to the well known Allen-Cahn equation. This scalar equation appears in the van der Waals-Cahn-Hilliard theory of phase transitions in fluids. From the geometrical point of view, the Allen-Cahn equation provides  an approximation, as $\varepsilon\to 0$, of codimension one minimal surfaces, {\it i.e.}, limits of critical points are functions with values in $\{\pm1\}$, and the limiting interfaces between the two phases~$\pm 1$ are  {\it generalized minimal hypersurfaces}, see {\it e.g.} \cite{HT,Mod}. Since the work of {\sc Alberti, Bouchitt\'e, \& Seppecher} \cite{ABScr,ABS}, fractional generalizations of the Allen-Cahn equation involving $(-\Delta)^s$ with  $0<s<1$ have been investigated by many authors, see {\it e.g.} \cite{CC,CC2,CS1,CS2,CSM,Gonz,PSV,SV}, specially in the case $s=1/2$  which is related to models for dislocations in crystals \cite{GM,GonzMon} (see also \cite{DFV,DPV} for $s\not=1/2$), and to thin-film micromagnetics \cite{K1,K2}. The convergence as $\eps\to0$ of {\it minimizers} of the fractional Ginzburg-Landau {\it $s$-energy} has been  treated very recently in \cite{SV1,SV2}. In these papers, Dirichlet and Ginzburg-Landau $s$-energies are built on the $H^s$-seminorm and are defined as in \eqref{defenergE}-\eqref{defFGLenerg}-\eqref{nonlocendens} with the weight $|x-y|^{-(n+2s)}$ in place of $|x-y|^{-(n+1)}$. It is proved in \cite{SV1,SV2} that limits of minimizers  still  take values in $\{\pm1\}$, and the geometrical characterization of the associated interfaces depends on wether $s\geq 1/2$ or $s<1/2$. For $s\geq 1/2$, limiting interfaces turn out to be (usual generalized) area-minimizing minimal surfaces, while the picture completely changes for $s<1/2$, where limiting interfaces are the so-called {\it nonlocal $s$-minimal surfaces} introduced by {\sc Caffarelli, Roquejoffre, \& Savin}~\cite{CRS}. The interface induced by a map $v$ taking values in $\{\pm1\}$ is said to be a minimizing $s$-minimal surface if $v$ itself is minimizing  the Dirichlet $s$-energy in the class of maps with values  in $\{\pm1\}$. In a sense, nonlocal $s$-minimal surfaces can be  thought as  {\it  fractional $s$-harmonic maps} with values in $\mathbb{S}^0\simeq\{\pm1\}$. The dichotomy in the parameter $s$ essentially comes from the fact that characteristic functions of (smooth) sets do not belong to $H^s$ for $s\geq 1/2$, so that the $s$-energy density concentrates  near a limiting interface very much like in the local case. For $s<1/2$, the $H^s$-regularity allows for (some) characteristic functions, and the energy of minimizers remains uniformly bounded as $\eps$ tends to $0$. In this paper, we focus on the case $s=1/2$ for some specific    geometrical features that we shall explain shortly.

In the vectorial case $m=2$, the local Ginzburg-Landau equation ({\it i.e.}, with $-\Delta$ instead of its square root) has been widely studied because it shares many of the relevant features of more elaborated models of superconductivity or superfluidity, see {\it e.g.} \cite{BBH,Riv2,SandSer}. In the spirit of the classical Landau's theory of phase transtions, fractional Ginzburg-Landau equations have been recently suggested in the physics literature in order to incorporate a long-range dependence posed by a nonlocal ordering, as it might appear in certain high temperature superconducting compounds, see \cite{MR,TZ,WZ} and the references therein. 
In arbitrary dimensions $n\geq 1$ and $m\geq 2$, the local Ginzburg-Landau equation has  a geometrical flavor like in the scalar case, and the limiting system in the asymptotic $\eps\to 0$ is given by the harmonic map equation into the sphere $\mathbb{S}^{m-1}$ (or into more general manifolds according to the potential well). Harmonic maps can be seen as higher dimensional generalizations of geodesics and are defined as critical points of the Dirichlet energy with respect to perturbations in the target. If the target is a sphere, it corresponds to variations of the Dirichlet energy of the form $t\mapsto\frac{v_*+t\varphi}{|v_*+t\varphi|}$ for bounded test functions $\varphi$ and $|t|\ll1$.  It leads to 
the Euler-Lagrange equation (in the sense of distributions)
\begin{equation}\label{locharmmap}
 -\Delta v_* = |\nabla v_*|^2 v_* \quad\Longleftrightarrow \quad
-\Delta v_*\perp T_{v_*}\mathbb{S}^{m-1} \,.
\end{equation}
The equation in the right hand side of \eqref{locharmmap} arises when using test functions satisfying $\varphi(x)\in T_{v_*(x)}\mathbb{S}^{m-1}$ a.e., while the equation in the left hand side appears when $\varphi$ is chosen with no constraint.  The implication from right to left can be easily deduced (at least formally) writing $\Delta v_*=(\Delta v_*\cdot v_*) v_*$ and using the identity $0=\Delta(|v_*|^2)=2\Delta v_*\cdot v_* +2|\nabla v_*|^2$. 
A main feature of  the harmonic map system is its critical structure concerning regularity. Indeed, the source term in the left hand side equation of  \eqref{locharmmap} has {\it a priori} no better integrability than $L^1$. This is precisely the borderline case where linear elliptic regularity does not apply. In dimension $n=2$, harmonic maps are smooth by the famous result of {\sc H\'elein} \cite{Hel}. However, 
equation \eqref{locharmmap} admits highly discontinuous solutions in dimensions $n\geq 3$~\cite{Riv}, and specific assumptions are required to obtain at least partial regularity ({\it i.e.}, smoothness outside some "small set", see Subsection~\ref{regtheo} for a more detailed discussion and references).  In addition to this regularity issue, equation \eqref{locharmmap} turns out to be invariant  under composition with conformal maps in dimension $n=2$. This conformal invariance implies that sequences of harmonic maps are usually non-compact in the energy space~\cite{Lin}, and this lack of compactness is formally transferred to the local Ginzburg-Landau equation in the asymptotic $\eps\to0$.   
The weak convergence of critical points of the local Ginzburg-Landau energy towards (weakly) harmonic maps 
has been proved by {\sc Lin \&Wang} \cite{LW1,LW2,LW3} (see also \cite{CS}). In the spirit of the blow-up analysis for harmonic maps developped in \cite{Lin}, the authors have determined in~\cite{LW1,LW2,LW3} the geometrical nature of the possible defect  measure arising in the weak convergence process, as well as  the rectifiability of the energy concentration set. 
\vskip5pt

The main object of the present article is to provide a careful analysis of critical points of the Ginzburg-Landau 1/2-energy $\mathcal{E}_\eps$ as $\varepsilon\to0$ in the spirit of \cite{LW1,LW2,LW3}, and as a byproduct to extend the previous works on the scalar case to the vectorial setting.  By analogy with the local case, we expect solutions of the fractional Ginzburg-Landau equation \eqref{GLIntro} with uniformly bounded    
energy in $\eps$ to  converge weakly as $\eps\to0$ to critical points of the Dirichlet 1/2-energy under the constraint to be $\mathbb{S}^{m-1}$-valued. In other words, a weak limit $v_*$ of critical points of the Ginzburg-Landau 1/2-energy should satisfy
\begin{equation}\label{defcritsphere}
\left[\frac{\de}{\de t} \,\mathcal{E}\left(\frac{v_*+t\varphi}{|v_*+t\varphi|}\,,\omega\right)\right]_{t=0} =0\qquad\text{for all $\varphi\in\mathscr{D}(\omega;\R^m)$}\,.
\end{equation}
Computing the associated Euler-Lagrange equation, we obtain 
 \begin{equation}\label{eqhhalfHarm}
 (-\Delta)^{\frac{1}{2}} v_*\perp T_{v_*}\mathbb{S}^{m-1} \quad\text{in $\omega$}\,,
 \end{equation}
in the distributional sense. In the case where $|v_*|=1$ in $\R^n$, the Lagrange multiplier related to the constraint takes a quite simple form and leads to the equation
\begin{equation}\label{eqhhalfHarm2}
 (-\Delta)^{\frac{1}{2}} v_*(x)=\left(\frac{\gamma_n}{2}\int_{\R^n}\frac{|v_*(x)-v_*(y)|^2}{|x-y|^{n+1}}\,\de y\right)v_*(x)\quad\text{in $\omega$}\,,
  \end{equation}
which is in clear analogy with \eqref{locharmmap}. We shall refer to as {\it weak 1/2-harmonic map into $\mathbb{S}^{m-1}$ in $\omega$}
 any map taking values into $\mathbb{S}^{m-1}$ in $\omega$ and satisfying condition  \eqref{defcritsphere}. 
 
The notion of 1/2-harmonic map into a manifold has been introduced in the case $\omega=\R$ by {\sc Da Lio \& Rivi\`ere}~\cite{DaLRi, DaLRi2}. More precisely, weak 1/2-harmonic maps from $\R$ into $\mathbb{S}^{m-1}$ are defined in \cite{DaLRi} as critical points of the {\it line energy} 
$$\mathcal{L}(v):= \frac{1}{2}\int_\R \big|(-\Delta)^{\frac{1}{4}} v\big|^2\,\de x\,,$$
under the $\mathbb{S}^{m-1}$-constraint as in \eqref{defcritsphere}. Here $(-\Delta)^{\frac{1}{4}}$ is the operator with symbol $\sqrt{2\pi|\xi|}$. A classical computation in Fourier space gives $\mathcal{L}(v)=\mathcal{E}(v,\R)$, so that the two notions of 1/2-harmonic maps indeed coincide. Recently, 1/2-harmonic maps into a manifold have found applications in geometrical problems such as {\it minimal surfaces with free boundary} (see \cite{DaLRi3,FS,struw}, and Remark~\ref{minimsurf}), and they are intimately related to {\it harmonic maps with partially free boundary}  (see Remark \ref{boundharmVsFreebound}, and \cite{GJ,Ham,Mos}).   
While the main purpose of~\cite{DaLRi,DaLRi2} was to show the regularity of 1/2-harmonic maps for $n=1$, it was suggested there that they arise as limits of the fractional Ginzburg-Landau equation. Our primary objective here is  to prove this assertion in arbitrary dimensions and for arbitrary domains, prescribing eventually an exterior Dirichlet condition like in \eqref{Dircondintro}. Working on such a program, it is natural to face the regularity problem for 1/2-harmonic maps in arbitrary dimension, and this is our secondary objective. Looking at the 1/2-harmonic map equation~\eqref{eqhhalfHarm}-\eqref{eqhhalfHarm2}, one immediately realizes that it has precisely the same  critical structure than the one of the local case, for both regularity and compactness. Here, equation \eqref{eqhhalfHarm} turns out to be invariant in dimension $n=1$ under composition with traces on~$\R$ of conformal maps from the half plane $\R^2_+$ into itself. By analogy with the local case again, it suggests that solutions of the fractional Ginzburg-Landau equation \eqref{GLIntro}
are not strongly compact as $\eps\to 0$ in general. Describing the possible defect of strong convergence shall be one of the main concern in our asymptotic analysis of the fractional Ginzburg-Landau equation. 
\vskip5pt

Our first main result, stated in Theorem \ref{mainthm} below, deals with  the fractional Ginzburg-Landau equation supplemented with the Dirichlet exterior condition \eqref{Dircondintro}, and provides an answer to the questions above. We assume in this theorem the uniform energy bound with respect to $\eps$ which easily holds in most cases whenever the exterior condition $g$ admits an $\mathbb{S}^{m-1}$-valued extension in $\omega$ of finite energy. Note that, if $g$ does not admits a smooth $\mathbb{S}^{m-1}$-valued extension in $\omega$, singularities have to appear in the limit $\eps\to 0$ as in usual Ginzburg-Landau problems. If $n=m$, this is the case when  $\partial\omega\simeq \mathbb{S}^{n-1}$ and $g:\partial\omega\to\mathbb{S}^{n-1}$ has a non vanishing topological degree.

\begin{theorem}\label{mainthm}
Let $\omega\subset\R^{n}$ be a smooth bounded open set, and let $g:\R^n\to \R^m$ be a smooth map satisfying $|g|=1$ in $\R^n\setminus\omega$. Let $\varepsilon_k\downarrow0$ be an arbitrary sequence,  and let  
$\{v_k\}_{k\in\mathbb{N}}\subset H^{1/2}_g(\omega;\mathbb{R}^m)\cap L^4(\omega)$ be such that for each $k\in\N$, $v_k$ weakly solves   
$$\begin{cases}
\displaystyle  (-\Delta)^{\frac{1}{2}}  v_k= \frac{1}{\varepsilon_k}(1-|v_k|^2)v_k & \text{in $\omega$}\,,\\[8pt]
v_k =g & \text{in $\R^n\setminus\omega$}\,.
\end{cases}$$
If $\sup_k \mathcal{E}_{\eps_k}(v_k,\omega)<\infty$, then there exist a (not relabeled) subsequence  and $v_*\in H^{1/2}_g(\omega;\R^m)$ a weak 1/2-harmonic map into $\mathbb{S}^{m-1}$ in $\omega$ such that $v_k-v_*\rightharpoonup 0$ weakly in $H^{1/2}_{00}(\omega)$. In addition, there exist 
a finite nonnegative Radon measure $\mu_{\rm sing}$ on $\omega$,  
a countably $\mathscr{H}^{n-1}$-rectifiable relatively closed set $\Sigma\subset \omega$ 
of  finite $(n-1)$-dimensional Hausdorff measure, and a Borel function $\theta:\Sigma\to(0,\infty)$ such that  
\vskip5pt
\begin{itemize}[leftmargin=22pt]
\item[\rm (i)] $\displaystyle {\rm e}(v_k,\omega)\,\mathscr{L}^n\LL\omega\, \mathop{\rightharpoonup}\limits^* \,{\rm e}(v_*,\omega)\,\mathscr{L}^n\LL\omega +\mu_{\rm sing}$ weakly* as Radon measures on $\omega$; 
\vskip5pt
\item[\rm (ii)] $\displaystyle \frac{(1-|v_k|^2)^2}{\eps_k}\to 0\;$ in $L^1_{\rm loc}(\omega)$;
\vskip5pt
\item[\rm (iii)] $\displaystyle \frac{1-|v_k(x)|^2}{\eps_k}\rightharpoonup \frac{\gamma_n}{2}\int_{\R^n}\frac{|v_*(x)-v_*(y)|^2}{|x-y|^{n+1}}\,\de y +\mu_{\rm sing}\;$ in $\mathscr{D}'(\omega)$; 
\vskip5pt
\item[\rm (iv)] $\mu_{\rm sing}= \theta \mathscr{H}^{n-1}\LL\Sigma$; 
\vskip5pt
\item[\rm (v)] $v_*\in C^\infty(\omega\setminus\Sigma)$ and $v_n\to v_*$ in $C^{\ell}_{\rm loc}(\omega\setminus\Sigma)$ for every $\ell\in\N$; 
\vskip5pt
\item[\rm (vi)] if $n\geq 2$, the limiting 1/2-harmonic map $v_*$ and the defect measure $\mu_{\rm sing}$ satisfy the stationarity relation
\begin{equation}\label{statintro}
\left[\frac{\de}{\de t} \,\mathcal{E}\big(v_*\circ\phi_t,\omega\big)\right]_{t=0} =\frac{1}{2} \int_\Sigma {\rm div}_{\Sigma}X\,\de \mu_{\rm sing}
\end{equation}
for all vector fields $X\in C^1(\R^{n};\R^{n})$ compactly supported in $\omega$, where $\{\phi_t\}_{t\in\R}$ denotes the flow on $\R^n$ generated by $X$; 
\vskip5pt
 \item[\rm (vii)] if $n=1$, the set $\Sigma$ is finite and $v_*\in C^\infty(\omega)$. 
\end{itemize}
\end{theorem}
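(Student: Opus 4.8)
The plan is to lift everything to the Caffarelli--Silvestre extension and run a Lin--Wang--type concentration--compactness analysis on the resulting Neumann-type boundary reaction problem, treating the defect measure through the generalized varifold formalism.

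\smallskip
\emph{Step 1: extension and absence of an interior defect.} For each $k$ I would let $u_k\in H^1_{\rm loc}(\R^{n+1}_+)$ be the harmonic extension of $v_k$, matched with the harmonic extension of $g$ off $\omega$. Since $(-\Delta)^{1/2}$ is the Dirichlet-to-Neumann operator for this extension, $u_k$ weakly solves $\Delta u_k=0$ in $\R^{n+1}_+$, $\partial_\nu u_k=\eps_k^{-1}(1-|u_k|^2)u_k$ on $\omega\times\{0\}$, $u_k=g$ on $(\R^n\setminus\omega)\times\{0\}$, and $\mathcal E_{\eps_k}(v_k,\omega)$ equals, up to fixed terms coming from $g$, the bulk Ginzburg--Landau energy $\tfrac12\int_{\R^{n+1}_+}|\nabla u_k|^2+\tfrac1{2\eps_k}\int_\omega(1-|u_k|^2)^2$. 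The uniform bound then produces a subsequence with $u_k\rightharpoonup u_*$ weakly in $H^1_{\rm loc}(\R^{n+1}_+)$ and $v_k-v_*\rightharpoonup0$ in $H^{1/2}_{00}(\omega)$ where $v_*:=u_*|_{\omega\times\{0\}}$. Because the extension equation is linear, $u_*$ is harmonic in $\R^{n+1}_+$, and interior elliptic estimates (Rellich plus gradient bounds for harmonic functions applied to $u_k-u_*$) force $|\nabla u_k|^2\to|\nabla u_*|^2$ in $L^1_{\rm loc}(\R^{n+1}_+)$: all loss of energy concentrates on $\omega\times\{0\}$, which I identify with $\omega$.

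\smallskip
\emph{Step 2: monotonicity, clearing-out, and the concentration set.} Each $u_k$ is an \emph{unconstrained} critical point, hence automatically stationary, and so satisfies a Pohozaev/stress--energy identity on boundary half-balls; integrating it produces an almost-monotonicity formula for $r\mapsto r^{1-n}\int_{B^+_r(x_0)}(|\nabla u_k|^2+\eps_k^{-1}(1-|u_k|^2)^2)$, uniform in $k$. Coupled with a small-energy ($\eta$-)regularity lemma (scaled energy $<\eta_0$ on $B^+_r(x_0)$ $\Rightarrow$ $|u_k|\geq1/2$ and uniform $C^\ell$ bounds on $B^+_{r/2}(x_0)$), this lets me set
$$\Sigma:=\Big\{x_0\in\omega:\ \liminf_k\, r^{1-n}\!\!\int_{B^+_r(x_0)}\!\!\big(|\nabla u_k|^2+\tfrac1{\eps_k}(1-|u_k|^2)^2\big)\geq\eta_0\ \text{ for all small }r\Big\},$$
a relatively closed subset of $\omega$ with $\mathscr H^{n-1}(\Sigma)<\infty$ by a Vitali covering argument. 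Off $\Sigma$ one obtains $v_k\to v_*$ in $C^\ell_{\rm loc}(\omega\setminus\Sigma)$ for every $\ell$, which is (v). Feeding the $\eta$-regularity and the monotonicity formula into a covering argument yields $\eps_k^{-1}(1-|v_k|^2)^2\to0$ in $L^1_{\rm loc}(\omega)$ (the potential cannot concentrate, since bubbles along $\Sigma$ are of harmonic-map type rather than vortex type), which is (ii); in particular $|v_*|=1$ a.e.\ in $\omega$, and passing to the limit in the equation off $\Sigma$ and removing $\Sigma$ (using $\mathscr H^{n-1}(\Sigma)<\infty$ and $v_*\in H^{1/2}$) shows that $v_*$ is a weak $1/2$-harmonic map into $\mathbb S^{m-1}$ in $\omega$.

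\smallskip
\emph{Step 3: defect measure and stationarity.} Up to a further subsequence, $|\nabla u_k|^2\mathscr L^{n+1}\LL\R^{n+1}_+\rightharpoonup|\nabla u_*|^2\mathscr L^{n+1}\LL\R^{n+1}_++\mu_{\rm sing}$ with $\mu_{\rm sing}$ a finite nonnegative Radon measure carried by $\omega$ (Step 1) and $\mathrm{spt}\,\mu_{\rm sing}\cup\{\text{sing.\ set of }v_*\}=\Sigma$; tracking the trace energy through the extension (it matches $|\nabla u_*|^2$-quantities off $\Sigma$, while a blow-up analysis shows the defect at $\Sigma$ is the same in the trace and bulk normalizations) gives (i), and then (iii) follows from (i), (ii) and the equation after writing $\eps_k^{-1}(1-|v_k|^2)=(-\Delta)^{1/2}v_k\cdot v_k+\eps_k^{-1}(1-|v_k|^2)^2$ and using $|v_*|=1$. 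The monotonicity formula makes $\Theta(x_0):=\lim_{r\to0}r^{1-n}\mu_{\rm sing}(B_r(x_0))$ exist with $\eta_0\leq\Theta(x_0)<\infty$ for $\mathscr H^{n-1}$-a.e.\ $x_0\in\Sigma$, so $\mu_{\rm sing}$ is comparable to $\mathscr H^{n-1}\LL\Sigma$; taking inner variations of the (unconstrained) GL energy of $u_k$ along the flow of a boundary-tangent extension of $X\in C^1_c(\omega;\R^n)$ and passing to the limit shows that $(v_*,\mu_{\rm sing})$ is a stationary generalized $(n-1)$-varifold satisfying \eqref{statintro}, the defect contribution to the limiting stress--energy being purely tangential to $\Sigma$. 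Ambrosio--Soner's rectifiability theorem for such varifolds (in the spirit of Lin's rectifiability of defect measures) then yields that $\Sigma$ is countably $\mathscr H^{n-1}$-rectifiable and $\mu_{\rm sing}=\theta\,\mathscr H^{n-1}\LL\Sigma$ with $\theta>0$, i.e.\ (iv) and (vi). Finally, if $n=1$ then $\mathscr H^0(\Sigma)<\infty$ makes $\Sigma$ finite, and $v_*$ --- a weak $1/2$-harmonic map from an interval, smooth off a finite set and in $H^{1/2}$ --- is smooth on all of $\omega$ by the one-dimensional regularity theory for $1/2$-harmonic maps together with a point-removability argument, which is (vii).

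\smallskip
\emph{Main obstacle.} The genuinely hard part is Step 3: proving that the abstract defect measure $\mu_{\rm sing}$ is a truly geometric $(n-1)$-rectifiable object carrying the minimal-surface-type identity \eqref{statintro}, rather than merely a finite measure of positive density. This requires the generalized varifold machinery adapted to the nonlocal/boundary setting and, crucially, the identification of the weak limit of the stress--energy tensor (showing that its angular part vanishes), exactly as in the most delicate parts of Lin--Wang. A second delicate point is establishing (ii) jointly with the fact that the \emph{same} measure $\mu_{\rm sing}$ governs (i) and (iii), which rests on a precise blow-up analysis of the nonlocal $1/2$-energy near $\Sigma$.
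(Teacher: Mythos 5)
Your overall strategy is the one the paper itself follows (harmonic extension, monotonicity formula plus clearing-out plus an $\eps$-regularity estimate proved by compactness, a concentration set $\Sigma$, generalized varifolds in the spirit of Lin--Wang/Ambrosio--Soner, and a Preiss-type rectifiability argument), but two steps you take for granted are genuine work. First, nothing in the hypotheses gives you $|v_k|\leq 1$, nor even a uniform $L^\infty$ bound: the theorem only assumes weak solutions in $H^{1/2}_g(\omega;\R^m)\cap L^4(\omega)$. Yet your clearing-out lemma, the $\eps$-regularity estimate, and the compactness argument for the extensions $u_k$ all require the a priori bound $|u_k|\leq 1$. The paper obtains it in Section~\ref{FractGL} through a Brezis--Kato-type iteration for the mixed Dirichlet--Neumann problem solved by the extension, followed by the maximum principle and the Hopf lemma (Corollary~\ref{modless1}), using the smoothness of $\partial\omega$ and of $g$ together with $|g|=1$ outside $\omega$; without this, your Step 2 cannot start. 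Relatedly, your claim that $\mathcal{E}_{\eps_k}(v_k,\omega)$ equals the \emph{global} bulk Ginzburg--Landau energy of $u_k$ ``up to fixed terms coming from $g$'' is not quite right: the exterior--exterior interaction of $g$ need not be finite, so the global Dirichlet energy of the extension may be infinite; what is true, and all that is needed, are local $H^1$ bounds on $u_k$ in terms of local $1/2$-energies (Lemma~\ref{hatH1/2toH1}), applied on an exhausting family of admissible sets.

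Second, in your Step 3 the passage from the limiting stationarity identity for the extension (the stress--energy tensor of $u_*$ plus a tangential varifold term) to the identity \eqref{statintro}, which is phrased as the first \emph{inner} variation of the nonlocal energy $t\mapsto\mathcal{E}(v_*\circ\phi_t,\omega)$, is not automatic: one needs that this inner variation equals $-\frac12\int_\Om\big(|\nabla v_*^\e|^2{\rm div}\mathbf{X}-2\sum_{i,j}(\partial_i v_*^\e\cdot\partial_j v_*^\e)\partial_j\mathbf{X}_i\big)\,\de x$ for any admissible extension $\mathbf{X}$ of $X$, which the paper proves in Lemma~\ref{compstatfrac} via the energy minimality of the Poisson extension (Corollary~\ref{minenergdirchfrac}) applied with $\pm X$, together with the fact that for harmonic $v_*^\e$ the bulk integral depends only on the trace of $\mathbf{X}$ (Remark~\ref{depbound}); you assert the final identity without this bridge. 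A smaller but real point concerns item (i): the identification of the trace-level defect (density ${\rm e}(v_k,\omega)$) with the \emph{same} $\mu_{\rm sing}$ produced by the extension is obtained in the paper not by a blow-up matching of normalizations but by testing the equation against $\varphi v_k$ and using the Dirichlet-to-Neumann representation (Lemma~\ref{repnormderfraclap}); your proposed mechanism is left as an assertion. Finally, note that the paper gets rectifiability by applying Preiss's criterion directly to $\mu_{\rm sing}$ (the densities exist by monotonicity and are positive and finite), reserving the generalized-varifold machinery for identifying the barycenter with the projection onto $T_x\Sigma$; your Ambrosio--Soner citation plays essentially the same role, so that part is a matter of packaging rather than substance.
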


The proof of this theorem rests on the representation of $(-\Delta)^{\frac1 2}$ as {\it Dirichlet-to-Neumann operator} associated to the harmonic extension to the open half space $\R^{n+1}_+:=\R^n\times(0,\infty)$ given by the convolution product with the Poisson kernel, see \eqref{poisson}. More precisely, denoting by $v\mapsto v^{\e}$ this harmonic extension,  we shall prove that it is well defined on the space of functions of finite Dirichlet 1/2-energy, and that 
$\partial_\nu v^\e=(-\Delta)^{\frac1 2}v$ as distributions  on $\omega$. Here, $\partial_\nu$ denotes the exterior normal differentiation on $\partial\R^{n+1}_+\simeq\R^n$. When applying the extension procedure to a solution 
$v_\eps$ of the fractional Ginzburg-Landau equation~\eqref{GLIntro}, we end up with the following {\it system of Ginzburg-Landau boundary reactions}
\begin{equation}\label{BoundGLIntro}
\begin{cases}
\Delta v_\eps^\e=0 & \text{in $\R^{n+1}_+$}\,,\\[8pt]
\displaystyle \frac{\partial v^\e_\eps}{\partial\nu}= \frac{1}{\varepsilon}(1-|v^\e_\eps|^2)v^\e_\eps & \text{on $\omega$}\,.
\end{cases}
\end{equation}
The asymptotic analysis of this system as $\eps\to0$  will lead us to the main conclusions stated in Theorem~\ref{mainthm}. To this purpose, we shall first establish an {\it epsilon-regularity} type of estimate for \eqref{BoundGLIntro} in the spirit of the regularity theory for harmonic maps \cite{Bet,SU} or usual Ginzburg-Landau equations \cite{CS}. This estimate is the key to derive the convergence result and the rectifiability of the defect measure. Note that Theorem~\ref{mainthm} actually shows that $\mu_{\rm sing}$ is a $(n-1)$-rectifiable varifold in the sense of {\sc Almgren}, see {\it e.g.} \cite{Sim2}. 
We emphasize that identity \eqref{statintro} is  precisely the coupling equation between the limiting 1/2-harmonic $v_*$ and the defect measure $\mu_{\rm sing}$. It states that the first inner variation of the Dirichlet 1/2-energy of $v_*$ is equal to $-\frac1 2$ times the first inner variation of the varifold $\mu_{\rm sing}$, see \cite[formulas 15.7 and~16.2]{Sim2}.  We shall achieve \eqref{statintro} in two independent steps. The first step consists in proving an analogous identity when passing to the limit $\eps\to0$ in system \eqref{BoundGLIntro}. In the spirit of \cite{LW3}, the convenient way to let $\eps\to 0$ in the first inner variation of the Dirichlet energy of $v^\e_\eps$ is to use the notion of {\it generalized varifold} of {\sc Ambrosio \& Soner} \cite{AS}, once adapted to the boundary setting. In turn, the second step allows us to return to the original formulation on $\R^n$. It shows that the first inner variation of the Dirichlet 1/2-energy of an arbitrary map $v$ is equal to the first inner variation up to $\omega$ of the Dirichlet energy  of its harmonic extension~$v^\e$, see Lemma~\ref{compstatfrac}. Note that this crucial observation also implies that an harmonic extension $v^\e$ is stationary up to $\omega$ for the Dirichlet energy whenever the original function $v$ is stationary for the Dirichlet 1/2-energy. Here, stationarity means stationarity with respect to inner variations ({\it i.e.}, variations of the domain). 
 \vskip5pt
 
 Concerning the regularity issue for 1/2-harmonic maps, it is also fruitful to rephrase the problem in terms of harmonic extensions. Applying the extension procedure to a weak $\mathbb{S}^{m-1}$-valued 1/2-harmonic map $v_*$ in $\omega$ leads to the system
\begin{equation}\label{BoundHarmIntro}
\begin{cases}
\Delta v_*^\e=0 & \text{in $\R^{n+1}_+$}\,,\\[8pt]
\displaystyle \frac{\partial v^\e_*}{\partial\nu}\perp T_{v^\e_*}\mathbb{S}^{m-1} & \text{on $\omega$}\,.
\end{cases}
\end{equation}
This system turns out to be (almost) included in the class of {\it harmonic maps with free boundary} (see Remark~\ref{boundharmVsFreebound}) for which a regularity theory do exist \cite{DS,DS2,HL,Sch}. This theory provides partial regularity results, and it requires {\it minimality}, or at least {\it stationarity} up to the free boundary.  As already mentioned in the discussion above, if $v_*$ is assumed to be stationary, then the extension $v^\e_*$ is stationary up to $\omega$. Similarly, we shall see that harmonic extensions also inherit minimality up to $\omega$. This is then enough to derive a general regularity result for 1/2-harmonic maps under a minimality or stationarity assumption. 
We finally point out that the relation between fractional harmonic maps and harmonic maps with a free boundary has been previously noticed by {\sc Moser} \cite{Mos} for a (non-explicit) operator  slightly different from the square root Laplacian $(-\Delta)^{\frac1 2}$, and leading to slightly weaker results. 
\vskip5pt

Our regularity results for 1/2-harmonic maps can be summarized in the following theorem. For  simplicity we only state it for a sphere target, but we would like to stress that it actually holds for more general target manifolds, see Remark~\ref{generaltarget}. Notice that it is an {\it interior regularity} result. The regularity at the boundary when prescribing an exterior Dirichlet condition  \eqref{Dircondintro} remains an open question. 

 \begin{theorem}\label{reghalfharmintro}
 Let $\omega\subset\R^{n}$ be a smooth bounded open set, and let $v_*\in \widehat H^{1/2}(\omega;\mathbb{R}^{m})\cap L^\infty(\R^n)$ be a weak 1/2-harmonic map into $\mathbb{S}^{m-1}$ in~$\omega$. Then $v_*\in C^\infty\big(\omega\setminus{\rm sing}(v_*)\big)$, where ${\rm sing}(v_*)$ denotes the complement of the largest open set on which $v_*$ is continuous. Moreover, 
 \begin{itemize}[leftmargin=22pt]
\item[\rm (i)] if $n=1$, then ${\rm sing}(v_*)\cap\omega=\emptyset$;  
\vskip3pt

\item[\rm (ii)] if $n\geq 2$ and $v_*$ is stationary, then  $\mathscr{H}^{n-1}\big({\rm sing}(v_*)\cap\omega\big)=0$;
\vskip3pt

\item[\rm (iii)] if $v_*$ is minimizing, then ${\rm dim}_{\mathscr{H}}\big({\rm sing}(v_*)\cap\omega\big)\leq n-2$ for $n\geq 3$, and ${\rm sing}(v_*)\cap\omega$ is discrete for $n=2$. 
 \end{itemize}
 \end{theorem}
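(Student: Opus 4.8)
The plan is to transfer the regularity question from the nonlocal equation on $\R^n$ to the local boundary-reaction problem \eqref{BoundHarmIntro} on $\R^{n+1}_+$, and then to invoke the existing regularity theory for harmonic maps with free boundary. First I would apply the harmonic extension procedure: given a weak $1/2$-harmonic map $v_*\in\widehat H^{1/2}(\omega;\R^m)\cap L^\infty(\R^n)$, its Poisson extension $v_*^{\e}$ is a bounded harmonic map on $\R^{n+1}_+$ whose Neumann data satisfies $\partial_\nu v_*^{\e}\perp T_{v_*^{\e}}\mathbb{S}^{m-1}$ on $\omega$, i.e.\ $v_*^{\e}$ solves \eqref{BoundHarmIntro}. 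The boundedness of $v_*$ on all of $\R^n$ passes to $v_*^{\e}$ by the maximum principle, which is what lets one view $v_*^{\e}$ as an (almost) harmonic map with partially free boundary on $\mathbb{S}^{m-1}$, in the sense of \cite{DS,DS2,HL,Sch}. The key structural point — established earlier in the paper via Lemma~\ref{compstatfrac} and its minimizing counterpart — is that stationarity (resp.\ minimality) of $v_*$ for the Dirichlet $1/2$-energy $\mathcal{E}(\cdot,\omega)$ implies stationarity (resp.\ minimality) up to $\omega$ of $v_*^{\e}$ for the Dirichlet energy on $\R^{n+1}_+$. This is exactly the hypothesis required to apply the free-boundary $\veps$-regularity theorems.

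Next, I would run the $\veps$-regularity / monotonicity machinery for the extended problem. Because $v_*^{\e}$ is harmonic in the interior, interior $\veps$-regularity for harmonic maps (\cite{SU,Bet}) applies away from $\omega\times\{0\}$; the genuinely new input is the boundary $\veps$-regularity near $\omega\times\{0\}$, which for stationary harmonic maps with free boundary is the content of \cite{DS,Sch}, and for minimizers of \cite{HL,DS2}. Combining these, one gets a closed singular set $\Sigma^{\e}\subset\overline{\R^{n+1}_+}$ relatively closed in the region $\omega$-adjacent, such that $v_*^{\e}$ is smooth (indeed real-analytic up to the boundary by reflection/elliptic bootstrap) on the complement. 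Restricting to the boundary slice, $\mathrm{sing}(v_*)=\Sigma^{\e}\cap(\omega\times\{0\})$, and interior elliptic regularity combined with boundary Schauder estimates gives $v_*\in C^\infty(\omega\setminus\mathrm{sing}(v_*))$. The Hausdorff-dimension bounds on the free-boundary singular set from that theory then yield the three cases: (i) for $n=1$ the boundary is one-dimensional and the stationary/monotonicity argument forces $\mathrm{sing}(v_*)\cap\omega=\emptyset$ (no nonconstant tangent maps survive, as in \cite{DaLRi}); (ii) for $n\geq 2$ and $v_*$ merely stationary, a Federer-type dimension reduction on the boundary stratum gives $\mathscr{H}^{n-1}(\mathrm{sing}(v_*)\cap\omega)=0$; (iii) for $v_*$ minimizing, the sharper dimension reduction for minimizing harmonic maps with free boundary gives $\dim_{\mathscr{H}}(\mathrm{sing}(v_*)\cap\omega)\leq n-2$ for $n\geq 3$ and discreteness for $n=2$.

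There are two technical wrinkles worth flagging. First, the Neumann condition in \eqref{BoundHarmIntro} is not literally the standard free-boundary condition (which is a Dirichlet-type constraint $v^{\e}(\omega)\subset\mathbb{S}^{m-1}$ together with a conormal condition); here $v^{\e}_*$ need \emph{not} take values in $\mathbb{S}^{m-1}$ off $\omega$, so one must check — as in Remark~\ref{boundharmVsFreebound} — that the defect from the genuine free-boundary setting is a lower-order term compatible with the $\veps$-regularity proof, e.g.\ by an even reflection across $\omega\times\{0\}$ which converts \eqref{BoundHarmIntro} into an interior harmonic-map-type system with a controlled jump in the metric/reaction. Second, one must be careful about which singular set is being measured: $v^{\e}_*$ could in principle have interior singularities in $\R^{n+1}_+$ not touching the boundary, but these do not affect $\mathrm{sing}(v_*)$, and near the boundary the boundary-monotonicity formula is what controls $\mathrm{sing}(v_*)$ itself.

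\medskip

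\emph{Main obstacle.} The hard part is the boundary $\veps$-regularity estimate for \eqref{BoundHarmIntro} near $\omega\times\{0\}$ in the merely stationary case: establishing a boundary monotonicity formula for the Dirichlet energy of $v^{\e}_*$ (using precisely the stationarity-up-to-$\omega$ provided by Lemma~\ref{compstatfrac}), and then a Bochner/Rellich-type decay-of-tilt-excess argument that upgrades smallness of the scaled energy on a half-ball to Hölder continuity up to the boundary. This is where the critical (borderline $L^1$) structure of the $1/2$-harmonic map equation bites, exactly as in the local case, and it is the step that cannot be imported verbatim from \cite{DS,HL} because of the reflected-metric discontinuity mentioned above; it has to be done by hand, presumably reusing the $\veps$-regularity estimate for the Ginzburg-Landau boundary reaction \eqref{BoundGLIntro} that the paper develops for Theorem~\ref{mainthm}, passed to the limit $\veps\to0$.
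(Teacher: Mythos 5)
Your first two paragraphs follow exactly the route of the paper: extend by the Poisson kernel, use the criticality/stationarity/minimality transfers (Propositions \ref{fracweakharm}, \ref{implstat}, \ref{fracminharm}), note $\|v_*^\e\|_{L^\infty}\le\|v_*\|_{L^\infty(\R^n)}$, identify ${\rm sing}(v_*)\cap\partial^0\Omega={\rm sing}(v_*^\e)\cap\partial^0\Omega$, invoke the partial regularity theory for harmonic maps with partially free boundary (Theorem \ref{thmregfreebd}, i.e.\ \cite{Sch} in the stationary case, \cite{HL,DS,DS2} in the minimizing case, \cite{GJ} for higher-order regularity), and exhaust $\omega$ by $\partial^0\Omega$ over admissible sets $\Omega$. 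Up to that point the proposal is correct and is essentially the paper's proof.

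The problem is your closing ``main obstacle'' paragraph, which misidentifies the difficulty and, taken literally, would derail the argument. System \eqref{BoundHarmIntro} is not an ``almost'' free boundary problem with a defect to be absorbed: it is literally a harmonic map with partially free boundary, with ambient manifold $\mathcal{M}=\R^m$ and supporting manifold $\mathcal{N}=\mathbb{S}^{m-1}$. In that framework the constraint $u(x)\in\mathcal{N}$ is imposed only on $\partial^0\Omega$, where $v_*^\e=v_*$ is indeed $\mathbb{S}^{m-1}$-valued; there is no requirement that $v_*^\e$ be sphere-valued in the interior, so there is no ``reflected-metric discontinuity'' to handle and no reflection is used. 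The only genuine mismatch with the literature is the non-compactness of the ambient target $\R^m$, and this is exactly what Remark \ref{boundharmVsFreebound} fixes: since $v_*^\e$ is bounded, one views $\mathbb{S}^{m-1}$ as a submanifold of the flat torus $\R^m/r\Z^m$ with $r>2\|v_*^\e\|_{L^\infty}$. After that, the boundary monotonicity formula and the boundary $\eps$-regularity are imported verbatim from \cite{Sch} (stationary) and \cite{HL,DS,DS2} (minimizing); nothing has to be redone by hand, and the Ginzburg--Landau estimate of Theorem \ref{epsreg} plays no role whatsoever in Theorem \ref{reghalfharmintro} (it is only used for the asymptotics $\eps\to0$ in Theorem \ref{mainthm}). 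Your proposed fallback, namely deducing the needed boundary $\eps$-regularity ``by passing to the limit'' in the Ginzburg--Landau estimate, would in any case not close the argument: a given weak (or stationary) $1/2$-harmonic map is not known to arise as a limit of critical points of $E_\eps$, so no such limiting procedure is available. Finally, in case (i) be careful not to invoke stationarity or monotonicity: for $n=1$ the statement concerns arbitrary weak $1/2$-harmonic maps, and full regularity follows from the conformal-dimension result of \cite{Sch} (or, alternatively, from \cite{DaLRi}) without any stationarity hypothesis.
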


\vskip5pt

Before concluding this introduction, let us briefly comment on some possible extensions of the present results and further open questions. First of all, in the energy \eqref{defFGLenerg} one could replace  the Ginzburg-Landau potential $(1-|u|^2)^2$ by a more general nonnegative potential $W(u)$ having a  zero set $\{W=0\}$ given by a smooth compact submanifold $\mathcal{N}$ of $\R^m$ without boundary, and then consider the corresponding fractional Ginzbug-Landau equation. In this context, the singular limit $\eps\to 0$ leads to the 1/2-harmonic map system into $\mathcal{N}$ (see Remark~\ref{generaltarget}). If the codimension of $\mathcal{N}$ is equal to $1$ (plus some non degeneracy assumptions on $W$), the proof of Theorem \ref{mainthm} can certainly be reproduced with minor modifications. However, the higher codimension case seems to require additional analysis since our {\it espilon-regularity estimate} (Theorem \ref{epsreg}) strongly uses the codimension 1 structure. It could be interesting to have a proof handling both cases. In another direction, one could consider the fractional Ginzburg-Landau equation in some open subset $\omega$ of a more general (smooth) complete Riemannian manifold $(\mathcal{M}, \mathbf{g})$ with a fractional $1/2$-Laplacian $(-\Delta_{\mathbf{g}})^{\frac{1}{2}}$ given by the square root of the Laplace-Beltrami operator on $\mathcal{M}$ as defined through spectral theory and functional calculus. In the most difficult case of a noncompact manifold, the construction of $(-\Delta_{\mathbf{g}})^{\frac{1}{2}}$ (or any fractional power $s\in(0,1)$) can be found in \cite{BGS} (and the references therein).  Under some geometrical assumption on $\mathcal{M}$ ({\it e.g.} $\mathcal{M}$ has bounded geometry), \cite{BGS} follows  the approach of \cite{ST} to construct the Poisson kernel for the harmonic extension in $\mathcal{M}\times\R_+$ (endowed with the product metric $\mathbf{g}+|\de t|^2$) of functions defined on $\mathcal{M}$, and it is shown that the fractional Laplacian  $(-\Delta_{\mathbf{g}})^{\frac{1}{2}}$ coincides with the Dirichlet-to-Neumann operator associated to this harmonic extension. This is then enough to rephrase the fractional Ginzburg-Landau equation on $\mathcal{M}$ (and $1/2$-harmonic maps) as a  system of Ginzburg-Landau boundary reactions on $\partial(\mathcal{M}\times\R_+)\simeq\mathcal{M}$ (respectively, harmonic maps with partially free boundary), and we believe that most of our results should extend to this setting.  Note that this approach was successfully used in \cite{GSS} in the scalar case $m=1$ and for any fractional power $s\in(0,1)$ to study the so-called {\it layer solutions} in the hyperbolic space $\mathbb{H}^n$. As already mentioned, we have considered here only the case $s=1/2$ for the specific relations with the geometrical problems of harmonic maps with partially free boundary and minimal surfaces with free boundary. However, it could be interesting to investigate in the case $s\not=1/2$ the analogues of this geometric objects, and then  perform the asymptotic analysis as $\eps\to 0$ of the corresponding Ginzburg-Landau equation. For this question, some aspects of the present analysis  can actually be used for an arbitrary $s\in(0,1)$ by means of the $2s$-harmonic extension procedure discovered in \cite{CaffSil}. In a forthcoming article \cite{MilSir}, we shall consider the scalar Ginzburg-Landau (Allen-Cahn) equation in the case $s\in(0,1/2)$ and the related nonlocal $s$-minimal surfaces in the spirit of \cite{CRS,SV1}. Interpreting nonlocal $s$-minimal surfaces as fractional $s$-harmonic maps with values in $\mathbb{S}^0\simeq\{\pm1\}$, and taking advantage of the $2s$-harmonic extension, we shall prove in \cite{MilSir} the convergence, as $\eps\to0$,  
of arbitrary solutions of the fractional Allen-Cahn equation with equibounded energy towards nonlocal $s$-minimal surfaces (possibly non-minimizing), thus extending a result in  \cite{SV1} to the non-minimizing case. 
\vskip5pt

The paper is organized as follows. In Section~\ref{prelim}, we present all the needed material on the functional aspects of the {\sl square root Laplacian/Dirichlet-to-Neumann operator}, as well as some key estimates related to harmonic extensions. Actually, we believe that this section can be of independent interest. Section \ref{FractGL} is devoted to the qualitative analysis of the fractional Ginzburg-Landau equation for $\eps$ fixed, and provides regularity results for this equation. Section~\ref{1/2Harm} is entirely devoted to the analysis of 1/2-harmonic maps. In a first part, we give the regularity theory  and prove Theorem~\ref{reghalfharmintro}. The second part deals with explicit examples of 1/2-harmonic maps underlying their geometrical nature, and stressing the analogies with standard harmonic maps.   In Section~\ref{epsregtitle}, we prove the aforementioned {\it epsilon-regularity estimate} for the system of Ginzburg-Landau boundary reactions, and its asymptotic analysis as $\eps\to0$ follows in Section~\ref{asymptGLBR}.  Then we return to the fractional Ginzburg-Landau equation in Section~\ref{FGLasymp} where we perform its asymptotic analysis with and without exterior Dirichlet condition. The special case of energy minimizers under exterior Dirichlet condition is also treated. Finally, we collect in Appendix A the proofs of several statements from Section~\ref{prelim}, and Appendix B provides some elliptic estimates coming into play in the proof of the epsilon-regularity result. 

\vskip5pt

\begin{notations*}
Throughout the paper,  $\R^n$ is identified with $\partial  \mathbb{R}^{n+1}_+=\R^n\times\{0\}$. 
We denote by $B_r(x)$ the open ball in $\mathbb{R}^{n+1}$ of radius $r$ centered at $x$, while $D_r(x):= B_r(x)\cap\mathbb{R}^{n}$ is the open disc in $\R^n$ centered at $x\in\R^n$.  For a set $A\subset  \mathbb{R}^{n+1}$, we write $A^+:=A\cap \mathbb{R}^{n+1}_+$, $\partial^+ A:=\partial A\cap \mathbb{R}^{n+1}_+$, and $\overline A^+$ the closure of $A^+$ in $\R^{n+1}$.  If $A\subset\R^n$ and no confusion arises, the complement of $A$ in $\R^n$ is simply denoted by~$A^c$. The Froebenius inner product between two matrices $M=(M_{ij})$ and $N=(N_{ij})$ is denoted by  $M:N:=\sum_{i,j} M_{ij}N_{ij}$. 

Concerning  bounded open sets $\Om\subset\R^{n+1}_+$, we shall say that $\Omega$ is an {\bf admissible open set} whenever 

\hskip5pt $\bullet$ $\partial\Omega$ is Lipschitz regular;  
\vskip2pt

\hskip5pt $\bullet$ the open set $\partial^0\Omega\subset\R^n$ defined by 
$$\partial^0\Omega:=\left\{x\in\partial\Omega\cap\R^{n} : B^+_{r}(x)\subset\Om \text{ for some $r>0$}\right \}\,,$$
is non empty and has Lipschitz boundary; 
\vskip2pt

\hskip5pt $\bullet$ $\partial \Omega=\partial^+\Omega\cup\overline{\partial^0\Omega}\,$.
\vskip5pt

Finally, we shall always denote by $C$ a generic positive constant which only depends on the dimensions $n$ and $m$, and possibly changing from line to line. If a constant depends on additional given parameters, we shall write those parameters using the subscript notation. 
\end{notations*}

 \vskip10pt

\section{Energy spaces and local representation of the 1/2-Laplacian} \label{prelim}  

\subsection{Basics on $H^{1/2}$-spaces}
 
Given an open set $\omega\subset \mathbb{R}^n$,  the fractional Sobolev space $H^{1/2}(\omega;\mathbb{R}^m)$ is defined by  
 $$H^{1/2}(\omega;\mathbb{R}^m):=\big\{v\in L^2(\omega;\mathbb{R}^m): [v]_{H^{1/2}(\omega)}<\infty\big\}\,,$$
where
$$[v]^2_{H^{1/2}(\omega)}:=\frac{\gamma_n}{4}\iint_{\omega\times \omega} \frac{|v(x)-v(y)|^2}{|x-y|^{n+1}}\,\de x\de y\,,\quad \gamma_n:=\frac{\Gamma\big((n+1)/2\big)}{\pi^{(n+1)/2}}\,,$$ 
and we recall that $H^{1/2}(\omega;\mathbb{R}^m)$ is an Hilbert space normed by  
$$\|v\|_{H^{1/2}(\omega)}:= \|v\|_{L^2(\omega)}+[v]_{H^{1/2}(\omega)}\,.$$
Then $ H^{1/2}_{\rm loc}(\mathbb{R}^n;\mathbb{R}^m)$ denotes the class of functions which belongs to $\in H^{1/2}(\omega;\mathbb{R}^m)$ for every bounded open set $\omega\subset\mathbb{R}^n$. 
The linear space $H^{1/2}_{00}(\omega;\mathbb{R}^m)$ is defined by 
$$H^{1/2}_{00}(\omega;\mathbb{R}^m):=\left\{v\in H^{1/2}(\mathbb{R}^n;\mathbb{R}^m) :  v=0 \text{ a.e. in } \R^n\setminus\omega\right\} \subset H^{1/2}(\mathbb{R}^n;\mathbb{R}^m)\,. $$
Endowed with the  norm induced by $H^{1/2}(\mathbb{R}^n;\mathbb{R}^m)$,   
the space  $H^{1/2}_{00}(\omega;\mathbb{R}^m)$ is also an Hilbert space, and for $v\in H^{1/2}_{00}(\omega;\mathbb{R}^m)$, 
\begin{equation}\label{defErond}
[v]^2_{H^{1/2}(\mathbb{R}^n)}=\frac{\gamma_n}{4}\iint_{\omega\times\omega}  \frac{|v(x)-v(y)|^2}{|x-y|^{n+1}}\,\de x\de y 
+ \frac{\gamma_n}{2} \iint_{\omega\times \omega^{c}}  \frac{|v(x)-v(y)|^2}{|x-y|^{n+1}}\,\de x\de y \,. 
\end{equation}
The topological dual space of $H^{1/2}_{00}(\omega;\mathbb{R}^m)$ will be denoted by $H^{-1/2}(\omega;\mathbb{R}^m)$. We recall that if the boundary of $\omega$ is smooth enough ({\it e.g.} if  $\partial \omega$ is Lipschitz regular), then 
\begin{equation}\label{densitysmoothH1/200}
 H^{1/2}_{00}(\omega;\mathbb{R}^m)= \overline{\mathscr{D}(\omega;\R^m)}^{\,\|\cdot\|_{H^{1/2}(\mathbb{R}^n)}}\,,
 \end{equation}
 see \cite[Theorem~1.4.2.2]{G}.
\vskip5pt

Throughout the paper we shall be interested in functions for which the right hand side of \eqref{defErond} is finite. We denote this class of funtions by 
$$\widehat{H}^{1/2}(\omega;\mathbb{R}^m):=\left\{v\in L^{2}_{\rm loc}(\mathbb{R}^n;\mathbb{R}^m) : \mathcal{E}(v,\omega)<\infty\right\} \,,$$
where $\mathcal{E}(\cdot,\omega)$ is  the {\it 1/2-Dirichlet energy} defined in \eqref{defenergE}. 
The following properties hold for any bounded open subsets $\omega$ and $\omega'$ of $\R^n$: 
\begin{itemize}
\vskip5pt
\item $ \widehat{H}^{1/2}(\omega;\mathbb{R}^m)$ is a linear space; 
\vskip5pt
\item $ \widehat{H}^{1/2}(\omega;\mathbb{R}^m) \subset  \widehat{H}^{1/2}(\omega';\mathbb{R}^m)$ whenever $\omega'\subset\omega$, and    
$ \mathcal{E}(v,\omega')\leq \mathcal{E}(v,\omega)\,$;
\vskip5pt
\item $ \widehat{H}^{1/2}(\omega;\mathbb{R}^m)\cap H^{1/2}_{\rm loc}(\mathbb{R}^n;\R^m) \subset  \widehat{H}^{1/2}(\omega';\mathbb{R}^m)\,$;
\vskip5pt
\item $H^{1/2}_{\rm loc}(\mathbb{R}^n;\mathbb{R}^m)\cap L^\infty(\mathbb{R}^n) \subset  \widehat{H}^{1/2}(\omega;\mathbb{R}^m)\,$.
\end{itemize}
\vskip3pt

An elementary property concerning $\widehat{H}^{1/2}(\omega;\mathbb{R}^m)$  is given in Lemma \ref{adminHchap} below whose proof can be found in Appendix A.
Using this lemma, it is then elementary to see that  $ \widehat{H}^{1/2}(\omega;\mathbb{R}^m)$ is a Hilbert space for the scalar product associated to the norm 
$v\mapsto \|v\|_{L^2(\omega)}+( \mathcal{E}(v,\omega))^{1/2}$.

\begin{lemma}\label{adminHchap}
Let $x_0\in\omega$ and $\rho>0$ be  such that $D_{\rho}(x_0)\subset\omega$.  There exists a constant $C_\rho>0$, independent of~$x_0$, such that 
$$\int_{\R^n}\frac{|v(x)|^2}{(|x-x_0|+1)^{n+1}}\,\de x\leq C_{\rho}\big(\mathcal{E}\big(v,D_{\rho}(x_0)\big)+\|v\|^2_{L^2(D_{\rho}(x_0))}\big)$$
for all $v\in  \widehat{H}^{1/2}(\omega;\mathbb{R}^m)$. 
\end{lemma}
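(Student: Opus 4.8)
The idea is to bound the weighted $L^2$-norm of $v$ over all of $\R^n$ by splitting $\R^n$ into the fixed disc $D_\rho(x_0)$, where we get the local $L^2$-norm for free, and its complement, where we must trade the slow decay of the kernel $(|x-x_0|+1)^{-(n+1)}$ against the Gagliardo energy. First I would reduce to the case $x_0=0$ by translation invariance of the kernel $|x-y|^{-(n+1)}$ and of the energy $\mathcal E$; this is what makes the constant $C_\rho$ independent of $x_0$, so for the rest I write $D_\rho=D_\rho(0)$. Over $D_\rho$ the integrand is at most $|v(x)|^2$, contributing $\|v\|_{L^2(D_\rho)}^2$. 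So everything comes down to controlling $\int_{\R^n\setminus D_\rho}\frac{|v(x)|^2}{(|x|+1)^{n+1}}\,\de x$.

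The key step is the elementary pointwise inequality $|v(x)|^2\le 2|v(x)-v(y)|^2+2|v(y)|^2$, which I would average over $y$ in a fixed region of $D_\rho$ of positive measure — say $y\in D_{\rho/2}$. For $x\notin D_\rho$ and $y\in D_{\rho/2}$ one has $|x-y|\le |x|+\rho/2\le C(|x|+1)$ and also $|x-y|\ge |x|-\rho/2\ge c(|x|+1)$ (using $|x|\ge\rho$), so $|x-y|^{-(n+1)}$ is comparable to $(|x|+1)^{-(n+1)}$ with constants depending only on $\rho$ and $n$. Averaging the pointwise inequality in $y$ over $D_{\rho/2}$ and multiplying by $(|x|+1)^{-(n+1)}$ then yields, after integrating in $x$ over $\R^n\setminus D_\rho$,
\begin{equation*}
\int_{\R^n\setminus D_\rho}\frac{|v(x)|^2}{(|x|+1)^{n+1}}\,\de x
\le C_\rho\iint_{(\R^n\setminus D_\rho)\times D_{\rho/2}}\frac{|v(x)-v(y)|^2}{|x-y|^{n+1}}\,\de x\,\de y
+C_\rho\int_{\R^n\setminus D_\rho}\frac{\|v\|_{L^2(D_{\rho/2})}^2}{(|x|+1)^{n+1}}\,\de x.
\end{equation*}
The second term on the right is $C_\rho\|v\|_{L^2(D_{\rho/2})}^2$ because $(|x|+1)^{-(n+1)}$ is integrable on $\R^n$ (here the exponent $n+1>n$ is exactly what we need), and the first term is bounded by the cross-energy piece $\iint_{\omega\times\omega^c}$, hence by $C_\rho\,\mathcal E(v,D_\rho)$ after noting $D_{\rho/2}\subset D_\rho$ and $\R^n\setminus D_\rho\subset \R^n\setminus D_{\rho/2}$ — one just needs to check the domains of integration sit inside the domains appearing in the definition of $\mathcal E(v,D_\rho)$, using $\mathcal E(v,\omega')\le\mathcal E(v,\omega)$ for $\omega'\subset\omega$ only as a sanity check since here we work directly with $D_\rho$.

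The main obstacle — really the only subtle point — is the bookkeeping that makes $C_\rho$ depend on $\rho$ alone and not on $x_0$: one must be careful that the two-sided comparison $|x-y|\simeq |x|+1$ on $(\R^n\setminus D_\rho)\times D_{\rho/2}$ uses only $|x|\ge\rho$ and $|y|\le\rho/2$, quantities that survive translation. A secondary point is to confirm that the Gagliardo double integral $\iint_{(\R^n\setminus D_\rho)\times D_{\rho/2}}$ is genuinely dominated by a piece of $\mathcal E(v,D_\rho)$; since $D_{\rho/2}$ is contained in $D_\rho$ and its complement $\R^n\setminus D_\rho$ is contained in $D_\rho^c$, this is exactly (half of) the second double integral in \eqref{defenergE}, so no further splitting is needed. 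Everything else is Fubini and the integrability of the weight.
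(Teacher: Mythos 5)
Your proposal is correct and follows essentially the same route as the paper's proof: both rest on the elementary splitting $|v(x)|^2\leq 2|v(x)-v(y)|^2+2|v(y)|^2$ averaged over $y$ in a fixed sub-disc near $x_0$, combined with the comparison $|x-y|\leq C_\rho(|x-x_0|+1)$ for $x$ outside the disc, so that the cross term of $\mathcal{E}(v,D_\rho(x_0))$ plus the local $L^2$-norm controls the weighted integral; the paper merely writes the same estimate starting from the energy side (bounding the Gagliardo cross term from below) rather than from the weighted integral side. A minor bonus of your bookkeeping is that it yields the constant for $D_\rho(x_0)$ exactly as stated, whereas the paper's write-up works with $D_{2\rho}(x_0)$ and implicitly relabels the radius.
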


\begin{remark}[\bf Exterior Dirichlet condition] We observe that for any $v\in \widehat{H}^{1/2}(\omega;\mathbb{R}^m)$, 
\begin{equation}\label{v+H1/200}
v+ H^{1/2}_{00}(\omega;\R^m)\subset  \widehat{H}^{1/2}(\omega;\mathbb{R}^m)\,.  
\end{equation}
Moreover,  if 
$v=g$ a.e. in $\R^n\setminus\omega$ for some $g\in \widehat{H}^{1/2}(\omega;\mathbb{R}^m)$, then $v-g\in H^{1/2}_{00}(\omega;\R^m)$. 
As a consequence, the affine subspace
$$H^{1/2}_g(\omega;\mathbb{R}^m):= \left\{v\in \widehat{H}^{1/2}(\omega;\mathbb{R}^m) : v=g \text{ a.e. in $\R^n\setminus\omega$}\right\}$$
is characterized by 
\begin{equation}\label{caracH1/2g}
H^{1/2}_g(\omega;\mathbb{R}^m)=g+H^{1/2}_{00}(\omega;\R^m)\,.
\end{equation}
Finally, we shall keep in mind  that 
\begin{equation}\label{H1/2gH1/2loc}
H^{1/2}_g(\omega;\mathbb{R}^m)\subset  H^{1/2}_{\rm loc}(\mathbb{R}^n;\mathbb{R}^m)
\end{equation}
whenever $g\in \widehat{H}^{1/2}(\omega;\mathbb{R}^m)\cap H^{1/2}_{\rm loc}(\mathbb{R}^n;\R^m)$. 
\end{remark}


 \subsection{The fractional Laplacian}

Let $\omega\subset \mathbb{R}^n$ be a bounded open set. We define the fractional $1/2$-Laplacian $(-\Delta)^{\frac{1}{2}}: \widehat{H}^{1/2}(\omega;\mathbb{R}^m) \to \big(\widehat{H}^{1/2}(\omega;\mathbb{R}^m)\big)^\prime$ as the continuous linear operator induced by the quadratic form $\mathcal E(\cdot,\omega)$. More precisely,  
for a given map $v\in \widehat{H}^{1/2}(\omega;\mathbb{R}^m)$, we define the distribution $ (-\Delta)^{\frac{1}{2}} v$ through its action on elements of $ \widehat{H}^{1/2}(\omega;\mathbb{R}^m)$ by setting 
\begin{multline}\label{deffraclap}
\big\langle  (-\Delta)^{\frac{1}{2}} v, \varphi\big\rangle_\omega:=\frac{\gamma_n}{2}\iint_{\omega\times\omega}  \frac{(v(x)-v(y))\cdot(\varphi(x)-\varphi(y))}{|x-y|^{n+1}}\,\de x\de y\\ 
+ \gamma_n \iint_{\omega\times \omega^c}  \frac{(v(x)-v(y))\cdot(\varphi(x)-\varphi(y))}{|x-y|^{n+1}}\,\de x\de y \,. 
\end{multline}
Note that for every $v\in \widehat{H}^{1/2}(\omega;\mathbb{R}^m)$, the restriction of $ (-\Delta)^{\frac{1}{2}} v$ to $H^{1/2}_{00}(\omega;\mathbb{R}^m)$ belongs to $H^{-1/2}(\omega;\mathbb{R}^m)$ with the estimate 
\begin{equation}\label{estinormH-1/2fraclap}
\| (-\Delta)^{\frac{1}{2}} v\|^2_{H^{-1/2}(\omega)}\leq 2\mathcal E(v,\omega)\,,
\end{equation}
which obviously follows from \eqref{defErond} and Cauchy-Schwarz Inequality. In addition, formula \eqref{deffraclap} being precisely twice the {\it symmetric 
bilinear form} associated to $\mathcal E(\cdot,\omega)$, we observe that the restriction of $ (-\Delta)^{\frac{1}{2}} v $ to $H^{1/2}_{00}(\omega;\mathbb{R}^m)$ corresponds to the first variation of the energy $\mathcal{E}(v,\omega)$ with respect to pertubations supported in $\omega$, {\it i.e.},  
\begin{equation}\label{firstvarcalE}
\big\langle  (-\Delta)^{\frac{1}{2}} v, \varphi\big\rangle_\omega=\left[\frac{\de}{\de t} \mathcal E (v+t \varphi , \omega)\right]_{t=0} 
\end{equation}
for all $\varphi \in H^{1/2}_{00}(\omega;\mathbb{R}^m)$.

\begin{remark}
Consider  an open set $\omega'\subset \omega$.  Since $\widehat{H}^{1/2}(\omega;\mathbb{R}^m)\subset \widehat{H}^{1/2}(\omega';\mathbb{R}^m)$ 
and $H^{1/2}_{00}(\omega';\mathbb{R}^m)\subset H^{1/2}_{00}(\omega;\mathbb{R}^m)$, direct computations from \eqref{deffraclap} yield  
$$\big\langle  (-\Delta)^{\frac{1}{2}} v, \varphi\big\rangle_\omega= \big\langle  (-\Delta)^{\frac{1}{2}} v, \varphi\big\rangle_{\omega'}$$
for all $\varphi\in H^{1/2}_{00}(\omega';\mathbb{R}^m)$.
\end{remark}
\vskip5pt

\begin{remark} If $v$ is a smooth bounded function, the distribution $ (-\Delta)^{\frac{1}{2}} v$ can be rewritten from \eqref{deffraclap} as a pointwise defined function through the formula
$$ (-\Delta)^{\frac{1}{2}} v(x) = {\rm p.v.}\left(\gamma_n\int_{\R^n}\frac{v(x)-v(y)}{|x-y|^{n+1}}\,\de y\right):=\lim_{\delta\downarrow 0}
\gamma_n\int_{\R^n\setminus D_\delta(x)}\frac{v(x)-v(y)}{|x-y|^{n+1}}\,\de y\,.$$
In turn, this last formula can be written in  a non-singular form, that is 
$$(-\Delta)^{\frac{1}{2}} v(x) = \frac{\gamma_n}{2}\int_{\R^n}\frac{v(x)-v(x+h)-\nabla v(x)\cdot h}{|h|^{n+1}}\,\de h\,.$$
or
$$(-\Delta)^{\frac{1}{2}} v(x) = \frac{\gamma_n}{2}\int_{\R^n}\frac{-v(x+h)+2v(x)-v(x-h)}{|h|^{n+1}}\,\de h\,.$$
Finally, as a motivation to the following subsection, we recall that  
$ (-\Delta)^{\frac{1}{2}} $ is the infinitesimal generator of the Poisson Kernel, {\it i.e.}, 
$$e^{-t (-\Delta)^{\frac{1}{2}} }(x,y)=\frac{\gamma_nt}{(|x-y|^2+t^2)^{\frac{n+1}{2}}}\;,\quad t>0 \,,$$
see {\it e.g.} \cite{LL}.
\end{remark}


 \subsection{Harmonic extension and the Dirichlet-to-Neumann operator}

Throughout the paper, for a measurable function $v$ defined over $\mathbb{R}^n$, we shall denote by 
$v^\e$ its extension to the half-space $\mathbb{R}^{n+1}_+$ given by the convolution of $v$ with the 
Poisson kernel, {\it i.e.}, 
\begin{equation}\label{poisson}
v^\e(x):= \gamma_n\int_{\mathbb{R}^n}\frac{x_{n+1} v(z)}{(|x'-z|^2+x_{n+1}^2)^{\frac{n+1}{2}}}\,\de z \,, 
\end{equation}
where we write $x=(x',x_{n+1})\in\R^{n+1}_+=\R^n\times(0,\infty)$.  
Notice that $v^\e$ is well defined if $v$ belongs to the Lebesgue space $L^p$ over $\R^n$ with respect to the finite measure 
\begin{equation}\label{defmeasm}
\mathfrak{m}:=(1+|z|)^{-(n+1)}\,\de z
\end{equation}
for some $1\leq p \leq\infty$. In particular,  $v^\e$ can be defined  whenever 
$v\in \widehat{H}^{1/2}(\omega;\mathbb{R}^m)$ for some open bounded set $\omega\subset\R^n$ by Lemma~\ref{adminHchap}. 
Moreover, if  $v\in L^\infty(\R^n)$, then $v^\e\in L^\infty(\R_+^{n+1})$ and  
\begin{equation}\label{bdlinftyext}
\|v^\e\|_{L^\infty(\R_+^{n+1})} \leq \|v\|_{L^\infty(\R^n)}\,.
\end{equation}
For an admissible function $v$, the extension  $v^\e$ is harmonic in $\R^{n+1}_+$. In addition, it has a pointwise trace on $\partial\R^{n+1}_+=\R^n$ which is equal to $v$ at every Lebesgue point. In other words,  
$v^\e$ solves
\begin{equation}\label{eqextharm}
\begin{cases} 
\Delta v^\e = 0 & \text{in $\mathbb{R}_+^{n+1}$}\,,\\
v^\e = v  & \text{on $\mathbb{R}^{n}$}\,. 
\end{cases}
\end{equation}
\vskip3pt

The operator $v\mapsto v^\e$ enjoys some useful continuity properties. Among them, we shall use  the following lemma which is proven in Appendix A. 
\begin{lemma}\label{context}
For every $R>0$, the linear operator $\mathfrak{P}_R:L^2(\R^n,\mathfrak{m})\to L^2(B_R^+)$ defined by 
\begin{equation}\label{Pfrak}
\mathfrak{P}_R(v):={v^\e}_{|B_R^+} \,,
\end{equation}
is continuous. 
\end{lemma}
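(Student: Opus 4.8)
\textbf{Proof plan for Lemma~\ref{context}.}
The claim is that for each fixed $R>0$ there is a constant $C_R>0$ with $\|v^\e\|_{L^2(B_R^+)}\leq C_R\|v\|_{L^2(\R^n,\mathfrak m)}$ for every $v\in L^2(\R^n,\mathfrak m)$; once this is known, linearity is clear from the linearity of the Poisson convolution \eqref{poisson}, and continuity follows. The plan is therefore to estimate the $L^2(B_R^+)$-norm of $v^\e$ directly by Minkowski's integral inequality (or Jensen applied to the probability measure obtained after normalising the Poisson kernel), reducing everything to a pointwise bound on the Poisson kernel against the weight $(1+|z|)^{-(n+1)}$.

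First I would write, for $x=(x',x_{n+1})\in B_R^+$,
\begin{equation*}
v^\e(x)=\int_{\R^n} K(x,z)\,v(z)\,\de z,\qquad K(x,z):=\frac{\gamma_n\,x_{n+1}}{(|x'-z|^2+x_{n+1}^2)^{\frac{n+1}{2}}}\,,
\end{equation*}
and apply Minkowski's integral inequality in the form
\begin{equation*}
\|v^\e\|_{L^2(B_R^+)}\leq \int_{\R^n}|v(z)|\,\Big(\int_{B_R^+}K(x,z)^2\,\de x\Big)^{1/2}\de z\,.
\end{equation*}
The key step is then the pointwise bound
\begin{equation*}
\Big(\int_{B_R^+}K(x,z)^2\,\de x\Big)^{1/2}\leq C_R\,(1+|z|)^{-(n+1)}\qquad\text{for all }z\in\R^n\,,
\end{equation*}
after which the right-hand side above is exactly $C_R\|v\|_{L^1(\R^n,\mathfrak m)}\leq C_R\|v\|_{L^2(\R^n,\mathfrak m)}$ (the measure $\mathfrak m$ being finite, by Cauchy--Schwarz), which is the assertion. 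To prove this bound I would split into two regimes. For $z$ in a fixed large ball, say $|z|\leq 2R+1$, one simply uses that $K(x,z)^2$ is locally integrable in $x$ on $B_R^+$ uniformly (the singularity at $x=(z,0)$ is integrable since $K^2\sim x_{n+1}^2|x-(z,0)|^{-2(n+1)}$ and $\int_{B_R^+} x_{n+1}^2|x-(z,0)|^{-2(n+1)}\de x<\infty$ in $\R^{n+1}$), and $(1+|z|)^{-(n+1)}$ is bounded below on that ball. For $|z|\geq 2R+1$, for every $x\in B_R^+$ one has $|x'-z|\geq |z|-R\geq |z|/2$ and $x_{n+1}\leq R$, hence $K(x,z)\leq \gamma_n R\,(|z|/2)^{-(n+1)}\leq C_R(1+|z|)^{-(n+1)}$ pointwise, and integrating the square over the bounded set $B_R^+$ (volume $\leq C_R$) gives the desired estimate.

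The only mildly delicate point is the local integrability in the first regime, i.e.\ checking that $\sup_{|z|\leq 2R+1}\int_{B_R^+}K(x,z)^2\,\de x<\infty$; this is a routine computation using polar coordinates around the point $(z,0)\in\partial\R^{n+1}_+$, noting $\int_{B_\rho^+}x_{n+1}^2|x|^{-2(n+1)}\de x\lesssim \int_0^\rho r^{2}\,r^{-2(n+1)}\,r^{n}\,\de r\cdot\int\!\sin^2 \le C_R<\infty$ since the exponent $2-2(n+1)+n=-n<0$ is harmless after integrating the $\sin^2$ factor of the polar angle (which contributes an extra factor $r$-independent but vanishing like the angle near the boundary, making the radial exponent effectively $-n+1$... in any case the integral converges). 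Assembling the two regimes yields a single constant $C_R$ depending only on $R$ and $n$, and the lemma follows. I do not expect any genuine obstacle here beyond bookkeeping of constants.
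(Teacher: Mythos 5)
Your overall strategy (reduce to a kernel estimate against the weight $(1+|z|)^{-(n+1)}$, split into $|z|$ small and $|z|$ large) is reasonable, but the key step fails: the pointwise bound $\big(\int_{B_R^+}K(x,z)^2\,\de x\big)^{1/2}\leq C_R(1+|z|)^{-(n+1)}$ is \emph{false} whenever $|z|\leq R$, because the Poisson kernel is not square integrable near its boundary singularity. Near $(z,0)$ one has $K(x,z)^2\simeq \gamma_n^2\, x_{n+1}^2\,|x-(z,0)|^{-2(n+1)}$, and in polar coordinates around $(z,0)$ the angular factor $\sin^2\phi$ integrates to a fixed constant over the half-sphere — it does \emph{not} produce an extra power of $r$ — so the radial exponent is exactly $2-2(n+1)+n=-n$ and $\int_0^\rho r^{-n}\,\de r=+\infty$ for every $n\geq 1$ (for instance, on the cone $\{|x'-z|<x_{n+1}<\rho\}$ one has $K\geq c\,x_{n+1}^{-n}$ and the slices have measure $\simeq x_{n+1}^n$, giving $\int\geq c\int_0^\rho t^{-n}\,\de t=\infty$). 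Consequently $\|K(\cdot,z)\|_{L^2(B_R^+)}=\infty$ for $|z|\leq R$, the Minkowski step produces a vacuous bound, and the argument collapses precisely at the point you flagged as ``mildly delicate'' and then waved through.

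The repair is to use the \emph{first} power of the kernel rather than its square, exploiting that $\int_{\R^n}K(x,z)\,\de z=1$ for each $x\in\R^{n+1}_+$: by Jensen (or Cauchy--Schwarz with respect to the probability measure $K(x,z)\,\de z$), $|v^\e(x)|^2\leq \int_{\R^n}K(x,z)\,|v(z)|^2\,\de z$, and then by Fubini $\int_{B_R^+}|v^\e|^2\,\de x\leq \int_{\R^n}|v(z)|^2\big(\int_{B_R^+}K(x,z)\,\de x\big)\,\de z$. Now the inner integral \emph{is} controllable: for $|z|\leq 2R+1$ one has $\int_{B_R^+}K(x,z)\,\de x\leq \int_0^R\big(\int_{\R^n}K\,\de x'\big)\,\de x_{n+1}=R\leq C_R(1+|z|)^{-(n+1)}$, while for $|z|\geq 2R+1$ your far-field pointwise bound on $K$ gives $\int_{B_R^+}K(x,z)\,\de x\leq C_R(1+|z|)^{-(n+1)}$. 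This yields $\|v^\e\|^2_{L^2(B_R^+)}\leq C_R\|v\|^2_{L^2(\R^n,\mathfrak m)}$, which is the lemma. This is essentially the paper's proof: in Appendix A (first part of the proof of Lemma \ref{hatH1/2toH1}) the estimate \eqref{conclestil2loc} is obtained exactly by Jensen's inequality, a near/far splitting of the $z$-integral, and a Fubini argument using the unit mass of the Poisson kernel for the near part; the subsequent remark observes that the first inequality in \eqref{conclestil2loc} is precisely the continuity of $\mathfrak{P}_R$.
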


If $v\in \dot H^{1/2}(\mathbb{R}^n;\mathbb{R}^m)$, it is  well known that  the harmonic extension $v^\e$  belongs to $\dot H^1(\mathbb{R}_+^{n+1};\mathbb{R}^m)$, 
and  the $H^{1/2}$-seminorm of $v$ can be computed from the Dirichlet energy of~$v^\e$ (here $\dot H^{1/2}$ and $\dot H^1$ denote the homogeneous Sobolev spaces). 
Moreover, $v^\e$ turns out to be the extension of $v$ of minimal energy. 
This result, summarized in Lemma \ref{normexth1/2} below,  is a classical application of Fourier Transform (see {\it e.g.} \cite{Hitch}). 

\begin{lemma}\label{normexth1/2}
Let $v\in \dot H^{1/2}(\mathbb{R}^n;\mathbb{R}^m)$, and let $v^\e$ be its harmonic extension to~$\mathbb{R}^{n+1}_+$ given by \eqref{poisson}. Then 
$v^\e$ belongs to $\dot H^1(\mathbb{R}_+^{n+1};\mathbb{R}^m)$ and
$$[v]^2_{H^{1/2}(\mathbb{R}^n)}  =\frac{1}{2}\int_{\mathbb{R}^{n+1}_+}|\nabla v^\e|^2\,\de x=\inf\left\{ \frac{1}{2}\int_{\mathbb{R}^{n+1}_+}|\nabla u|^2\,\de x  : u\in \dot H^1(\mathbb{R}^{n+1}_+;\mathbb{R}^m)\,, \ u=v \text{ on $\mathbb{R}^n$} \right\}\,.$$
\end{lemma}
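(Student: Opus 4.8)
The plan is to reduce everything to Fourier side, which is standard for this kind of identity. First I would recall the Poisson kernel formula and compute its Fourier transform in the $x'$ variable: one has $\widehat{P(\cdot,x_{n+1})}(\xi)=e^{-2\pi|\xi|x_{n+1}}$, so that $\widehat{v^\e}(\xi,x_{n+1})=\hat v(\xi)\,e^{-2\pi|\xi|x_{n+1}}$. This makes it transparent that $v^\e$ is harmonic in $\R^{n+1}_+$ and solves \eqref{eqextharm}, but for the lemma the point is the computation of the Dirichlet integral.

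Second, I would compute $\int_{\R^{n+1}_+}|\nabla v^\e|^2\,\de x$ by Plancherel, slicing along $x_{n+1}$. For each fixed $x_{n+1}>0$, $\int_{\R^n}|\nabla_{x'} v^\e|^2\,\de x' = \int_{\R^n}4\pi^2|\xi|^2|\hat v(\xi)|^2 e^{-4\pi|\xi|x_{n+1}}\,\de\xi$, and similarly $\int_{\R^n}|\partial_{x_{n+1}}v^\e|^2\,\de x' = \int_{\R^n}4\pi^2|\xi|^2|\hat v(\xi)|^2 e^{-4\pi|\xi|x_{n+1}}\,\de\xi$. Adding these and integrating $\int_0^\infty e^{-4\pi|\xi|x_{n+1}}\,\de x_{n+1}=\frac{1}{4\pi|\xi|}$ gives $\int_{\R^{n+1}_+}|\nabla v^\e|^2\,\de x = \int_{\R^n}2\pi|\xi|\,|\hat v(\xi)|^2\,\de\xi$. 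On the other hand the $H^{1/2}$-seminorm satisfies $[v]^2_{H^{1/2}(\R^n)} = \pi\int_{\R^n}|\xi|\,|\hat v(\xi)|^2\,\de\xi$ (this is exactly the Fourier representation of the Gagliardo seminorm with the constant $\gamma_n$ chosen so that the symbol of $(-\Delta)^{1/2}$ is $2\pi|\xi|$; it can be quoted from the references or verified by computing the Fourier transform of $|x|^{-(n+1)}$ in the principal value sense). Comparing the two yields $[v]^2_{H^{1/2}(\R^n)}=\tfrac12\int_{\R^{n+1}_+}|\nabla v^\e|^2$, and in particular $v^\e\in\dot H^1(\R^{n+1}_+;\R^m)$, which is the first assertion.

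Third, for the minimality statement I would argue that $v^\e$ is the unique minimizer of the Dirichlet energy among $\dot H^1(\R^{n+1}_+;\R^m)$ functions with trace $v$: given any competitor $u$ with $u=v$ on $\R^n$, write $u=v^\e+w$ with $w\in\dot H^1$ and $w=0$ on $\R^n$; then $\int\nabla v^\e\cdot\nabla w=0$ by integration by parts, using $\Delta v^\e=0$ in $\R^{n+1}_+$ and the vanishing boundary trace of $w$ (one density-approximates $w$ by functions in $\mathscr{D}(\R^{n+1}_+;\R^m)$ to justify this, and controls the boundary term at infinity by the decay of $v^\e$). Hence $\int|\nabla u|^2=\int|\nabla v^\e|^2+\int|\nabla w|^2\geq\int|\nabla v^\e|^2$, with equality iff $w\equiv 0$. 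Combined with the previous step this gives the displayed chain of equalities.

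The main obstacle, and the only genuinely delicate point, is the orthogonality/integration-by-parts step on the unbounded domain $\R^{n+1}_+$: one must justify that the boundary term vanishes despite working with the homogeneous space $\dot H^1$ rather than $H^1$. The clean way is to first prove it for $w\in\mathscr{D}(\R^{n+1}_+;\R^m)$ (where there is no issue), then pass to the limit using that such $w$ are dense in the subspace of $\dot H^1(\R^{n+1}_+;\R^m)$ with zero trace, together with the continuity of $w\mapsto\int\nabla v^\e\cdot\nabla w$ on $\dot H^1$. Since this is a classical fact (and the paper explicitly defers the proof of the neighbouring Lemma~\ref{context} to Appendix~A, with a pointer to \cite{Hitch} for the present statement), I would simply invoke the standard reference rather than reproduce the density argument in detail.
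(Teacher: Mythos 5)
Your argument is correct and is exactly the route the paper has in mind: the paper does not prove Lemma~\ref{normexth1/2} but simply labels it ``a classical application of Fourier Transform'' and cites \cite{Hitch}, and your Plancherel computation of $\int_{\R^{n+1}_+}|\nabla v^\e|^2$ via $\widehat{v^\e}(\xi,x_{n+1})=\hat v(\xi)e^{-2\pi|\xi|x_{n+1}}$, together with the identity $[v]^2_{H^{1/2}(\R^n)}=\pi\int_{\R^n}|\xi|\,|\hat v(\xi)|^2\,\de\xi$ (consistent with the paper's normalization of $\gamma_n$ making the symbol $2\pi|\xi|$) and the standard orthogonal-decomposition argument for minimality, is precisely that classical proof. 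The delicate density/integration-by-parts point in $\dot H^1$ that you flag is real, but citing the standard reference for it is entirely in keeping with how the paper treats this lemma.
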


For a bounded open set $\omega\subset\R^n$,  
we have the following analogous result whose proof is postponed to Appendix~A. In the following statement and hereafter, we denote by $H^1_{\rm loc}(\R^{n+1}_+\cup\omega;\R^m)$ the family of maps which belongs to $H^1(\Omega;\R^m)$ for every bounded open set $\Omega\subset \R^{n+1}_+$ such that $\overline\Omega\subset \R^{n+1}_+\cup\omega$. 

\begin{lemma}\label{hatH1/2toH1}
Let $\omega\subset \mathbb{R}^n$ be a bounded open set. For every $v\in \widehat{H}^{1/2}(\omega;\mathbb{R}^m)$, the  harmonic extension $v^\e$ 
given by \eqref{poisson} belongs to $H^1_{\rm loc}(\R^{n+1}_+\cup\omega;\R^m)\cap L^2_{\rm loc}(\overline{\R}^{n+1}_+)$. In addition, for every $x_0\in\omega$, $R>0$ and  $\rho>0$ 
such that $D_{3\rho}(x_0)\subset\omega$, there exist constant $C_{R,\rho}>0$  and $C_R>0$, independent of $v$ and $x_0$, such that  
$$\big\|v^\e\big\|^2_{L^2(B_R^+(x_0))}\leq C_{R,\rho} \left(\mathcal{E}\big(v,D_{2\rho}(x_0)\big)+\|v\|^2_{L^2(D_{2\rho}(x_0))}\right)\,, $$
and 
$$ \big\|\nabla v^\e\big\|^2_{L^2(B_\rho^+(x_0))}\leq C_{\rho} \left(\mathcal{E}\big(v,D_{2\rho}(x_0)\big)+\|v\|^2_{L^2(D_{2\rho}(x_0))}\right)\,. $$
\end{lemma}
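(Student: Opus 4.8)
The strategy is to reduce everything to local estimates near a fixed boundary point $x_0\in\omega$ and then combine a pointwise bound coming from Lemma~\ref{adminHchap}, the continuity of the Poisson extension from Lemma~\ref{context}, and interior elliptic regularity for the harmonic function $v^\e$ in $\R^{n+1}_+$ together with a Caccioppoli-type inequality up to the flat boundary piece $D_{3\rho}(x_0)\subset\omega$. First I would fix $v\in\widehat H^{1/2}(\omega;\R^m)$ and $x_0\in\omega$ with $D_{3\rho}(x_0)\subset\omega$. By Lemma~\ref{adminHchap} applied on $D_\rho(x_0)$ (and its scaled versions on $D_{2\rho}(x_0)$), one controls $\int_{\R^n}|v(x)|^2(1+|x-x_0|)^{-(n+1)}\,\de x$ by $C_\rho(\mathcal E(v,D_{2\rho}(x_0))+\|v\|_{L^2(D_{2\rho}(x_0))}^2)$; after translating $x_0$ to the origin this says precisely that $v\in L^2(\R^n,\mathfrak m)$ with the quantitative bound. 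Then Lemma~\ref{context}, with $R$ chosen (say) equal to $R$ in the statement, gives $v^\e\in L^2(B_R^+(x_0))$ together with $\|v^\e\|^2_{L^2(B_R^+(x_0))}\le C_R\|v\|^2_{L^2(\R^n,\mathfrak m)}$, and composing these two estimates yields the first inequality with $C_{R,\rho}:=C_RC_\rho$. This also shows $v^\e\in L^2_{\mathrm{loc}}(\overline\R^{n+1}_+)$.

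For the gradient estimate and the $H^1_{\mathrm{loc}}(\R^{n+1}_+\cup\omega)$ membership, the plan is a Caccioppoli inequality up to the flat boundary. Since $v^\e$ is harmonic in $\R^{n+1}_+$ and its trace on $D_{3\rho}(x_0)\subset\R^n$ equals $v\in H^{1/2}(D_{3\rho}(x_0))$, one can reflect nothing and instead argue directly: choose a cutoff $\eta\in C^\infty_c(B_{2\rho}(x_0))$ with $\eta\equiv1$ on $B_\rho(x_0)$, $0\le\eta\le1$, $|\nabla\eta|\le C/\rho$, and test the equation $\Delta v^\e=0$ against $\eta^2(v^\e-\bar v)$ where $\bar v$ is the mean of $v$ over $D_{2\rho}(x_0)$. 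The boundary term on $D_{2\rho}(x_0)$ that appears upon integration by parts is $\int_{D_{2\rho}(x_0)}\eta^2(v-\bar v)\,\partial_\nu v^\e$, which one handles by the trace characterization: since $v\in H^{1/2}$, the $H^{1/2}$-seminorm of $\eta^2(v-\bar v)$ on $\R^n$ is controlled by $\mathcal E(v,D_{2\rho}(x_0))+\rho^{-1}\|v\|^2_{L^2(D_{2\rho}(x_0))}$ (Poincaré on the disc for the $L^2$ part), and pairing it with $\partial_\nu v^\e=(-\Delta)^{1/2}v$ via \eqref{estinormH-1/2fraclap} and the dual norm estimate closes the loop. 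The standard absorption argument (Young's inequality to bury $\int\eta^2|\nabla v^\e|^2$) then gives
$$\int_{B_\rho^+(x_0)}|\nabla v^\e|^2\,\de x\le C_\rho\Big(\mathcal E\big(v,D_{2\rho}(x_0)\big)+\|v\|^2_{L^2(D_{2\rho}(x_0))}+\rho^{-2}\|v^\e-\bar v\|^2_{L^2(B_{2\rho}^+(x_0))}\Big),$$
and the last term is absorbed into the right-hand side via the already-established $L^2$ bound for $v^\e$ on $B_{2\rho}^+(x_0)$ (applied with $R=2\rho$), plus $\|\bar v\|_{L^2(B_{2\rho}^+)}\le C\rho^{(n+1)/2}|\bar v|\le C_\rho\|v\|_{L^2(D_{2\rho}(x_0))}$. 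Interior elliptic estimates for harmonic functions then upgrade this to $H^1$ (indeed $C^\infty$) control on any $\Omega\Subset\R^{n+1}_+\cup\omega$, covering such $\Omega$ by finitely many half-balls $B_\rho^+(x_0)$ centered on $\omega$ and interior balls, which gives $v^\e\in H^1_{\mathrm{loc}}(\R^{n+1}_+\cup\omega;\R^m)$.

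\textbf{Main obstacle.} The delicate point is the rigorous justification of the boundary integration by parts and of the trace pairing $\int_{D_{2\rho}}\eta^2(v-\bar v)\,\partial_\nu v^\e$: a priori $v^\e$ is only known to be $L^2_{\mathrm{loc}}$ up to $\R^n$ before the estimate is proven, so the weak formulation must be set up carefully — e.g. first proving the inequality for $v\in H^{1/2}(\R^n;\R^m)$ (where Lemma~\ref{normexth1/2} applies and all traces are classical), then extending to general $v\in\widehat H^{1/2}(\omega;\R^m)$ by truncation: write $v=v\chi+v(1-\chi)$ with $\chi$ supported in $D_{3\rho}(x_0)$, note $v\chi\in H^{1/2}_{00}$ after a correction, and show the contribution of the far part $v(1-\chi)$ to $v^\e$ on $B_\rho^+(x_0)$ is smooth with all derivatives controlled by $\|v\|_{L^2(\R^n,\mathfrak m)}$ (differentiate the Poisson kernel, which is bounded on $B_\rho^+(x_0)\times(\R^n\setminus D_{2\rho}(x_0))$ together with all its derivatives, uniformly). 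This splitting both legitimizes the computation and supplies the only genuinely non-routine estimate; everything else is bookkeeping with the constants and a covering argument, which I would not carry out in detail here. These arguments are collected in Appendix~A.
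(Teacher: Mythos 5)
Your proposal is correct in substance, and its first half coincides with the paper's: the $L^2$ estimate is obtained exactly as in Appendix A, by combining Lemma~\ref{adminHchap} with the Poisson-kernel bound that constitutes Lemma~\ref{context} (translated to $x_0$). For the gradient estimate, however, you take a genuinely different route. The paper first proves the bound for $v\in\widehat H^{1/2}(\omega)\cap C^\infty(\R^n)$ by testing with $\Phi=\chi^2v^\e$, computing the boundary term through the explicit Poisson kernel as $\langle(-\Delta)^{\frac12}v,\chi^2v\rangle_{D_{2\rho}(x_0)}$ and estimating $\mathcal E(\chi^2v,D_{2\rho}(x_0))$ directly (their \eqref{good}), and then passes to general $v$ by mollification (Lemma~\ref{approxlemma}) and weak lower semicontinuity. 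You instead split $v$ in space into a near part $\chi v\in H^{1/2}(\R^n)$, handled globally by Lemma~\ref{normexth1/2}, and a far part whose extension is controlled pointwise on $B_\rho^+(x_0)$ by differentiating the Poisson kernel and pairing with the $L^2(\R^n,\mathfrak m)$ bound from Lemma~\ref{adminHchap}. This avoids the approximation lemma and the limiting $\delta\downarrow0$ computation altogether, at the price of the product estimate $[\chi v]_{H^{1/2}(\R^n)}^2\lesssim\mathcal E(v,\cdot)+C_\rho\|v\|_{L^2}^2$, which is the same type of computation as the paper's \eqref{good}; in fact, once you have the decomposition, the Caccioppoli/duality step in your main text becomes superfluous, since Lemma~\ref{normexth1/2} already gives the full Dirichlet energy of $(\chi v)^\e$.

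Two small cautions. First, do not cite $\partial_\nu v^\e=(-\Delta)^{\frac12}v$ (Lemma~\ref{repnormderfraclap}) as a black box: in the paper that identity is proved \emph{after}, and by means of, the present lemma, so invoking it would be circular; your fallback (prove the pairing for $H^{1/2}(\R^n)$ data, where it is classical, and decompose) is precisely what removes this issue. Second, adjust the cutoff radii: to obtain the stated right-hand side in terms of $\mathcal E(v,D_{2\rho}(x_0))$ and $\|v\|_{L^2(D_{2\rho}(x_0))}$ you should take $\chi$ supported in $D_{2\rho}(x_0)$ with $\chi\equiv1$ on $D_{3\rho/2}(x_0)$ (so the far part is still uniformly separated from $B_\rho^+(x_0)$); with $\chi$ supported in $D_{3\rho}(x_0)$ you would only control the gradient by the $D_{3\rho}$ quantities, which is weaker than the statement. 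These are bookkeeping fixes, not gaps.
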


\begin{remark}\label{H1/2loctoH1loc}
By the previous lemma, for any $v\in \widehat H^{1/2}(\omega;\mathbb{R}^m)\cap H^{1/2}_{\rm loc}(\mathbb{R}^n;\R^m)$, the harmonic extension $v^\e$ 
belongs to $H^1_{\rm loc}(\overline{\R}^{n+1}_+;\R^m)$, and for any $R>0$, 
$$\big\| v^\e\big\|^2_{H^1(B_R^+)}\leq C_{R} \left(\mathcal{E}\big(v,D_{2R}\big)+\|v\|^2_{L^2(D_{2R})}\right)\,. $$
\end{remark}

If $v\in \widehat H^{1/2}(\omega;\mathbb{R}^m)$ for some bounded open set $\omega\subset\R^n$ with Lipschitz boundary, we now observe that $v^\e$ admits a distributional (exterior) normal derivative $\partial_\nu v^\e$ on $\omega$ by its harmonicity 
in $\mathbb{R}_+^{n+1}$. More precisely, we define $\partial_\nu v^\e$  through its action on $\varphi\in \mathscr{D}(\omega;\mathbb{R}^m)$ by
\begin{equation}\label{defNeumOp}
\left\langle \frac{\partial v^\e}{\partial\nu}, \varphi\right\rangle := \int_{\mathbb{R}^{n+1}_+}\nabla v^\e\cdot\nabla\Phi\,\de x\,,
\end{equation}
where $\Phi$ is any smooth extension of $\varphi$ compactly supported in  $\mathbb{R}_+^{n+1}\cup\omega$. Note that the right hand side of \eqref{defNeumOp} 
is well defined by Lemma~\ref{hatH1/2toH1}. Using the harmonicity of $v^\e$ and the divergence theorem, it is routine to check that 
the integral in \eqref{defNeumOp} does not depend on the choice of the extension $\Phi$, and that $\partial_\nu v^\e$  coincides with the classical exterior normal derivative of $v^\e$ on $\omega\subset \partial\R^{n+1}_+$  
whenever $v$ is smooth. It turns out that $\partial_\nu v^\e$ coincides with the distribution $ (-\Delta)^{\frac{1}{2}} v$ defined in the previous subsection. Here again the proof is left to Appendix A.

\begin{lemma}\label{repnormderfraclap}
Let $\omega\subset \mathbb{R}^n$ be a bounded open set with  Lipschitz boundary. For every $v\in \widehat H^{1/2}(\omega;\mathbb{R}^m)$ we have 
$$\big\langle  (-\Delta)^{\frac{1}{2}} v, \varphi\big\rangle_\omega =\left\langle \frac{\partial v^\e}{\partial\nu}, \varphi\right\rangle  \quad \text{for all $\varphi\in H^{1/2}_{00}(\omega;\mathbb{R}^m)$}\,,$$
where $v^\e$ is the harmonic extension of $v$ to $\mathbb{R}^{n+1}_+$ given by \eqref{poisson}.
\end{lemma}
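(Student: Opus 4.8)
\textbf{Proof proposal for Lemma \ref{repnormderfraclap}.}

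The plan is to show that both sides of the claimed identity arise as the first variation of a Dirichlet-type energy, and then to identify the two energies via the extension procedure. Concretely, I would first treat the case where $v$ is smooth and bounded, where everything is classical, and then pass to the general case $v\in\widehat H^{1/2}(\omega;\R^m)$ by a density argument. Throughout, I fix $\varphi\in\mathscr D(\omega;\R^m)$; by the density statement \eqref{densitysmoothH1/200} and the continuity estimates \eqref{estinormH-1/2fraclap} and \eqref{defNeumOp} (via Lemma \ref{hatH1/2toH1}), it suffices to prove the identity for such $\varphi$, since both functionals $\varphi\mapsto\langle(-\Delta)^{1/2}v,\varphi\rangle_\omega$ and $\varphi\mapsto\langle\partial_\nu v^\e,\varphi\rangle$ are continuous on $H^{1/2}_{00}(\omega;\R^m)$.

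The key computation is the following. Let $\Phi$ be a fixed smooth extension of $\varphi$ compactly supported in $\R^{n+1}_+\cup\omega$; note $\Phi_{|\R^n}=\varphi$ vanishes outside $\omega$, so $\Phi_{|\R^n}\in H^{1/2}_{00}(\omega;\R^m)$ and in particular $\Phi_{|\R^n}$ is an admissible perturbation. By Lemma \ref{normexth1/2} applied to $v^\e+t\Phi$ and to $v^\e$ — or rather by the minimality property of the harmonic extension together with \eqref{defErond}, \eqref{firstvarcalE} — I want to write, for $|t|$ small,
$$
\frac12\int_{\R^{n+1}_+}|\nabla(v^\e+t\Phi)|^2\,\de x \ \geq\ \frac12\int_{\R^{n+1}_+}|\nabla (v+t\varphi)^\e|^2\,\de x \ =\ [v+t\varphi]^2_{H^{1/2}(\R^n)}\,,
$$
with equality at $t=0$, and similarly with $v^\e-t\Phi$. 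Differentiating the quadratic function $t\mapsto \frac12\int_{\R^{n+1}_+}|\nabla(v^\e+t\Phi)|^2$ at $t=0$ gives $\int_{\R^{n+1}_+}\nabla v^\e\cdot\nabla\Phi\,\de x = \langle\partial_\nu v^\e,\varphi\rangle$ by the very definition \eqref{defNeumOp}. On the other hand, since $v^\e+t\Phi\geq (v+t\varphi)^\e$ in energy with equality at $t=0$, the derivative of the left side at $t=0$ equals the derivative of $[v+t\varphi]^2_{H^{1/2}(\R^n)}$ at $t=0$, which by \eqref{firstvarcalE} (applied with $\omega$ replaced by a large ball, or directly using \eqref{defErond} since $v\in H^{1/2}_g$-type and $\varphi\in H^{1/2}_{00}(\omega)$) is precisely $\langle(-\Delta)^{1/2}v,\varphi\rangle_\omega$. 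Combining the two computations yields the identity. For the general $v\in\widehat H^{1/2}(\omega;\R^m)$ one needs the homogeneous-space statement of Lemma \ref{normexth1/2} only up to subtracting a reference function; alternatively, fix a disc $D_{3\rho}(x_0)\subset\omega$ containing $\mathrm{supp}\,\varphi$, write $v = (v-\chi v) + \chi v$ with $\chi$ a cutoff, reduce the harmonic-extension/minimality argument to a map in $\dot H^{1/2}(\R^n;\R^m)$, and control error terms using Lemma \ref{context} and Lemma \ref{hatH1/2toH1}.

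I expect the main obstacle to be the non-homogeneous bookkeeping: Lemma \ref{normexth1/2} is stated for $v\in\dot H^{1/2}(\R^n;\R^m)$ with globally finite Dirichlet energy, whereas a general admissible $v$ need not lie in that space, so I cannot directly invoke the minimality property globally. The fix is to localize: since $\varphi$ and $\Phi$ have compact support in $\R^{n+1}_+\cup\omega$, only the behaviour of $v^\e$ near $\mathrm{supp}\,\Phi$ matters, and the harmonicity of $v^\e$ in $\R^{n+1}_+$ makes the integration-by-parts in \eqref{defNeumOp} independent of the chosen extension; the delicate point is then justifying that the ``competitor'' energy inequality survives localization, which I would handle by comparing $v^\e$ to an arbitrary $H^1_{\mathrm{loc}}$ extension of $v$ agreeing with $v^\e$ outside a large ball and using the uniqueness of the harmonic extension plus Lemma \ref{hatH1/2toH1} to ensure all quantities are finite. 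Once these technical estimates are in place, the proof is a short variational identity; accordingly I would relegate the full details to Appendix A as the excerpt indicates.
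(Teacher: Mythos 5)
Your route is genuinely different from the paper's. You want both sides to appear as first variations and to match them through the minimality property of the harmonic extension (a polarization of Lemma \ref{normexth1/2}), whereas the paper proves the identity for smooth $v$ by a direct computation with the regularized Poisson kernel, writing $\langle\partial_\nu v^\e,\varphi\rangle=\lim_{\delta\downarrow 0}\int_{\R^n}\delta^{-1}\big(v(x)-v^\e(x,\delta)\big)\cdot\varphi(x)\,\de x$, symmetrizing the kernel $(|x-y|^2+\delta^2)^{-(n+1)/2}$ and using dominated convergence, and then passes to general $v\in\widehat H^{1/2}(\omega;\R^m)$ by mollification, relying on Lemma \ref{approxlemma} (which gives $\mathcal E(v_k-v,\partial^0\Omega)\to0$ and $v_k^\e\rightharpoonup v^\e$ weakly in $H^1(\Omega)$) together with \eqref{estinormH-1/2fraclap}. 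Your reduction to $\varphi\in\mathscr D(\omega;\R^m)$ is legitimate (the paper does the same), and your diagnosis of the main obstacle is accurate: for $v\in\widehat H^{1/2}(\omega;\R^m)$ --- even smooth and bounded --- both $[v]^2_{H^{1/2}(\R^n)}$ and $\int_{\R^{n+1}_+}|\nabla v^\e|^2$ are in general infinite, so the global squeeze argument is meaningless as written. For the near piece $\chi v$, which does lie in $H^{1/2}(\R^n;\R^m)$, your variational argument is sound and yields $\int_{\R^{n+1}_+}\nabla(\chi v)^\e\cdot\nabla\Phi\,\de x=\langle(-\Delta)^{\frac12}(\chi v),\varphi\rangle_\omega$.

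The genuine gap is the far piece $u_2:=(1-\chi)v$. It is not an ``error term'' that Lemma \ref{context} and Lemma \ref{hatH1/2toH1} can ``control'': those lemmas only give bounds on $u_2^\e$, while $u_2$ contributes equal and in general nonzero amounts to the two sides --- $\int_{\R^{n+1}_+}\nabla u_2^\e\cdot\nabla\Phi\,\de x$ on one side and, since the supports of $u_2$ and $\varphi$ are disjoint, $-\gamma_n\iint \frac{u_2(y)\cdot\varphi(x)}{|x-y|^{n+1}}\,\de x\de y$ on the other --- and their equality must itself be proved. This can be done (because $u_2$ vanishes near $\mathrm{supp}\,\varphi$, the Poisson integral \eqref{poisson} of $u_2$ is smooth up to that boundary portion, so one integrates by parts and differentiates under the integral to identify $\partial_\nu u_2^\e$), but this is precisely a kernel computation of the same nature as the one the paper performs, and it does not follow from the lemmas you cite. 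Note also that the cutoff cannot be enlarged to make this piece small: outside $\omega$ the seminorm of $v$ over $\omega^c\times\omega^c$ is not controlled by $\mathcal E(v,\omega)$, so $\chi_R v\notin H^{1/2}(\R^n)$ once $\mathrm{supp}\,\chi_R$ leaves $\omega$. Your alternative fix (comparing $v^\e$ with an arbitrary $H^1_{\rm loc}$ extension agreeing with it outside a large ball) is too vague to assess and is essentially the localized minimality inequality of Corollary \ref{minenergdirchfrac}, which the paper deduces \emph{from} Lemma \ref{repnormderfraclap}; invoking it here would be circular. Finally, the opening plan ``smooth case first, then density'' needs a quantitative approximation statement in the energy $\mathcal E(\cdot,\partial^0\Omega)$ (this is what Lemma \ref{approxlemma} supplies); $L^2_{\rm loc}$ convergence plus the bounds of Lemma \ref{hatH1/2toH1} are not enough to pass to the limit in $\langle(-\Delta)^{\frac12}v_k,\varphi\rangle_\omega$.
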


Up to now, we have not said anything about the local counterpart of Lemma \ref{normexth1/2} concerning the minimality of $v^\e$ for a function $v$ in $\widehat H^{1/2}(\omega;\mathbb{R}^m)$. This is the purpose of the following lemma inspired from \cite[Lemma 7.2]{CRS}. 

\begin{corollary}\label{minenergdirchfrac}
Let $\omega\subset \mathbb{R}^n$ be a bounded open set, and
let $\Omega\subset \R^{n+1}_+$ be an admissible bounded  open set  such that 
$\overline{\partial^0 \Omega}\subset \omega$. 
Let $v\in \widehat H^{1/2}(\omega;\mathbb{R}^m)$, and let $v^\e$ be its harmonic extension to $\mathbb{R}^{n+1}_+$ given by \eqref{poisson}. Then, 
\begin{equation}\label{ineqenergDfrac}
\frac{1}{2}\int_\Omega|\nabla u|^2\,\de x - \frac{1}{2}\int_\Omega|\nabla v^\e|^2\,\de x
\geq  \mathcal{E}(u,\omega) -\mathcal{E}(v,\omega)
\end{equation}
for all $u\in H^1(\Omega;\R^m)$ such that $u-v^\e$ is compactly supported in $\Omega\cup\partial^0\Om$. In the right hand side of \eqref{ineqenergDfrac}, 
the trace of $u$ on $\partial^0\Om$ is extended by $v$ outside $\partial^0\Om$. 
\end{corollary}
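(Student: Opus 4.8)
The strategy is to reduce the inequality \eqref{ineqenergDfrac} to the \emph{global} minimality statement of Lemma~\ref{normexth1/2} by a localization/extension argument. First I would reduce to the case where $\Omega$ is replaced by a large half-ball $B_R^+$. Indeed, if $u\in H^1(\Omega;\R^m)$ with $u-v^\e$ compactly supported in $\Omega\cup\partial^0\Omega$, then extending $u$ by $v^\e$ on $\R^{n+1}_+\setminus\Omega$ produces (using Lemma~\ref{hatH1/2toH1}, which gives $v^\e\in H^1_{\rm loc}(\R^{n+1}_+\cup\omega;\R^m)$) a function $\widetilde u\in H^1_{\rm loc}(\R^{n+1}_+\cup\omega;\R^m)$ agreeing with $v^\e$ outside a compact subset $K\subset\Omega\cup\partial^0\Omega$; pick $R$ so that $K\subset B_R^+$ and $\overline{K\cap\R^n}\subset D_R\subset\omega$. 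Since $\nabla\widetilde u=\nabla v^\e$ on $\Omega\setminus K$ and on $B_R^+\setminus\Omega$, the left side of \eqref{ineqenergDfrac} is unchanged if $\Omega$ is replaced by $B_R^+$; likewise the right side is unchanged because the trace of $\widetilde u$ on $\R^n$ equals $v$ outside $D_R$ (hence outside $\omega$ it is $v$, and the energy $\mathcal E(\cdot,\omega)$ only sees $D_R$ versus the complement). So it suffices to prove \eqref{ineqenergDfrac} with $\Omega=B_R^+$ and $u-v^\e$ compactly supported in $B_R^+\cup D_R$.

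The core computation is then an ``integration by parts against $v^\e$'' identity. Write $w:=u-v^\e\in H^1(B_R^+;\R^m)$, compactly supported in $B_R^+\cup D_R$. Expanding,
\begin{equation*}
\frac12\int_{B_R^+}|\nabla u|^2 - \frac12\int_{B_R^+}|\nabla v^\e|^2 = \frac12\int_{B_R^+}|\nabla w|^2 + \int_{B_R^+}\nabla v^\e\cdot\nabla w\,.
\end{equation*}
By the definition \eqref{defNeumOp} of the distributional normal derivative together with Lemma~\ref{repnormderfraclap}, and since $w$ restricted to $\R^n$ lies in $H^{1/2}_{00}(D_R;\R^m)\subset H^{1/2}_{00}(\omega;\R^m)$ (it is an $H^{1/2}$ function vanishing outside $D_R$, using \eqref{densitysmoothH1/200} and an approximation), the cross term equals $\langle(-\Delta)^{\frac12}v,\,w|_{\R^n}\rangle_\omega$. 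On the other hand, on the boundary side, denoting $\phi:=w|_{\R^n}$, one has $u|_{\R^n}=v+\phi$ (with $\phi$ extended by $0$, matching the convention that $u$ is extended by $v$ outside $\partial^0\Omega$), so
\begin{equation*}
\mathcal E(v+\phi,\omega)-\mathcal E(v,\omega) = \big\langle(-\Delta)^{\frac12}v,\phi\big\rangle_\omega + \mathcal E(\phi,\omega)\,,
\end{equation*}
because $\mathcal E(\cdot,\omega)$ is a quadratic form whose associated bilinear form, evaluated at $(v,\phi)$, is $\frac12\langle(-\Delta)^{\frac12}v,\phi\rangle_\omega$ — here I use that $\phi\in H^{1/2}_{00}(\omega;\R^m)$ so $\mathcal E(\phi,\omega)=[\phi]^2_{H^{1/2}(\R^n)}$ by \eqref{defErond}, and \eqref{firstvarcalE}. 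Subtracting, inequality \eqref{ineqenergDfrac} becomes equivalent to
\begin{equation*}
\frac12\int_{B_R^+}|\nabla w|^2\,\de x \;\geq\; \mathcal E(\phi,\omega) \;=\; [\phi]^2_{H^{1/2}(\R^n)}\,,
\end{equation*}
and this is exactly the content of Lemma~\ref{normexth1/2} applied to $\phi\in\dot H^{1/2}(\R^n;\R^m)$: its harmonic extension $\phi^\e$ minimizes the Dirichlet energy among $\dot H^1$-extensions, and $w$ (extended by $0$ to all of $\R^{n+1}_+$, which is legitimate since $w$ is supported in $B_R^+\cup D_R$ and has zero trace on $\R^n\setminus D_R$) is a competitor with boundary value $\phi$, giving $\frac12\int_{\R^{n+1}_+}|\nabla w|^2\geq \frac12\int_{\R^{n+1}_+}|\nabla\phi^\e|^2 = [\phi]^2_{H^{1/2}(\R^n)}$.

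The step I expect to require the most care is the bookkeeping on the boundary traces: verifying that $w|_{\R^n}$ genuinely belongs to $H^{1/2}_{00}(D_R;\R^m)$ (so that the pairing $\langle(-\Delta)^{\frac12}v,\cdot\rangle_\omega$ and the identity \eqref{firstvarcalE} apply, and so that Lemma~\ref{repnormderfraclap} is usable), and that the two ``extension by $v$ / extension by $0$'' conventions — one for the bulk function $u$ across $\partial\Omega$, one for the trace on $\R^n\setminus\partial^0\Omega$ — are mutually consistent. This is where the hypothesis that $\Omega$ is admissible with $\overline{\partial^0\Omega}\subset\omega$ is used: it guarantees that the compact support of $u-v^\e$ inside $\Omega\cup\partial^0\Omega$ stays away from $\partial\omega$, so that enlarging to $B_R^+$ with $D_R\Subset\omega$ causes no trace mismatch and the zero-extension of $w$ to $\R^{n+1}_+$ is in $\dot H^1$. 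The regularity inputs $v^\e\in H^1_{\rm loc}(\R^{n+1}_+\cup\omega;\R^m)\cap L^2_{\rm loc}(\overline\R^{n+1}_+)$ from Lemma~\ref{hatH1/2toH1} are needed precisely to make all these integrals and traces finite.
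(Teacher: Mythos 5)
Your proof is correct and follows essentially the same route as the paper: decompose $u=v^\e+w$, identify the cross term $\int\nabla v^\e\cdot\nabla w\,\de x$ with the pairing $\big\langle(-\Delta)^{\frac{1}{2}}v,\,w_{|\R^n}\big\rangle$ via \eqref{defNeumOp} and Lemma~\ref{repnormderfraclap}, bound $\tfrac12\int|\nabla w|^2\,\de x$ from below by $[w_{|\R^n}]^2_{H^{1/2}(\R^n)}$ using Lemma~\ref{normexth1/2}, and conclude by the quadratic-form algebra for $\mathcal{E}(\cdot,\omega)$. The only caveat is the preliminary half-ball reduction: requiring both $K\subset B_R^+$ and $D_R\subset\omega$ is not always possible (since $\omega$ is bounded while $\Omega$ may be wide at positive heights) and is unnecessary — it suffices to observe that $w_{|\R^n}$ is supported in $\overline{\partial^0\Omega}\subset\omega$, hence lies in $H^{1/2}_{00}(\partial^0\Omega;\R^m)\subset H^{1/2}_{00}(\omega;\R^m)$, which is exactly how the paper proceeds, extending $h:=u-v^\e$ by zero to all of $\R^{n+1}_+$ and pairing over $\partial^0\Omega$.
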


\begin{proof}
Let $u\in H^1(\Omega;\R^m)$ such that $u-v^\e$ is compactly supported in $\Omega\cup\partial^0\Om$. We extend $u$ by 
$v^\e$ outside~$\Omega$. Then $h:=u-v^\e\in H^1(\mathbb{R}_+^{n+1};\mathbb{R}^m)$ and $h$ is compactly supported in $\Omega\cup\partial^0\Om$. Hence  
$h_{|\R^n} \in H^{1/2}_{00}(\partial^0 \Omega;\R^m)$. Since $v\in \widehat H^{1/2}(\partial^0 \Omega;\R^m)$ we deduce from \eqref{v+H1/200}
that  $u$ admits a trace on $\mathbb{R}^n$ which belongs to $ \widehat{H}^{1/2}(\partial^0 \Omega;\mathbb{R}^m)$. 

Using Lemma \ref{normexth1/2} and Lemma \ref{repnormderfraclap}, we now estimate
\begin{align}
\nonumber \frac{1}{2}\int_\Omega |\nabla u|^2\,\de x-\frac{1}{2}\int_\Omega |\nabla v^\e|^2\,\de x & = \frac{1}{2}\int_{\R^{n+1}_+} |\nabla h|^2\,\de x+\int_{\mathbb{R}^{n+1}_{+}} \nabla v^\e \cdot \nabla h\,\de x \\
\nonumber& = \frac{1}{2}\int_{\R^{n+1}_+} |\nabla h|^2\,\de x +  \big\langle  (-\Delta)^{\frac{1}{2}} v,h_{|\mathbb{R}^n}\big\rangle_{\partial^0 \Omega} \\
\nonumber& \geq  \big[h_{|\mathbb{R}^n}\big]^2_{H^{1/2}(\mathbb{R}^n)}+  \big\langle  (-\Delta)^{\frac{1}{2}} v,h_{|\mathbb{R}^n}\big\rangle_{\partial^0 \Omega} \\
& = \mathcal{E}(h_{|\mathbb{R}^n},\partial^0 \Omega)+  \big\langle  (-\Delta)^{\frac{1}{2}} v,h_{|\mathbb{R}^n}\big\rangle_{\partial^0 \Omega} \,. \label{estiequiv1}
\end{align}
Using the fact that $u_{|\mathbb{R}^n},v \in \widehat{H}^{1/2}(\partial^0 \Omega;\mathbb{R}^m)$, we derive  
\begin{equation}\label{estiequiv2}
 \mathcal{E}(h_{|\R^n},\partial^0 \Omega)=  \mathcal{E}(u_{|\mathbb{R}^n},\partial^0 \Omega)+ \mathcal{E}(v,\partial^0 \Omega)- \big\langle  (-\Delta)^{\frac{1}{2}} v,u_{|\mathbb{R}^n}\big\rangle_{\partial^0 \Omega}\,,
 \end{equation}
and 
\begin{equation}\label{estiequiv3}
\big\langle  (-\Delta)^{\frac{1}{2}} v,h_{|\mathbb{R}^n}\big\rangle_{\partial^0 \Omega}= \big\langle  (-\Delta)^{\frac{1}{2}} v,u_{|\mathbb{R}^n}\big\rangle_{\partial^0 \Omega}-  2\mathcal{E}(v,\partial^0 \Omega)\,.
\end{equation}
Gathering \eqref{estiequiv1}-\eqref{estiequiv2}-\eqref{estiequiv3} yields 
$$\frac{1}{2}\int_\Omega|\nabla u|^2\,\de x - \frac{1}{2}\int_\Omega|\nabla v^\e|^2\,\de x
\geq  \mathcal{E}(u_{|\R^n},\partial^0 \Omega) -\mathcal{E}(v,\partial^0 \Omega)\,.$$
Since $u_{|\R^n}=v$ outside $\partial^0 \Omega$, we infer that 
$$\mathcal{E}(u_{|\R^n},\partial^0 \Omega)- \mathcal{E}(v,\partial^0 \Omega)
=\mathcal{E}(u_{|\R^n},\omega)- \mathcal{E}(v,\omega)\,,$$
and the conclusion follows.  
\end{proof}

 \vskip10pt

\section{The fractional Ginzburg-Landau equation}\label{FractGL}                      

We consider for the entire section a bounded open set $\omega\subset \mathbb{R}^n$ with Lipschitz boundary. We are interested  
in weak solutions  $v_\varepsilon\in \widehat H^{1/2}(\omega;\R^m)\cap L^4(\omega)$ of the fractional Ginzburg-Landau  equation
\begin{equation}\label{eqfractGL}
 (-\Delta)^{\frac{1}{2}} v_\varepsilon=\frac{1}{\varepsilon}(1-|v_\varepsilon|^2)v_\varepsilon \quad\text{in $\omega$}\,.
\end{equation}
Here the notion of weak solution is understood in the duality sense according to the formulation \eqref{deffraclap} of the fractional Laplacian, {\it i.e.}, 
$$\big\langle  (-\Delta)^{\frac{1}{2}} v_\varepsilon, \varphi\big\rangle_{\omega} = \frac{1}{\varepsilon}\int_{\omega}(1-|v_\varepsilon|^2)v\cdot\varphi\,\de x\quad 
\text{for all $\varphi\in H^{1/2}_{00}(\omega;\R^m)\cap L^4(\omega)$}$$
(or equivalently for all $\varphi\in \mathscr{D}(\omega;\R^m)$).  
By \eqref{firstvarcalE}, such solutions correspond  to  critical points in $\omega$ of the {\it Ginzburg-Landau $1/2$-energy}  
$$\mathcal{E}_\varepsilon(v,\omega):=\mathcal{E}(v,\omega) + \frac{1}{4\varepsilon}\int_\omega (1-|v|^2)^2\,\de x\,.$$
In other words, we are interested in maps $v_\varepsilon\in \widehat H^{1/2}(\omega;\R^m)\cap L^4(\omega)$ satisfying  
$$\left[\frac{\de}{\de t} \mathcal{E}_\varepsilon(v_\varepsilon+t\varphi,\omega) \right]_{t=0} =0\quad 
\text{for all $\varphi\in H^{1/2}_{00}(\omega;\R^m)\cap L^4(\omega)$}\,.$$
Among all kinds of critical points are the {\it minimizers}. We say that $v_\varepsilon\in \widehat H^{1/2}(\omega;\R^m)\cap L^4(\omega)$ 
is a minimizer of $\mathcal{E}_\varepsilon$ in $\omega$ if 
$$\mathcal{E}_\varepsilon(v_\varepsilon,\omega)\leq \mathcal{E}_\varepsilon(v_\varepsilon+\varphi,\omega) 
\quad \text{for all $\varphi\in H^{1/2}_{00}(\omega;\R^m)\cap L^4(\omega)$}$$
(or equivalently for all $\varphi\in \mathscr{D}(\omega;\R^m)$). 
The most standard way to construct minimizing solutions (and in particular solutions of \eqref{eqfractGL}) is to minimize $\mathcal{E}_\varepsilon(\cdot,\omega)$ under an exterior Dirichlet condition. 
More precisely, given a map $g\in \widehat H^{1/2}(\omega;\R^m)\cap L^4(\omega)$, one consider the minimization problem
\begin{equation}\label{GLminProb}
\min\left\{ \mathcal{E}_\varepsilon(v,\omega) : v\in H^{1/2}_g (\omega;\R^m)\cap L^4(\omega) \right\}\,, 
\end{equation}
whose resolution follows directly from the Direct Method of Calculus of Variations. 
\vskip5pt

To investigate the qualitative properties of solutions of \eqref{eqfractGL}, we  rely on the harmonic extension to the half space $\R^{n+1}_+$ 
introduced in the previous section.  
According to Lemma \ref{repnormderfraclap}, if $v_\varepsilon\in \widehat H^{1/2}(\omega;\R^m)\cap L^4(\omega)$ is a weak solution of \eqref{eqfractGL}, then its 
harmonic extension $v^\e_\varepsilon$ given by \eqref{poisson}  weakly solves
\begin{equation}\label{eqvepsext}
\begin{cases}
\Delta v^\e_\varepsilon= 0 & \text{in $\R^{n+1}_+$}\,,\\[10pt]
\displaystyle \frac{\partial v^\e_\varepsilon}{\partial \nu}=\frac{1}{\varepsilon}\big(1-|v^\e_\varepsilon|^2\big)v^\e_\varepsilon & \text{on $\omega$}\,.
\end{cases}
\end{equation} 
In view of Lemma \ref{hatH1/2toH1} and \eqref{defNeumOp}, the weak sense for this system corresponds to 
\begin{equation}\label{varformbdgleq}
\int_{\R^{n+1}_+} \nabla v^\e_\varepsilon \cdot\nabla\Phi\,\de x= \frac{1}{\varepsilon}\int_\omega \big(1-|v^\e_\varepsilon|^2\big)v^\e_\varepsilon \cdot\Phi\,\de \mathscr{H}^n
\end{equation}
for all $\Phi \in H^1(\R^{n+1}_+;\R^m)\cap L^4(\omega)$ compactly supported in $\R^{n+1}_+\cup\omega$. 
System \eqref{eqvepsext} also has  a variational structure. Indeed, 
considering an admissible  bounded  open set $\Om\subset \R^{n+1}_+$ such that $\overline{\partial^0\Omega}\subset \omega$, and setting for $u\in H^1(\Om;\R^m)\cap L^4(\partial^0\Omega)$, 
\begin{equation}\label{defEnergGLB}
E_\varepsilon(u,\Om):=\frac{1}{2}\int_\Om |\nabla u|^2\,\de x + \frac{1}{4\varepsilon}\int_{\partial^0\Omega} (1-|u|^2)^2\,\de \mathscr{H}^n\,,
\end{equation}
the weak formulation \eqref{varformbdgleq} can be rephrased as 
$$\left[\frac{\de}{\de t} E_\varepsilon\left(v^{\e}_\varepsilon+t\Phi,\Omega\right) \right]_{t=0} =0$$ 
for all $\Phi\in H^{1}(\Omega;\R^m)\cap L^4(\partial^0\Omega)$ compactly supported in $\Omega\cup\partial^0\Omega$. Hence $v^\e$ is a critical point of $E_\eps$ in $\Om$ for all admissible bounded  open set $\Om\subset \R^{n+1}_+$ such that $\overline{\partial^0\Omega}\subset \omega$.  The energy $E_\eps$ is what we may refer  to as {\it Ginzburg-Landau boundary energy}. 
\vskip5pt

If $v_\varepsilon$ turns out to be minimizing, 
we can transfer the minimality of $v_\varepsilon$ to $v^\e_\varepsilon$ with the help of Corollary~\ref{minenergdirchfrac}. In the following proposition, 
we say that $u\in H^1(\Om;\R^m)\cap L^4(\partial^0\Omega)$ is minimizer of $E_\varepsilon$ in $\Omega$ if 
$$E_\varepsilon(u,\Omega)\leq E_\varepsilon(u+\Phi,\Omega) $$ 
for all $\Phi\in H^1(\Om;\R^m)\cap L^4(\partial^0\Omega)$ compactly supported in $\Omega\cup\partial^0\Omega$. 

\begin{proposition}[\bf Minimality transfer]
Let $v_\varepsilon\in \widehat H^{1/2}(\omega;\R^m)\cap L^4(\omega)$ be a minimizer of $\mathcal{E}_\varepsilon$ in $\omega$, and let $v^\e_\varepsilon$  
be its harmonic extension to $\mathbb{R}^{n+1}_+$ given by \eqref{poisson}. Then $v^\e_\varepsilon$ is a minimizer of $E_\varepsilon$ in every admissible bounded  open set $\Omega\subset \R^{n+1}_+$  such that 
$\overline{\partial^0\Omega}\subset \omega$.  
\end{proposition}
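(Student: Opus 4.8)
The plan is to deduce the minimality of $v^\e_\varepsilon$ from the minimality of $v_\varepsilon$ by combining Corollary~\ref{minenergdirchfrac} (which compares Dirichlet energies of competitors with that of the harmonic extension, up to a correction term expressed through the $1/2$-energy on $\R^n$) with the definition of minimality for $\mathcal{E}_\varepsilon$. Fix an admissible bounded open set $\Omega\subset\R^{n+1}_+$ with $\overline{\partial^0\Omega}\subset\omega$, and let $\Phi\in H^1(\Omega;\R^m)\cap L^4(\partial^0\Omega)$ be compactly supported in $\Omega\cup\partial^0\Omega$; set $u:=v^\e_\varepsilon+\Phi$. We want $E_\varepsilon(v^\e_\varepsilon,\Omega)\leq E_\varepsilon(u,\Omega)$.

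First I would unwind the definition of $E_\varepsilon$ in \eqref{defEnergGLB} and split the difference into the Dirichlet part on $\Omega$ and the potential part on $\partial^0\Omega$. For the Dirichlet part, Corollary~\ref{minenergdirchfrac} gives
$$\frac{1}{2}\int_\Omega|\nabla u|^2\,\de x-\frac{1}{2}\int_\Omega|\nabla v^\e_\varepsilon|^2\,\de x\geq \mathcal{E}(u_{|\R^n},\omega)-\mathcal{E}(v_\varepsilon,\omega)\,,$$
where $u_{|\R^n}$ denotes the trace of $u$ on $\partial^0\Omega$ extended by $v_\varepsilon$ outside $\partial^0\Omega$; note $u_{|\R^n}\in H^{1/2}_{g}$-type class, more precisely $u_{|\R^n}-v_\varepsilon\in H^{1/2}_{00}(\partial^0\Omega;\R^m)\subset H^{1/2}_{00}(\omega;\R^m)$, and $u_{|\R^n}\in L^4(\omega)$ since it agrees with $v_\varepsilon\in L^4(\omega)$ off the bounded set $\partial^0\Omega$ where its trace is in $L^4$. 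For the potential part, since $u=v^\e_\varepsilon$ outside a compact subset of $\Omega\cup\partial^0\Omega$, the trace of $u$ equals $v_\varepsilon$ on $\omega\setminus\partial^0\Omega$, so
$$\frac{1}{4\varepsilon}\int_{\partial^0\Omega}(1-|u|^2)^2\,\de\mathscr{H}^n-\frac{1}{4\varepsilon}\int_{\partial^0\Omega}(1-|v_\varepsilon|^2)^2\,\de\mathscr{H}^n=\frac{1}{4\varepsilon}\int_{\omega}(1-|u_{|\R^n}|^2)^2\,\de x-\frac{1}{4\varepsilon}\int_{\omega}(1-|v_\varepsilon|^2)^2\,\de x\,.$$
Adding these two inequalities yields $E_\varepsilon(u,\Omega)-E_\varepsilon(v^\e_\varepsilon,\Omega)\geq \mathcal{E}_\varepsilon(u_{|\R^n},\omega)-\mathcal{E}_\varepsilon(v_\varepsilon,\omega)$, and the right-hand side is $\geq 0$ by the minimality of $v_\varepsilon$ applied to the admissible perturbation $\varphi:=u_{|\R^n}-v_\varepsilon\in H^{1/2}_{00}(\omega;\R^m)\cap L^4(\omega)$. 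This gives the claim.

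The only genuinely delicate points — and where I would spend the care — are the bookkeeping for the potential term and the trace class: one must check that extending the trace of $u$ by $v_\varepsilon$ is consistent with the way Corollary~\ref{minenergdirchfrac} phrases its right-hand side (it is, by design), that this extended trace lies in $L^4(\omega)$ so that $\mathcal{E}_\varepsilon(u_{|\R^n},\omega)$ makes sense and $\varphi$ is an admissible competitor, and that the potential integrals over $\partial^0\Omega$ and over $\omega$ differ only by the contribution on $\omega\setminus\partial^0\Omega$, which cancels because $u_{|\R^n}=v_\varepsilon$ there. None of this is hard, but it is the place where an argument of this type can go wrong if the domains of integration are not tracked precisely; everything else is a direct concatenation of Corollary~\ref{minenergdirchfrac} with the definition of minimality.
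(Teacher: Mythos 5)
Your argument is correct and is exactly the route the paper intends: the proposition is stated as a direct consequence of Corollary~\ref{minenergdirchfrac}, combined with the cancellation of the potential terms off $\partial^0\Omega$ (legitimate since $v_\varepsilon\in L^4(\omega)$ and the trace of the competitor is in $L^4$) and the minimality of $v_\varepsilon$ applied to $\varphi=u_{|\R^n}-v_\varepsilon\in H^{1/2}_{00}(\omega;\R^m)\cap L^4(\omega)$. Nothing to add.
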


Concerning regularity for system \eqref{eqfractGL}, it suffices to investigate the boundary regularity of the harmonic extension which satisfies \eqref{eqvepsext}. For this issue we can rely on the well known regularity theory for linear Neumann boundary value problems, see {\it e.g.} \cite{GT}.  The following theorem deals with regularity in the interior of the free boundary, and a simple proof can be found in~\cite[Lemma~2.2]{CSM} (which actually deals with scalar valued equations, but a quick inpection of the proof shows 
that the result still holds in the vectorial case).  

\begin{theorem}\label{regint}
Let $R>0$, and let  $u_\varepsilon\in H^1(B_{2R}^+;\mathbb{R}^m)\cap L^\infty(B_{2R}^+)$ be a weak solution of 
$$\begin{cases} 
\Delta u_\varepsilon=0 & \text{in $B_{2R}^+$}\,,\\[8pt]
\displaystyle \frac{\partial u_\varepsilon}{\partial\nu}= \frac{1}{\varepsilon}(1-|u_\varepsilon|^2)u_\varepsilon & \text{on $D_{2R}$}\,.
\end{cases}$$ 
Then $u\in C^\infty\big(B_R^+\cup D_R\big)$. 
\end{theorem}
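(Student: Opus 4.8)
The plan is to treat system~\eqref{eqvepsext} on $B_{2R}^+$ as a linear Neumann problem with a reaction term that improves in regularity step by step, via a standard bootstrap. Write $f_\varepsilon:=\varepsilon^{-1}(1-|u_\varepsilon|^2)u_\varepsilon$. First I would observe that since $u_\varepsilon\in L^\infty(B_{2R}^+)$, we have $f_\varepsilon\in L^\infty(D_{2R})\subset L^p(D_{2R})$ for every $p<\infty$; by the classical elliptic theory for the linear problem $\Delta u=0$ in $B^+$, $\partial_\nu u = f$ on $D$ (see e.g.~\cite{GT}, or the half-space Schauder/$L^p$ theory for oblique boundary data), one gets $u_\varepsilon\in W^{1,p}_{\rm loc}(B_{3R/2}^+\cup D_{3R/2})$ for all $p<\infty$, hence $u_\varepsilon\in C^{0,\alpha}_{\rm loc}$ up to the flat boundary for every $\alpha\in(0,1)$ by Sobolev embedding. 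Since the nonlinearity $w\mapsto\varepsilon^{-1}(1-|w|^2)w$ is smooth (polynomial) in $w$, this immediately upgrades $f_\varepsilon$ to $C^{0,\alpha}_{\rm loc}(D_{3R/2})$.

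Next I would run the Schauder iteration: if $u_\varepsilon\in C^{k,\alpha}_{\rm loc}$ up to $D_r$ for some $r\le 3R/2$, then $f_\varepsilon$ is a smooth function of $u_\varepsilon$ and its tangential derivatives and hence lies in $C^{k,\alpha}_{\rm loc}(D_{r})$; Schauder estimates for the harmonic Neumann problem with $C^{k,\alpha}$ boundary data then give $u_\varepsilon\in C^{k+1,\alpha}_{\rm loc}$ up to $D_{r'}$ for any $r'<r$. Shrinking the radius finitely many times and diagonalizing, one concludes $u_\varepsilon\in C^\infty(B_R^+\cup D_R)$. For the tangential-derivative step one differentiates the equation in the directions $e_1,\dots,e_n$ parallel to $\{x_{n+1}=0\}$: each $\partial_{e_i}u_\varepsilon$ again solves a harmonic Neumann problem, now with boundary datum $\partial_{e_i}f_\varepsilon$, which by the chain rule is controlled by $u_\varepsilon$ and its tangential derivatives of one order lower; the normal derivatives are then recovered from harmonicity ($\partial_{x_{n+1}}^2 u_\varepsilon = -\Delta' u_\varepsilon$) so no separate estimate is needed for them.

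The only mild subtlety — and the step I would be most careful with — is the choice of the right linear estimate near the flat portion $D_{2R}$ of the boundary, since $B_{2R}^+$ is a half-ball whose boundary has a corner along $\partial D_{2R}$. This is dealt with exactly as in the scalar case: all estimates are \emph{interior-in-}$D_{2R}$, i.e. one always passes to a strictly smaller half-ball $B_r^+$ with $r<2R$ and uses cutoff functions supported away from the lateral sphere $\partial^+B_{2R}^+$, so the corner is never seen and one only needs the local $L^p$/Schauder theory for $\Delta u=0$ in $\R^{n+1}_+$ with prescribed $\partial_\nu u$ on a flat piece of $\partial\R^{n+1}_+$. This is precisely the content of~\cite[Lemma~2.2]{CSM}, and as noted there the proof is insensitive to the scalar-versus-vectorial distinction because the system decouples into $m$ scalar linear Neumann problems coupled only through the (smooth) lower-order term $f_\varepsilon$, which is irrelevant for the bootstrap. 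Hence the statement follows.
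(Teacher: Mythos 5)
Your bootstrap is correct and is essentially the argument the paper relies on: the paper gives no proof of its own here, but simply invokes \cite[Lemma~2.2]{CSM} (whose proof is exactly this localized $L^p$/Schauder iteration for the flat Neumann problem, carried out there via the auxiliary function $\int_0^{x_{n+1}}u(x',t)\,\de t$ reducing to a Dirichlet problem) and observes, as you do, that the vectorial case follows because the components couple only through the smooth zero-order reaction term. So your proposal matches the paper's route, merely writing out the standard details instead of citing them.
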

 
 Applying Theorem \ref{regint} to system \eqref{eqvepsext}  yields interior regularity for 
 {\it bounded} variational solutions of the fractional Ginzburg-Landau system \eqref{eqfractGL}. 
 
 \begin{corollary}
 Let $v_\varepsilon \in  \widehat H^{1/2}(\omega;\R^m)\cap L^\infty(\R^n)$ be a weak solution of \eqref{eqfractGL}. Then $v_\varepsilon \in C^\infty(\omega)$. 
 \end{corollary}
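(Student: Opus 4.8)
The plan is to deduce interior smoothness of a bounded weak solution $v_\varepsilon \in \widehat H^{1/2}(\omega;\R^m)\cap L^\infty(\R^n)$ of \eqref{eqfractGL} by transferring the problem to the half-space and invoking the interior regularity result for the boundary-reaction system, Theorem~\ref{regint}. The only issue is to check that the local hypotheses of Theorem~\ref{regint} are met at an arbitrary point of $\omega$.

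First I would fix an arbitrary point $x_0\in\omega$ and choose $R>0$ small enough that $\overline{D_{2R}(x_0)}\subset\omega$. Since $v_\varepsilon\in L^\infty(\R^n)$, the bound \eqref{bdlinftyext} gives $v^\e_\varepsilon\in L^\infty(\R^{n+1}_+)$ with $\|v^\e_\varepsilon\|_{L^\infty(\R^{n+1}_+)}\leq\|v_\varepsilon\|_{L^\infty(\R^n)}$; in particular $v^\e_\varepsilon\in L^\infty\big(B^+_{2R}(x_0)\big)$. Next, $v_\varepsilon\in L^\infty(\R^n)$ together with the local $H^{1/2}$-information coming from $v_\varepsilon\in\widehat H^{1/2}(\omega;\R^m)$ shows $v_\varepsilon\in \widehat H^{1/2}(\omega;\R^m)\cap H^{1/2}_{\rm loc}(\R^n;\R^m)$ (indeed $L^\infty\cap H^{1/2}_{\rm loc}$ is automatic since bounded functions which are locally $H^{1/2}$ on one scale are locally $H^{1/2}$ on every scale; alternatively one uses the inclusion list after Lemma~\ref{adminHchap}). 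Then Lemma~\ref{hatH1/2toH1} (or Remark~\ref{H1/2loctoH1loc}) yields $v^\e_\varepsilon\in H^1\big(B^+_{2R}(x_0)\big;\R^m)$. Hence $v^\e_\varepsilon\in H^1\big(B^+_{2R}(x_0);\R^m\big)\cap L^\infty\big(B^+_{2R}(x_0)\big)$.

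By Lemma~\ref{repnormderfraclap}, the harmonic extension $v^\e_\varepsilon$ of the weak solution $v_\varepsilon$ of \eqref{eqfractGL} weakly solves \eqref{eqvepsext}, i.e. $\Delta v^\e_\varepsilon=0$ in $\R^{n+1}_+$ and $\partial_\nu v^\e_\varepsilon=\tfrac1\varepsilon(1-|v^\e_\varepsilon|^2)v^\e_\varepsilon$ on $\omega$ in the sense of \eqref{varformbdgleq}; restricting test functions to those supported in $B_{2R}(x_0)\cup D_{2R}(x_0)$, the pair $(v^\e_\varepsilon,D_{2R}(x_0))$ satisfies exactly the hypotheses of Theorem~\ref{regint} (after translating $x_0$ to the origin). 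Therefore $v^\e_\varepsilon\in C^\infty\big(B^+_R(x_0)\cup D_R(x_0)\big)$, and in particular its trace $v_\varepsilon$ is smooth on $D_R(x_0)$, hence near $x_0$. Since $x_0\in\omega$ was arbitrary, $v_\varepsilon\in C^\infty(\omega)$.

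The argument is essentially a bookkeeping exercise; there is no real obstacle, only the need to verify that boundedness of $v_\varepsilon$ upgrades the local regularity of $v^\e_\varepsilon$ from the merely $L^2_{\rm loc}(\overline\R^{n+1}_+)$ and $H^1_{\rm loc}(\R^{n+1}_+\cup\omega)$ of Lemma~\ref{hatH1/2toH1} to genuine $H^1\cap L^\infty$ up to the flat boundary on balls compactly contained (in $\R^n$) in $\omega$ — which is precisely the content of Remark~\ref{H1/2loctoH1loc} combined with \eqref{bdlinftyext}. If one wished to avoid even that remark, one could instead note that $\Delta v^\e_\varepsilon=0$ in $\R^{n+1}_+$ with $v^\e_\varepsilon\in L^\infty$ gives, by interior estimates for harmonic functions, $v^\e_\varepsilon\in H^1_{\rm loc}(\R^{n+1}_+)$ automatically, and the local finiteness of the Dirichlet energy up to $\omega$ then follows from Lemma~\ref{hatH1/2toH1}; either way one lands in the hypotheses of Theorem~\ref{regint}.
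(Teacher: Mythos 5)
Your argument is correct and is essentially the paper's own proof: transfer to the half-space, note that $v^\e_\varepsilon$ is bounded by \eqref{bdlinftyext} and lies in $H^1\big(B^+_{2R}(x_0)\big)$ by Lemma~\ref{hatH1/2toH1} whenever $\overline{D_{2R}(x_0)}\subset\omega$, check via Lemma~\ref{repnormderfraclap} that it weakly solves \eqref{eqvepsext}, and apply Theorem~\ref{regint} locally to get smoothness up to $D_R(x_0)$, hence $v_\varepsilon\in C^\infty(\omega)$. One side remark of yours is inaccurate — $v_\varepsilon\in\widehat H^{1/2}(\omega;\R^m)\cap L^\infty(\R^n)$ does \emph{not} imply $v_\varepsilon\in H^{1/2}_{\rm loc}(\R^n;\R^m)$, since the energy $\mathcal{E}(v_\varepsilon,\omega)$ gives no control on the $H^{1/2}$-seminorm of $v_\varepsilon$ over $\omega^c\times\omega^c$ (the inclusion listed after Lemma~\ref{adminHchap} goes the other way) — but this is harmless, because Lemma~\ref{hatH1/2toH1} alone, without Remark~\ref{H1/2loctoH1loc}, already yields $v^\e_\varepsilon\in H^1\big(B^+_{2R}(x_0);\R^m\big)$ for half-balls whose base is compactly contained in $\omega$, which is all your argument uses.
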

  
Dealing with arbitrary critical points of  $\mathcal{E}_\varepsilon(\cdot, \omega)$ in $H^{1/2}_g(\omega;\R^m)$ for a 
smooth domain $\omega$ and a smooth (exterior) Dirichlet data $g$, one may wonder if regularity holds accross the boundary $\partial\omega$. 
If $v_\varepsilon$ is such a critical point and $v_\varepsilon^\e$ is its harmonic extension, we end up with the mixed boundary value problem  
\begin{equation}\label{equatsystmixbdval}
 \begin{cases}
 \Delta v^\e_\varepsilon= 0 & \text{in $\R^{n+1}_+$}\,,\\[10pt]
\displaystyle \frac{\partial v^\e_\varepsilon}{\partial \nu}=\frac{1}{\varepsilon}\big(1-|v^\e_\varepsilon|^2\big)v^\e_\varepsilon & \text{on $\omega$}\,,\\[10pt]
v^\e_\varepsilon = g & \text{on $\R^n\setminus \omega$}\,.
 \end{cases}
 \end{equation}
 Even if we obviously expect regularity in the interior of $\omega$, it might not hold across the edge~$\partial\omega$, 
since solutions of linear mixed boundary value problems are usually not better than H\"older continous (and 1/2 is in general the best possible H\"older exponent, see {\it e.g.} \cite{Sav} and the references therein). H\"older continuity for variational solutions of (non homogeneous) linear mixed boundary value problems follows from the general results of {\sc Stampacchia} \cite{Stamp1, Stamp2} (see also \cite{Chicc}), and 
 an estimate on the order of regularity is given by the classical result of {\sc Shamir}~\cite{Sham}. In our case, it is enough to prove an {\it a priori} global $L^\infty$-bound on $v_\varepsilon$ to derive from \cite{Sham} the regularity stated in 
 the following theorem. 
  
\begin{theorem}\label{regDirich}
Assume that $\partial\omega$ is smooth. Let $g\in C^1(\R^n;\R^m)\cap L^\infty(\R^n)$,  
and let $v_\varepsilon \in H_{g}^{1/2}(\omega;\R^m)\cap L^4(\omega)$ 
be a weak solution of \eqref{eqfractGL}. Then 
$v_\epsilon \in C^\infty(\omega)$ and $v_\varepsilon$ is $\alpha$-H\"older continuous  near $\partial\omega$ for every $\alpha\in(0,1/2)$.  
\end{theorem}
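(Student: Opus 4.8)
The strategy is to work entirely with the harmonic extension $v_\varepsilon^\e$, which by Lemma~\ref{repnormderfraclap} solves the mixed boundary value problem \eqref{equatsystmixbdval}, and to split the argument into two parts: an \emph{a priori} $L^\infty$-bound, and then the application of linear regularity theory. For the interior smoothness $v_\varepsilon\in C^\infty(\omega)$, nothing new is needed: once the $L^\infty$-bound is available, Theorem~\ref{regint} applies locally on half-balls $B_{2R}^+(x_0)$ with $D_{2R}(x_0)\subset\omega$, giving $v_\varepsilon^\e\in C^\infty(B_R^+(x_0)\cup D_R(x_0))$, hence $v_\varepsilon\in C^\infty(\omega)$ by taking traces.

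\textbf{Step 1: the $L^\infty$-bound.} First I would establish that $\|v_\varepsilon^\e\|_{L^\infty(\R^{n+1}_+)}\leq \max\{1,\|g\|_{L^\infty(\R^n)}\}$. The natural device is a truncation/comparison argument for the function $w:=v_\varepsilon^\e$. Since $w$ is harmonic in $\R^{n+1}_+$, the function $|w|^2$ is subharmonic there; on $\omega$ one has the Neumann-type relation $\partial_\nu w = \varepsilon^{-1}(1-|w|^2)w$, so testing the weak formulation \eqref{varformbdgleq} with $\Phi=(|w|^2-M^2)_+\,w$ for $M:=\max\{1,\|g\|_{L^\infty}\}$ (a legitimate test map once we know $w\in H^1_{\rm loc}\cap L^4$, using \eqref{bdlinftyext} on $\R^n\setminus\omega$ to control the exterior part and the fact that $w=g$ there makes the truncation vanish outside $\omega$) gives
\begin{equation*}
\int_{\R^{n+1}_+}\nabla w\cdot\nabla\big((|w|^2-M^2)_+\,w\big)\,\de x
=\frac{1}{\varepsilon}\int_\omega(1-|w|^2)|w|^2(|w|^2-M^2)_+\,\de\mathscr{H}^n\leq 0,
\end{equation*}
because on the set $\{|w|>M\ge 1\}$ the factor $(1-|w|^2)$ is nonpositive. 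The left-hand side is $\geq 0$ (expand: it equals $\int (|w|^2-M^2)_+|\nabla w|^2 + \tfrac12\int \ind_{\{|w|>M\}}|\nabla|w|^2|^2$, both terms nonnegative), forcing $\nabla\big((|w|^2-M^2)_+^{1/2}\big)\equiv 0$; since $(|w|^2-M^2)_+=0$ near infinity and on $\R^n\setminus\omega$, we conclude $|w|\leq M$ a.e. in $\R^{n+1}_+$, hence $|v_\varepsilon|\leq M$ a.e.\ on $\R^n$.

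\textbf{Step 2: boundary H\"older regularity.} With the $L^\infty$-bound in hand, fix $x_0\in\partial\omega$ and localize in a half-ball $B_{2R}^+(x_0)$ with $R$ small. On this half-ball $v_\varepsilon^\e$ solves a \emph{linear} mixed Dirichlet--Neumann problem $\Delta w=0$ in $B_{2R}^+$, $\partial_\nu w = f$ on $D_{2R}\cap\omega$, $w=g$ on $D_{2R}\setminus\omega$, where $f:=\varepsilon^{-1}(1-|v_\varepsilon^\e|^2)v_\varepsilon^\e\in L^\infty\subset L^q$ for all $q$ (by Step~1) and $g\in C^1$. After subtracting a smooth extension of $g$ to reduce to homogeneous Dirichlet data, H\"older continuity of variational solutions is exactly the content of Stampacchia's regularity for mixed problems \cite{Stamp1,Stamp2} (see also \cite{Chicc}), and the precise exponent — any $\alpha<1/2$, the $1/2$ being optimal because of the edge $\partial\omega$ — follows from Shamir's theorem \cite{Sham}. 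This yields $v_\varepsilon^\e\in C^{0,\alpha}(B_R^+(x_0)\cup D_R(x_0))$ for every $\alpha\in(0,1/2)$, and taking traces gives the stated $\alpha$-H\"older continuity of $v_\varepsilon$ near $\partial\omega$.

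\textbf{Main obstacle.} The genuinely delicate point is Step~1: one must make sure $\Phi=(|w|^2-M^2)_+w$ is an admissible test function in \eqref{varformbdgleq}, i.e.\ that it lies in $H^1(\R^{n+1}_+;\R^m)\cap L^4(\omega)$ and can be taken ``compactly supported up to $\omega$''. This requires knowing that the set $\{|w|>M\}$ does not reach $\R^n\setminus\omega$ (clear, since $w=g$ there and $M\ge\|g\|_\infty$) and is controlled near infinity — here one uses Lemma~\ref{hatH1/2toH1} together with \eqref{bdlinftyext} to get $w\in H^1_{\rm loc}(\overline\R^{n+1}_+)\cap L^\infty$ near $\R^n\setminus\omega$ and a truncation/exhaustion argument to handle the non-compactness of $\R^{n+1}_+$. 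Everything in Step~2 is then a black-box citation; the only care needed is the standard reduction to homogeneous boundary data and checking that the cited results cover the vectorial (system) case, which they do componentwise since the linearized problem decouples.
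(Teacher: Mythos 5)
Your Step 2 coincides with the paper's: once an $L^\infty$-bound on $v_\varepsilon$ is known, the Neumann datum $\varepsilon^{-1}(1-|v_\varepsilon^\e|^2)v_\varepsilon^\e$ is bounded, the Stampacchia/Shamir theory for the linear mixed Dirichlet--Neumann problem gives $C^{0,\alpha}$ regularity near $\partial\omega$ for every $\alpha<1/2$, and Theorem~\ref{regint} gives interior smoothness. Where you genuinely diverge is Step 1. The paper proves only $v_\varepsilon\in L^\infty$ at this stage, by a Brezis--Kato/Moser iteration for $\eta=\sqrt{|v_\varepsilon^\e|^2+\lambda}$ on a \emph{fixed bounded} admissible set $\Omega\supset\supset\omega$, accepting boundary terms on $\partial^+\Omega$ (controlled because $v_\varepsilon^\e$ is $C^1$ away from $\overline\omega$); the resulting bound depends on $\|v_\varepsilon^\e\|_{C^1(\partial^+\Omega)}$ and is not the clean constant. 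The sharp bound $\max(1,\|g\|_\infty)$ is obtained only afterwards, in Corollary~\ref{modless1}, by the maximum principle and Hopf lemma, which require the regularity just established. Your alternative --- a single signed truncation test with $\Phi=(|w|^2-M^2)_+w$, exploiting that $(1-|w|^2)\le 0$ on $\{|w|>M\geq 1\}$ --- is more elementary, avoids the iteration, and (if completed) even yields the clean constant directly; the price is that all the real work is pushed into the admissibility of this global test function, which you flag but do not carry out.

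Two specific points in Step 1 need repair. First, the appeal to \eqref{bdlinftyext} is circular: that estimate presupposes $v_\varepsilon\in L^\infty(\R^n)$, which is exactly what is being proved. The correct substitute is the decomposition $v_\varepsilon^\e=g^\e+(v_\varepsilon-g)^\e$ with $v_\varepsilon-g\in H^{1/2}_{00}(\omega;\R^m)$ (by \eqref{caracH1/2g}): then $|g^\e|\le\|g\|_\infty$, while $(v_\varepsilon-g)^\e$ has finite Dirichlet energy and Poisson-kernel decay at infinity; in particular $(|w|^2-M^2)_+$ need not vanish near infinity (only its $\liminf$ does), so the conclusion must be drawn from $\nabla w=0$ a.e.\ on $\{|w|>M\}$ plus connectedness and either the decay or the vanishing trace on $\R^n\setminus\omega$. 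Second, both the integrability of $\Phi$ (cubic in $w$, while $w$ is only $H^1_{\rm loc}$ up to the boundary) and the lack of compact support must be dealt with: the former by truncating, $\Phi_T=\min((|w|^2-M^2)_+,T)\,w$, the latter by a cut-off/exhaustion in which the cross term $\int\Phi_T\,(\nabla\chi_R\cdot\nabla w)$ has to be shown to vanish --- this is genuinely the delicate estimate (it can be done using the decay of $(v_\varepsilon-g)^\e$ and the interior gradient bound $|\nabla g^\e|\le C\|g\|_\infty/x_{n+1}$), and it is precisely what the paper circumvents by working on a bounded $\Omega$. A cleaner way to implement your idea, bypassing the extension altogether, is to test the fractional formulation of \eqref{eqfractGL} directly with $\varphi:=\min\big((|v_\varepsilon|^2-M^2)_+,T\big)v_\varepsilon$, which lies in $H^{1/2}_{00}(\omega;\R^m)\cap L^4(\omega)$ because it is a Lipschitz function of $v_\varepsilon$ vanishing where $|v_\varepsilon|\le M$ (hence on $\R^n\setminus\omega$); the monotonicity of $t\mapsto\min((t^2-M^2)_+,T)\,t$ makes the pairing \eqref{deffraclap} nonnegative, while the right-hand side is nonpositive, and one concludes $|v_\varepsilon|\le M$ a.e.\ without any exhaustion. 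With either implementation your Step 1 is sound, and the remainder of the argument is the paper's.
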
  

 As mentioned in the few lines above, the proof of Theorem \ref{regDirich} rely on  the boundedness of $v_\eps$. Indeed, once the $L^\infty$-bound on $v_\varepsilon$ is obtained, 
 the right hand side in the Neumann boundary equation of \eqref{equatsystmixbdval}  remains bounded. This is enough to apply the regularity result of \cite{Sham}, and then deduce the H\"older 
 continuity of $v^\e_\varepsilon$ (and thus of $v_\epsilon$). The higher order regularity away from $\partial\omega$ follows from Theorem~\ref{regint}. 
 
 \begin{lemma}
 Let $\omega$ and $g$ be as in Theorem \ref{regDirich}. Let $v_\varepsilon \in H_{g}^{1/2}(\omega;\R^m)\cap L^4(\omega)$ 
be a weak solution of\eqref{eqfractGL}. Then $v_\varepsilon \in L^\infty(\R^n)$. 
 \end{lemma}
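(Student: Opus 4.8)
The statement to prove is the $L^\infty$ bound on a weak solution $v_\varepsilon\in H^{1/2}_g(\omega;\R^m)\cap L^4(\omega)$ of \eqref{eqfractGL} subject to the exterior Dirichlet condition $v_\varepsilon=g$ on $\R^n\setminus\omega$, where $g\in C^1\cap L^\infty$ with $|g|=1$ outside $\omega$ (as in Theorem~\ref{regDirich}). The plan is to work with the harmonic extension $v^\e_\varepsilon$, which solves the mixed boundary value problem \eqref{equatsystmixbdval}, and to run a truncation/maximum-principle argument tailored to the nonlinear Neumann reaction term. The key structural fact is that the reaction $\frac1\varepsilon(1-|v^\e_\varepsilon|^2)v^\e_\varepsilon$ points \emph{inward} whenever $|v^\e_\varepsilon|$ exceeds $1$, so it should not allow the modulus to exceed $\max(1,\|g\|_{L^\infty})=:M$ on the Dirichlet part.

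First I would set $M:=\max\{1,\|g\|_{L^\infty(\R^n)}\}$ and test the weak formulation \eqref{varformbdgleq} — more precisely its analogue with the Dirichlet datum, i.e. the variational characterization of critical points of $\mathcal E_\varepsilon(\cdot,\omega)$ in $H^{1/2}_g(\omega;\R^m)$ — against the admissible perturbation $\Phi:=(v^\e_\varepsilon-\pi_M(v^\e_\varepsilon))$, where $\pi_M$ is the nearest-point projection onto the closed ball $\overline{B^m_M}\subset\R^m$. Since $|g|\leq M$ on $\R^n\setminus\omega$, this $\Phi$ has trace vanishing outside $\omega$, hence belongs to $H^{1/2}_{00}(\omega;\R^m)$ (and to $L^4$ since $v_\varepsilon\in L^4$ and $\pi_M$ is Lipschitz and bounded), so it is an admissible test function and also an admissible competitor in the minimization when $v_\varepsilon$ is a minimizer; but here we only have a critical point, so the right object is the first variation. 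The bulk (Dirichlet-energy) term $\int_{\R^{n+1}_+}\nabla v^\e_\varepsilon\cdot\nabla\Phi\,\de x$ is nonnegative: writing $w:=v^\e_\varepsilon$, one has $\nabla w\cdot\nabla(w-\pi_M(w))\geq 0$ pointwise because $\pi_M$ is a $1$-Lipschitz gradient map of the convex function $u\mapsto\frac12|u|^2-\frac12\,\mathrm{dist}(u,\overline{B^m_M})^2$, so $\int|\nabla(w-\pi_M(w))|^2\leq\int\nabla w\cdot\nabla(w-\pi_M(w))$, and in fact the cleaner route is to use $\nabla(w-\pi_M(w))\cdot\nabla w\ge|\nabla(w-\pi_M(w))|^2\ge0$. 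The boundary term is $\frac1\varepsilon\int_\omega(1-|w|^2)\,w\cdot(w-\pi_M(w))\,\de\mathscr H^n$; on $\{|w|\leq M\}$ the integrand vanishes since $w=\pi_M(w)$ there, while on $\{|w|>M\}$ we have $|w|\ge M\ge1$ so $(1-|w|^2)\le0$, and $w\cdot(w-\pi_M(w))=w\cdot(w-\tfrac{M}{|w|}w)=(1-\tfrac{M}{|w|})|w|^2\ge0$, hence the whole boundary integrand is $\le 0$. Combining, $0=\int_{\R^{n+1}_+}\nabla w\cdot\nabla\Phi+(\text{nonpositive boundary term})$ forces $\int_{\R^{n+1}_+}|\nabla(w-\pi_M(w))|^2=0$, so $w-\pi_M(w)$ is constant on $\R^{n+1}_+$, and since its trace vanishes on $\R^n\setminus\omega\neq\emptyset$ this constant is $0$; thus $|v^\e_\varepsilon|\leq M$ a.e.\ in $\R^{n+1}_+$, and by \eqref{eqextharm} (pointwise trace at Lebesgue points) $|v_\varepsilon|\leq M$ a.e.\ in $\R^n$.

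The main obstacle I anticipate is the \emph{admissibility and integrability bookkeeping} for the test function: one must check that $\Phi=v^\e_\varepsilon-\pi_M(v^\e_\varepsilon)$, or rather its boundary trace, lies in $H^{1/2}_{00}(\omega;\R^m)\cap L^4(\omega)$ so that it is a legitimate variation in the weak formulation — this uses that $v_\varepsilon-g$ already lies in $H^{1/2}_{00}(\omega;\R^m)$, that $\pi_M$ is globally Lipschitz (so the composition stays in $H^{1/2}$, a fact one should cite or verify since Nemytskii operators on fractional spaces are slightly delicate), and that $\pi_M$ maps into a bounded set (so $L^4$-integrability is inherited from $v_\varepsilon\in L^4$). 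A secondary technical point is the pointwise inequality $\nabla w\cdot\nabla(w-\pi_M(w))\geq 0$ on $\R^{n+1}_+$, which should be justified either by the monotonicity of $\pi_M$ (as the gradient of a convex function, $\pi_M$ is monotone, whence $\langle\pi_M(a)-\pi_M(b),a-b\rangle\ge0$ and a discrete-to-continuous/chain-rule argument gives the gradient inequality for the $H^1$ function $w$) or by a direct computation on the two regions $\{|w|\le M\}$ and $\{|w|>M\}$ exactly as for the boundary term. Once these two points are in place the argument is short; alternatively, for the minimizer case one may bypass the first variation entirely and simply compare energies, $E_\varepsilon(w)\ge E_\varepsilon(\pi_M\circ w)$ via the minimality transfer of Proposition (Minimality transfer), but since the lemma is stated for arbitrary weak solutions the first-variation argument above is the one to carry out.
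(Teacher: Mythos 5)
Your underlying idea — test with $v-\pi_M(v)$, $M:=\max(1,\|g\|_{L^\infty})$, exploiting monotonicity of the truncation in the quadratic form and the sign of the potential term on $\{|v|>M\}$ — is sound, but the implementation at the level of the harmonic extension has a genuine gap. The weak formulation \eqref{varformbdgleq} and the identification of $\partial_\nu v^\e_\eps$ with $(-\Delta)^{\frac12}v_\eps$ (Lemma \ref{repnormderfraclap}, via \eqref{defNeumOp}) are only available for test functions compactly supported in $\R^{n+1}_+\cup\omega$, whereas your $\Phi=v^\e_\eps-\pi_M(v^\e_\eps)$ is supported on $\{|v^\e_\eps|>M\}$, which is not a compact subset of $\R^{n+1}_+\cup\omega$. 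Moreover, since $g$ is only bounded and $C^1$ (in general $g\notin \dot H^{1/2}(\R^n)$), the extension $v^\e_\eps$ is only in $H^1_{\rm loc}(\R^{n+1}_+\cup\omega)$ (Lemma \ref{hatH1/2toH1}), so the global integrals $\int_{\R^{n+1}_+}\nabla v^\e_\eps\cdot\nabla\Phi\,\de x$ and $\int_{\R^{n+1}_+}|\nabla(v^\e_\eps-\pi_M(v^\e_\eps))|^2\de x$ that you manipulate need not even be finite; the concluding step (vanishing Dirichlet integral, hence $v^\e_\eps-\pi_M(v^\e_\eps)$ constant) presupposes exactly this global finiteness. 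Your ``bookkeeping'' caveat addresses the trace only, not this obstruction.

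The repair is to drop the extension altogether and test the fractional formulation of \eqref{eqfractGL} directly with $\varphi:=v_\eps-\pi_M(v_\eps)$. This $\varphi$ lies in $H^{1/2}_{00}(\omega;\R^m)\cap L^4(\omega)$: it vanishes outside $\omega$ because $|g|=1\le M$ there, and since ${\rm id}-\pi_M$ is $1$-Lipschitz both double integrals in \eqref{defErond} are controlled by $\mathcal{E}(v_\eps,\omega)$. In \eqref{deffraclap} the integrand $(v_\eps(x)-v_\eps(y))\cdot(\varphi(x)-\varphi(y))$ is pointwise nonnegative, since ${\rm id}-\pi_M$ is monotone (it is the gradient of the convex function $u\mapsto\frac12{\rm dist}^2(u,\overline B_M)$), so $\langle(-\Delta)^{\frac12}v_\eps,\varphi\rangle_\omega\ge0$; on the other hand $\frac1\eps\int_\omega(1-|v_\eps|^2)v_\eps\cdot\varphi\,\de x\le0$ with strictly negative integrand on $\{|v_\eps|>M\}$, since there $1-|v_\eps|^2\le 1-M^2\le0$ and $v_\eps\cdot\varphi=(|v_\eps|-M)|v_\eps|>0$. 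Equality of the two sides forces $|\{|v_\eps|>M\}\cap\omega|=0$, i.e. $|v_\eps|\le M$ a.e. With this correction your route is genuinely different from the paper's: the paper proves the lemma by a Brezis--Kato/Moser-type iteration on $\eta=\sqrt{|v^\e_\eps|^2+\lambda}$ in a bounded admissible $\Omega$, using smoothness of $v^\e_\eps$ away from $\overline\omega$ to control the data on $\partial^+\Omega$, a boundary Poincar\'e inequality, and a passage to the limit in the exponent, and only afterwards (Corollary \ref{modless1}, once regularity up to $\partial\omega$ is known) obtains $|v_\eps|\le\max(1,\|g\|_{L^\infty})$ by the maximum principle; the corrected truncation argument yields that quantitative bound immediately, without iteration and without any regularity beyond the weak formulation.
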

 
 \begin{proof}
 Let us fix for the whole proof an admissible bounded open set $\Omega\subset \R^{n+1}_+$  with smooth boundary such that  $\overline\omega\subset \partial^0\Omega$. We shall use an argument in the spirit of the proof by {\sc Brezis \& Kato} for the standard Laplacian. 
 
\noindent{\it Step 1.}  By \eqref{H1/2gH1/2loc} and Remark~\ref{H1/2loctoH1loc}, $v^\e_\varepsilon \in H^1_{\rm loc}(\overline{\R}^{n+1}_+;\R^m)$ 
and $v^\e_\varepsilon$ weakly solves \eqref{equatsystmixbdval}.  By standard elliptic regularity for the Dirichlet problem, 
 we have $v^\e_\varepsilon\in C^1(\overline{\R}^{n+1}_+\setminus \overline \omega)$. 
 Since ${\rm dist}(\partial^+\Omega,\overline\omega)>0$, we have 
 $$M:=\| v^\e_\varepsilon\|_{L^\infty(\partial^+\Omega)}+\|\nabla v^\e_\varepsilon\|_{L^\infty(\partial^+\Omega)}<\infty\,. $$
 
 Let us now consider the scalar function
 $$\eta:= \sqrt{|v^\e_\varepsilon|^2+\lambda} \in H^1(\Omega)\cap L^4(\omega)$$
with $\lambda:=\max(1,\|g\|^2_{L^\infty(\R^n\setminus\omega)})$, 
and  fix an arbitrary nonnegative function $\Phi \in C^\infty(\overline\Omega)$ with compact support in 
$\Omega\cup\omega\cup\partial^+\Omega$.  
 By the chain-rule formula for Sobolev functions we have 
$$
 \int_\Omega \nabla\eta \cdot \nabla \Phi\,\de x= \sum_{j=1}^{n+1} \int_\Omega \frac{(v^\e_\varepsilon\cdot\partial_j v^\e_\varepsilon)\partial_j\Phi}{\sqrt{|v^\e_\varepsilon|^2+\lambda}} \,\de x\,.$$
Since 
 $$ \frac{v^\e_\varepsilon }{\sqrt{|v^\e_\varepsilon|^2+\lambda}} \in H^1(\Omega;\R^m)\,,$$
 we deduce that
$$ \int_\Omega \nabla\eta \cdot \nabla \Phi\,\de x= 
 \int_\Omega \nabla v^\e_\varepsilon\cdot \nabla \left(\frac{ \Phi v^\e_\varepsilon }{\sqrt{|v^\e_\varepsilon|^2+\lambda}}\right) \,\de x 
 - \int_\Omega  \left( \frac{|\nabla v^\e_\varepsilon|^2 }{\sqrt{|v^\e_\varepsilon|^2+\lambda}}-  \sum_{j=1}^{n+1} \frac{|v^\e_\varepsilon \cdot\partial_j v^\e_\varepsilon|^2 }{(|v^\e_\varepsilon|^2+\lambda)^{3/2}}  
 \right) \Phi \,\de x\,.$$
On the other hand $\Phi\geq 0$, so that 
 $$ \int_\Omega \nabla\eta \cdot \nabla \Phi\,\de x \leq   \int_\Omega \nabla v^\e_\varepsilon\cdot \nabla \left(\frac{ \Phi v^\e_\varepsilon }{\sqrt{|v^\e_\varepsilon|^2+\lambda}}\right) \,\de x \,.$$
 Using equation \eqref{equatsystmixbdval}, we  infer that 
 $$\int_\Omega \nabla\eta \cdot \nabla \Phi\,\de x \leq 
 \int_{\partial^+\Omega}  \left(v_\varepsilon\cdot\frac{\partial v_\varepsilon^\e}{\partial\nu}\right)\frac{\Phi}{\sqrt{|v^\e_\varepsilon|^2+\lambda}}\,\de\mathscr{H}^n
 +\frac{1}{\varepsilon}\int_\omega(1-|v_\varepsilon^\e|^2)\frac{|v_\varepsilon^\e|^2\Phi}{\sqrt{|v^\e_\varepsilon|^2+\lambda}}\,\de \mathscr{H}^n\,,$$ 
 whence 
 \begin{equation}\label{inegKato}
 \int_\Omega \nabla\eta \cdot \nabla \Phi\,\de x \leq 
 \int_{\partial^+\Omega}  \left(v_\varepsilon\cdot\frac{\partial v_\varepsilon^\e}{\partial\nu}\right)\frac{\Phi}{\eta}\,\de \mathscr{H}^n 
 -\frac{1}{\varepsilon}\int_\omega\frac{|v_\varepsilon^\e|^2(\eta+\sqrt{1+\lambda})}{\eta}\,(\eta-\sqrt{1+\lambda})\Phi\,\de \mathscr{H}^n\,. 
 \end{equation}
 Then we conclude by approximation that \eqref{inegKato} actually holds for any nonnegative $\Phi \in H^1(\Omega)\cap L^4(\omega)$ satisfying 
 $\Phi=0$ $\mathscr{H}^n$-a.e. on $\partial^0\Omega\setminus\omega$.  
 \vskip5pt
 
 \noindent{\it Step 2.} Given $T>0$ and $\beta>0$, we  define  
 $$\rho:=\max\{\eta-\sqrt{2\lambda},0\} \,,\quad \rho_T:=\min(\rho,T)\,,\quad \Psi_{T,\beta}:= \rho^\beta_T\rho\,,\quad \Phi_{T,\beta}:= \rho^{2\beta}_T\rho\,.$$
 Those functions belong to $H^1(\Omega)\cap L^4(\omega)$ and vanish on $\partial^0\Omega\setminus\omega$. 
 Setting $\Omega_T:=\{0<\rho<T\}\cap \Omega$,  straightforward computations yield 
 $$\int_\Omega |\nabla \Psi_{T,\beta}|^2\,\de x=\int_\Omega \rho_T^{2\beta}|\nabla\eta|^2\,\de x+ (\beta^2+2\beta)\int_{\Omega_T} \rho^{2\beta}|\nabla\eta|^2\,\de x\,,$$
 and 
 $$\int_\Omega \nabla\eta\cdot\nabla\Phi_{T,\beta}\,\de x= \int_\Omega \rho_T^{2\beta}|\nabla \eta|^2\,\de x + 2\beta\int_{\Omega_T}\rho^{2\beta}|\nabla \eta|^2\,\de x\,.$$
 From this two equalities we now  infer that 
 $$ \int_\Omega |\nabla \Psi_{T,\beta}|^2\,\de x\leq (\beta+1)\int_\Omega \nabla\eta\cdot\nabla\Phi_{T,\beta}\,\de x\,.$$
 Using $\Phi_{T,\beta}$ as a test function in \eqref{inegKato},  we  deduce that
$$ \int_\Omega |\nabla \Psi_{T,\beta}|^2\,\de x\leq (\beta+1) 
 \int_{\partial^+\Omega}  \left(v_\varepsilon\cdot\frac{\partial v_\varepsilon^\e}{\partial\nu}\right)\frac{\rho_T^{2\beta}\rho}{\eta}\,\de \mathscr{H}^n 
 -\frac{\beta+1}{\varepsilon}\int_\omega\frac{|v_\varepsilon^\e|^2(\eta+\sqrt{1+\lambda})}{\eta}\,\rho_T^{2\beta}\rho^2\,\de \mathscr{H}^n\,,$$
 which leads to 
 $$\int_{\Omega}\left|\nabla\left( \rho_T^{\beta}\rho\right)  \right|^2\,\de x \leq (\beta+1)\mathscr{H}^n(\partial^+\Omega) M^{2\beta+2} \,.$$
 Applying the Poincar\'e Inequality in \cite[Corollary 4.5.2]{Zi} to $ \rho_T^{\beta}\rho$ yields
 $$\int_{\partial \Omega}\big|\rho_T^\beta\rho\big|^{2}\,\de x\leq C_{\Omega,\omega}(\beta+1)M^{2\beta+2} \,,$$
 for a constant $C_{\Omega,\omega}>0$ which only depends on $\Omega$ and $\omega$.  Next we let $T\to \infty$ in this last inequality to obtain 
 $$\|\rho\|_{L^{2(\beta+1)}(\partial\Omega)} \leq C_{\Omega,\omega}^{1/(2\beta+2)}(\beta+1)^{1/(2\beta+2)}M\,.$$
 Letting now $\beta\to\infty$ leads to $\|\rho\|_{L^\infty(\partial\Omega)}\leq M$, which in turn implies $v_\varepsilon\in L^\infty(\omega)$. 
 Since $v_\varepsilon=g$ outside~$\omega$, we have thus proved that $v_\varepsilon\in L^\infty(\R^n)$. 
 \end{proof}
 
We now deduce from the maximum principle the following  upper bound on the modulus of a critical point 
$v_\varepsilon$ satisfying an exterior Dirichlet condition.

\begin{corollary}\label{modless1}
Let $\omega$ and $g$ be as in Theorem \ref{regDirich}. Let $v_\varepsilon \in H_{g}^{1/2}(\omega;\R^m)\cap L^4(\omega)$ 
be a weak solution of~\eqref{eqfractGL}. Then $\|v_\varepsilon\|_{L^\infty(\R^n)}\leq \max(1,\|g\|_{L^\infty(\R^n\setminus\omega)})$. 
\end{corollary}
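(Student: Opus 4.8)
The plan is to prove the $L^\infty$ bound $\|v_\varepsilon\|_{L^\infty(\R^n)}\le \max(1,\|g\|_{L^\infty(\R^n\setminus\omega)})$ via a maximum principle argument applied to the harmonic extension $v^\e_\varepsilon$. By the previous lemma we already know $v_\varepsilon\in L^\infty(\R^n)$, hence $v^\e_\varepsilon\in L^\infty(\R^{n+1}_+)$ with $\|v^\e_\varepsilon\|_{L^\infty(\R^{n+1}_+)}\le\|v_\varepsilon\|_{L^\infty(\R^n)}$ by \eqref{bdlinftyext}, and $v^\e_\varepsilon$ solves the mixed boundary value problem \eqref{equatsystmixbdval}. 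Set $\Lambda:=\max(1,\|g\|_{L^\infty(\R^n\setminus\omega)})$. First I would consider the truncated modulus $w:=(|v^\e_\varepsilon|^2-\Lambda^2)_+$, which lies in $H^1_{\rm loc}(\overline\R^{n+1}_+)$, and test the variational formulation of the system against an appropriate localization of $v^\e_\varepsilon w$ (multiplied by a cutoff needed because $\omega$ is bounded and one is working on the half-space).

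The key computation is the standard Kato/Stampacchia one: since $\Delta v^\e_\varepsilon=0$ in $\R^{n+1}_+$, for any nonnegative $\Phi\in H^1$ compactly supported in $\R^{n+1}_+\cup\omega$ one has, on the set $\{|v^\e_\varepsilon|^2>\Lambda^2\}$,
\begin{equation*}
\int_{\R^{n+1}_+}\nabla v^\e_\varepsilon\cdot\nabla(v^\e_\varepsilon\Phi)\,\de x=\frac{1}{\varepsilon}\int_\omega(1-|v^\e_\varepsilon|^2)|v^\e_\varepsilon|^2\Phi\,\de\mathscr{H}^n\,.
\end{equation*}
Choosing $\Phi=\chi^2 w$ with $\chi$ a suitable cutoff, one gets on the left a term $\int\chi^2|\nabla v^\e_\varepsilon|^2(\tfrac12|\nabla(|v^\e_\varepsilon|^2)|^2\text{-type})\ge$ (a coercive quantity controlling $\nabla w$) minus an error from $\nabla\chi$; on the right, since $\Lambda\ge1$, on $\{|v^\e_\varepsilon|^2>\Lambda^2\}$ we have $1-|v^\e_\varepsilon|^2<0$, so the boundary term is $\le0$. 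Because $v^\e_\varepsilon=g$ on $\R^n\setminus\omega$ and $|g|\le\Lambda$ there, the function $w$ vanishes on $\R^n\setminus\omega$, so no boundary contribution arises off $\omega$; and on $\partial^+$ of a large ball the cutoff kills everything. This forces $\nabla w\equiv0$, hence $w\equiv0$, i.e. $|v^\e_\varepsilon|\le\Lambda$ everywhere, and restricting to $\R^n$ gives $|v_\varepsilon|\le\Lambda$ a.e.

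An alternative, perhaps cleaner, route avoids the energy inequality: since $v^\e_\varepsilon$ is bounded and harmonic in $\R^{n+1}_+$, the scalar function $\varphi:=|v^\e_\varepsilon|^2$ is subharmonic there ($\Delta\varphi=2|\nabla v^\e_\varepsilon|^2\ge0$), with $\partial_\nu\varphi=2v^\e_\varepsilon\cdot\partial_\nu v^\e_\varepsilon=\frac{2}{\varepsilon}(1-|v^\e_\varepsilon|^2)|v^\e_\varepsilon|^2$ on $\omega$ and $\varphi=|g|^2\le\Lambda^2$ on $\R^n\setminus\omega$. On the (open) set $\{\varphi>\Lambda^2\}\cap\R^{n+1}_+$, its trace on $\omega$ satisfies $\partial_\nu\varphi<0$ (again because $\Lambda\ge1$), so $\varphi$ cannot attain a boundary maximum $>\Lambda^2$ on $\omega$ by the Hopf lemma, cannot attain an interior maximum by the strong maximum principle for subharmonic functions, and on $\R^n\setminus\omega$ it is $\le\Lambda^2$; combined with $\varphi(x)\to0$ as $|x|\to\infty$ (from the Poisson-kernel decay of the bounded function $v_\varepsilon$, or from \eqref{bdlinftyext} applied on far-away balls), this forces $\varphi\le\Lambda^2$ throughout $\overline\R^{n+1}_+$.

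The main obstacle I anticipate is the behavior at infinity on the half-space: one must justify that the maximum of $|v^\e_\varepsilon|^2$ is not "escaping to infinity", which requires knowing that $v^\e_\varepsilon$ decays (or at least does not exceed $\Lambda$) far out — this follows from the Poisson representation \eqref{poisson} and $\|v^\e_\varepsilon\|_{L^\infty}\le\|v_\varepsilon\|_{L^\infty}$, but deserves care since the domain is unbounded. The secondary technicality is handling the nonsmooth edge $\partial\omega$ where the Hopf-lemma argument does not directly apply; this is circumvented because the trace of $v^\e_\varepsilon$ off $\omega$ equals $g$ with $|g|\le\Lambda$, so the bad set $\{\varphi>\Lambda^2\}$ stays away from $\R^n\setminus\omega$ and the maximum-principle analysis only needs the interior and the smooth part $\omega$. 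Everything else is routine, so I would present the subharmonicity/Hopf argument with a brief remark on the decay at infinity, falling back to the energy inequality version if one prefers to avoid invoking regularity of $v^\e_\varepsilon$ up to $\omega$ (which is in any case available from Theorem~\ref{regint}).
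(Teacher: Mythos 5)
Your second (``cleaner'') route is essentially the paper's own proof: the paper sets $m_\eps:=\lambda^2-|v^\e_\eps|^2$ with $\lambda=\max(1,\|g\|_{L^\infty(\R^n\setminus\omega)})$, notes it is superharmonic, uses the sign of its normal derivative on $\omega$ wherever $|v^\e_\eps|>\lambda$, and concludes with the strong maximum principle and the Hopf boundary lemma, exactly your subharmonicity/Hopf argument for $\varphi=|v^\e_\eps|^2$. One correction: your claim that $\varphi(x)\to 0$ as $|x|\to\infty$ is false, since $|v_\eps|=|g|=1$ on $\R^n\setminus\omega$, so the harmonic extension does not decay; what is true (and is all you need) is $\limsup_{|x|\to\infty}|v^\e_\eps(x)|\leq\Lambda$, which follows by splitting the Poisson integral \eqref{poisson} into the contribution of the bounded set $\omega$ (which vanishes as $|x|\to\infty$) and of $\omega^c$ (where $|v_\eps|\leq\Lambda$); the bare inequality \eqref{bdlinftyext} alone does not give this. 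The paper sidesteps the issue at infinity entirely with a neater device: if the bound failed, then $v_\eps$ (continuous across $\partial\omega$ by Theorem \ref{regDirich}, with $|v_\eps|\leq\lambda$ off $\omega$) would attain its maximum over $\R^n$ at some interior point $x_0\in\omega$ with $|v_\eps(x_0)|>\lambda$, and then \eqref{bdlinftyext} forces $x_0$ to be a global minimum of $m_\eps$ over all of $\overline{\R}^{n+1}_+$, so the Hopf lemma applies at $x_0$ with no discussion of behavior at infinity. With that fix (or the paper's device) your argument is complete; your first, Stampacchia-type truncation route would also work but is not the one taken in the paper.
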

 
\begin{proof}
By Theorem \ref{regDirich}, $v^\e_\eps$ is smooth in $\R^{n+1}_+\cup\omega$ and continous in $\overline{\R}^{n+1}_+$. 
 We consider the function $m_\varepsilon:=\lambda^2-|v^\e_\varepsilon|^2$ with $\lambda:=\max(1,\|g\|_{L^\infty(\R^n\setminus\omega)})$. Then $m_\varepsilon$ satisfies
$$\begin{cases} 
-\Delta m_\varepsilon=2|\nabla v^\e_\varepsilon|^2\geq 0 & \text{in $\R^{n+1}_+$}\,,\\[8pt]
\displaystyle \frac{\partial m_\varepsilon}{\partial\nu}= -\frac{2}{\varepsilon}|v_\varepsilon^\e|^2(m_\varepsilon+1-\lambda^2) & \text{on $\omega$}\,,\\[8pt]
m_\varepsilon \geq 0 & \text{on $\R^n\setminus\omega$}\,.
\end{cases}
$$
Assume that $m_\varepsilon$ achieves its minimum over $\R^n$ at a point $x_0\in\omega$. Then $x_0$ is a point of maximum of $|v_\varepsilon|$, and hence $x_0$ is an absolute minima of 
$m_\varepsilon$ over the whole half space $\overline\R^{n+1}_+$ by \eqref{bdlinftyext}. If $m(x_0)< 0$ we obtain 
$\partial_\nu m_\varepsilon(x_0)> 0$. On the other hand, by the strong maximum maximum principle and the 
Hopf boundary lemma, we have $\partial_\nu m_\varepsilon(x_0)< 0$ which leads to contradiction.  
\end{proof}

 \begin{remark}
In the case where the exterior condition $g$ satisfies $\|g\|_{L^\infty(\R^n\setminus \omega)}\leq 1$, Corollary~\ref{modless1} provides the estimate 
$|v_\varepsilon|\leq 1$ which is very standard for the usual (local) Ginzburg-Landau equation. We emphasize that, in this case, we have  
$|v^\e_\varepsilon|\leq 1$. 
 \end{remark}

 \vskip10pt

 \section{$1/2$-harmonic maps: definitions, regularity, and examples}\label{1/2Harm}		       

\subsection{Definitions and regularity theory}\label{regtheo}
In this subsection we assume that  $\omega\subset \mathbb{R}^n$ is a bounded  open set with Lipschitz boundary. We start introducing the concept 
of {\it 1/2-harmonic map in $\omega$ with values into the sphere $\mathbb{S}^{m-1}$} as well as the related notion of {\it boundary harmonic map}. 
We shall then discuss their regularity in view of the existing theory for classical harmonic maps. 
To simplify slightly the presentation, we made the choice to focus on the case where the target manifold is a sphere. However definitions and results 
extend to  more general compact manifolds  without boundary, see Remark~\ref{generaltarget} at the end of this subsection.  

 \begin{definition}\label{defhalfharm}
 Let $v\in \widehat{H}^{1/2}(\omega;\mathbb{R}^{m})$ be such that $|v|=1$ a.e. in $\omega$. 
 We say that $v$ is {\it weakly 1/2-harmonic into $\mathbb{S}^{m-1}$ in $\omega$} if  
 \begin{equation}\label{criticconstrS}
 \left[\frac{\de}{\de t} \,\mathcal{E}\left(\frac{v+t\varphi}{|v+t\varphi|}\,,\omega\right)\right]_{t=0} =0
 \end{equation}
 for all $\varphi\in H^{1/2}_{00}(\omega;\mathbb{R}^m)\cap L^\infty(\omega)$ compactly supported in $\omega$. 
 \end{definition}

Writing explicitly  the criticality condition \eqref{criticconstrS}, we find the variational formulation of the Euler-Lagrange equation for  1/2-harmonic into 
$\mathbb{S}^{m-1}$ as stated in the following proposition. 

\begin{proposition}\label{caracthalfharm}
Let $v\in \widehat{H}^{1/2}(\omega;\mathbb{R}^{m})$ be such that $|v|=1$ a.e. in $\omega$. 
Then $v$ is a weak 1/2-harmonic into $\mathbb{S}^{m-1}$ in $\omega$ if and only if 
\begin{equation}\label{varformwkfracharm}
\big\langle  (-\Delta)^{\frac{1}{2}} v , \varphi\big\rangle_\omega =0 
\end{equation}
for all $\varphi \in H^{1/2}_{00}(\omega;\mathbb{R}^m)$  satisfying 
$$\varphi(x)\in T_{v(x)}\mathbb{S}^{m-1}\quad\text{a.e. in $\omega$}\,. $$
 \end{proposition}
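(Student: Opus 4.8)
The plan is to reduce the statement to the explicit first–variation identity
\[
\left[\frac{\de}{\de t}\,\mathcal E\Big(\tfrac{v+t\varphi}{|v+t\varphi|},\omega\Big)\right]_{t=0}=\big\langle(-\Delta)^{\frac12}v,\ \varphi-(v\cdot\varphi)v\big\rangle_\omega
\]
valid for every $\varphi\in H^{1/2}_{00}(\omega;\R^m)\cap L^\infty(\omega)$ compactly supported in $\omega$, and then to identify the admissible directions. For the set-up, fix such a $\varphi$. For $|t|$ small one has $|v+t\varphi|\geq 1-|t|\,\|\varphi\|_{L^\infty}>0$ a.e. on $\mathrm{supp}\,\varphi$ (since $|v|=1$ a.e. on $\omega$), so $w_t:=(v+t\varphi)/|v+t\varphi|$ is well defined a.e. on $\omega$, equals $v$ on $\omega\setminus\mathrm{supp}\,\varphi$, and we extend it by $v$ on $\omega^c$; thus $h_t:=w_t-v$ is supported in $\mathrm{supp}\,\varphi$. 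Writing $w_t=F(v+t\varphi)$ with $F(y):=y/|y|$, which is $C^\infty$ on the annulus $\{1/2\leq|y|\leq 2\}$ containing $v(x)+st\varphi(x)$ for $x\in\mathrm{supp}\,\varphi$ and $s\in[0,1]$, and using that a Lipschitz function (vanishing at $0$) of an $H^{1/2}\cap L^\infty$ map is again in $H^{1/2}\cap L^\infty$, one checks $h_t\in H^{1/2}_{00}(\omega;\R^m)\cap L^\infty(\omega)$, hence $w_t=v+h_t\in\widehat H^{1/2}(\omega;\R^m)$ by \eqref{v+H1/200} and $\mathcal E(w_t,\omega)$ makes sense.

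The first step is the first–variation computation. Since $t\mapsto\mathcal E(v+t h,\omega)$ is a quadratic polynomial with linear coefficient $\langle(-\Delta)^{\frac12}v,h\rangle_\omega$ (by \eqref{firstvarcalE}) and pure quadratic part $\mathcal E(h,\omega)=[h]^2_{H^{1/2}(\R^n)}$, we get $\mathcal E(v+h,\omega)=\mathcal E(v,\omega)+\langle(-\Delta)^{\frac12}v,h\rangle_\omega+\mathcal E(h,\omega)$ for all $h\in H^{1/2}_{00}(\omega;\R^m)$. Applying this with $h=h_t$: the first term is constant; the last is $O(\|h_t\|^2_{H^{1/2}(\R^n)})=O(t^2)$; and for the middle term I would use that $h_t/t\to\psi:=DF(v)\varphi=\varphi-(v\cdot\varphi)v$ in $H^{1/2}_{00}(\omega;\R^m)$ as $t\to0$. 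This convergence follows from the Taylor expansion $h_t=t\psi+t\rho_t$ with $\rho_t(x)=\int_0^1\!\big(DF(v(x)+st\varphi(x))-DF(v(x))\big)\varphi(x)\,\de s$: pointwise $|\rho_t|\leq Ct\|\varphi\|_{L^\infty}^2$, and $\rho_t=G_t(v,\varphi)$ with $G_t(a,b):=\int_0^1(DF(a+stb)-DF(a))b\,\de s$ satisfying $|G_t(a,b)-G_t(a',b')|\leq Ct(|a-a'|+|b-b'|)$ on the relevant compact region (here $F\in C^3$ is used), whence $[\rho_t]_{H^{1/2}(\R^n)}\leq Ct\big([v]_{H^{1/2}(\omega)}+[\varphi]_{H^{1/2}(\R^n)}\big)\to0$ and likewise in $L^2$. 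Since the restriction of $(-\Delta)^{\frac12}v$ to $H^{1/2}_{00}(\omega;\R^m)$ lies in $H^{-1/2}(\omega;\R^m)$ by \eqref{estinormH-1/2fraclap}, the displayed identity follows. One then notes $\psi=\varphi-(v\cdot\varphi)v\in H^{1/2}_{00}(\omega;\R^m)$ (product of $H^{1/2}_{00}\cap L^\infty$ with the bounded, finite-seminorm map $v$) and $v\cdot\psi=v\cdot\varphi-(v\cdot\varphi)|v|^2=0$ a.e., so $\psi$ is a tangential field; this already gives the implication ``\eqref{varformwkfracharm} for all tangential test fields'' $\Rightarrow$ ``$v$ weakly $1/2$-harmonic''.

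The converse is where the work is. Assuming $v$ weakly $1/2$-harmonic, Step 1 gives $\langle(-\Delta)^{\frac12}v,\varphi-(v\cdot\varphi)v\rangle_\omega=0$ for every $\varphi\in\mathscr D(\omega;\R^m)$; I must upgrade this to $\langle(-\Delta)^{\frac12}v,\psi\rangle_\omega=0$ for every tangential $\psi\in H^{1/2}_{00}(\omega;\R^m)$. Since $(-\Delta)^{\frac12}v\in H^{-1/2}(\omega)$, it suffices to approximate $\psi$ in $H^{1/2}(\R^n)$ by tangential fields in the range of the projection $P:\varphi\mapsto\varphi-(v\cdot\varphi)v$ applied to smooth compactly supported fields. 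First, truncating by $T_M(y):=y\min(1,M/|y|)$ — which is $1$-Lipschitz, continuous on $H^{1/2}(\R^n)$ (by a dominated–convergence argument on the Gagliardo integral), preserves vanishing outside $\omega$, and produces a scalar multiple of $\psi$ hence still tangential — reduces to $\|\psi\|_{L^\infty}\leq M<\infty$. Then pick $\varphi_k\in\mathscr D(\omega;\R^m)$ with $\varphi_k\to\psi$ in $H^{1/2}(\R^n)$ (by \eqref{densitysmoothH1/200}) and set $\tilde\varphi_k:=T_M(\varphi_k)$, so $\tilde\varphi_k$ is compactly supported in $\omega$, uniformly bounded by $M$, and $\tilde\varphi_k\to T_M(\psi)=\psi$ in $H^{1/2}(\R^n)$. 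With $\psi_k:=P\tilde\varphi_k$, Step 1 and weak $1/2$-harmonicity give $\langle(-\Delta)^{\frac12}v,\psi_k\rangle_\omega=0$, and since $v\cdot\psi=0$ one has $\psi_k=\psi+w_k-(v\cdot w_k)v$ with $w_k:=\tilde\varphi_k-\psi\to0$ in $H^{1/2}(\R^n)$, $\|w_k\|_{L^\infty}\leq 2M$, $w_k=0$ on $\omega^c$. Using that multiplication by a \emph{fixed} function in $H^{1/2}(\omega)\cap L^\infty(\omega)$ sends $L^\infty$-bounded $H^{1/2}$-null sequences (vanishing off $\omega$) to $H^{1/2}$-null sequences — split the increment of a product as $f(x)(g(x)-g(y))+g(y)(f(x)-f(y))$, bound the first term by $\|f\|_{L^\infty}[g]_{H^{1/2}}$ and apply dominated convergence to the second — one gets $(v\cdot w_k)v\to0$ in $H^{1/2}(\R^n)$, hence $\psi_k\to\psi$. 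Passing to the limit yields $\langle(-\Delta)^{\frac12}v,\psi\rangle_\omega=0$, completing the proof.

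The main obstacle is precisely this density step: the tangential projection $P$ is \emph{not} continuous on $H^{1/2}_{00}(\omega;\R^m)$ without an $L^\infty$ bound (since the product estimate on the fractional seminorm costs an $L^\infty$ norm of the varying factor), so the approximation of a general tangential test field has to be done through truncated, compactly supported competitors and a dominated-convergence argument for products against the fixed map $v$. By contrast, the first–variation computation, once the $H^{1/2}_{00}$-differentiability of $t\mapsto w_t$ (i.e. boundedness and first–order expansion of the Nemytskii operator $F(\cdot)$ on $H^{1/2}\cap L^\infty$) is established, is routine.
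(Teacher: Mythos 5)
Your proof is correct, and its core is the same as the paper's: identify the first variation of the constrained energy with $\big\langle (-\Delta)^{\frac12}v,\ \varphi-(v\cdot\varphi)v\big\rangle_\omega$ and then move between the two classes of test fields. The difference lies in how the two reductions are organized. In the paper's Step 2, for a \emph{tangential} $\varphi$ one has $|v+t\varphi|^2=1+t^2|\varphi|^2$ a.e.\ in $\omega$, so the expansion $v+t\varphi+O(t^2)$ needs no projection, and the passage to bounded, compactly supported tangential fields is dispatched as ``standard truncations and cut-off arguments'' --- which works precisely because modulus truncation and scalar cut-offs preserve tangency. You instead prove a single quantitative first-variation identity for \emph{every} bounded compactly supported $\varphi\in H^{1/2}_{00}(\omega;\R^m)$ (Taylor expansion of $y\mapsto y/|y|$ with $H^{1/2}$ control of the remainder), which makes the implication \eqref{varformwkfracharm}$\Rightarrow$\eqref{criticconstrS} immediate and bypasses the paper's density reduction to smooth $\varphi$ in its Step~1; for the converse you approximate a general tangential $\psi$ by $P\big(T_M(\varphi_k)\big)$ with $\varphi_k$ smooth, at the cost of two auxiliary facts: continuity of the radial truncation $T_M$ on $H^{1/2}$ (your dominated-convergence gloss really needs the generalized dominated convergence/subsequence trick, since the majorant varies with $k$), and the product lemma for multiplication by $v\otimes v$, where in the splitting $f(x)(g(x)-g(y))+g(y)(f(x)-f(y))$ you must treat the pairs with $x\notin\omega$ or $y\notin\omega$ separately using $g=0$ off $\omega$, because $v$ is neither bounded nor of finite Gagliardo energy outside $\omega$. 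These are minor repairs, not gaps. What your route buys is that the truncation/cut-off step the paper leaves implicit is carried out in full (the paper's scalar-multiplication shortcut avoids products with $v$ altogether and is the quicker path), and your explicit convention of extending the perturbed map by $v$ outside $\omega$ removes an ambiguity in Definition~\ref{defhalfharm} when $|v|\neq 1$ off $\omega$, a point the paper's proof glosses over.
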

 
 \begin{proof}
{\it Step 1.} We start proving that  \eqref{varformwkfracharm} implies \eqref{criticconstrS}. By the density of compactly supported smooth functions stated in \eqref{densitysmoothH1/200}, it is enough to consider the case where $\varphi\in \mathscr{D}(\omega;\mathbb{R}^m)$. Then we observe that $(\varphi\cdot v)v\in H^{1/2}_{00}(\omega;\R^m)$. Hence 
 $\psi := \varphi - (\varphi\cdot v)v$ belongs to  $H^{1/2}_{00}(\omega;\R^m)$, and it satisfies $\psi\cdot v= 0$ a.e. in~$\omega$.  
 Noticing that 
 $$\frac{v(x)+t\varphi(x)}{|v(x)+t\varphi(x)| }= v(x)+ t\psi(x) +O(t^2)\quad\text{as $t\to 0$}\,,$$
 we obtain by dominated convergence and assuming  \eqref{varformwkfracharm} that
$$  \left[\frac{\de}{\de t} \,\mathcal{E}\left(\frac{v+t\varphi}{|v+t\varphi|}\,,\omega\right)\right]_{t=0} =\big\langle  (-\Delta)^{\frac{1}{2}} v , \psi\big\rangle_\omega =0 \,,$$ 
and this first  step is complete. 
\vskip3pt

\noindent{\it Step 2.} We now prove the reverse implication. Let  $\varphi \in H^{1/2}_{00}(\omega;\mathbb{R}^m)$ be such that $\varphi\cdot v =0$ a.e. in~$\omega$. By standard 
truncations and cut-off arguments, we may assume without loss of generality that $\varphi$ is compactly supported in $\omega$ and that $\varphi\in L^\infty(\omega)$. Then, 
$$\frac{v(x)+t\varphi(x)}{|v(x)+t\varphi(x)| }=\frac{v(x)+t\varphi(x)}{\sqrt{1+t^2|\varphi(x)|^2} }= v(x)+ t\varphi(x) +O(t^2)\quad\text{as $t\to 0$}\,,$$
and we deduce again by dominated convergence and  assumption \eqref{criticconstrS} that 
$$ 0=\left[\frac{\de}{\de t} \,\mathcal{E}\left(\frac{v+t\varphi}{|v+t\varphi|}\,,\omega\right)\right]_{t=0} =\big\langle  (-\Delta)^{\frac{1}{2}} v , \varphi\big\rangle_\omega\,,$$
which ends the proof. 
 \end{proof}

 \begin{remark}\label{eq1/2harm}
 Let $v\in \widehat{H}^{1/2}(\omega;\mathbb{R}^{m})$  be a weak 1/2-harmonic map into $\mathbb{S}^{m-1}$ in $\omega$. In view of Proposition~\ref{caracthalfharm}, the Euler-Lagrange equation can be written in the form  
 \begin{equation}\label{EL1/2Harm}
  (-\Delta)^{\frac{1}{2}}  v  \ \bot  \ T_v\,\mathbb{S}^{m-1} \quad \text{ in $H^{-1/2}(\omega)$}\,,
 \end{equation}
 which is the clear analogue of the classical (weak) harmonic map system. Of course this equation is not completely explicit, but one can  rewrite it  computing the Lagrange multiplier associated to the constraint. In the case where $|v|=1$ a.e. in all of $\R^n$, it takes a quite simple form. 
Indeed, for $\varphi\in\mathscr{D}(\omega;\R^m)$, Proposition~\ref{caracthalfharm} yields 
 $$\big\langle  (-\Delta)^{\frac{1}{2}} v , \varphi\big\rangle_\omega =\big\langle  (-\Delta)^{\frac{1}{2}} v , (\varphi\cdot v)v \big\rangle_\omega\,.$$ 
Using the representation \eqref{deffraclap} and the pointwise identity (based on the fact that $|v|=1$), 
$$\big(v(x)-v(y)\big)\cdot\big((\varphi(x)\cdot v(x))v(x) - (\varphi(y)\cdot v(y))v(y)\big)= \frac{1}{2}\,|v(x)-v(y)|^2\big(\varphi(x)\cdot v(x)+\varphi(y)\cdot v(y)\big) \,, $$
we obtain
 $$\big\langle  (-\Delta)^{\frac{1}{2}} v , (\varphi\cdot v)v \big\rangle_\omega = \frac{\gamma_n}{2}\iint_{\omega\times\R^n} \frac{|v(x)-v(y)|^2}{|x-y|^{n+1}}\,v(x)\cdot\varphi(x)\,\de x\de y\,.$$
The identity above then leads to  
 $$  (-\Delta)^{\frac{1}{2}} v(x) = \left(\frac{\gamma_n}{2}\int_{\R^n} \frac{|v(x)-v(y)|^2}{|x-y|^{n+1}}\,\de y\right)v(x)\quad\text{in $\mathscr{D}'(\omega)$}\,,$$
 which is again in clear analogy with the standard harmonic map system into a sphere. 
 \end{remark}

We now introduce what we call {\it $\mathbb{S}^{m-1}$-boundary harmonic maps}. Those maps correspond to critical points of the Dirichlet energy under the constraint to be $\mathbb{S}^{m-1}$-valued on a prescribed portion of the boundary.

 \begin{definition}\label{defFBHarm}
Let  $\Omega\subset \R^{n+1}_+$ be an admissible bounded  open set, and let $\;u\in H^1(\Omega;\mathbb{R}^m)$ be such that $|u|=1$ $\mathscr{H}^n$-a.e. on $\partial^0 \Omega$. 
We say that $u$ is {\it weakly harmonic  in $\Omega$ with respect to the (partially) free boundary condition $u(\partial^0\Omega)\subset \mathbb{S}^{m-1}$} if  
\begin{equation}\label{weakformharmmap}
\int_\Omega \nabla u \cdot \nabla \Phi\,\de x=0
\end{equation}
for all $\Phi \in H^1(\Omega;\mathbb{R}^m)\cap L^\infty(\Omega)$ compactly supported  $\Omega\cup\partial^0 \Omega$ and satisfying 
\begin{equation}\label{condtestharmon}
\Phi(x)\in T_{u(x)}\mathbb{S}^{m-1} \quad\text{$\mathscr{H}^n$-a.e. on $\partial^0 \Omega$}\,.
\end{equation}
In short, we shall say that $u$ is a {\it weak $(\mathbb{S}^{m-1}, \partial^0\Om)$-boundary  harmonic map in $\Omega$}.  
\end{definition}

Defintion \ref{defFBHarm} can be motivated by the fact that 
$$\left[\frac{\de}{\de t} \left(\frac{1}{2} \int_\Omega |\nabla u_t|^2\,\de x \right)\right]_{t=0} = \int_\Omega \nabla u \cdot \nabla \Phi\,\de x$$ 
for variations $u_t$ of the form
$$u_t:=\frac{u+t\Phi}{\sqrt{1+t^2|\Phi|^2}}\,,$$
with $\Phi$ satisfying \eqref{condtestharmon}. Such variations $u_t$ belong to $H^1(\Omega;\mathbb{R}^m)$, and they satisfy the constraint $|u_t|=1$ $\mathscr{H}^n$-a.e. on 
$\partial^0 \Omega$ by \eqref{condtestharmon}.  Actually, if  $u \in H^1(\Omega;\mathbb{R}^m)$ is a  weak $(\mathbb{S}^{m-1}, \partial^0\Om)$-boundary 
harmonic map in~$\Omega$, a standard  truncation argument shows that \eqref{weakformharmmap} 
holds for any  $\Phi \in H^1(\Omega;\mathbb{R}^m)$ with compact support support in  $\Omega\cup\partial^0 \Omega$ and satisfying \eqref{condtestharmon}.  Moreover, choosing $\Phi$ with compact support in $\Omega$ in formula \eqref{weakformharmmap} shows that $\Delta u=0$ in $\Omega$.  
Integrating by parts in \eqref{weakformharmmap} then yields 
$$\int_{\partial^0 \Omega} \frac{\partial u}{\partial\nu}\cdot \Phi \,\de \mathscr{H}^n =0$$
for all smooth functions $\Phi$ compactly supported in  $\Omega\cup\partial^0 \Omega$ and satisfying \eqref{condtestharmon}. In general the integral above has of course to be 
understood in the $H^{-1/2}-H^{1/2}_{00}$ duality sense. Therefore, the weak $(\mathbb{S}^{m-1}, \partial^0\Om)$-boundary harmonic map system can be reformulated as 
\begin{equation}\label{eqweakFBCharm}
\begin{cases}
\Delta u = 0 & \text{in $\Omega$}\,,\\[8pt]
\displaystyle \frac{\partial u}{\partial\nu} \ \bot  \ T_u\,\mathbb{S}^{m-1} & \text{in $H^{-1/2}(\partial^0 \Omega)$}\,.
\end{cases}
\end{equation}
\vskip3pt

\begin{remark}[\bf Harmonic maps with partially free boundary]\label{boundharmVsFreebound}
In view of the discussion above, let us mention that $(\mathbb{S}^{m-1}, \partial^0\Om)$-boundary harmonic maps belong to a larger class of harmonic maps known in the literature as {\it harmonic maps with partially free boundary}, see \cite{Bal,DG,DS,DS2,GJ,Ham,HL,Sch} and references therein. In most studies, one considers a smooth compact Riemannian manifold $\mathcal{M}$ without boundary (that we can assume to be isometrically embedded in some Euclidean space by the Nash embedding Theorem), and $\mathcal{N}$ a smooth closed submanifold of $\mathcal{M}$. The boundary portion $\partial^0\Omega$ is called {\it the partially free boundary}, and $\mathcal{N}$ is {\it the supporting manifold}. Then $\mathcal{M}$-valued (weak) harmonic maps in $\Om$ with the partially free boundary condition $u(\partial^0\Om)\subset\mathcal{N}$ are defined as critical points of the Dirichlet energy  under the constraints  $u(x)\in\mathcal{M}$ for a.e. $x\in\Om$ and $u(x)\in\mathcal{N}$ for $\mathscr{H}^n$-a.e. $x\in\partial^0\Om$. For $(\mathbb{S}^{m-1}, \partial^0\Om)$-boundary harmonic maps, we may consider the submanifold 
$\mathcal{N}=\mathbb{S}^{m-1}$ of the target $\mathcal{M}=\R^m$. However, to apply known results on harmonic maps, the compactness of $\mathcal{M}$ is usually required. Similarly to \cite[Sec.~3]{Mos}, a way to avoid this problem is to consider {\it bounded} $(\mathbb{S}^{m-1}, \partial^0\Om)$-boundary harmonic maps, noticing that $\mathbb{S}^{m-1}$ can be viewed as a submanifold of a flat torus $\mathcal{M}=\mathbb{R}^m/r\mathbb{Z}^m$ with factor $r>2$.  
Indeed, if $u$ is a $(\mathbb{S}^{m-1}, \partial^0\Om)$-boundary harmonic map in $\Omega$ satisfying $\|u\|_{L^\infty(\Om)}<r/2$, then it can be considered as a $\mathcal{M}$-valued harmonic map. We finally point out that such $L^\infty$-bound is not too restrictive in many applications.  For instance, if we consider an additional 
Dirichlet boundary condition $u=g$ on $\partial^+\Om$ with $g:\overline{\partial^+\Om}\to\mathbb{S}^{m-1}$ smooth, then we usually obtain 
$\|u\|_{L^\infty(\Om)}\leq 1$ through some maximum principle. 
\end{remark}

 As a consequence of \eqref{defNeumOp}, Lemma \ref{repnormderfraclap}, and Proposition \ref{caracthalfharm}, we can relate 1/2-harmonic maps into $\mathbb{S}^{m-1}$  to $(\mathbb{S}^{m-1}, \partial^0\Om)$-boundary harmonic maps, as we already did for the fractional Ginzburg-Landau equation. 
  
 \begin{proposition}[\bf Criticality transfer]\label{fracweakharm}
 Let $v\in \widehat H^{1/2}(\omega;\mathbb{R}^{m})$  be a weak 1/2-harmonic map into $\mathbb{S}^{m-1}$ in~$\omega$,  
 and let $v^\e$ be its harmonic extension to $\mathbb{R}^{n+1}_+$ given by \eqref{poisson}.
 Then $v^\e$ is  a weak $(\mathbb{S}^{m-1}, \partial^0\Om)$-boundary  harmonic map in $\Omega$ for every admissible bounded  open set $\Omega\subset \R^{n+1}_+$  such that $\overline{\partial^0\Omega}\subset \omega$. 
  \end{proposition}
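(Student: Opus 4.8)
The plan is to transfer the criticality condition \eqref{criticconstrS} for $v$ directly into the weak formulation \eqref{weakformharmmap} for $v^\e$, using the three tools already available: the definition \eqref{defNeumOp} of the Neumann operator $\partial_\nu v^\e$, Lemma~\ref{repnormderfraclap} identifying $\partial_\nu v^\e$ with $(-\Delta)^{\frac12}v$ in the $H^{-1/2}$--$H^{1/2}_{00}$ duality, and Proposition~\ref{caracthalfharm} characterizing weak 1/2-harmonicity by the vanishing of $\langle (-\Delta)^{\frac12}v,\varphi\rangle_\omega$ on tangential test functions. First I would fix an admissible bounded open set $\Omega\subset\R^{n+1}_+$ with $\overline{\partial^0\Omega}\subset\omega$. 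Note that by \eqref{H1/2gH1/2loc}-type considerations (or rather by Lemma~\ref{hatH1/2toH1}, since $v\in\widehat H^{1/2}(\omega;\R^m)$), the extension $v^\e$ lies in $H^1_{\rm loc}(\R^{n+1}_+\cup\omega;\R^m)$, and since $\overline{\partial^0\Omega}$ is a compact subset of $\omega$, we get $v^\e\in H^1(\Omega;\R^m)$. Moreover, since $|v|=1$ a.e.\ in $\omega$ and $\partial^0\Omega\subset\omega$, the trace of $v^\e$ on $\partial^0\Omega$ equals $v$ and satisfies $|v^\e|=1$ $\mathscr H^n$-a.e.\ there; thus $v^\e$ is an admissible competitor for Definition~\ref{defFBHarm}.

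Next, given a test function $\Phi\in H^1(\Omega;\R^m)\cap L^\infty(\Omega)$ compactly supported in $\Omega\cup\partial^0\Omega$ with $\Phi(x)\in T_{v(x)}\mathbb S^{m-1}$ for $\mathscr H^n$-a.e.\ $x\in\partial^0\Omega$, I would extend $\Phi$ by zero to a function on $\R^{n+1}_+$ (still in $H^1$, compactly supported in $\R^{n+1}_+\cup\omega$) and apply the definition \eqref{defNeumOp} of $\partial_\nu v^\e$ with $\Omega=\R^{n+1}_+$ and $\omega$ in place of $\Omega$; the point is that the trace $\varphi:=\Phi_{|\R^n}$ belongs to $H^{1/2}_{00}(\omega;\R^m)$ because $\Phi$ vanishes outside a compact subset of $\Omega\cup\partial^0\Omega\subset\R^{n+1}_+\cup\omega$. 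One subtlety: $\varphi$ is only guaranteed to be tangential on $\partial^0\Omega$, not on all of $\omega$. I would fix this by first multiplying $\Phi$ by a cutoff $\chi\in\mathscr D(\omega)$ with $\chi\equiv1$ on a neighborhood of $\overline{\partial^0\Omega}$ if needed — actually, since $\Phi$ is already compactly supported in $\Omega\cup\partial^0\Omega$, its trace is already supported in $\partial^0\Omega$, so $\varphi(x)\in T_{v(x)}\mathbb S^{m-1}$ holds for a.e.\ $x\in\omega$ automatically (being trivially true off the support of $\varphi$). Hence $\varphi$ is an admissible test function in Proposition~\ref{caracthalfharm}, giving $\langle(-\Delta)^{\frac12}v,\varphi\rangle_\omega=0$. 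Combining \eqref{defNeumOp}, Lemma~\ref{repnormderfraclap}, and this vanishing, and observing that the integral $\int_{\R^{n+1}_+}\nabla v^\e\cdot\nabla\Phi\,\de x$ reduces to $\int_\Omega\nabla v^\e\cdot\nabla\Phi\,\de x$ because $\Phi\equiv0$ outside $\Omega$, yields exactly \eqref{weakformharmmap}.

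The main obstacle — really the only delicate point — is the bookkeeping about function spaces and supports: ensuring that $v^\e\in H^1(\Omega)$ (from Lemma~\ref{hatH1/2toH1} together with compactness of $\overline{\partial^0\Omega}$ in $\omega$), that the zero-extension of a tangential test function $\Phi$ remains an admissible competitor in \eqref{defNeumOp}, and that its trace lands in $H^{1/2}_{00}(\omega;\R^m)$ and is tangential a.e.\ in $\omega$. Once these are verified, the identity \eqref{weakformharmmap} for $v^\e$ is a formal consequence of the already-established duality identities, and the conclusion that $v^\e$ is a weak $(\mathbb S^{m-1},\partial^0\Omega)$-boundary harmonic map follows from Definition~\ref{defFBHarm}.
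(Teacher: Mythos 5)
Your proposal is correct and follows exactly the route the paper intends (the paper gives no separate proof, simply citing \eqref{defNeumOp}, Lemma~\ref{repnormderfraclap}, and Proposition~\ref{caracthalfharm}): extend the tangential test field $\Phi$ by zero, observe its trace lies in $H^{1/2}_{00}(\partial^0\Omega;\R^m)\subset H^{1/2}_{00}(\omega;\R^m)$ and is trivially tangential off its support, and combine the duality identity with the vanishing of $\langle(-\Delta)^{\frac12}v,\varphi\rangle_\omega$. The support/space bookkeeping you flag (including that a non-smooth $H^1$ extension may be used in \eqref{defNeumOp}, by harmonicity of $v^\e$ and density of smooth compactly supported functions) is exactly the routine verification the paper leaves implicit.
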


For what concerns regularity, let us emphasize that a full regularity result  cannot hold in general neither for weakly $1/2$-harmonic maps into $\mathbb{S}^{m-1}$ nor for weak $(\mathbb{S}^{m-1}, \partial^0\Om)$-boundary  harmonic maps due to topological constraints. In the case of boundary harmonic maps, the regularity issue is of course the regularity at the free boundary $\partial^0\Om$, since the system is the usual Laplace equation in $\Om$ (so that boundary harmonic maps are smooth in the interior of the domain). At the end of the next subsection, we shall give examples of  weakly $1/2$-harmonic maps from $\R^2$ into $\mathbb{S}^1$ which are not continuous at the origin (see Proposition~\ref{homog1/2harm}), the prototypical one being $v(x)=\frac{x}{|x|}$. Obviously, such examples do not exclude some partial regularity to hold, but dealing with general weak solutions we actually do not expect any reasonable kind of regularity by analogy with the classical harmonic map equation into a manifold and the famous counterexample of {\sc Rivi\`ere}~\cite{Riv} (of a weak harmonic map from the three dimensional ball into $\mathbb{S}^2$ which is everywhere discontinuous). In the context of $1/2$-harmonic maps into spheres, we are not aware of an analoguous counterexample to regularity, but we believe that it should exist.  
As it is the case for harmonic maps into a manifold, it is then reasonnable to ask for an extra assumption on a weak $1/2$-harmonic map to derive at least partial regularity. The (usual) assumption we make is 
either {\it energy minimality} or {\it stationarity}, {\it i.e.}, criticality under inner variations. We now recall these notions for $1/2$-harmonic maps and for boundary harmonic maps.

 \begin{definition}
 Let $v\in \widehat{H}^{1/2}(\omega;\mathbb{R}^{m})$ be such that $|v|=1$ a.e. in $\omega$. We say that $v$ is a {\it minimizing 1/2-harmonic map into $\mathbb{S}^{m-1}$} in $\omega$ if  
 $$\mathcal{E}(v,\omega)\leq \mathcal{E}(\tilde v,\omega)$$
 for all $\tilde v\in \widehat{H}^{1/2}(\omega;\mathbb{R}^{m})$ such that $|\tilde v|=1$ a.e. in $\omega$, and $\tilde v-v$ is compactly supported in~$\omega$. 
 \end{definition}

\begin{definition}\label{defFBminiHarm}
Let $\Omega\subset \R^{n+1}_+$ be an admissible bounded  open set, and let $u\in H^1(\Omega;\mathbb{R}^m)$ be such that $|u|=1$ $\mathscr{H}^n$-a.e. on $\partial^0\Omega$. 
We say that $u$ is a {\it minimizing harmonic map in $\Omega$ with respect to the (partially)  free boundary condition $u(\partial^0 \Omega)\subset \mathbb{S}^{m-1}$} if 
$$\frac{1}{2} \int_\Omega |\nabla u|^2\,\de x\leq \frac{1}{2} \int_\Omega |\nabla \tilde u|^2\,\de x$$
for all $\tilde u \in H^1(\Omega;\mathbb{R}^m)$  such that $|\tilde u|=1$ $\mathscr{H}^n$-a.e. on $\partial^0 \Omega$, and $\tilde u-u$ is compactly supported in $\Omega\cup\partial^0 \Omega$. 
In short, we shall say that $u$ is a {\it minimizing $(\mathbb{S}^{m-1}, \partial^0\Om)$-boundary harmonic map} in~$\Omega$. 
\end{definition}

Obviously, if $v\in \widehat{H}^{1/2}(\omega;\mathbb{R}^{m})$ is a minimizing 1/2-harmonic map into $\mathbb{S}^{m-1}$ in $\omega$, then $v$ is a weak  1/2-harmonic map into $\mathbb{S}^{m-1}$ in $\omega$. In the same way, minimizing $(\mathbb{S}^{m-1}, \partial^0\Om)$-boundary harmonic maps are weak $(\mathbb{S}^{m-1}, \partial^0\Om)$-boundary harmonic maps. As already pursued for the  fractional Ginzburg-Landau equation, the minimality of a 1/2-harmonic map can be transfered to its harmonic extension with the help of Corollary ~\ref{minenergdirchfrac}. 

\begin{proposition}[\bf Minimality transfer]\label{fracminharm}
Let $v\in \widehat H^{1/2}(\omega;\mathbb{R}^{m})$ be a minimizing 1/2-harmonic map into $\mathbb{S}^{m-1}$ in~$\omega$.  
Let $v^\e$ be the harmonic extension of 
 $v$ in $\mathbb{R}^{n+1}_+$ given by \eqref{poisson}. Then $v^\e$ is  a minimizing $(\mathbb{S}^{m-1}, \partial^0\Om)$-boundary  harmonic map in $\Omega$ for every admissible bounded  open set $\Omega\subset\R^{n+1}_+$  such that $\overline{\partial^0\Omega}\subset \omega$.   
\end{proposition}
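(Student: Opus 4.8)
The plan is to deduce the minimality transfer for $1/2$-harmonic maps directly from Corollary~\ref{minenergdirchfrac}, by the same reasoning already used for the Ginzburg-Landau boundary energy, observing that the sphere constraint is preserved by the construction. Let $\Omega\subset\R^{n+1}_+$ be an admissible bounded open set with $\overline{\partial^0\Omega}\subset\omega$, and let $\tilde u\in H^1(\Omega;\R^m)$ be a competitor with $|\tilde u|=1$ $\mathscr H^n$-a.e. on $\partial^0\Omega$ and $\tilde u-v^\e$ compactly supported in $\Omega\cup\partial^0\Omega$. Applying Corollary~\ref{minenergdirchfrac} with $u=\tilde u$ (noting that $v^\e$ is precisely the harmonic extension of $v\in\widehat H^{1/2}(\omega;\R^m)\supset\widehat H^{1/2}(\partial^0\Omega;\R^m)$) yields
\begin{equation}\label{transferineq}
\frac12\int_\Omega|\nabla\tilde u|^2\,\de x-\frac12\int_\Omega|\nabla v^\e|^2\,\de x\geq \mathcal E(\tilde u_{|\R^n},\omega)-\mathcal E(v,\omega)\,,
\end{equation}
where $\tilde u_{|\R^n}$ denotes the trace of $\tilde u$ on $\partial^0\Omega$ extended by $v$ outside $\partial^0\Omega$. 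Thus the right-hand side of \eqref{transferineq} is nonnegative provided $\tilde u_{|\R^n}$ is an admissible competitor for the minimality of $v$, i.e. provided $|\tilde u_{|\R^n}|=1$ a.e. in $\omega$ and $\tilde u_{|\R^n}-v$ is compactly supported in $\omega$.

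The second point is immediate: outside $\partial^0\Omega$ we have $\tilde u_{|\R^n}=v$ by definition, and on $\partial^0\Omega$ the difference $\tilde u-v^\e$ has compact support in $\Omega\cup\partial^0\Omega$, hence its trace has compact support in $\partial^0\Omega$; since $\overline{\partial^0\Omega}\subset\omega$, the set $\{\tilde u_{|\R^n}\neq v\}$ is a compact subset of $\omega$. The first point is where the sphere constraint enters: on $\partial^0\Omega$ we have $|\tilde u|=1$ $\mathscr H^n$-a.e. by hypothesis on the competitor, and outside $\partial^0\Omega$ (but inside $\omega$) we have $\tilde u_{|\R^n}=v$, so $|\tilde u_{|\R^n}|=|v|=1$ a.e. there; hence $|\tilde u_{|\R^n}|=1$ a.e. in $\omega$. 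Consequently $\tilde u_{|\R^n}$ is an admissible comparison map for the minimizing $1/2$-harmonic map $v$, so $\mathcal E(\tilde u_{|\R^n},\omega)\geq\mathcal E(v,\omega)$, and \eqref{transferineq} gives $\frac12\int_\Omega|\nabla v^\e|^2\,\de x\leq\frac12\int_\Omega|\nabla\tilde u|^2\,\de x$. This is exactly the statement that $v^\e$ is a minimizing $(\mathbb S^{m-1},\partial^0\Omega)$-boundary harmonic map in $\Omega$.

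One small technical point to address is the measurability/integrability of the trace: by Lemma~\ref{hatH1/2toH1} (or Remark~\ref{H1/2loctoH1loc}, since minimizing $1/2$-harmonic maps lie in $\widehat H^{1/2}\cap L^\infty$, hence in $H^{1/2}_{\rm loc}$) the extension $v^\e$ lies in $H^1_{\rm loc}(\overline\R^{n+1}_+;\R^m)$, so $v^\e$ has a well-defined trace on $\partial^0\Omega$ equal to $v$; together with $\tilde u-v^\e\in H^1$ compactly supported in $\Omega\cup\partial^0\Omega$, this guarantees $\tilde u$ has a trace on $\partial^0\Omega$ lying in $\widehat H^{1/2}(\partial^0\Omega;\R^m)$, as already recorded in the proof of Corollary~\ref{minenergdirchfrac}. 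I do not anticipate any genuine obstacle here: the entire argument is a direct application of Corollary~\ref{minenergdirchfrac} combined with the elementary observation that gluing the competitor's trace to $v$ outside $\partial^0\Omega$ preserves both the sphere constraint and the compact-support condition. The only thing that requires a line of care is verifying that the constraint $|\tilde u_{|\R^n}|=1$ holds on all of $\omega$ and not merely on $\partial^0\Omega$, which is precisely where the matching $\tilde u_{|\R^n}=v$ and $|v|=1$ outside $\partial^0\Omega$ is used.
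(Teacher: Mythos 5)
Your argument is correct and is exactly the route the paper intends: Proposition~\ref{fracminharm} is stated there as a direct consequence of Corollary~\ref{minenergdirchfrac}, using precisely your observation that the glued trace (competitor's trace on $\partial^0\Omega$, equal to $v$ elsewhere) is $\mathbb{S}^{m-1}$-valued in $\omega$ and differs from $v$ on a compact subset of $\omega$, so the minimality of $v$ makes the right-hand side of \eqref{transferineq} nonnegative. Nothing further is needed.
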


As mentioned above, the regularity theory for harmonic maps also deals with stationary harmonic maps. 
For boundary harmonic maps, the stationarity criteria has to allow inner variations up to the partially free boundary $\partial^0\Om$,  
 and it leads to the following definition. 

\begin{definition}\label{defstation}
Let $\Omega\subset \R^{n+1}_+$ be an admissible bounded open set, and let $u\in H^1(\Omega;\mathbb{R}^m)$ be a weak $(\mathbb{S}^{m-1}, \partial^0\Om)$-boundary  harmonic map in $\Omega$.  
We say that $u$ is  {\it stationary} if
\begin{equation}\label{compderivstat}
\left[\frac{\de}{\de t} \left(\frac{1}{2}\int_\Omega \big|\nabla(u\circ \Phi_t)\big|^2\,\de x \right)\right]_{t=0} =0 
\end{equation}
for any differentiable 1-parameter group of smooth diffeomorphisms $\Phi_t:\R^{n+1}\to\R^{n+1}$ satisfying 
\begin{itemize}[leftmargin=22pt]
\item $\Phi_0={\rm id}_{\R^{n+1}}$; 
\item $\Phi_t(\R^n)\subset\R^n$; 
\item $\Phi_t-{\rm id}_{\R^{n+1}}$ is compactly supported, and  
${\rm supp}(\Phi_t-{\rm id}_{\R^{n+1}})\cap\overline{\R}^{n+1}_+\subset \Omega\cup\partial^0 \Omega$. 
\end{itemize}
\end{definition}

\begin{remark}\label{stresstens}
It is well known that the usual stationarity definition can be recast in terms of the infinitesimal generator $\mathbf{X}$ of the 1-parameter group $\{\Phi_t\}_{t\in\R}$ of diffeomorphisms (see {\it e.g.} \cite[Chapter~2.2]{Sim}).  In the free boundary context, a map $u\in H^1(\Omega;\mathbb{R}^m)$  is stationary in $\Omega$ up to the free boundary $\partial^0\Om$ if and only if 
\begin{equation}\label{statcondVectF}
\int_{\Om}\bigg( |\nabla u|^2{\rm div}\mathbf{X} -2\sum_{i,j=1}^{n+1} (\partial_i u\cdot\partial_j u)\partial_j\mathbf{X}_i\bigg)\,\de x =0
\end{equation}
for all vector fields $\mathbf{X}=(\mathbf{X}_1,\ldots,\mathbf{X}_{n+1})\in C^1(\overline{\Om};\R^{n+1})$ compactly supported in $\Om\cup\partial^0\Om$ and satisfying $\mathbf{X}_{n+1}=0$ on $\partial^0\Om$ (the left hand side of \eqref{statcondVectF} being exactly minus twice the value of the derivative in~\eqref{compderivstat}). Moreover, 
using the so-called {\it stress-energy tensor} $T=(T_i)_{i=1}^{n+1}$ given by 
$$(T_{i})_j(u):= |\nabla u|^2\delta_{ij} - 2(\partial_i u\cdot\partial_j u)\,,\quad 1\leq j\leq n+1\,,$$
identity \eqref{statcondVectF} can be rewritten as the following system of conservation laws
\begin{equation}\label{eqenergtens}
\begin{cases}
\displaystyle {\rm div}\, T_{i} = 0 & \text{in $\Om\,$ for each $i\in\{1,\ldots,n+1\}$}\,,\\
T_{i}\cdot \nu = 0 & \text{on $\partial^0\Om\,$ for each $i\in\{1,\ldots,n\}$}\,,
\end{cases}
\end{equation}
in the sense of distributions. 
\end{remark}

\begin{remark}\label{depbound}
For any $u\in H^1(\Om;\R^m)$ satisfying $\Delta u=0$ in $\Omega$, it turns out that the integral in the left hand side of \eqref{statcondVectF} only depends on the trace of $(\mathbf{X}_1,\ldots,\mathbf{X}_n)$ on $\partial^0\Om$. Indeed, if $u$ is harmonic in $\Omega$, then we have ${\rm div}\, T=0$ in $\Omega$. Therefore, if $\mathbf{X}$ vanishes on $\partial^0\Om$,  integrating by parts this integral shows  that it is equal to zero (no matter what the trace of $u$ on $\partial^0\Om$ is). 
\end{remark}

\begin{remark}
Note that minimizing boundary harmonic maps are always stationary, but a stationary boundary harmonic map might not be minimizing. On the other hand, if a boundary harmonic map is smooth enough  
up to the free boundary $\partial^0\Om$, then it is stationary. Indeed, under such a smoothness assumption a simple computation shows that a solution of \eqref{eqweakFBCharm} satisfies \eqref{eqenergtens}.   
\end{remark}

Concerning the 1/2-Dirichlet energy, we define the notion of stationarity in the usual way through inner variations. 

\begin{definition}\label{defstationfrac}
Let $v\in \widehat H^{1/2}(\omega;\mathbb{R}^{m})$ be a weak 1/2-harmonic map into $\mathbb{S}^{m-1}$ in~$\omega$. We say that $v$  is  {\it stationary} in $\omega$ if 
$$ \left[\frac{\de}{\de t} \,\mathcal{E}\big(v\circ\phi_t,\omega\big)\right]_{t=0} =0$$
for any differentiable 1-parameter group of smooth diffeomorphisms $\phi_t:\R^{n}\to\R^{n}$ satisfying 
\begin{itemize}[leftmargin=22pt]
\item $\phi_0={\rm id}_{\R^{n}}$; 
\item $\phi_t-{\rm id}_{\R^{n}}$ is compactly supported, and  
${\rm supp}(\phi_t-{\rm id}_{\R^{n}})\subset \omega$. 
\end{itemize}
\end{definition}

Another consequence of Corollary ~\ref{minenergdirchfrac} is that  stationarity for 1/2-harmonic maps implies stationarity for harmonic extensions as stated in the following proposition. 

\begin{proposition}[\bf Stationarity transfer]\label{implstat}
Let $v\in \widehat H^{1/2}(\omega;\mathbb{R}^{m})$ be a stationary weak 1/2-harmonic map into $\mathbb{S}^{m-1}$ in~$\omega$. Let $v^\e$ be the harmonic extension of 
 $v$ in $\mathbb{R}^{n+1}_+$ given by \eqref{poisson}. Then $v^\e$ is  a stationary weak $(\mathbb{S}^{m-1}, \partial^0\Om)$-boundary  harmonic map in $\Omega$ for every admissible bounded  open set $\Omega\subset \R^{n+1}_+$  such that $\overline{\partial^0\Omega}\subset \omega$. 
\end{proposition}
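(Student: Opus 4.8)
The plan is to reduce the stationarity identity for $v^\e$ to the stationarity identity for $v$ via the key observation already alluded to in the introduction: the first inner variation up to $\omega$ of the Dirichlet energy of the harmonic extension $v^\e$ equals the first inner variation of the Dirichlet $1/2$-energy of $v$. Concretely, let $\mathbf{X}\in C^1(\overline\Omega;\R^{n+1})$ be compactly supported in $\Omega\cup\partial^0\Omega$ with $\mathbf{X}_{n+1}=0$ on $\partial^0\Omega$, and let $\{\Phi_t\}$ be the flow it generates; by restricting $\Phi_t$ to $\R^n$ we obtain a $1$-parameter group $\{\phi_t\}$ of diffeomorphisms of $\R^n$, compactly supported in $\partial^0\Omega\subset\omega$, generated by the vector field $X:=(\mathbf{X}_1,\dots,\mathbf{X}_n)|_{\R^n}$. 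First I would verify that $u\circ\Phi_t$ is an admissible competitor in the sense of Corollary~\ref{minenergdirchfrac} applied with $u=v^\e$ — that is, $(v^\e\circ\Phi_t)-v^\e$ is compactly supported in $\Omega\cup\partial^0\Omega$ (immediate from the support condition on $\mathbf{X}$), and its trace on $\partial^0\Omega$ is $v\circ\phi_t$.

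Next I would combine two facts. On one hand, Corollary~\ref{minenergdirchfrac} gives, for every $t$,
\begin{equation}\label{statproofa}
\frac{1}{2}\int_\Omega|\nabla(v^\e\circ\Phi_t)|^2\,\de x-\frac12\int_\Omega|\nabla v^\e|^2\,\de x\geq \mathcal{E}(v\circ\phi_t,\omega)-\mathcal{E}(v,\omega)\,,
\end{equation}
where on the right the trace $v\circ\phi_t$ on $\partial^0\Omega$ is extended by $v$ outside, which is consistent since $\phi_t=\mathrm{id}$ outside $\partial^0\Omega$. On the other hand, at $t=0$ both sides vanish, so \eqref{statproofa} says the smooth (in $t$) function $t\mapsto \frac12\int_\Omega|\nabla(v^\e\circ\Phi_t)|^2 - \mathcal{E}(v\circ\phi_t,\omega)$ attains a minimum at $t=0$; hence its derivative at $t=0$ is zero, i.e.
\begin{equation}\label{statproofb}
\left[\frac{\de}{\de t}\,\frac12\int_\Omega|\nabla(v^\e\circ\Phi_t)|^2\,\de x\right]_{t=0}=\left[\frac{\de}{\de t}\,\mathcal{E}(v\circ\phi_t,\omega)\right]_{t=0}=0\,,
\end{equation}
the last equality being precisely the stationarity hypothesis on $v$. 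This proves the stationarity condition \eqref{compderivstat} for $v^\e$ for all flows with $\mathbf{X}_{n+1}=0$ on $\partial^0\Omega$. To handle a general admissible flow $\{\Phi_t\}$ as in Definition~\ref{defstation} (no constraint $\mathbf{X}_{n+1}=0$), I would invoke Remark~\ref{depbound}: since $v^\e$ is harmonic in $\Omega$, the inner-variation integral \eqref{statcondVectF} depends only on the trace of $(\mathbf{X}_1,\dots,\mathbf{X}_n)$ on $\partial^0\Omega$, so the normal component $\mathbf{X}_{n+1}$ is irrelevant and one may reduce to the tangential case already treated.

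The main technical obstacle is the differentiability in $t$ and the interchange of $\frac{\de}{\de t}$ with the integrals — in particular justifying that $t\mapsto\frac12\int_\Omega|\nabla(v^\e\circ\Phi_t)|^2$ is differentiable and that its derivative is given by the usual inner-variation formula \eqref{statcondVectF} (which requires only $v^\e\in H^1(\Omega)$ and the change of variables $y=\Phi_t(x)$, standard for compactly supported flows), and likewise that $t\mapsto\mathcal{E}(v\circ\phi_t,\omega)$ is differentiable with the derivative appearing in Definition~\ref{defstationfrac}. Both are routine: the first is the classical computation recalled in Remark~\ref{stresstens}, and the second follows by the analogous change of variables in the double integral defining $\mathcal E(\cdot,\omega)$, using that $\phi_t-\mathrm{id}$ is compactly supported in $\omega$ and $v\in\widehat H^{1/2}(\omega;\R^m)$. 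Once differentiability is in hand, the inequality-to-equality argument at the minimizer $t=0$ closes the proof; no finer analysis is needed since Corollary~\ref{minenergdirchfrac} already packages the comparison between the two energies.
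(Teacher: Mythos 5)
Your proposal is correct and follows essentially the same route as the paper: the paper proves Lemma~\ref{compstatfrac} by applying Corollary~\ref{minenergdirchfrac} to $v^\e\circ\Phi_t$ (whose trace is $v\circ\phi_t$) and deduces Proposition~\ref{implstat} from it together with Remarks~\ref{stresstens} and~\ref{depbound}, exactly as you do. The only cosmetic difference is that you conclude equality of the two first variations via the first-derivative test at the minimum $t=0$, whereas the paper divides by $t>0$, passes to the limit, and then replaces $X$ by $-X$ — the same argument in different clothing (and note that Definition~\ref{defstation} already forces $\mathbf{X}_{n+1}=0$ on $\R^n$, so your final reduction step is automatic).
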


As matter of fact, Proposition \ref{implstat} is directly implied by the following lemma together with Remarks~\ref{stresstens} \& \ref{depbound}.

\begin{lemma}\label{compstatfrac}
Let $\Omega\subset \R^{n+1}_+$ be an admissible bounded  open set such that $\overline{\partial^0\Omega}\subset \omega$. Given $X\in C^1(\R^n;\R^{n})$ compactly supported in $\partial^0\Omega $, let $\{\phi_t\}_{t\in\R}$ be the flow  generated by $X$. For each $v\in \widehat H^{1/2}(\omega;\mathbb{R}^{m})$ we have 
$$ \left[\frac{\de}{\de t} \,\mathcal{E}\big(v\circ\phi_t,\omega\big)\right]_{t=0} = -\frac{1}{2}\int_{\Om}\bigg( |\nabla v^\e|^2{\rm div}\mathbf{X} -2\sum_{i,j=1}^{n+1} (\partial_i v^\e\cdot\partial_j v^\e)\partial_j\mathbf{X}_i\bigg)\,\de x\,,$$
where $\mathbf{X}=(\mathbf{X}_1,\ldots,\mathbf{X}_{n+1})\in C^1(\overline{\Om};\R^{n+1})$ is any vector field compactly supported in $\Om\cup\partial^0\Om$  
satisfying $\mathbf{X}=(X,0)$ on $\partial^0\Om$. 
 \end{lemma}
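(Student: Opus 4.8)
The plan is to compute both sides explicitly as first inner variations and match them. First I would handle the left-hand side: for $v\in\widehat H^{1/2}(\omega;\R^m)$ and the flow $\{\phi_t\}$ generated by $X\in C^1(\R^n;\R^n)$ compactly supported in $\partial^0\Omega$, the change of variables $y=\phi_t(x)$ in the double integral defining $\mathcal{E}(v\circ\phi_t,\omega)$ (note $v\circ\phi_t=v$ outside $\partial^0\Omega$, so only the interaction involving $\partial^0\Omega$ moves) gives, after differentiating at $t=0$ and using $\frac{\de}{\de t}\big|_{t=0}\phi_t=X$ together with $\frac{\de}{\de t}\big|_{t=0}\det D\phi_t=\mathrm{div}\,X$, a bilinear expression in $v$ of the standard type, namely
\begin{equation*}
\left[\frac{\de}{\de t}\,\mathcal{E}(v\circ\phi_t,\omega)\right]_{t=0}
=-\frac{\gamma_n}{4}\iint \frac{|v(x)-v(y)|^2}{|x-y|^{n+1}}\left((n+1)\frac{(x-y)\cdot(X(x)-X(y))}{|x-y|^2}-\mathrm{div}\,X(x)-\mathrm{div}\,X(y)\right)\de x\,\de y,
\end{equation*}
the domain of integration being $\omega\times\omega$ plus the exterior interaction, exactly as in the definition of $\mathcal{E}$. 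This is the nonlocal first inner variation of the $1/2$-Dirichlet energy.

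Next I would compute the right-hand side, which is (minus one half times) the classical first inner variation of the Dirichlet energy of the harmonic extension; by Remark \ref{stresstens} this equals $\int_\Omega T_i(v^\e):\partial_i\mathbf{X}$ up to constants, and since $v^\e$ is harmonic in $\R^{n+1}_+$ (Lemma \ref{hatH1/2toH1}, \eqref{eqextharm}), one has $\mathrm{div}\,T_i(v^\e)=0$ in $\R^{n+1}_+$; hence integrating by parts, the volume integral over $\Omega$ collapses to a boundary integral over $\partial\Omega$, and because $\mathbf{X}$ is compactly supported in $\Omega\cup\partial^0\Omega$ and $\mathbf{X}_{n+1}=0$ on $\partial^0\Omega$, only the tangential part $X$ on $\partial^0\Omega\subset\R^n$ survives (this is precisely the content of Remark \ref{depbound}, which also shows the answer does not depend on the chosen extension $\mathbf{X}$, only on $X$). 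Thus the right-hand side reduces to an integral over $\partial^0\Omega$ of a quadratic expression in the boundary trace of $\nabla v^\e$, and after splitting $\nabla v^\e$ into tangential and normal components and using $\partial_\nu v^\e=(-\Delta)^{1/2}v$ in the principal-value/Dirichlet-to-Neumann sense (Lemma \ref{repnormderfraclap}), I would rewrite it entirely in terms of $v$ on $\R^n$.

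The decisive step — and the main obstacle — is to show that these two expressions coincide. The cleanest route is to reduce both sides to the Fourier side: by Lemma \ref{normexth1/2}, $\frac12\int_{\R^{n+1}_+}|\nabla v^\e|^2=[v]_{H^{1/2}(\R^n)}^2=\int_{\R^n}2\pi|\xi|\,|\hat v(\xi)|^2\,\de\xi$, and the inner variation commutes with this identity since precomposition with $\phi_t$ on $\R^n$ corresponds, on the extension side, to precomposition with the diffeomorphism $\Phi_t=(\phi_t,\mathrm{id})$ of $\R^{n+1}_+$ which preserves $\R^{n+1}_+$ and agrees with $(X,0)$ at $t=0$; differentiating the equality $\frac12\int_{\R^{n+1}_+}|\nabla (v\circ\phi_t)^\e|^2=\frac12\int_{\R^{n+1}_+}|\nabla(v^\e\circ\Phi_t)|^2$ (valid because $(v\circ\phi_t)^\e$ and $v^\e\circ\Phi_t$ have the same boundary trace $v\circ\phi_t$ and $v^\e\circ\Phi_t$ is admissible, though not harmonic, so it has energy at least that of $(v\circ\phi_t)^\e$ — one must check equality of derivatives at $t=0$, not just the inequality). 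Here the subtlety is that $v^\e\circ\Phi_t$ is \emph{not} harmonic, so the two extensions differ at second order; however, because $(v\circ\phi_t)^\e$ minimizes energy among extensions with the given trace, the first-order term in $t$ of the difference of energies vanishes, which forces the two $t$-derivatives at $t=0$ to agree. Once this is justified, the left-hand side of the lemma equals $\left[\frac{\de}{\de t}\frac12\int_{\R^{n+1}_+}|\nabla(v^\e\circ\Phi_t)|^2\right]_{t=0}$, which is exactly minus one half the quantity appearing on the right (the sign coming from the orientation convention, i.e. differentiating $u\circ\Phi_t$ versus $u\circ\Phi_{-t}$), and Remark \ref{depbound} guarantees this depends only on $X=(\mathbf{X}_1,\dots,\mathbf{X}_n)|_{\partial^0\Omega}$, completing the proof. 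The technical care needed is: (a) justifying differentiation under the integral sign in the nonlocal expression, using $v\in\widehat H^{1/2}$, the compact support of $X$, and Lemma \ref{adminHchap} to control the exterior interaction; (b) the minimality argument that lets one replace the genuine harmonic extension of $v\circ\phi_t$ by the pushforward $v^\e\circ\Phi_t$ at the level of first derivatives; and (c) verifying that the boundary reduction in Remark \ref{depbound} indeed isolates the $\partial^0\Omega$-contribution with the correct constant.
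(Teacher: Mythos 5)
Your central idea --- compare the pushed-forward extension $v^\e\circ\Phi_t$ with the harmonic extension of the deformed trace $v\circ\phi_t$, and use minimality of the harmonic extension to force first-order tangency at $t=0$ --- is exactly the mechanism the paper uses, and your Step 1 (the explicit nonlocal inner variation, with the differentiation under the integral justified by the compact support of $X$) coincides with the paper's formula \eqref{innervar}. The problem is the way you implement the tangency: you pass to \emph{global} quantities. Lemma \ref{normexth1/2} and the Fourier identity $\frac12\int_{\R^{n+1}_+}|\nabla v^\e|^2=[v]^2_{H^{1/2}(\R^n)}$ require $v\in\dot H^{1/2}(\R^n)$, whereas the hypothesis is only $v\in\widehat H^{1/2}(\omega;\R^m)$: for such $v$ the full seminorm $[v]_{H^{1/2}(\R^n)}$, the $\omega^c\times\omega^c$ interaction (which you implicitly subtract to identify $\frac{\de}{\de t}[v\circ\phi_t]^2_{H^{1/2}}$ with $\frac{\de}{\de t}\mathcal{E}(v\circ\phi_t,\omega)$), and the global Dirichlet energy $\int_{\R^{n+1}_+}|\nabla v^\e|^2$ may all be infinite --- Lemma \ref{hatH1/2toH1} only gives $H^1_{\rm loc}(\R^{n+1}_+\cup\omega)$ bounds. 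So the functions $f(t)$ and $g(t)$ in your squeeze argument need not be finite, and the Fourier step collapses. A related localization issue: your diffeomorphism $\Phi_t=(\phi_t,\mathrm{id})$ is generated by $(X,0)$ extended along an infinite vertical cylinder, which is not compactly supported in $\Om\cup\partial^0\Om$; identifying its first variation with the $\Omega$-integral for an arbitrary admissible $\mathbf{X}$ through Remark \ref{depbound} then requires a further cutoff/decay argument you do not supply. (Your intermediate Step 2, reducing the stress-tensor integral to a boundary integral involving the trace of $\nabla v^\e$ on $\partial^0\Om$, is also not available for general $v$ --- the normal derivative is only an $H^{-1/2}$ distribution --- but since your final route does not use it, this is not the load-bearing gap.)

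The repair is to run your tangency argument in the localized form already available in the paper: apply Corollary \ref{minenergdirchfrac} with competitor $u=v^\e\circ\Phi_t$, where $\Phi_t$ is the flow of a \emph{compactly supported} $C^1$ extension of $\mathbf{X}$ to $\R^{n+1}$ with $\mathbf{X}=(X,0)$ on $\R^n$ (so that $\Phi_t$ preserves $\R^{n+1}_+$, restricts to $\phi_t$ on $\R^n$, and $u-v^\e$ is compactly supported in $\Om\cup\partial^0\Om$). This gives
\begin{equation*}
\frac{1}{2}\int_\Om \big|\nabla(v^\e\circ\Phi_t)\big|^2\,\de x-\frac{1}{2}\int_\Om |\nabla v^\e|^2\,\de x\;\geq\;\mathcal{E}\big(v\circ\phi_t,\omega\big)-\mathcal{E}(v,\omega)\,,
\end{equation*}
with all terms finite. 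Dividing by $t$ and letting $t\to0$ (and using your Step 1 to know the right-hand derivative exists) yields one inequality between the two sides of the lemma; either your equality-of-derivatives observation at the tangency point, or simply replacing $X,\mathbf{X}$ by $-X,-\mathbf{X}$, upgrades it to the claimed identity. With that substitution your argument becomes essentially the paper's proof; as written, the global Fourier route is not valid under the stated hypotheses.
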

 
\begin{proof} 
Let $\mathbf{X}=(\mathbf{X}_1,\ldots,\mathbf{X}_{n+1})\in C^1(\overline{\Om};\R^{n+1})$ be an arbitrary  vector field  compactly supported in $\Om\cup\partial^0\Om$ and satisfying $\mathbf{X}=(X,0)$ on $\partial^0\Om$. Then 
consider a compactly supported $C^1$-extension of $\mathbf{X}$ to the whole space $\R^{n+1}$, still denoted by $\mathbf{X}$, such that $\mathbf{X}=(X,0)$ on $\R^n$.  We define $\{\Phi_t\}_{t\in\R}$ to be the flow generated by $\mathbf{X}$, {\it i.e.}, for each $x\in\R^{n+1}$ the map $t\mapsto \Phi_t(x)$ is defined as the solution of 
$$\begin{cases}
\displaystyle\frac{\de}{\de t} \Phi_t(x)= \mathbf{X}\big(\Phi_t(x)\big) \,,\\
\Phi_0(x)=x\,.
\end{cases}
$$
One can easily check that the family $\{\Phi_t\}_{t\in\R}$ is admissible for Definition \ref{defstation}.  
Noticing that $\Phi_t=(\phi_t,0)$ on~$\R^n$, 
 we now infer from Corollary~\ref{minenergdirchfrac} that 
$$\frac{1}{2}\int_\Om \big|\nabla(v^\e\circ\Phi_t)\big|^2\,\de x-\frac{1}{2}\int_\Om |\nabla v^\e|^2\,\de x\geq \mathcal{E}\big(v\circ\phi_t,\omega\big)-\mathcal{E}(v,\omega)\,.$$
Dividing both sides of this inequality by $t>0$, and then letting $t\to 0$, we obtain 
\begin{equation}\label{resultstat}
 -\frac{1}{2}\int_{\Om}\bigg( |\nabla v^\e|^2{\rm div}\mathbf{X} -2\sum_{i,j=1}^{n+1} (\partial_i v^\e\cdot\partial_j v^\e)\partial_j\mathbf{X}_i\bigg)\,\de x\geq  \left[\frac{\de}{\de t} \,\mathcal{E}\big(v\circ\phi_t,\omega\big)\right]_{t=0} \,.
 \end{equation}
Here, straightforward  computations yield
\begin{multline}\label{innervar}
 \left[\frac{\de}{\de t} \,\mathcal{E}\big(v\circ\phi_t,\omega\big)\right]_{t=0}= \frac{(n+1)\gamma_n}{4}\iint_{\omega\times\omega}\frac{ |v(x)-v(y)|^2}{|x-y|^{n+1}}\frac{(x-y)\cdot(X(x)-X(y))}{|x-y|^2}\,\de x\de y\\
 + \frac{(n+1)\gamma_n}{2}\iint_{\omega\times\omega^c}\frac{ |v(x)-v(y)|^2}{|x-y|^{n+1}}\frac{(x-y)\cdot X(x)}{|x-y|^2}\,\de x\de y\\
  -\frac{\gamma_n}{2}\iint_{\omega\times\R^n}\frac{ |v(x)-v(y)|^2}{|x-y|^{n+1}} {\rm div}\,X(x)\,\de x\de y\,.
 \end{multline}
Since $X$ was chosen arbitrary, inequality \eqref{resultstat} holds with $(-X)$ and $(-\mathbf{X})$ instead of $X$ and $\mathbf{X}$ respectively, and we conclude that  inequality \eqref{resultstat}  is in fact an equality. 
\end{proof}

\begin{remark}\label{remsmoothstat}
From formula \eqref{innervar}, one can actually deduce that any sufficiently smooth 1/2-harmonic map $v$ in $\omega$ is stationary.  
\begin{proof}
To see this,  we write the first two terms in the right hand side of \eqref{innervar} as $A$ and $B$, respectively. We have
$$A=\lim_{\delta\downarrow 0}\frac{\gamma_n}{2}\int_\omega\left( \int_{\omega\setminus D_\delta(y)}|v(x)-v(y)|^2X(x)\cdot\nabla_x\big(\frac{-1}{|x-y|^{n+1}}\big)\,\de x\right)\,\de y=:\lim_{\delta\downarrow 0} A_\delta\,.$$
Since $X$ is compactly supported in $\omega$, we can write for $\delta>0$ small enough
$$B= \frac{\gamma_n}{2}\int_{\omega^c}\left( \int_{\omega\setminus D_\delta(y)}|v(x)-v(y)|^2X(x)\cdot\nabla_x\big(\frac{-1}{|x-y|^{n+1}}\big)\,\de x\right)\,\de y\,.$$
Given $y\in\R^n\setminus\partial\omega$ and $\delta>0$ small enough, we  integrate by parts to find that 
\begin{multline*}
\int_{\omega\setminus D_\delta(y)}|v(x)-v(y)|^2X(x)\cdot\nabla_x\big(\frac{-1}{|x-y|^{n+1}}\big)\,\de x= \int_{\omega\setminus D_\delta(y)}\frac{|v(x)-v(y)|^2}{|x-y|^{n+1}}{\rm div}\,X(x)\,\de x \\
+ 2\int_{\omega\setminus D_\delta(y)}\frac{v(x)-v(y)}{|x-y|^{n+1}}\cdot \big(X(x)\cdot\nabla v(x)\big)\,\de x +\frac{\chi_\omega(y)}{\delta^{n+1}}\int_{\partial D_\delta(y)} |v(x)-v(y)|^2\frac{(x-y)\cdot X(x)}{|x-y|}\,\de \mathscr{H}^{n-1}_x\,,
\end{multline*}
where $\chi_\omega$ denotes the characteristic function of the set $\omega$. Therefore, for $\delta$ small we have 
\begin{multline}\label{id1645}
A_\delta+B= \frac{\gamma_n}{2}\int_{\R^n} \int_{\omega\setminus D_\delta(y)}\frac{|v(x)-v(y)|^2}{|x-y|^{n+1}}{\rm div}\,X(x)\,\de x\,\de y\\
+\gamma_n \int_{\R^n}\int_{\omega\setminus D_\delta(y)}\frac{v(x)-v(y)}{|x-y|^{n+1}}\cdot \big(X(x)\cdot\nabla v(x)\big)\,\de x\,\de y\\
+\frac{\gamma_n}{2\delta^{n+1}}\int_\omega 
\int_{\partial D_\delta(y)} |v(x)-v(y)|^2\frac{(x-y)\cdot X(x)}{|x-y|}\,\de \mathscr{H}^{n-1}_x\,\de y=:I_\delta+II_\delta+III_\delta\,.
\end{multline}
Obviously,
$$\lim_{\delta\downarrow 0} I_\delta =\frac{\gamma_n}{2}\iint_{\omega\times\R^n}\frac{ |v(x)-v(y)|^2}{|x-y|^{n+1}} {\rm div}\,X(x)\,\de x\de y\,.$$
Then we rewrite
$$II_\delta= \gamma_n \int_{\omega}\left(\int_{\R^n\setminus D_\delta(x)}\frac{v(x)-v(y)}{|x-y|^{n+1}}\,\de y\right)\cdot \big(X(x)\cdot\nabla v(x)\big)\,\de x\,,$$
so that 
$$\lim_{\delta\downarrow 0} II_\delta = \int_\omega (-\Delta)^{\frac1 2}v(x)\cdot \big(X(x)\cdot\nabla v(x)\big)\,\de x\,.$$
Finally, expanding $v$ around $y$, we easily get that
$$\lim_{\delta\downarrow 0} III_\delta=  \frac{\gamma_n}{2}\int_\omega\left( \int_{\mathbb{S}^{n-1}} |\sigma\cdot\nabla v(y)|^2X(y)\cdot\sigma\,\de\mathscr{H}^{n-1}_\sigma \right)\,.$$
We claim that the integral above vanishes, {\it i.e.}, $\lim_{\delta} III_\delta =0$. Indeed, write for $y\in\omega$ fixed, 
$$\int_{\mathbb{S}^{n-1}} |\sigma\cdot\nabla v(y)|^2X(y)\cdot\sigma\,\de\mathscr{H}^{n-1}_\sigma=\sum_{i=1}^m
\int_{\mathbb{S}^{n-1}} |\overrightarrow{a}_i\cdot\sigma|^2\,\overrightarrow{b}\cdot\sigma\,\de\mathscr{H}^{n-1}_\sigma\,,$$
where we have set $\overrightarrow{a}_i:=\nabla v_i(y)$ and $\overrightarrow{b}:=X(y)$. Given $i\in\{1,\ldots,m\}$,  we can assume  that $\overrightarrow{a}_i$ and $\overrightarrow{b}$ belongs to $\R^2\times\{(0,\ldots,0)\}$ by invariance under rotation. Then, 
$$\int_{\mathbb{S}^{n-1}} |\overrightarrow{a}_i\cdot\sigma|^2\overrightarrow{b}\cdot\sigma\,\de\mathscr{H}^{n-1}_\sigma= C_n\int_{\mathbb{S}^{1}} |\overrightarrow{a}_i\cdot\sigma|^2\overrightarrow{b}\cdot\sigma\,\de\mathscr{H}^{1}_\sigma \,,$$
for a dimensional constant $C_n$. Now observe that the function in the right hand side is a homogeneous polynomial in $\sigma$ of degree~$3$. Hence its integral over $\mathbb{S}^{1}$ vanishes, and  the claim is proved. 

Gathering \eqref{innervar} with \eqref{id1645}, we have thus shown that 
\begin{equation}\label{tim1652}
 \left[\frac{\de}{\de t} \,\mathcal{E}\big(v\circ\phi_t,\omega\big)\right]_{t=0}= \int_\omega (-\Delta)^{\frac1 2}v(x)\cdot \big(X(x)\cdot\nabla v(x)\big)\,\de x\,.
 \end{equation}
Finally, since $v(x)\in \mathbb{S}^{n-1}$ for $x\in\omega$, we have $\partial_j v(x)\in T_{v(x)} \mathbb{S}^{n-1}$ for $x\in\omega$ and $j=1,\ldots,n$. Then the Euler-Lagrange equation \eqref{EL1/2Harm} 
yields $(-\Delta)^{\frac1 2}v\cdot \partial_j v=0$ in $\omega$ for $j=1,\ldots,n$. Therefore the function under the integral in \eqref{tim1652} vanishes in $\omega$, which shows that $v$ is stationary. 
\end{proof}
\end{remark}

Going back to our discussion on the regularity of harmonic maps, we now mention that there is only one case where  full (interior) regularity holds for  general critical points. It is the case where the starting dimension equals two (the conformal dimension), and this is a well known result due to {\sc H\'elein}~\cite{Hel}. In higher dimensions, the (optimal) partial regularity result for minimizing harmonic maps has been obtained 
in the pioneering work of {\sc Schoen \& Uhlenbeck}~\cite{SU}. It has then been extended to stationary harmonic maps by {\sc Bethuel}~\cite{Bet} (see also \cite{Ev}). All this results have an analogue in the context  of harmonic maps with partially free boundary, where the new issue is of course to determine the
 partial regularity at the relative interior of the free boundary $\partial^0\Om$. Higher order regularity starting from continuous solutions has been studied and proved by {\sc Gulliver \& Jost} \cite{GJ}, see also \cite{Bal,Ham}. The minimizing case has been handled independently by {\sc Hardt \& Lin} \cite{HL} and {\sc Duzaar \& Steffen} \cite{DS,DS2}, while full regularity in the conformal dimension and partial regularity under the stationarity condition has been more recently proved by {\sc Scheven} \cite[Theorem~2.2, Theorem 4.1]{Sch}. All these results on partial regularity essentially deals with an estimate on the 
 Hausdorff dimension of the so-called {\it singular set}. The singular set ${\rm sing}(u)$ of a map $u$  is usually defined as the complement of the largest (relative) open set on which $u$ is continuous. It is therefore a relatively closed subset of the domain where $u$ is defined. For a harmonic map with partially free boundary it is then a relatively closed subset of $\Om\cup\partial^0\Om$, and in case of a $(\mathbb{S}^{m-1}, \partial^0\Om)$-boundary  harmonic map it is a relatively closed subset of $\partial^0\Om$ (since regularity holds in $\Om$). In view of Remark~\ref{boundharmVsFreebound}, we can obtain a partial regularity theory for {\it bounded} $\mathbb{S}^{m-1}$-boundary  harmonic maps from the known results  mentioned above about harmonic maps with partially free boundary. In the statements below, ${\rm dim}_{\mathscr{H}}$ denotes the Hausdorff dimension.

 \begin{theorem}[\cite{DS,DS2,GJ,HL,Sch}]\label{thmregfreebd}
 Let $\Omega\subset \R^{n+1}_+$ be an admissible bounded  open set. If $u\in H^1(\Omega;\mathbb{R}^m)\cap L^\infty(\Om)$ is a weak $(\mathbb{S}^{m-1}, \partial^0\Om)$-boundary  harmonic map in 
 $\Omega$, then $u\in C^\infty\big((\Omega\cup\partial^0 \Omega) \setminus {\rm sing}(u)\big)$.  
Moreover, 
\begin{itemize}[leftmargin=22pt] 
\item[\rm (i)] if $n=1$, then ${\rm sing}(u)=\emptyset$; 
\vskip3pt

\item[\rm (ii)] if $n\geq 2$ and $u$ is stationary, then  $\mathscr{H}^{n-1}({\rm sing}(u))=0$; 
\vskip3pt

\item[\rm (iii)] if $u$ is minimizing, then  ${\rm dim}_{\mathscr{H}}({\rm sing}(u))\leq n-2$ for $n\geq 3$, and ${\rm sing}(u)$ is discrete for $n=2$. 
 \end{itemize}
 \end{theorem}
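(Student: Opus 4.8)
The plan is to reduce Theorem~\ref{thmregfreebd} to the existing regularity theory for harmonic maps with partially free boundary, following the strategy already sketched in Remark~\ref{boundharmVsFreebound}. First I would observe that testing the weak formulation \eqref{weakformharmmap} against arbitrary $\Phi\in\mathscr{D}(\Omega;\R^m)$ gives $\Delta u=0$ in $\Omega$, so interior elliptic regularity yields $u\in C^\infty(\Omega)$; hence ${\rm sing}(u)$ is a relatively closed subset of $\partial^0\Omega$ and every assertion is really a statement about the behaviour of $u$ near the relative interior of the free boundary.

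Next I would fix $r:=2\|u\|_{L^\infty(\Omega)}+1$ and regard $\mathbb{S}^{m-1}$ as a smooth closed submanifold of the flat compact torus $\mathcal{M}:=\R^m/r\Z^m$. Since $\|u\|_{L^\infty(\Omega)}<r/2$, the map $u$ may be viewed as $\mathcal{M}$-valued, and because $\mathcal{M}$ is flat one has $T_p\mathcal{M}=\R^m$ for every $p\in\mathcal{M}$; therefore the test functions allowed in Definition~\ref{defFBHarm} (arbitrary in the interior, tangent to $\mathbb{S}^{m-1}$ along $\partial^0\Omega$) are exactly those admissible in the standard notion of an $\mathcal{M}$-valued weakly harmonic map with partially free boundary on $\mathcal{N}=\mathbb{S}^{m-1}$. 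Thus $u$ is such a map, and I would then check that the extra hypotheses transfer: stationarity in the sense of Definition~\ref{defstation} is literally the same condition since the Dirichlet energy is unchanged, while energy minimality (Definition~\ref{defFBminiHarm}) transfers because any $\mathcal{M}$-valued competitor that agrees with $u$ outside a compact subset of a contractible half-ball $B^+_\rho(x_0)\subset\Omega$ lifts to an $\R^m$-valued competitor with the same Dirichlet energy and the same constraint on $D_\rho(x_0)$.

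Then I would localize. For $x_0\in\partial^0\Omega$ the admissibility of $\Omega$ (indeed the very definition of $\partial^0\Omega$) provides $\rho>0$ with $\overline{B^+_\rho(x_0)}\subset\Omega\cup\partial^0\Omega$ and $D_\rho(x_0)\subset\partial^0\Omega$, so on $B^+_\rho(x_0)$ the map $u$ is a weakly (resp.\ stationary, resp.\ minimizing) harmonic map into $\mathcal{M}$ with free boundary on $\mathbb{S}^{m-1}$ along the flat part $D_\rho(x_0)$ --- precisely the model situation of \cite{DS,DS2,GJ,HL,Sch}. From these results: continuity of $u$ near a point of $D_\rho(x_0)$ implies $u\in C^\infty$ up to $D_\rho(x_0)$ there by the higher-order regularity of \cite{GJ} (which, together with interior smoothness, gives the first claim); in the conformal dimension $n+1=2$ one gets ${\rm sing}(u)\cap D_\rho(x_0)=\emptyset$ from \cite[Theorem~2.2]{Sch}, proving (i); under stationarity $\mathscr{H}^{n-1}\big({\rm sing}(u)\cap D_\rho(x_0)\big)=0$ for $n\ge2$ from \cite[Theorem~4.1]{Sch}, proving (ii); and in the minimizing case ${\rm dim}_{\mathscr{H}}\big({\rm sing}(u)\cap D_\rho(x_0)\big)\le n-2$ for $n\ge3$, with ${\rm sing}(u)\cap D_\rho(x_0)$ discrete for $n=2$, from \cite{HL,DS,DS2}, proving (iii). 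A covering argument over $\partial^0\Omega$ then finishes the proof.

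The only genuinely delicate point, and the one I would write out with care, is the passage through the torus: checking that Definitions~\ref{defFBHarm}, \ref{defFBminiHarm} and \ref{defstation} coincide with the notions used in the cited literature after the embedding $\mathbb{S}^{m-1}\hookrightarrow\mathcal{M}$. The flatness of $\mathcal{M}$ makes the comparison of the Euler--Lagrange equations and of the stationarity identities immediate; for minimality it suffices to note, as in \cite{Mos}, that the $\eps$-regularity and dimension-reduction arguments of \cite{HL,DS2,Sch} only ever compare $u$ with competitors supported in small balls, where lifting $\mathcal{M}$-valued maps to $\R^m$ is unproblematic. Everything else is a routine transcription of known estimates.
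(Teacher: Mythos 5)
Your argument is essentially the paper's own proof: view $\mathbb{S}^{m-1}$ as a submanifold of the flat torus $\mathcal{M}=\R^m/r\Z^m$ with $r>2\|u\|_{L^\infty(\Omega)}$, note interior harmonicity so that ${\rm sing}(u)\subset\partial^0\Omega$, and invoke \cite[Theorems 2.2 and 4.1]{Sch} for (i)--(ii), \cite{HL,DS,DS2} for (iii), and \cite{GJ} for higher-order regularity. Your extra care about transferring stationarity and minimality through the torus embedding is sound and only makes explicit what the paper leaves implicit.
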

 
 \begin{proof}
 As indicated in Remark~\ref{boundharmVsFreebound}, the sphere $\mathbb{S}^{m-1}$ can be considered as a submanifold of a flat torus $\mathcal{M}:=\R^m/r\Z^m$ with $r>2\|u\|_{L^\infty(\Omega)}$. Then $u$ is a weakly harmonic map in $\Omega$ into the compact (boundaryless) manifold $\mathcal{M}$ satisfying the partially free boundary 
condition $u(\partial^0\Omega)\subset \mathbb{S}^{m-1}$. Items (i) and (ii) follow from \cite[Theorem 2.2, Theorem 4.1]{Sch} (note that for $n=1$, stationarity is not required since the conclusion of \cite[Corollary 1.3]{Sch} trivially holds in this case), while item (iii) is proved in \cite{DS,DS2, HL}. Finally, the higher order regularity away from the singular set is obtained from \cite[Section 4]{GJ}. 
 \end{proof}
 
Thanks to Theorem~\ref{thmregfreebd}, we can now deduce the partial regularity theory for {\it bounded} 1/2-harmonic maps into $\mathbb{S}^{m-1}$.

\begin{proof}[Proof of Theorem \ref{reghalfharmintro}]
 Let $v\in \widehat H^{1/2}(\omega;\mathbb{R}^{m})\cap L^\infty(\R^n)$ be a weak 1/2-harmonic map into $\mathbb{S}^{m-1}$ in~$\omega$, and let  
 $\Omega\subset \R^{n+1}_+$ be an arbitray admissible bounded  open set such that $\overline{\partial^0\Omega}\subset \omega$. 
 By Proposition \ref{fracweakharm}, $v^\e$ is  a weak $(\mathbb{S}^{m-1}, \partial^0\Om)$-boundary  harmonic map in $\Omega$. If $n=1$, then 
 Theorem \ref{thmregfreebd} shows that $v^\e\in C^\infty(\Omega\cup\partial^0\Om)$, whence $v\in C^\infty(\omega)$ by arbitrariness of $\Om$. Let us now assume that $n\geq 2$, and that $v$ is stationary. By Proposition \ref{implstat}, 
 $v^\e$ is stationary. Since ${\rm sing}(v)\cap\partial^0\Om= {\rm sing}(v^\e)\cap\partial^0\Om$,  Theorem \ref{thmregfreebd} yields $\mathscr{H}^{n-1}({\rm sing}(v)\cap\partial^0\Omega)=0$, and the conclusion follows by monotone convergence letting $\partial^0\Omega\uparrow\omega$. In the case where $v$ is minimizing, the argument follows the same lines using Proposition \ref{fracminharm}. 
\end{proof}

\begin{remark}
In recent papers, {\sc Da Lio \& Rivi\`ere} have obtained a direct proof for the full regularity of weak $1/2$-harmonic maps in the case  $\omega=\R$, first for a sphere target in \cite{DaLRi}, and then for more general target manifolds in \cite{DaLRi2}. A main point of their proof is to rewrite the Euler-Lagrange equation \eqref{EL1/2Harm}  in a form that exhibits a special algebraic structure which allows to use some compensated compactness arguments (somehow in the spirit of \cite{Hel}). Even if we have deduced here the regularity theory for 1/2-harmonic maps  from the existing literature, we believe that the analysis in \cite{DaLRi,DaLRi2} could be of first importance for further investigations. 
\end{remark}

\begin{remark}
The relation between 1/2-harmonic maps and harmonic maps with partially free boundary was first noticed by {\sc Moser} \cite{Mos} 
for a different, non-explicit, integro-differential operator (which coincides with $ (-\Delta)^{\frac{1}{2}} $ only in the case $\omega=\R^n$). Roughly speaking, the Dirichlet-to-Neumann operator considered in~\cite{Mos} is associated to a "homogeneous Neumann type condition" on the exterior of $\omega$, while we are dealing with Dirichlet exterior conditions. In this Neumann framework and under an additional technical condition, \cite{Mos} provides a similar partial regularity result  from the regularity theory for harmonic maps with partially free boundary.
\end{remark}

 \begin{remark}
In the context of harmonic map with partially free boundary, {\sc Duzaar  \& Grotowski} \cite{DG} have studied regularity for the mixed boundary value problem which consists in prescribing a 
Dirichlet data on the remaining part $\partial^+\Om$ of the boundary of $\Omega$. Under suitable smoothness conditions on the Dirichlet data, $\partial^+\Om$ and $\partial^0\Om$, {\it and assuming that 
$\partial^+\Om$ and $\partial^0\Om$ meet orthogonaly}, they have proved that minimizing solutions are continuous accross the edge $\overline{\partial^+\Om}\cap\overline{\partial^0\Om}$. 
Unfortunately, this result cannot be used for 1/2-harmonic maps with exterior Dirichlet condition (even for minimizing maps) since the two parts of the boundary would have to meet {\it tangentially}. 
Up to our knowledge, there is no regularity result at the boundary $\partial\omega$ for the 1/2-harmonic map problem with a prescribed exterior Dirichlet condition.    
 \end{remark}

\begin{remark}[\bf Energy Monotonicity]\label{remmonotbdharm} For usual harmonic maps, it is well known that the stationarity property yields a monotonic control on the energy on balls with respect to the radius.  For a stationary boundary harmonic map $u\in H^1(\Om;\R^m)$, the relation \eqref{statcondVectF} leads to the monotonicity of the function 
$$r\in\big(0,{\rm dist}(x_0,\partial^+\Om)\big)\mapsto \frac{1}{2r^{n-1}}\int_{B^+_r(x_0)}|\nabla u|^2\,\de x \,,$$
for every $x_0\in\partial^0\Om$. 
\end{remark}

\begin{remark}[\bf Liouville property] 
As a consequence of the energy monotonicity above, 
if $n\geq 2$ and $u\in \dot H^1(\R^{n+1}_+;\R^m)$ is an entire  stationary boundary
harmonic map ({\it i.e.}, a stationary boundary harmonic map in $B_r^+$ for every $r>0$), then $u$ is constant. 
As a byproduct, if $n\geq 2$ and $v\in \dot H^{1/2}(\R^n;\R^m)$ is an entire stationary 1/2-harmonic map ({\it i.e.}, a stationary 1/2-harmonic map in $D_r$ for every $r>0$), 
then  $v$ is constant. 
\end{remark} 
 
\begin{remark}[\bf General target manifold]\label{generaltarget}
The definition of 1/2-harmonic maps  extends to a more general target submanifold $\mathcal{N}\subset \R^m$ in the following way. Assuming that $\mathcal{N}$ is smooth, compact, and without boundary, the nearest point retraction $\pi_{\mathcal{N}}$ on  $\mathcal{N}$ is smooth in a small tubular neighborhood of $\mathcal{N}$. We then say that a map $v\in \widehat{H}^{1/2}(\omega;\mathbb{R}^{m})$ satisfying $v(x)\in\mathcal{N}$ for a.e. $x\in\omega$,   is  weakly  1/2-harmonic into $\mathcal{N}$ in $\omega$ if 
 $$\left[\frac{\de}{\de t} \,\mathcal{E}\left(\pi_\mathcal{N}(v+t\varphi),\omega\right)\right]_{t=0} =0 $$
 for all $\varphi\in H^{1/2}_{00}(\omega;\mathbb{R}^m)\cap L^\infty(\omega)$ compactly supported in $\omega$. The associated Euler-Lagrange equation reads
 $$ (-\Delta)^{\frac{1}{2}} v \perp T_v\mathcal{N} \quad\text{in $H^{-1/2}_{00}(\omega)$}\,,$$
{\it i.e.}, $\big\langle  (-\Delta)^{\frac{1}{2}} v,\varphi\big\rangle_\omega =0$ for all $\varphi\in H^{1/2}_{00}(\omega;\R^m)$ satisfying $\varphi(x)\in T_{v(x)}\mathcal{N}$ for a.e. $x\in\omega$. 

Concerning the definition of $(\mathcal{N},\partial^0\Omega)$-valued boundary harmonic maps, we simply reproduce Definition~\ref{defFBHarm} replacing $\mathbb{S}^{m-1}$ by the manifold $\mathcal{N}$. The notions of minimality and stationarity for both  boundary harmonic and 1/2-harmonic maps remain unchanged. With these definitions, all the results of this subsection do hold. In particular, we have the same partial regularity theory for $(\mathcal{N},\partial^0\Omega)$-boundary  harmonic maps  as stated in Theorem \ref{thmregfreebd} by the general  regularity results in \cite{HL,DS,DS2,Sch}. As a consequence, we have the same partial regularity theory for 1/2-harmonic maps into $\mathcal{N}$ as stated in  Theorem~\ref{reghalfharmintro}. 

\end{remark}

\subsection{1/2-harmonic lines into $\mathbb{S}^1$} 
 
 We provide in this subsection some explicit examples of 1/2-harmonic maps into $\mathbb{S}^1$ enlightening the geometric flavour of the 1/2-harmonic map equation, 
 as well as its analogy with usual harmonic maps. We start with an explicit representation formula for  {\it all entire 1/2-harmonic maps} from the line $\R$ into $\mathbb{S}^1$ with finite energy.  In the sequel, we identify $\R^2$ with the complex plane $\C$ writing $z=x_1+ix_2$. The open unit disc of $\C$ is denoted by   $\mathbb{D}$, and $\partial\D$ is identified with $\mathbb{S}^1$.

 \begin{theorem}\label{classif1/2harm}
 Let $v\in \dot H^{1/2}(\R;\mathbb{S}^1)$ be a non-constant entire 1/2-harmonic map into $\mathbb{S}^1$, and let $v^\e$ be its harmonic extension to $\mathbb{R}^{2}_+$ given by \eqref{poisson}.  
 There exist some $d\in\N$, $\theta\in\R$, $\{\lambda_k\}_{k=1}^d\subset (0,\infty)$, and $\{a_k\}_{k=1}^d\subset\R$ such that  $v^\e(z)$ or its complex conjugate equals
 \begin{equation}\label{formfracharm}
 e^{i\theta}\prod_{k=1}^d\frac{\lambda_k(z-a_k)-i}{\lambda_k(z-a_k)+i}\,.
 \end{equation}
 In addition,  
 \begin{equation}\label{quantiz}
 \mathcal{E}(v,\R)=[v]^2_{H^{1/2}(\R)}=\frac{1}{2}\int_{\R^2_+}|\nabla v^\e|^2\,dz=\pi d\,.
 \end{equation}
\end{theorem}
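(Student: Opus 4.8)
The plan is to reduce the problem to the classification of certain harmonic maps into the disc $\D$ and then identify them with finite Blaschke products. The starting point is Proposition \ref{fracweakharm}: the harmonic extension $v^\e$ is a weak $(\mathbb{S}^1,\R)$-boundary harmonic map in every admissible bounded open set $\Omega\subset\R^2_+$, and by the $n=1$ case of Theorem \ref{reghalfharmintro} (equivalently Theorem \ref{thmregfreebd}(i)), $v$ is smooth, hence $v^\e\in C^\infty(\overline{\R}^2_+)$. Since $|v|=1$ on $\R$ and $\|v^\e\|_{L^\infty}\le 1$ by \eqref{bdlinftyext}, the map $v^\e$ takes values in $\overline\D$, is harmonic in $\R^2_+$, and maps $\partial\R^2_+=\R$ into $\partial\D=\mathbb{S}^1$. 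First I would exploit the boundary equation $\partial_\nu v^\e\perp T_{v^\e}\mathbb{S}^1$ on $\R$: since $v^\e$ is $\mathbb{S}^1$-valued on $\R$ we may write $v^\e = e^{i\psi}$ on $\R$ for a smooth phase $\psi$, and $\partial_\nu v^\e\perp T_{v^\e}\mathbb{S}^1$ says exactly that the normal derivative is parallel to $v^\e$, i.e. the \emph{tangential} component vanishes: $\partial_\nu v^\e\cdot(iv^\e)=0$ on $\R$. Writing $v^\e=(p,q)$ with $p,q$ harmonic conjugate-like components, this is a Cauchy–Riemann type relation on the boundary, and the classical way to proceed is to observe that $v^\e$ being harmonic and disc-valued with $\mathbb{S}^1$-boundary data forces $v^\e$ (or $\overline{v^\e}$) to be \emph{holomorphic}: indeed, decompose $v^\e=F+\overline G$ with $F,G$ holomorphic on $\R^2_+$; the boundary constraint $|v^\e|=1$ together with $\partial_\nu v^\e \parallel v^\e$ kills one of the two pieces. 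This is the step I expect to require the most care — making rigorous that the only disc-valued harmonic extensions satisfying the free-boundary condition are $\pm$-holomorphic, and handling the behavior at infinity so that $F$ (or $G$) extends to a holomorphic map of the closed half-plane into $\overline\D$ carrying $\R$ to $\mathbb{S}^1$.

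Once that reduction is in place, the classification is essentially the structure theorem for inner functions on the half-plane with finite Dirichlet energy. A holomorphic map $F:\R^2_+\to\D$, continuous up to $\overline{\R}^2_+$, with $|F|=1$ on $\R$, and with finite Dirichlet energy $\int_{\R^2_+}|\nabla F|^2<\infty$ (which holds since $v\in\dot H^{1/2}(\R)$ and Lemma \ref{normexth1/2}), must be a \emph{finite} Blaschke product of the half-plane. I would argue this by transporting to the disc via the Cayley transform $w=\frac{z-i}{z+i}$, under which $F$ becomes a holomorphic self-map $\widetilde F$ of $\D$ with unimodular boundary values a.e.; finite Dirichlet energy translates (conformal invariance of the Dirichlet integral) into $\widetilde F$ having finitely many zeros and no singular inner part, hence $\widetilde F$ is a finite Blaschke product $e^{i\theta}\prod_{k=1}^d \frac{w-\alpha_k}{1-\overline{\alpha_k}w}$. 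Pulling back through the Cayley transform, each factor $\frac{w-\alpha_k}{1-\overline{\alpha_k}w}$ becomes a Möbius factor of the half-plane of the form $\frac{\lambda_k(z-a_k)-i}{\lambda_k(z-a_k)+i}$ with $\lambda_k>0$ and $a_k\in\R$ (this is the content of the normalization $\alpha_k=\frac{a_k\lambda_k-i}{a_k\lambda_k+i}$, a routine computation), yielding \eqref{formfracharm}; the non-constancy of $v$ forces $d\ge 1$.

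For the energy identity \eqref{quantiz}, the first two equalities are immediate: $\mathcal{E}(v,\R)=[v]^2_{H^{1/2}(\R)}$ by definition (recall $\mathcal E(\cdot,\R)$ coincides with the full $H^{1/2}(\R)$-seminorm), and $[v]^2_{H^{1/2}(\R)}=\frac12\int_{\R^2_+}|\nabla v^\e|^2$ by Lemma \ref{normexth1/2}. For the last equality I would use conformal invariance once more: $\frac12\int_{\R^2_+}|\nabla v^\e|^2 = \int_{\R^2_+}|F'|^2$ equals the area (with multiplicity) of the image of $\R^2_+$ under $F$, which for a degree-$d$ Blaschke product is exactly $d$ times the area of $\D$, i.e. $\pi d$; equivalently, pass to the disc and compute $\int_\D|\widetilde F'|^2 = \pi d$ directly from the Blaschke product form (the Dirichlet integral of a finite Blaschke product of degree $d$ is $\pi d$, a short residue/Parseval computation). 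A clean alternative is to note $\frac1\pi\int_{\R^2_+}|F'|^2$ counts preimages of a generic point, which is the boundary degree $d$ of $v:\R\cup\{\infty\}\to\mathbb{S}^1$. Assembling these pieces gives the theorem; the only genuinely delicate point remains the holomorphic rigidity step, everything else being standard complex analysis on the disc.
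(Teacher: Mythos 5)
Your outline follows the same global route as the paper (holomorphy of $v^\e$ or $\overline{v^\e}$, Cayley transform, finite Blaschke product, conformal energy count), but the step you yourself single out as delicate is precisely where the proposal has a genuine gap, and the sketch you give would not close it. The opening heuristic --- that a disc-valued harmonic map with $\mathbb{S}^1$-valued boundary data must be $\pm$-holomorphic --- is false as stated: the Poisson extension of an arbitrary unimodular boundary map is harmonic and $\overline{\D}$-valued but generically neither holomorphic nor anti-holomorphic. Everything must therefore come from the free-boundary (Neumann) condition \emph{together with} the finiteness of the energy, and your decomposition $v^\e=F+\overline G$ with the assertion that the constraints ``kill one of the two pieces'' names the desired conclusion ($F'G'\equiv 0$) rather than proving it. The missing idea is the Hopf differential (Lemma \ref{conform} in the paper): $\mathcal{H}:=|\partial_1 v^\e|^2-|\partial_2 v^\e|^2-2i\,\partial_1 v^\e\cdot\partial_2 v^\e$, which in your notation equals $4F'G'$, is holomorphic in $\R^2_+$ because $v^\e$ is harmonic; the boundary relation you wrote, $\partial_\nu v^\e\perp T_{v^\e}\mathbb{S}^1$ together with $\partial_1 v^\e\in T_{v}\mathbb{S}^1$, gives $\partial_1 v^\e\cdot\partial_2 v^\e=0$ on $\R$, so ${\rm Im}\,\mathcal{H}$ vanishes on $\R$ and extends by odd reflection to a harmonic function on all of $\R^2$; since $v^\e\in\dot H^1(\R^2_+;\R^2)$ by Lemma \ref{normexth1/2}, that reflection lies in $L^1(\R^2)$ and hence vanishes identically by the mean value property; then $\mathcal{H}$ is real-valued and holomorphic, hence constant, and $\mathcal{H}\in L^1(\R^2_+)$ forces $\mathcal{H}\equiv 0$, i.e. $F$ or $G$ is constant. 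Note that the finite Dirichlet energy is indispensable in this very step (otherwise $\mathcal{H}$ could be a nonzero real constant and the rigidity fails); in your write-up the energy bound only enters later, at the Blaschke stage, and the ``behavior at infinity'' you flag is exactly this issue, for which no device is offered.

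Granting the holomorphy, the rest of your plan is correct and essentially the paper's, with one difference worth noting: after the Cayley transform the paper identifies $w:=v^\e\circ\mathfrak{C}^{-1}$ as the harmonic extension of its trace $g\in H^{1/2}(\mathbb{S}^1;\mathbb{S}^1)$, invokes Brezis--Nirenberg to get $|w(z)|\to 1$ uniformly as $|z|\to 1$, and then Mironescu--Pisante to conclude that a nonconstant holomorphic map with this property is a finite Blaschke product; your alternative (an inner function with finite Dirichlet integral is a finite Blaschke product) is also viable, but it is itself a nontrivial classical fact that would need a proof or a precise reference. The conversion of the disc factors into the half-plane factors of \eqref{formfracharm} and the energy identity $[v]^2_{H^{1/2}(\R)}=\frac12\int_{\R^2_+}|\nabla v^\e|^2\,\de z=\pi d$ are handled exactly as in the paper.
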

 
 \begin{remark}
Theorem \ref{classif1/2harm} shows that the map 
 $$x\in\R\mapsto \left(\frac{x^2-1}{x^2+1},\frac{-2x}{x^2+1}\right)\in \mathbb{S}^1$$
 is a 1/2-harmonic map. Indeed, it corresponds to the case $\theta=0$, $d=1$, $\lambda_1=1$, and $a_1=0$ in \eqref{formfracharm}. Note that this map is precisely the inverse of the  stereographic projection with pole at $(1,0)$ going from the circle into the line. 
 \end{remark}

 The proof of Theorem~\ref{classif1/2harm} is  based on an observation due to {\sc Mironescu \& Pisante} \cite{MirPis} together with  the following preliminary lemma. 
 
\begin{lemma}\label{conform}
 Let $v\in \dot H^{1/2}(\R;\R^2)$ be a nontrivial entire 1/2-harmonic map into $\mathbb{S}^1$, and let $v^\e$ be its harmonic extension to $\mathbb{R}^{2}_+$ given by \eqref{poisson}. 
  Then $v^\e$ is either a conformal or an anti-conformal transformation of $\overline{\R}^2_+$ into the closed unit disc $\overline{\mathbb{D}}$. 
  \end{lemma}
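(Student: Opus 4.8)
The goal is to show that the harmonic extension $v^\e$ of a nontrivial entire $1/2$-harmonic map $v\colon\R\to\mathbb{S}^1$ is a (anti-)conformal map of $\overline{\R}^2_+$ onto $\overline{\mathbb{D}}$. The strategy is to exploit the dimension $n=1$, where the boundary harmonic map system on $\R=\partial\R^2_+$ carries an extra conformal invariance, together with the fact that $\mathbb{S}^1$ is one-dimensional, so that the tangent-space constraint pins down the normal derivative essentially completely.

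First I would pass, via Proposition~\ref{fracweakharm}, to the fact that $v^\e$ is a weak $(\mathbb{S}^1,\R)$-boundary harmonic map in every admissible bounded $\Omega\subset\R^2_+$; by Theorem~\ref{thmregfreebd}(i), since $n=1$, it is smooth up to $\R$, and by Remark~\ref{remmonotbdharm} the energy is monotone. The crucial step is the \emph{holomorphy of the Hopf-type differential}. Writing $v^\e=(u_1,u_2)$ on $\R^2_+\cong\C$ with complex coordinate $z=x_1+ix_2$, consider the complex quadratic differential
\[
\Phi(z):=\Big(|\partial_{x_1}v^\e|^2-|\partial_{x_2}v^\e|^2\Big)-2i\,\partial_{x_1}v^\e\cdot\partial_{x_2}v^\e\,,
\]
which is (up to a factor) the component $(T_1)_1 - i(T_1)_2$ of the stress-energy tensor from Remark~\ref{stresstens}. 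Since $v^\e$ is harmonic, $\Phi$ is holomorphic in $\R^2_+$. Now I use that $v^\e$ is smooth up to $\R$ and that, on $\R$, the free-boundary condition $\partial_\nu v^\e\perp T_{v^\e}\mathbb{S}^1$ together with $|v^\e|=1$ forces $\partial_{x_2}v^\e = -(v^\e\cdot\partial_{x_2}v^\e)\,v^\e$ to be \emph{parallel} to $v^\e$, hence normal to $\mathbb{S}^1$, while $\partial_{x_1}v^\e$ is tangent to $\mathbb{S}^1$, hence orthogonal to $v^\e$; therefore $\partial_{x_1}v^\e\cdot\partial_{x_2}v^\e=0$ on $\R$, i.e. $\Phi$ is real on the boundary $\partial\R^2_+$. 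By Schwarz reflection, $\Phi$ extends to an entire holomorphic function; combined with the energy bound $\int_{\R^2_+}|\nabla v^\e|^2<\infty$, which forces $\Phi\in L^1(\R^2_+)$ and decay at infinity, Liouville's theorem gives $\Phi\equiv0$. That is exactly the statement that $v^\e$ is weakly conformal: $|\partial_{x_1}v^\e|=|\partial_{x_2}v^\e|$ and $\partial_{x_1}v^\e\cdot\partial_{x_2}v^\e=0$ everywhere in $\R^2_+$.

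A weakly conformal harmonic map from a planar domain into $\R^2$ is either holomorphic or antiholomorphic (its differential $\partial_{x_1}v^\e + i\,J\,\partial_{x_2}v^\e$ or the conjugate one vanishes); so, after possibly replacing $v^\e$ by its complex conjugate, $v^\e\colon\R^2_+\to\C$ is holomorphic. It is nonconstant since $v$ is nontrivial, and by \eqref{bdlinftyext} it maps $\R^2_+$ into $\overline{\mathbb{D}}$; moreover its boundary trace $v$ takes values in $\mathbb{S}^1=\partial\mathbb{D}$. By the maximum principle a nonconstant holomorphic map into $\overline{\mathbb{D}}$ with unimodular boundary values actually maps $\R^2_+$ \emph{into} $\mathbb{D}$; since it is also proper (boundary goes to boundary, as $v^\e$ is smooth up to $\R$ and $|v^\e|=1$ there) and has finite Dirichlet energy (hence finite degree over $\mathbb{D}$), I would conclude it is in fact a biholomorphism onto $\mathbb{D}$ --- here one can either invoke the Blaschke-product structure (which is what Theorem~\ref{classif1/2harm} will make explicit) or, more cheaply for this lemma, note that the finite-energy condition and properness already yield a conformal (one-to-one) map onto $\mathbb{D}$ after ruling out higher multiplicity. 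I expect the main obstacle to be making the "properness + finite energy $\Rightarrow$ conformal onto $\mathbb D$" step clean: one must be careful that $v^\e$, a priori only known to be holomorphic into $\overline{\mathbb D}$ with boundary on $\mathbb S^1$, does not degenerate (e.g. branch points on $\R$ or escape of mass at infinity), and the safest route is probably to combine the energy quantization with the Mironescu--Pisante observation referenced after the lemma, i.e. to treat Lemma~\ref{conform} and the representation \eqref{formfracharm} in tandem.
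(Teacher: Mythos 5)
Your core argument coincides with the paper's proof: smoothness of $v^\e$ up to the boundary via Proposition~\ref{fracweakharm} and Theorem~\ref{thmregfreebd}(i), holomorphy of the Hopf differential $\mathcal{H}$ by harmonicity, the free-boundary condition forcing $\partial_1 v^\e\cdot\partial_2 v^\e=0$ on $\R$ (note that parallelism gives $\partial_2 v^\e=(v\cdot\partial_2 v^\e)\,v$, without your minus sign, but only the parallelism matters), and then reflection together with the finite Dirichlet energy, i.e. $\mathcal{H}\in L^1$, to conclude $\mathcal{H}\equiv 0$ by the mean value property. The only (immaterial) variation is that you reflect $\mathcal{H}$ itself by Schwarz reflection, whereas the paper reflects the imaginary part of $\mathcal{H}$ oddly as a harmonic function, kills it by the $L^1$ bound, and then uses that a real-valued holomorphic function is constant.

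Your final paragraph, however, overreaches and contains a claim that is false as stated: one cannot upgrade ``weakly conformal harmonic with unimodular boundary trace and finite energy'' to ``one-to-one onto $\mathbb{D}$''. Theorem~\ref{classif1/2harm} shows that $v^\e$ (or its conjugate) is a finite Blaschke product of arbitrary degree $d\geq 1$ composed with the Cayley transform, hence a $d$-to-one branched cover of $\mathbb{D}$ whenever $d\geq 2$; properness and finite energy only quantize the degree ($\mathcal{E}(v,\R)=\pi d$), they do not force $d=1$. The lemma does not assert injectivity or surjectivity: ``conformal or anti-conformal transformation of $\overline{\R}^2_+$ into $\overline{\mathbb{D}}$'' means precisely that the Hopf differential vanishes, i.e. $v^\e$ is holomorphic or antiholomorphic with values in $\overline{\mathbb{D}}$ (the latter by \eqref{bdlinftyext}), which is exactly where your proof should stop. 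The Blaschke-product structure belongs to Theorem~\ref{classif1/2harm} (via Brezis--Nirenberg and Mironescu--Pisante), and invoking it here would in any case be circular, since the proof of that theorem relies on this lemma.
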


\begin{proof}
First recall that $v^\e$ is smooth in $\overline{\R}^2_+$ by Theorem~\ref{thmregfreebd}, and $v^\e$ takes values in $\overline{\mathbb{D}}$ by  \eqref{bdlinftyext}. 
Let us now consider the Hopf differential $\mathcal{H}$ of $v^\e$ defined by
$$ \mathcal{H}(z):=\big(|\partial_1v^\e|^2-|\partial_2v^\e|^2\big)-2i(\partial_1v^\e\cdot\partial_2v^\e)\,.$$
It is well-known that $v^\e$ is conformal or anti-conformal if and only if $\mathcal{H}\equiv0$. 
We thus have to prove that $\mathcal{H}$ vanishes identically. 

First notice that $\mathcal{H}$ is holomorphic since $v^\e$ is harmonic. Since $v(x)\in\mathbb{S}^1$ we find that $\partial_1v^\e(x)\in T_{v(x)}\mathbb{S}^1$ for every $x\in\R$.  By the boundary equation in \eqref{eqweakFBCharm}, it implies that  $\partial_1v^\e\cdot\partial_2v^\e$ vanishes on $\R$. Setting $g$ to be the imaginary part of $\mathcal{H}$, the function $g$ is  harmonic in $\R^2_+$ by holomorphicity of $\mathcal{H}$. Since $g$ vanishes on $\R$, we  can extend $g$ to the whole plane by odd reflection ({\it i.e.}, setting $g(z):=-g(\overline z)$ if ${\rm Im}(z)<0$), and the resulting function (still denoted by $g$) is harmonic in~$\R^2$.  On the other hand, 
 $v^\e\in \dot H^1(\R^2_+;\R^2)$ by Lemma \ref{normexth1/2}. Hence $g \in L^1(\R^2)$, which leads to $g\equiv 0$ by the mean value property of harmonic functions.  Therefore $\mathcal{H}$ takes real values, and it must be constant by holomorphicity. We then conclude that $\mathcal{H}\equiv 0$ since $\mathcal{H}\in L^1(\R^2_+)$. 
\end{proof}

\begin{remark}[{\bf Minimal surfaces}]\label{minimsurf}
According to Remark \ref{generaltarget}, Lemma \ref{conform} still holds if $\mathbb{S}^1$ is replaced by a smooth compact manifold $\mathcal{N}\subset\R^m$ without boundary, {\it i.e.},  if $v\in \dot H^{1/2}(\R;\R^m)$ is  a nontrivial entire 1/2-harmonic map into $\mathcal{N}$, then $v^\e$ is conformal or anti-conformal. If $m=3$ and $\mathcal{N}$ is a surface, it shows that  the image of $\R^2_+$ by $v^\e$ is a minimal surface in $\R^3$ whose boundary lies in $\mathcal{N}$, and meets   $\mathcal{N}$ orthogonaly, see \cite{struw}. If $\mathcal{N}$ is a closed boundary curve, then the image of $\R^2_+$ by $v^\e$ is a minimal surface spanned by $\mathcal{N}$. 
\end{remark}

\begin{proof}[Proof of Theorem~\ref{classif1/2harm}] By Lemma~\ref{conform} we may assume that $v^\e$ is conformal (otherwise we simply consider the complex conjugate of $v$). 
We introduce the so-called {\it Caley transform} $\mathfrak{C}:\overline{\R}^2_+\to \overline{\mathbb{D}}$ defined by 
$$\mathfrak{C}(z):=\frac{z-i}{z+i} \,.$$
It is well known that $\mathfrak{C}$ is a conformal map which is one-to-one from $\R^2_+$ into $\mathbb{D}$, and one-to-one from $\R$ into $\mathbb{S}^1\setminus\{(1,0)\}$. Setting   
$$w:=v^\e\circ \mathfrak{C}^{-1}\,,$$
we find that $w$ is conformal in $\mathbb{D}$, and thus holomorphic in $\mathbb{D}$. 
Moreover, by conformality of $\mathfrak{C}$, we have $w\in H^1(\mathbb{D};\mathbb{C})$, and more precisely
\begin{equation}\label{energconf}
\int_{\mathbb{D}}|\nabla w|^2\,\de z = \int_{\mathbb{R}^2_+}|\nabla v^\e|^2\,\de z\,.
\end{equation}
In particular $g:=w_{|\mathbb{S}^1}\in H^{1/2}(\mathbb{S}^1;\mathbb{C})$. But $g$ is smooth on $\mathbb{S}^1\setminus\{(0,1)\}$ and $g(z)=v\circ\mathfrak{C}^{-1}(z) $ for every $z\in\mathbb{S}^1\setminus\{(0,1)\}$, so that $g\in H^{1/2}(\mathbb{S}^1;\mathbb{S}^1)$. Now, $w$ being holomorphic, it is also harmonic in $\mathbb{D}$. It is therefore the harmonic extension to the disc of the map $g$. By a result of {\sc Brezis \& Nirenberg} \cite{BN1,BN2}, it implies that $|w(z)|\to 1$ uniformly as $|z|\to 1$.  For a nonconstant holomorphic map this latter property implies that it is a (finite) Blaschke product, see \cite{MirPis} and the references therein.   In other words, we can find a positive $d\in\N$, $\tilde \theta\in\R$, and $\alpha_1,\ldots,\alpha_d\in\mathbb{D}$ such that 
\begin{equation}\label{blaschke}
w(z)=e^{i\tilde \theta}\prod_{k=1}^d\frac{z-\alpha_k}{1-\overline{\alpha_k}z} \,.
\end{equation}
Then \eqref{formfracharm} follows from the previous formula with 
$$\lambda_k:=\frac{|1-\alpha_k|^2}{1-|\alpha_k|^2}\,,\quad a_k:= -\frac{2{\rm Im}(\alpha_k)}{|1-\alpha_k|^2}\,,\quad \theta:=\tilde\theta+\sum^d_{k=1} \theta_k\text{ where $\theta_k\in\R$ and } e^{i\theta_k}=
\frac{1-\alpha_k}{1-\overline{\alpha_k}}\,.$$
On the other hand, it is easy to check from \eqref{blaschke} (see {\it e.g.} \cite{MirPis}) that 
$$\int_{\mathbb{D}}|\nabla w|^2\,\de z =2\pi d\,, $$
which combined with  \eqref{energconf} implies \eqref{quantiz} by Lemma \ref{normexth1/2}. 
\end{proof}

\begin{remark}[{\bf 1/2-harmonic circles}]\label{harmcirc}
By analogy with our definition of $\mathcal{E}$ in \eqref{defenergE}, we can consider on $H^{1/2}(\mathbb{S}^1;\C)$ the energy 
$$\mathcal{E}(g,\mathbb{S}^1):=\frac{\gamma_1}{4}\iint_{\mathbb{S}^1\times\mathbb{S}^1}\frac{|g(x)-g(y)|^2}{|x-y|^2}\,\de x\de y\,.$$
It is well known that for every $g\in H^{1/2}(\mathbb{S}^1;\C)$, 
$$\mathcal{E}(g,\mathbb{S}^1)=\frac{1}{2}\int_{\mathbb{D}}|\nabla w_g|^2\,\de x \,,$$
where $w_g\in H^1(\mathbb{D};\C)$ denotes the harmonic extension of $g$ to the disc. In view of identity \eqref{energconf} and Lemma \ref{normexth1/2}, we have 
$$\mathcal{E}(g,\mathbb{S}^1)= \mathcal{E}(g\circ\mathfrak{C},\R) \quad\text{for all  $g\in H^{1/2}(\mathbb{S}^1;\C)$}\,,$$
where $\mathfrak{C}$ is the Caley transform. Defining (weak) 1/2-harmonic maps from $\mathbb{S}^1$ into $\mathbb{S}^1$ as critical points of $\mathcal{E}(\cdot ,\mathbb{S}^1)$ with respect to perturbations in the target (as in Definition \ref{defhalfharm}, see \eqref{ptcritS1} below), we  deduce that 
$g$ is such a 1/2-harmonic map if and only if $g\circ\mathfrak{C}$ is a (finite energy) 1/2-harmonic map from $\R$ into $\mathbb{S}^1$. In terms of the harmonic extension $w_g$, the Euler-Lagrange equation for 1/2-harmonic maps from $\mathbb{S}^1$ into $\mathbb{S}^1$ writes 
\begin{equation}\label{eqharmcirc}
\frac{\partial w_g}{\partial\nu}\wedge g= 0 \quad \text{on $\mathbb{S}^1$}\,,
\end{equation}
where $\frac{\partial}{\partial\nu}$ is the exterior normal derivative on $\partial\mathbb{D}\simeq\mathbb{S}^1$. As a consequence of Theorem~\ref{classif1/2harm}, 
we find that  $g$ is a 1/2-harmonic map from $\mathbb{S}^1$ into $\mathbb{S}^1$ if and only if $g$ is the restriction to $\mathbb{S}^1$ of a (finite) Blaschke product ({\it i.e.}, $g$ is of the form \eqref{blaschke}). 
This fact fact has also been discovered independently by {\sc Berlyand, Mironescu, Rybalko, \&  Sandier} \cite{BMRS} (with essentially the same proof), and by {\sc Da Lio \& Rivi\`ere}~\cite{DaLRi3}. Note that the explicit formula \eqref{blaschke} shows that the energy is quantized by the topological degree, {\it i.e.}, 
$$\mathcal{E}(g,\mathbb{S}^1)=\pi|{\rm deg}(g)| \,,$$
for any  1/2-harmonic map $g$ from $\mathbb{S}^1$ into $\mathbb{S}^1$. By the result of {\sc Mironescu \& Pisante} \cite{MirPis}, it shows 
in particular that every 1/2-harmonic map from $\mathbb{S}^1$ into $\mathbb{S}^1$ is minimizing in its own homotopy class. Here the homotopy classes are classified by the topological 
degree for  $H^{1/2}(\mathbb{S}^1;\mathbb{S}^1)$-maps as defined in \cite{BN1,BN2}. These properties of {\it 1/2-harmonic circles into $\mathbb{S}^1$} are in clear analogy with the theory of usual 
harmonic maps where it is well known that  {\it harmonic 2-spheres into $\mathbb{S}^2$} are minimizing in their own homotopy class and have an energy quantized by the degree. 
\end{remark}

With the help of Theorem \ref{classif1/2harm} we can now give an explicit representation of 0-homogeneous maps wich are weakly 1/2-harmonic from the unit disc of $\R^2$ into $\mathbb{S}^1$. 
Those maps provide examples of singular weak 1/2-harmonic maps. In particular, it shows that $\frac{x}{|x|}$ is a weak 1/2-harmonic map into $\mathbb{S}^1$.  It would be interesting to determine which maps are minimizing or stationary, and then to compare the result with~\cite{BCL} 
(which deals with 0-homogeneous harmonic maps from the unit ball in $\R^3$ into $\mathbb{S}^2$).

\begin{proposition}\label{homog1/2harm}
For $n=2$, let $v\in \widehat H^{1/2}(D_1;\mathbb{R}^2)$ be a 0-homogeneous map in all of $\R^2$ such that $|v|=1$ a.e. in $\R^2$.  
Then $v$ is a weak 1/2-harmonic map into $\mathbb{S}^1$ in $D_1$ if and only if $v(x)=g(\frac{x}{|x|})$ for a 1/2-harmonic circle $g$ into $\mathbb{S}^1$ in the sense of Remark~\ref{harmcirc}. 
\end{proposition}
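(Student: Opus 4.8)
The plan is to exploit the conformal invariance of the $1/2$-Dirichlet energy in dimension $n=1$, together with the criticality transfer of Proposition~\ref{fracweakharm}, to reduce the $0$-homogeneous problem on $D_1\subset\R^2$ to the circle problem on $\mathbb{S}^1$. First I would parametrize a $0$-homogeneous map $v$ on $\R^2$ by its restriction $g$ to the unit circle, so that $v(x)=g(x/|x|)$ with $g\in H^{1/2}(\mathbb{S}^1;\mathbb{S}^1)$ (the $H^{1/2}$ regularity of $g$ is forced by the condition $v\in\widehat H^{1/2}(D_1)$, via the usual computation of the $H^{1/2}$-seminorm of a $0$-homogeneous function on an annulus in terms of the $H^{1/2}$-seminorm of its angular profile). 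The key is then the identity relating the relevant energies: I would show that the first inner/outer variation $\big[\tfrac{\de}{\de t}\mathcal{E}(\tfrac{v+t\varphi}{|v+t\varphi|},D_1)\big]_{t=0}$ is proportional, for $0$-homogeneous test perturbations, to the corresponding variation of $\mathcal{E}(\cdot,\mathbb{S}^1)$ evaluated at $g$, and conversely that arbitrary (not necessarily homogeneous) test perturbations can be reduced to homogeneous ones by averaging in the radial variable, using that $v$ is already homogeneous.

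The cleanest route, I think, avoids direct manipulation of the singular double integrals and goes through harmonic extensions. By Proposition~\ref{fracweakharm}, $v$ is weakly $1/2$-harmonic into $\mathbb{S}^1$ in $D_1$ if and only if $v^\e$ is a weak $(\mathbb{S}^1,\partial^0\Omega)$-boundary harmonic map in every admissible $\Omega\subset\R^3_+$ with $\overline{\partial^0\Omega}\subset D_1$; taking $\Omega=B_\rho^+$ for $\rho<1$ this means $v^\e$ is harmonic in $B_\rho^+$ with $\partial_\nu v^\e\perp T_{v^\e}\mathbb{S}^1$ on $D_\rho$. Now I would use the conformal Kelvin-type change of variables in $\R^3_+$: the inversion (or, after a Caley-type map, the biholomorphism) sending $\R^3_+$ to a half-ball, under which harmonicity in three dimensions is \emph{not} preserved — so instead I would work directly with the $2$-dimensional conformal picture on the boundary. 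Concretely: the harmonic extension $w_g$ of $g$ to the disc $\mathbb{D}$ and the harmonic extension $v^\e$ of the $0$-homogeneous $v$ to $\R^3_+$ are linked because a $0$-homogeneous harmonic function on $\R^3_+$ restricts on each half-sphere $\partial B_\rho^+$ to (a rescaling of) a fixed function, and the trace equation $\partial_\nu v^\e\perp T_{v^\e}\mathbb{S}^1$ on $D_\rho$ is scale-invariant, so it holds on $D_1\setminus\{0\}$ iff it holds on, say, $\partial D_1$; then a direct computation (or the Fourier/series representation of $0$-homogeneous harmonic functions in $\R^3_+$, i.e.\ the Poisson extension of $g(x/|x|)$) identifies the boundary reaction condition with exactly the Euler--Lagrange equation \eqref{eqharmcirc} for $g$ on $\mathbb{S}^1$. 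I would spell this out by computing $\big[\tfrac{\de}{\de t}\mathcal{E}(\tfrac{v+t\varphi}{|v+t\varphi|},D_1)\big]_{t=0}$ for $0$-homogeneous $\varphi\in H^{1/2}_{00}(D_1;\R^2)\cap L^\infty$ via the change of variables $x=r\sigma$, separating the $r$-integration: the scaling $|x-y|^{-2}$ against $\de x\,\de y=r\,\de r\,s\,\de s\,\de\sigma\,\de\tau$ produces, after the substitution $s=rt$, a convergent integral in $t$ times $\iint_{\mathbb{S}^1\times\mathbb{S}^1}|g(\sigma)-g(\tau)|^2\,(\cdots)\,\de\sigma\,\de\tau$, yielding $\mathcal{E}(v\text{-variation},D_1)=c\,\mathcal{E}(g\text{-variation},\mathbb{S}^1)$ for an explicit positive constant $c$.

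For the converse direction — from $g$ a $1/2$-harmonic circle to $v(x)=g(x/|x|)$ being weakly $1/2$-harmonic in $D_1$ — I would argue that it suffices to test \eqref{criticconstrS} against $0$-homogeneous perturbations, because a general $\varphi\in H^{1/2}_{00}(D_1;\R^2)\cap L^\infty$ compactly supported in $D_1$ can be radially averaged: set $\bar\varphi(x):=\fint_0^1 \varphi(\lambda x)\,\de\lambda$ (suitably cut off), note that $v$ being homogeneous the bilinear form $\langle(-\Delta)^{1/2}v,\varphi\rangle_{D_1}$ depends on $\varphi$ only through its homogeneous-degree-$0$ "average", and then invoke the homogeneous case. (Alternatively, and perhaps more robustly, I would phrase everything on the extension side using the scale invariance established in the previous paragraph and the equivalence of Proposition~\ref{fracweakharm}.)

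The main obstacle I anticipate is bookkeeping the constants and convergence in the radial separation of the $H^{1/2}$ double integral — specifically verifying that the kernel $\int_0^\infty \Phi(r,t)\,\de t$ arising after the substitution is finite and independent of the angular variables, and handling the boundary term coming from the interaction with $\R^n\setminus D_1$ (the second integral in \eqref{defenergE} and in \eqref{deffraclap}), which does \emph{not} obviously localize to the circle. I expect this to work because $v$ is globally $0$-homogeneous on all of $\R^2$, so the exterior integral over $D_1\times(\R^2\setminus D_1)$ also separates radially; but making this rigorous, and checking that the resulting angular functional is precisely $\mathcal{E}(\cdot,\mathbb{S}^1)$ with the normalization $\gamma_1$ as defined in Remark~\ref{harmcirc}, is where the real care is needed. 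Once that identification is in hand, the equivalence $v\ 1/2\text{-harmonic in }D_1\iff g\ 1/2\text{-harmonic circle}$ is immediate from the definitions, since the variation $\tfrac{v+t\varphi}{|v+t\varphi|}$ corresponds under restriction to $\tfrac{g+t\psi}{|g+t\psi|}$ with $\psi=\varphi|_{\mathbb{S}^1}$, and the admissible test functions match up under this correspondence.
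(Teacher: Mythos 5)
Your overall reduction (pass to the angular profile $g$, use harmonic extensions and the two–dimensional conformal structure linking the disc, the half–sphere and the circle) is the same as the paper's, but your ``cleanest route'' has a genuine gap in the forward implication. To assert that the boundary condition $\partial_\nu v^\e\perp T_{v^\e}\mathbb{S}^1$ holds \emph{pointwise} on $D_1\setminus\{0\}$, and then exploit its scale covariance and a Fourier identification with \eqref{eqharmcirc}, you need $v^\e$ to be $C^1$ up to the flat boundary away from the origin; but in this direction $v$ is only a \emph{weak} $1/2$-harmonic map, and for $n=2$ no regularity at all is available for non-stationary weak solutions (Theorem~\ref{reghalfharmintro} requires stationarity or minimality), so the smoothness of the angular profile is part of the conclusion — it follows only once $g$ is known to be a $1/2$-harmonic circle, hence a Blaschke product — and using it as an intermediate step is circular. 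The paper avoids this entirely by a variational argument: via Proposition~\ref{fracweakharm} it tests the weak formulation upstairs with $\widetilde\Phi(x)=\eta(|x|)\Phi(x/|x|)$, where $\Phi$ is built from the planar harmonic extension of a circle test function through the conformal map $\mathfrak{S}$ of $\mathbb{D}$ onto $\mathbb{S}^2_+$, and the $0$-homogeneity of $v^\e$ makes the radial variable factor out of the \emph{local} Dirichlet pairing $\int_{B_1^+}\nabla v^\e\cdot\nabla\widetilde\Phi\,\de x$ trivially, with no pointwise information and no kernel computation.

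Your fallback route — radial separation of the nonlocal bilinear form on $\R^2$ — can be made to work, but the two points you defer are exactly where the content lies. First, $0$-homogeneous perturbations are not admissible (not compactly supported in $D_1$), so you must test with $\varphi(x)=\eta(|x|)\psi(x/|x|)$; writing $x=r\sigma$, $y=s\tau$ and substituting $s=rt$, the radial kernel does close up, since by the symmetry $t\mapsto 1/t$ one gets $\int_0^\infty\big(1+t^2-2t\,\sigma\cdot\tau\big)^{-3/2}\de t=(1-\sigma\cdot\tau)^{-1}=2|\sigma-\tau|^{-2}$ in front of both the $\psi(\sigma)$ and $\psi(\tau)$ contributions, which is what identifies the angular functional with $\mathcal{E}(\cdot,\mathbb{S}^1)$; but the Fubini/absolute-convergence justification with only $H^{1/2}(\mathbb{S}^1)$ data for $g$, and the admissibility $\varphi\in H^{1/2}_{00}(D_1)\cap L^\infty$, must be supplied (note that since $\varphi$ vanishes outside $D_1$, the pairing $\langle(-\Delta)^{\frac12}v,\varphi\rangle_{D_1}$ coincides with the full symmetric form on $\R^2\times\R^2$, so the exterior-interaction term you worry about causes no separate difficulty). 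Second, in the converse direction your averaging step needs $\overline\varphi(\sigma)=\int_0^1\varphi(r\sigma)\,\de r$ to be an admissible circle test function, i.e.\ to belong to $H^{1/2}(\mathbb{S}^1)$, a trace-type fact you do not address; the paper instead uses that a $1/2$-harmonic circle is smooth, checks the pointwise equation $(-\Delta)^{\frac12}v\wedge v=0$ on $\R^2\setminus\{0\}$ through the explicit formulas $v^\e(x)=w_g\circ\mathfrak{S}^{-1}(x/|x|)$ and $\partial_\nu v^\e(x)=|x|^{-1}\,\partial_{\nu_p}w_g(p)$, and removes the origin by a capacity argument, which is both shorter and sidesteps the averaging issue.
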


\begin{proof}
{\it Step 1.} Let us consider an arbitrary map $v\in \widehat H^{1/2}(D_1;\mathbb{R}^2)$ which is 0-homogeneous. By a rescaling  argument 
we first deduce that $v\in H^{1/2}_{\rm loc}(\R^2;\R^2)$, and in view of Remark~\ref{H1/2loctoH1loc},  
$v^\e\in H^1_{\rm loc}(\overline\R^3_+;\R^2)$. Since $v^\e$ clearly inherits the 0-homogeneity of $v$, we infer that the map $\widetilde v^{\,\e}:={v^\e}_{|\mathbb{S}^2_+}$ belongs to $H^1(\mathbb{S}^2_+;\R^2)$ where $\mathbb{S}^2_+:=\mathbb{S}^2\cap\R^3_+$. Therefore $g:={\widetilde{v}^{\,\e}}_{\;\;|\partial \mathbb{S}^2_+}\in H^{1/2}(\mathbb{S}^1;\R^2)$ identifying $\partial \mathbb{S}^2_+$ with $\mathbb{S}^1$. 
Obviously, we have $v=g(\frac{x}{|x|})$ and $v^\e(x)=\widetilde v^{\,\e}(\frac{x}{|x|})$.  From the harmonicity of $v^\e$, we deduce that $\widetilde v^{\,\e} $ is the unique solution of 
$$\begin{cases}  
\Delta_{\mathbb{S}^2} \widetilde v^{\,\e} =0 & \text{on $\mathbb{S}^2_+$}\,,\\
\widetilde v^{\,\e}=g & \text{on $\partial \mathbb{S}^2_+$}\,,
\end{cases}
$$
where $\Delta_{\mathbb{S}^2}$ denotes the Laplace-Beltrami operator on $\mathbb{S}^2$. 

We now introduce the one-to-one conformal mapping $\mathfrak{S}:\overline{\mathbb{D}}\to \overline{\mathbb{S}^2_+}$ defined by 
$$\mathfrak{S}(x):=\left(\frac{2x_1}{|x|^2+1}, \frac{2x_2}{|x|^2+1},\frac{1-|x|^2}{|x|^2+1}\right) \,.$$
Note that $\mathfrak{S}$ is the identity on $\mathbb{S}^1$. By conformality of  $\mathfrak{S}$ the map $w_g:=\widetilde v^{\,\e}\circ \mathfrak{S}\in H^1(\mathbb{D}:\R^2)$ is harmonic in $\mathbb{D}$, and $w_g=g$ on $\mathbb{S}^1$ in the trace sense. It is therefore the harmonic extension of $g$ to the disc  $\mathbb{D}$, and we infer from Remark~\ref{harmcirc} and the conformality of  $\mathfrak{S}$ that
$$\frac{1}{2}\int_{\mathbb{S}^2_+}|\nabla_T \widetilde v^{\,\e}|^2\,\de \mathcal{H}^2 =\frac{1}{2}\int_{\mathbb{D}}|\nabla w_g|^2\,\de x=\mathcal{E}(g,\mathbb{S}^1)\,,$$
where $\nabla_T$ denotes the tangential gradient. 
\vskip5pt

\noindent{\it Step 2.} Assume that $v\in \widehat H^{1/2}(D_1;\mathbb{R}^2)$ is a 0-homogeneous 1/2-harmonic map into $\mathbb{S}^1$ in~$D_1$. We claim that the associated map $g$ as defined in Step 1 is a 1/2-harmonic circle into $\mathbb{S}^1$, {\it i.e.}, 
\begin{equation}\label{ptcritS1}
 \left[\frac{\de}{\de t} \,\mathcal{E}\left(\frac{g+t\varphi}{|g+t\varphi|}\,,\mathbb{S}^1\right)\right]_{t=0} =0 
 \end{equation}
 for all $\varphi\in H^{1/2}(\mathbb{S}^1;\mathbb{R}^2)\cap L^\infty(\mathbb{S}^1)$. Arguing as in the proof of Proposition~\ref{caracthalfharm}, we infer that it is enough to prove \eqref{ptcritS1}  for all $\varphi\in H^{1/2}(\mathbb{S}^1;\mathbb{R}^2)\cap L^\infty(\mathbb{S}^1)$ such that 
 $\varphi\cdot g =0$ a.e. on~$\mathbb{S}^1$. For such a function $\varphi$, we have 
 $$ \left[\frac{\de}{\de t} \,\mathcal{E}\left(\frac{g+t\varphi}{|g+t\varphi|}\,,\mathbb{S}^1\right)\right]_{t=0} = \left[\frac{\de}{\de t} \,\mathcal{E}\left(g+t\varphi\,,\mathbb{S}^1\right)\right]_{t=0}= \left[\frac{\de}{\de t} \left(\frac{1}{2}\int_{\mathbb{D}}|\nabla(w_g+tw_\varphi)|^2\,\de x\right)\right]_{t=0}\,,  $$
where $w_\varphi$ is the harmonic extension of $\varphi$ to the disc $\mathbb{D}$ (notice that $w_\varphi\in H^1(\mathbb{D};\R^2)\cap L^\infty(\mathbb{D})$). 
Setting $\Phi:=w_\varphi\circ \mathfrak{S}^{-1}\in H^1(\mathbb{S}^2_+;\R^2)\cap L^\infty(\mathbb{S}^2_+)$, we can argue as in Step 1 to obtain  
$$\frac{1}{2}\int_{\mathbb{D}}|\nabla(w_g+tw_\varphi)|^2\,dx=  \frac{1}{2}\int_{\mathbb{S}^2_+}|\nabla_T (\widetilde v^{\,\e}+t\Phi)|^2\,\de \mathscr{H}^2\,.$$
Consequently, 
\begin{equation}\label{2104}
 \left[\frac{\de}{\de t} \,\mathcal{E}\left(\frac{g+t\varphi}{|g+t\varphi|}\,,\mathbb{S}^1\right)\right]_{t=0} =
 \left[\frac{\de}{\de t} \left( \frac{1}{2}\int_{\mathbb{S}^2_+}|\nabla_T (\widetilde v^{\,\e}+t\Phi)|^2\,\de \mathscr{H}^2\right)\right]_{t=0} 
 =\int_{\mathbb{S}^2_+}\nabla_T \widetilde v^{\,\e} \cdot \nabla_T \Phi\,\de \mathscr{H}^2\,.
 \end{equation}
 Let us now consider a function $\eta\in C^1((0,+\infty))$ with compact support in the interval $(0,1)$ satisfying $\int_0^1\eta(r)r^2\,\de r=1$. Define for $x\in\R^3_+$, 
 $\widetilde \Phi:=\eta(|x|)\Phi(\frac{x}{|x|})$. Then $\widetilde \Phi\in H^1(B_1^+;\R^2)\cap L^\infty(B_1^+)$, $\widetilde \Phi$ has compact support in $B_1^+\cup D_1$, and 
 $\widetilde \Phi\cdot v^\e=0$ $\mathscr{H}^2$-a.e. on $D_1$. Since $v$ is assumed to be 1/2-harmonic, we deduce from  Proposition \ref{fracweakharm} that 
\begin{equation}\label{2102}
\int_{B^+_1} \nabla v^\e\cdot \nabla \widetilde \Phi\,\de x =0\,.
\end{equation}
 On the other hand, by homogeneity of $v^\e$, we have 
 \begin{equation}\label{2103}
 \int_{B^+_1} \nabla v^\e\cdot \nabla \widetilde \Phi\,\de x=\left(\int_0^1\eta(r)r^2\,\de r\right) \int_{\mathbb{S}^2_+} \nabla_T \widetilde v^{\,\e}\cdot \nabla_T \Phi\,\de \mathscr{H}^2
 =\int_{\mathbb{S}^2_+}\nabla_T \widetilde v^{\,\e} \cdot \nabla_T \Phi\,\de \mathscr{H}^2\,. 
 \end{equation}
Gathering \eqref{2104}-\eqref{2102}-\eqref{2103} leads to \eqref{ptcritS1}, and the claim is proved. 
\vskip5pt

\noindent{\it Step 3.} Consider  a 1/2-harmonic circle $g:\mathbb{S}^1\to\mathbb{S}^1$, and set $v:=g(\frac{x}{|x|})$. From the smoothness of $g$, we infer that $v\in H^{1/2}_{\rm loc}(\R^2;\R^2)\cap L^\infty(\R^2)\subset \widehat H^{1/2}(D_1;\mathbb{R}^2)$, and $v$ is smooth away from the origin. Moreover, we deduce from Step 1 that 
$$v^\e(x)=w_g\circ\mathfrak{S}^{-1}\left(\frac{x}{|x|}\right) \,.$$
As a consequence,
$$\frac{\partial v^\e}{\partial \nu}(x)= \frac{1}{|x|} \frac{\partial w_g}{\partial\nu_{p}}(p) \quad\text{on $\R^2\setminus\{0\}$}\,,$$
where $p:=\mathfrak{S}^{-1}\left(\frac{x}{|x|}\right)\in \mathbb{S}^1$ and  $\nu_p$ denotes the exterior normal to $\partial \mathbb{D}\simeq \mathbb{S}^1$ at the point $p$. In view of \eqref{eqharmcirc} we have
$$ \frac{\partial v^\e}{\partial \nu}(x)\wedge v(x)= \frac{1}{|x|} \frac{\partial w_g}{\partial\nu_{p}}(p)\wedge g(p)=0 \quad\text{on $\R^2\setminus\{0\}$}\,.$$
From Lemma \ref{repnormderfraclap} we then infer that $ (-\Delta)^{\frac{1}{2}} v\wedge v=0$ in $\R^2\setminus\{0\}$, and thus $v$ satisfies 
\eqref{varformwkfracharm} in $D_1\setminus \overline D_\rho$ for every $0<\rho<1$. Since $v$ belongs to $\widehat H^{1/2}(D_1;\R^2)$, a standard capacity argument shows that \eqref{varformwkfracharm} actually holds in $D_1$. Hence $v$ is a weak 1/2-harmonic map in $D_1$ by Proposition~\ref{caracthalfharm}.
\end{proof}

\vskip10pt

\section{Small energy estimate for Ginzburg-Landau boundary reactions}\label{epsregtitle}                        

In this section we perform a preliminary analysis on the asymptotic, as $\eps\to 0$, of critical points of the Ginzburg-Landau boundary energy $E_\varepsilon$ defined in \eqref{defEnergGLB}. 
The first step we make here is to prove an {\it epsilon}-{\it regularity} type of estimate which allows to control the 
regularity of solutions under the assumption that the energy, suitably renormalized, is small. This is the purpose of the following theorem which is a corner stone of the present paper. The entire section is devoted to its proof.

\begin{theorem}\label{epsreg}
Let $\varepsilon>0$ and $R>0$  such that $\varepsilon\leq R$. There exist constants $\eta_0>0$ and $C_0>0$  independent of $\eps$ and $R$ such that for each map 
$u_\eps \in C^{2}(B_R^+\cup D_R;\mathbb{R}^m)$ satisfying $|u_\eps|\leq 1$ and solving 
\begin{equation}\label{tim1439}
\begin{cases}
\Delta u_\eps= 0 & \text{in $B_R^+$}\,,\\[5pt]
\displaystyle \frac{\partial u_\eps}{\partial \nu}=\frac{1}{\varepsilon}(1-|u_\eps|^2)u_\eps & \text{on $D_R$}\,,
\end{cases}
\end{equation}
the condition $E_\varepsilon(u_\eps,B_R^+)\leq \eta_0 R^{n-1}$ implies
\begin{equation}\label{tim2206}
\sup_{ B^+_{R/4}} |\nabla u_\eps|^2 +\sup_{ D_{R/4}} \frac{(1-|u_\eps|^2)^2}{\varepsilon^2}\leq \frac{C_0}{R^2}\,\eta_0\,.
\end{equation}
\end{theorem}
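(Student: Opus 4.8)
The plan is to follow the classical Schoen--Uhlenbeck / Bethuel \emph{$\varepsilon$-regularity} scheme, adapted to the boundary reaction setting of \eqref{tim1439}, via a blow-up (contradiction) argument combined with a small-energy decay estimate and a bootstrap. The key preliminary is a monotonicity formula: since $u_\eps$ solves \eqref{tim1439}, one computes the first inner variation and obtains that, for $x_0\in D_R$, the rescaled energy
\[
r\longmapsto \frac{1}{r^{n-1}}E_\varepsilon(u_\eps,B^+_r(x_0)) + \frac{1}{\varepsilon}\frac{1}{r^{n-1}}\int_{D_r(x_0)}(1-|u_\eps|^2)^2\,\de\mathscr H^n
\]
is (almost) monotone nondecreasing on $(0,R-|x_0|)$, with an explicit nonnegative ``extra'' term involving $|x\cdot\nabla u_\eps|^2$ on half-spheres and the potential on $D_r$. (This is the boundary analogue of Remark~\ref{remmonotbdharm}; the potential term has the right sign because of the factor $\frac1\varepsilon(1-|u|^2)^2$ scaling like $r^{n}$ on $D_r$ vs.\ $r^{n-1}$ overall.) Granting monotonicity, the smallness hypothesis $E_\varepsilon(u_\eps,B_R^+)\le\eta_0R^{n-1}$ transfers to all smaller half-balls $B^+_r(x_0)$, $x_0\in D_{R/2}$.

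\textbf{Main decay step.} The heart is to prove: there exist $\eta_0,\theta\in(0,1)$ such that if $E_\varepsilon(u_\eps,B^+_r(x_0))\le\eta_0 r^{n-1}$ and $\varepsilon\le r$, then $E_\varepsilon(u_\eps,B^+_{\theta r}(x_0))\le\frac12(\theta r)^{n-1}\,\eta_0\cdot(\text{something})$ — more precisely a genuine decay $r^{1-n}E_\varepsilon(u_\eps,B^+_r)\le C\theta^{2\alpha}\,\big(r^{1-n}E_\varepsilon(u_\eps,B^+_r)\big)$ is too naive; instead one shows $(\theta r)^{1-n}E_\varepsilon(u_\eps,B^+_{\theta r})\le \tfrac12 r^{1-n}E_\varepsilon(u_\eps,B^+_r)$. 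This is done by contradiction: if it fails, take sequences $u_k$, $r_k$, $\varepsilon_k\le r_k$ with $\beta_k:=r_k^{1-n}E_{\varepsilon_k}(u_k,B^+_{r_k})\to 0$ but the decay fails. Rescale to unit radius, set $w_k:=(u_k-\bar u_k)/\sqrt{\beta_k}$ where $\bar u_k$ is a suitable average; the normalized equation has $|\nabla w_k|^2$ of unit-order $L^2$-mass on $B_1^+$, and the nonlinear Neumann term, once renormalized, is controlled because $|u_k|\le 1$ and the potential energy $\to 0$. One must separate two regimes: if $\varepsilon_k/r_k\to 0$, the limit $w_\infty$ solves the \emph{linear} problem $\Delta w_\infty=0$ in $B_1^+$ with homogeneous Neumann condition $\partial_\nu w_\infty=0$ on $D_1$ (the constraint $|u_k|\to1$ forces the normal component to vanish in the limit — here the codimension-one structure of the potential well $\{\,|u|=1\,\}$ is used); if $\varepsilon_k/r_k\to c>0$, the limit solves a linear Robin-type problem. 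In either case $w_\infty$ is smooth up to $D_1$ with the standard linear decay estimate $\rho^{1-n}\int_{B^+_\rho}|\nabla w_\infty|^2\le C\rho^2\int_{B^+_1}|\nabla w_\infty|^2$, and by strong $H^1_{\mathrm{loc}}$-convergence (using the equation to upgrade weak to strong convergence away from, and then up to, $D_1$) this contradicts the failure of decay for $\theta$ chosen small. The strong convergence up to the flat boundary, and verifying that the normalized potential term does not obstruct passage to the limit, is the step I expect to be the main obstacle.

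\textbf{Iteration and bootstrap.} Once the one-step decay is established, iterating it along the scales $\theta^j r$ (using monotonicity to re-enter the smallness hypothesis, and noting $\varepsilon\le\theta^j r$ fails eventually, so the iteration is run only down to scale $\sim\varepsilon$, and below that scale one uses that $|u_\eps|\le 1$ makes the reaction term harmless) yields a Morrey-type decay $\rho^{1-n}E_\varepsilon(u_\eps,B^+_\rho(x_0))\le C\eta_0(\rho/R)^{2\alpha}$ for all $x_0\in D_{R/2}$ and $\varepsilon\le\rho\le R/2$, for some $\alpha\in(0,1)$. Combined with the interior estimate (Theorem~\ref{regint}, applied at scale $\varepsilon$ after rescaling $x\mapsto x/\varepsilon$ so the equation becomes the normalized one with coefficient $1$), this Campanato/Morrey decay gives uniform $C^{0,\alpha}$ bounds, and then elliptic regularity for the Neumann problem (Appendix~B estimates) bootstraps to the gradient bound. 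Concretely: the Morrey decay at scale $\varepsilon$ gives $\varepsilon^{1-n}E_\varepsilon(u_\eps,B^+_\varepsilon(x_0))\le C\eta_0(\varepsilon/R)^{2\alpha}$; rescaling $\tilde u(x):=u_\eps(x_0+\varepsilon x)$ solves \eqref{tim1439} with $\varepsilon=1$ on $B_1^+$ with $\int_{B_1^+}|\nabla\tilde u|^2+\int_{D_1}(1-|\tilde u|^2)^2\le C\eta_0(\varepsilon/R)^{2\alpha}$, so by the linear Neumann estimates $\sup_{B^+_{1/2}}|\nabla\tilde u|^2+\sup_{D_{1/2}}(1-|\tilde u|^2)^2\le C\eta_0(\varepsilon/R)^{2\alpha}$; scaling back gives $\sup_{B^+_{\varepsilon/2}(x_0)}|\nabla u_\eps|^2\le \frac{C\eta_0}{\varepsilon^2}(\varepsilon/R)^{2\alpha}\le\frac{C\eta_0}{R^2}$ (using $\varepsilon\le R$, after also covering the scales between $\varepsilon$ and $R/4$ by the same rescaling at those intermediate radii where the coefficient is $\le 1$). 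Taking the supremum over $x_0\in D_{R/4}$ and covering $B^+_{R/4}$ by such half-balls centered on $D_{R/4}$ plus interior balls (handled by standard interior gradient estimates for harmonic functions) yields \eqref{tim2206}.
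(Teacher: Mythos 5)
Your outline replaces the paper's argument (clearing-out, Schoen-type point selection, and a compactness argument in the spirit of \cite{W}) by a Schoen--Uhlenbeck style one-step energy decay plus Morrey iteration, and the step you yourself flag as the main obstacle is a genuine gap, not a technicality. In your blow-up, the rescaled maps satisfy $\partial_\nu \tilde u_k=\tilde\varepsilon_k^{-1}(1-|\tilde u_k|^2)\tilde u_k$ on $D_1$ with $\tilde\varepsilon_k=\varepsilon_k/r_k$, and the energy hypothesis only gives $\tilde\varepsilon_k^{-1}\int_{D_1}(1-|\tilde u_k|^2)^2\le C\beta_k$; hence the Neumann data of $w_k=(\tilde u_k-\bar u_k)/\sqrt{\beta_k}$ is controlled in $L^2(D_1)$ only by $C\,\tilde\varepsilon_k^{-1/2}$, which blows up precisely in the regime $\tilde\varepsilon_k\to0$. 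So neither the identification of the limit boundary condition ($\partial_\nu w_\infty=0$) nor the strong $H^1$ convergence up to $D_1$ follows from $|u_k|\to1$ in $L^4(D_1)$; nothing in your scheme rules out concentration of $|\nabla w_k|^2$ on the flat boundary (even a boundary Caccioppoli inequality is problematic, since testing with $\phi^2(u_k-c)$ produces the same uncontrolled factor $\tilde\varepsilon_k^{-1/2}$). This is exactly why the paper does not run a decay scheme: there is no Bochner inequality available, so it first proves the clearing-out Lemma~\ref{clear2} ($|u_\eps|\ge 1/2$), then selects the blow-up point so that the rescaled maps come with the a priori bounds $1/2\le|v_\eps|\le1$, $|\nabla v_\eps|\le2$, and only then proves compactness (Proposition~\ref{epsnotvanish}) via the polar decomposition $v_k=a_kw_k$ and barrier arguments for a degenerate Robin problem (Lemmas~\ref{convc1alph} and~\ref{supersol}) to get $C^{1,\alpha}$ convergence up to $D_1$. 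Without such uniform bounds your compactness claim is unsupported.

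There is also a quantitative error in your bootstrap. From the Morrey decay $\varepsilon^{1-n}E_\varepsilon(u_\eps,B^+_\varepsilon(x_0))\le C\eta_0(\varepsilon/R)^{2\alpha}$ you deduce $\sup_{B^+_{\varepsilon/2}(x_0)}|\nabla u_\eps|^2\le C\eta_0\,\varepsilon^{-2}(\varepsilon/R)^{2\alpha}$ and claim this is $\le C\eta_0R^{-2}$; but $\varepsilon^{-2}(\varepsilon/R)^{2\alpha}=R^{-2}(\varepsilon/R)^{2\alpha-2}\ge R^{-2}$ for every $\alpha<1$, and the ratio is unbounded as $\varepsilon/R\to0$, so the inequality goes the wrong way. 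A decay lemma with exponent $\alpha<1$ only yields uniform H\"older control of $u_\eps$; to reach the stated gradient (and potential) bound you would have to iterate the decay using the improved information up to $\alpha\ge1$, or run a Schauder-type argument on the nonlinear Neumann condition quantifying $(1-|u_\eps|^2)/\varepsilon$ on $D_R$ (in the paper this control is an output of the barrier estimates, cf.\ \eqref{tim1146}), none of which is indicated. Finally, note that also the scale-matching "between $\varepsilon$ and $R/4$" needs the same correction, since applying \cite[Lemma~2.2]{CSM} at scale $\rho>\varepsilon$ requires the coefficient $\rho/\varepsilon$ to stay bounded.
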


The proof of Theorem~\ref{epsreg} is divided into three main parts according to the following subsections. For the local Ginzburg-Landau equation, the argument leading to the analoguous estimate is essentially based on the so-called {\it Bochner inequality} satified by the energy density, see {\it e.g.}~\cite{CS}. In our case, such inequality does not seem to be avaible, and to prove \eqref{tim2206} we better use a compactness approach in the spirit of \cite{W}.


 \subsection{Energy monotonicity and clearing-out lemma}
 
As usual in Ginzburg-Landau or harmonic map problems, the first main ingredient to derive regularity estimates is a useful monotonicity formula.  This is the purpose of the following lemma. 

 \begin{lemma}[\bf Monotonicity formula]\label{monotform}
Let $R>0$ and  $u_\eps \in  C^{2}(B_R^+\cup D_R;\mathbb{R}^m)$ 
satisfying~\eqref{tim1439}. 
Then, for every $x_0\in D_R$ and every $0<\rho<r<{\rm dist}(x_0,\partial D_R)$,
\begin{multline*}
\frac{1}{r^{n-1}}E_\varepsilon\big(u_\eps,B^+_r(x_0)\big)-\frac{1}{\rho^{n-1}}E_\varepsilon\big(u_\eps,B^+_\rho(x_0)\big) \\
=\int_\rho^r \frac{1}{t^{n-1}} \left(\int_{\partial^+ B_t(x_0)}\left|\frac{\partial u_\eps}{\partial \nu}\right|^2\,\de \mathscr{H}^n\right)\,\de t\\
+\frac{1}{4\varepsilon}\int_\rho^r\frac{1}{t^n}\left(\int_{D_t(x_0)}(1-|u_\eps|^2)^2\,\de\mathscr{H}^n\right)\,\de t\,.
\end{multline*}
\end{lemma}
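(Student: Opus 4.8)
The plan is to derive this from a Pohozaev-type (inner variation) identity for the harmonic function $u_\eps$ on half-balls, combined with the Neumann boundary condition. First I would recall that the stationarity/inner-variation identity for a harmonic map $u_\eps$ in $B_r^+(x_0)$, tested against the radial vector field $X(x) = (x-\tilde x_0)$ where $\tilde x_0 := (x_0,0)$, yields the classical monotonicity-type relation for the Dirichlet energy on interior balls. Here, however, we are on half-balls with a nonlinear Neumann condition on the flat part $D_r(x_0)$, so I would instead work directly: multiply $\Delta u_\eps = 0$ by $(x-\tilde x_0)\cdot\nabla u_\eps$ and integrate over $B_r^+(x_0)$. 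Integrating by parts produces a bulk term equal to $\frac{r-n}{2}\int_{B_r^+}|\nabla u_\eps|^2$ plus boundary contributions from $\partial^+ B_r(x_0)$ and from $D_r(x_0)$. On $\partial^+ B_r(x_0)$ one gets $r\int_{\partial^+B_r(x_0)}|\partial_\nu u_\eps|^2 - \frac r2\int_{\partial^+B_r(x_0)}|\nabla u_\eps|^2$, while on $D_r(x_0)$ the normal is $\nu = -e_{n+1}$ and the tangential part of $X$ is $(x'-x_0)$, so that term becomes $\int_{D_r(x_0)} \frac{\partial u_\eps}{\partial\nu}\,(x'-x_0)\cdot\nabla' u_\eps\, \de\mathscr{H}^n$.

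Next I would substitute the boundary equation $\partial_\nu u_\eps = \frac1\eps(1-|u_\eps|^2)u_\eps$ into that last integral. Since $(x'-x_0)\cdot\nabla' u_\eps = \frac12(x'-x_0)\cdot\nabla'(|u_\eps|^2)$ after pairing with $u_\eps$, and $\nabla'(|u_\eps|^2) = -\nabla'(1-|u_\eps|^2)$, we can write the boundary term as $-\frac1{2\eps}\int_{D_r(x_0)} (1-|u_\eps|^2)\,(x'-x_0)\cdot\nabla'\big((1-|u_\eps|^2)/2\big)^{?}$ — more precisely $-\frac1{4\eps}\int_{D_r(x_0)}(x'-x_0)\cdot\nabla'\big((1-|u_\eps|^2)^2\big)\,\de\mathscr{H}^n$. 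Integrating by parts on $D_r(x_0)$ (an $n$-dimensional integration by parts, $\mathrm{div}'(x'-x_0)=n$), this equals $\frac{n}{4\eps}\int_{D_r(x_0)}(1-|u_\eps|^2)^2 - \frac{r}{4\eps}\int_{\partial D_r(x_0)}(1-|u_\eps|^2)^2\,\de\mathscr{H}^{n-1}$. Collecting all terms, multiplying through by $r^{-n}$, and recognizing that $\frac{\de}{\de r}\big(r^{1-n}E_\eps(u_\eps,B_r^+(x_0))\big)$ expands via $\frac{\de}{\de r}\int_{B_r^+}(\cdots) = \int_{\partial^+B_r}(\cdots)$ and $\frac{\de}{\de r}\int_{D_r}(\cdots) = \int_{\partial D_r}(\cdots)$, one checks that the derivative of $r^{1-n}E_\eps(u_\eps,B_r^+(x_0))$ equals $r^{1-n}\int_{\partial^+B_r(x_0)}|\partial_\nu u_\eps|^2 + \frac1{4\eps}r^{-n}\int_{D_r(x_0)}(1-|u_\eps|^2)^2$, which is manifestly nonnegative. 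Integrating this ODE identity in $r$ from $\rho$ to $r$ gives exactly the claimed formula.

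The only mild technical point — not really an obstacle given the $C^2$ regularity hypothesis — is justifying the integrations by parts on the half-ball with the corner along $\partial D_r(x_0) = \partial^+B_r(x_0)\cap\R^n$; since $u_\eps\in C^2(B_R^+\cup D_R)$ and $\rho<r<\mathrm{dist}(x_0,\partial D_R)$, all the relevant integrands are smooth up to $\overline{B_r^+(x_0)}$ away from the corner, and the corner is $\mathscr{H}^{n-1}$-negligible with the integrands bounded, so the divergence theorem applies on $B_r^+(x_0)$ and on $D_r(x_0)$ separately without boundary contributions from the corner. One should also note the boundary-term bookkeeping: the $\partial D_r(x_0)$ contributions coming from the $D_r(x_0)$-integration by parts and the $\partial^+B_r(x_0)$ surface measure must be reconciled, but since $(x'-x_0)$ is tangent to $\R^n$ and $|x'-x_0| = \sqrt{r^2 - x_{n+1}^2} \to r$ as one approaches $\partial D_r(x_0)$, the two $\frac{r}{4\eps}(1-|u_\eps|^2)^2$ boundary terms are genuinely the same and cancel in the final ODE — I would verify this cancellation carefully. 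An alternative, perhaps cleaner route avoiding the corner entirely is to first prove the differentiated identity (the formula for $\frac{\de}{\de r}$) directly, by applying the inner variation / stress-energy tensor form $\mathrm{div}\,T_i = 0$ of Remark~\ref{stresstens} on $B_r^+(x_0)$ against $X$, which sidesteps re-deriving the $\partial^+B_r$ terms, and then integrate; I would present whichever is shorter, but both reduce to the same computation.
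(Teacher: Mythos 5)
Your proposal is correct and takes essentially the same route as the paper: there, the same differential identity $\frac{\de}{\de t}\big(t^{1-n}E_\eps(u_\eps,B_t^+(x_0))\big)=t^{1-n}\int_{\partial^+B_t(x_0)}\big|\frac{\partial u_\eps}{\partial\nu}\big|^2\,\de\mathscr{H}^n+\frac{1}{4\eps t^n}\int_{D_t(x_0)}(1-|u_\eps|^2)^2\,\de\mathscr{H}^n$ is obtained from the inner variation of $E_\eps$ (your stress--energy alternative of Remark~\ref{stresstens}) tested on $\mathbf{X}(x)=\eta(|x|)x$ with $\eta$ increasing to the indicator of $[-t,t]$, which is just a cutoff implementation of your Rellich--Pohozaev computation, the tangential integration by parts on $D_t$ being encoded there in the term $\frac{1}{4\eps}\int_{D_R}(1-|u_\eps|^2)^2\,{\rm div}_{\R^n}\mathbf{X}\,\de\mathscr{H}^n$. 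The only slip is the bulk coefficient, which should be $\tfrac{1-n}{2}$ (the ambient dimension is $n+1$), not $\tfrac{r-n}{2}$; this does not affect the differential identity you state or its integration from $\rho$ to $r$.
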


\begin{proof}
{\it Step 1.} Consider a smooth vector field $\mathbf{X}=(\mathbf{X}_1,\ldots,\mathbf{X}_{n+1}):\R^{n+1}\to\R^{n+1}$ compactly supported in $B_R$ and satisfying $\mathbf{X}_{n+1}=0$ on $\R^n$.   
For $t\in\mathbb{R}$ small, the map $\Phi_t(x):=x-t \mathbf{X}(x)$ defines a smooth diffeomorphism from $B_R$ into $B_R$ satisfying $\Phi_t(D_R)\subset D_R$, and $\Phi_t(B^+_R)\subset B^+_R$. Setting $u_t:=u_\eps\circ\Phi_t$, standard computations (see {\it e.g.} \cite[Chapter 2.2]{Sim}) yield 
\begin{equation}\label{tim1516}
\left[\frac{\de}{\de t} \left(\frac{1}{2}\int_{B_R^+} \big|\nabla u_t \big|^2\,\de x \right)\right]_{t=0} 
=\frac{1}{2}\int_{B_R^+}\left( |\nabla u_\eps|^2{\rm div}\mathbf{X} -2\sum_{i,j=1}^{n+1} (\partial_i u_\eps\cdot\partial_j u_\eps)\partial_j\mathbf{X}_i\right)\,\de x\,,
\end{equation}
and
$$\left[\frac{\de}{\de t} \left(\frac{1}{4\varepsilon}\int_{D_R} (1-|u_t|^2)^2\,\de\mathscr{H}^n  \right)\right]_{t=0} 
= \frac{1}{4\varepsilon}\int_{D_R} (1-|u_\eps|^2)^2{\rm div}_{\R^n}\mathbf{X}\,\de \mathscr{H}^n \,.$$
Using \eqref{tim1439} we integrate by parts in \eqref{tim1516} to find  
$$\left[\frac{\de}{\de t} \left(\frac{1}{2}\int_{B_R^+} \big|\nabla u_t \big|^2\,\de x \right)\right]_{t=0} 
= - \frac{1}{4\varepsilon}\int_{D_R} (1-|u_\eps|^2)^2{\rm div}_{\R^n}\mathbf{X}\,\de \mathscr{H}^n \,,$$
whence 
\begin{equation}\label{statGL}
\left[\frac{\de}{\de t}\, E_\varepsilon(u_t,B_R^+)\right]_{t=0}=0\,. 
\end{equation}
\vskip3pt

\noindent{\it Step 2.} Let $x_0\in D_R$, and $0<r<{\rm  dist}(x_0,\partial D_R)$. Without loss of generality we may assume that $x_0=0$ (to simplify the notation). Let $\eta\in C^\infty(\R;[0,1])$ be an even function such that $\eta(t)=0$ for $|t|\geq r$. Using the vector field $\mathbf{X}(x)=\eta(|x|)x$ in Step 1, we find 
\begin{multline}\label{tim1843}
\frac{n-1}{2}\int_{B_R^+}|\nabla u_\eps|^2\eta(|x|)\,\de x+ \frac{1}{2}\int_{B_R^+}|\nabla u_\eps|^2\eta^\prime(|x|)|x|\,\de x \\
-\int_{B_R^+}\left|\frac{\partial u_\eps}{\partial |x|}\right|^2 \eta^\prime(|x|)|x|\,\de x 
+\frac{n}{4\varepsilon}\int_{D_R}(1-|u_\eps|^2)^2\eta(|x|)\,\de \mathscr{H}^n\\
+\frac{1}{4\varepsilon}\int_{D_R}(1-|u_\eps|^2)^2\eta^\prime(|x|)|x|\,\de \mathscr{H}^n =0\,.
\end{multline}
Then, given an arbitrary $t\in(0,r)$, we consider a sequence $\{\eta_k\}$ of functions as above such that $\eta_k$ converges weakly* in $BV$ as $k\to\infty$ to the characteristic function of the interval $[-t,t]$. Using $\eta_k$ as a test function in \eqref{tim1843} and letting $k\to\infty$ leads to 
\begin{multline*}
\frac{n-1}{2}\int_{B_{t}^+}|\nabla u_\eps|^2\,\de x -  \frac{t}{2}\int_{\partial^+B_t}|\nabla u_\eps|^2\,\de \mathscr{H}^n
+t \int_{\partial^+ B_t}\left|\frac{\partial u_\eps}{\partial \nu}\right|^2\,\de \mathscr{H}^n\\
+\frac{n}{4\varepsilon}\int_{D_t}(1-|u_\eps|^2)^2\,\de \mathscr{H}^n-\frac{t}{4\varepsilon}\int_{\partial D_t}(1-|u_\eps|^2)^2\,\de \mathscr{H}^{n-1} =0\,.
\end{multline*}
Dividing by $t^n$ and rearranging terms, we end up with 
$$\frac{\de}{\de t} \left(\frac{1}{t^{n-1}}E_\varepsilon(u_\eps,B_t^+) \right)=  \frac{1}{t^{n-1}} \int_{\partial^+ B_t}\left|\frac{\partial u_\eps}{\partial \nu}\right|^2\,\de \mathscr{H}^n
+\frac{1}{4\varepsilon t^n}\int_{D_t}(1-|u_\eps|^2)^2\,\de \mathscr{H}^n\,.$$
Integrating this equality between $\rho>0$ and $r$ yields to the announced result.
\end{proof}

\begin{remark}[\bf Liouville property]
As a consequence of Lemma~\ref{monotform} and the stationarity equation \eqref{statGL}, any entire (smooth) solution $u_\varepsilon:\overline{\R}^{n+1}_+\to \R^m$ of the Ginzburg-Landau boundary equation ({\it i.e.}, $u_\varepsilon$ solves \eqref{tim1439} for every radius $R>0$) satisfying $E_\varepsilon(u_\varepsilon,\R^{n+1}_+)<\infty$, has to be  {\it constant}.  In view of \eqref{eqvepsext} and Lemma~\ref{normexth1/2}, the same Liouville property holds for entire solutions of the fractional Ginzburg-Landau equation having finite energy. More precisely,  if $v_\varepsilon:\R^n\to\R^m$ is a (smooth) solution of \eqref{eqfractGL} in $\R^n$ satisfying $\mathcal{E}_\varepsilon(v_\varepsilon,\R^n)<\infty$, then $v_\varepsilon$ is constant. The argument goes as follows.
\begin{proof}
If $n\geq 2$, Lemma~\ref{monotform} implies $E_\eps(u_\eps,B_\rho^+)\leq C(r/\rho)^{1-n}$ for every $0<\rho\leq r$, and the constancy of $u_\eps$ follows letting $r\to\infty$. The case $n=1$ is slightly more involved, and relies on \eqref{statGL}. For $n=1$ we consider a nonnegative cut-off function 
$\zeta\in C^\infty(\R^{2})$ such that $\zeta=1$ on $B_1$ and $\zeta=0$ on $\R^{2}\setminus B_2$. Setting $\zeta_k(x):=\zeta(x/k)$, we use the vector field ${\bf X}(x)=\zeta_k(x)x$ in \eqref{statGL} to obtain
\begin{multline*}
\frac{1}{2}\int_{B_{2k}^+\setminus B_k^+}\left( |\nabla u_\eps|^2(x\cdot\nabla\zeta_k) -2\sum_{i,j=1}^{2}(\partial_iu_\eps\cdot\partial_ju_\eps)x_i\partial_j\zeta_k\right)\,\de x \\
+\frac{1}{4\eps}\int_{D_{2k}}(1-|u_\eps|^2)^2\zeta_k\,\de\mathscr{H}^1 +\frac{1}{4\eps}\int_{D_{2k}\setminus D_k}(1-|u_\eps|^2)^2x_1\partial_1\zeta_k\,\de\mathscr{H}^1=0\,.
\end{multline*}
Since $|\nabla\zeta_k|\leq C/k$, this identity yields
$$\frac{1}{\eps}\int_{D_{k}}(1-|u_\eps|^2)^2\,\de\mathscr{H}^1 \leq CE_\eps(u_\eps, B^+_{2k}\setminus B_k^+)\mathop{\longrightarrow}\limits_{k\to\infty} 0\,,$$
and we deduce that $|u_\eps|\equiv 1$. We then infer that $u_\eps$ is a bounded solution of 
$$\begin{cases}
\Delta u_\eps =0 & \text{in $\R^{n+1}_+$}\,,\\[5pt]
\displaystyle \frac{\partial u_\eps}{\partial\nu} = 0 & \text{on $\R^n$}\,,
\end{cases}
$$
and the constancy of $u_\eps$ follows from Liouville Theorem on bounded entire harmonic functions through the usual reflection argument across $\partial \R^{n+1}_+\simeq\R^n$. 
\end{proof}
\end{remark}

The second key ingredient in Ginzburg-Landau problems is the so-called {\it clearing-out property} which essentially asserts that a solution must take  values  in a small neighborhood of the potential-well whenever the energy is small enough. This is precisely the object of the 
following lemma. The proof uses some ingredients from \cite[Lemma 3.1]{Sch} suitably modified to fit the Ginzburg-Landau setting. 
 
\begin{lemma}[\bf Clearing-out]\label{clear2}
Let  $\varepsilon\in (0,1]$. There exists a constant $\eta_1>0$ independent of~$\eps$ such that for each map 
$u_\eps\in  C^{2}(B_1^+\cup D_1;\mathbb{R}^m)$ satisfying $|u_\eps|\leq 1$ and \eqref{tim1439} with $R=1$, 
the condition $E_\varepsilon(u_\eps,B^+_1)\leq \eta_1$ 
implies 
\begin{equation}\label{cleareps}
 |u_\eps|\geq \frac{1}{2} \quad \text{in }  \overline{B}_{1/2}^+\,.
\end{equation}
\end{lemma}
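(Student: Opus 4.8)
\textbf{Plan for the proof of Lemma~\ref{clear2} (clearing-out).}

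The plan is to argue by contradiction via a blow-up/compactness scheme, combined with the monotonicity formula of Lemma~\ref{monotform} to transfer information from the small ball $B^+_{1/2}$ back to $B^+_1$. Suppose the statement fails: there is a sequence $\varepsilon_k\in(0,1]$ and maps $u_k\in C^2(B^+_1\cup D_1;\R^m)$ solving \eqref{tim1439} with $R=1$, satisfying $|u_k|\le 1$ and $E_{\varepsilon_k}(u_k,B^+_1)\to 0$, but with a point $x_k\in\overline B^+_{1/2}$ where $|u_k(x_k)|<1/2$. The first step is to extract the limiting behaviour. Since $E_{\varepsilon_k}(u_k,B^+_1)\to 0$, the Dirichlet energy $\int_{B^+_1}|\nabla u_k|^2\to 0$ and $\frac{1}{\varepsilon_k}\int_{D_1}(1-|u_k|^2)^2\to 0$. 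The maps $u_k$ are bounded in $L^\infty$ and in $H^1(B^+_1)$, so up to a subsequence $u_k\rightharpoonup u_\infty$ weakly in $H^1_{\rm loc}(B^+_1\cup D_1)$ with $\nabla u_\infty\equiv 0$, i.e.\ $u_\infty$ is a constant vector $c$ on $B^+_{3/4}$ say; by the vanishing of the potential term and trace continuity, $|c|=1$.

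The core of the argument is the passage to uniform (rather than merely $H^1$-weak) convergence near the boundary disc, because the failure point $x_k$ could lie on $D_{1/2}$. Here I would use the harmonicity of $u_k$ together with the Neumann condition. The strategy is a two-step elliptic estimate: first, on the interior half-ball, standard interior estimates for harmonic functions give $\|u_k\|_{C^\ell(B^+_{3/4}\setminus N_\delta(D_1))}$ controlled by $\|\nabla u_k\|_{L^2(B^+_1)}\to 0$ for any neighbourhood $N_\delta$ of the boundary disc; second, near $D_{3/4}$, I would test the equation and use the boundary regularity theory for the mixed/Neumann problem (the kind of estimates collected in Appendix~B of the paper) to bound $\|\nabla u_k\|_{L^\infty(B^+_{1/2})}$ in terms of $\|\nabla u_k\|_{L^2(B^+_{3/4})}$ plus $\frac{1}{\varepsilon_k}\|(1-|u_k|^2)u_k\|_{L^\infty(D_{3/4})}$. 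The second term is where the condition $\varepsilon_k\le 1$ enters and is the delicate point. To handle it I would argue that the oscillation of $|u_k|$ on $D_{1/2}$ is small: combining the monotonicity formula (Lemma~\ref{monotform}) with the smallness of $E_{\varepsilon_k}(u_k,B^+_1)$ gives, for every $x_0\in D_{1/2}$ and $0<r\le 1/2$, the bound $\frac{1}{r^{n-1}}E_{\varepsilon_k}(u_k,B^+_r(x_0))\le C\eta_1$, hence $\frac{1}{\varepsilon_k r^n}\int_{D_r(x_0)}(1-|u_k|^2)^2\,d\mathscr{H}^n\le C\eta_1$; rescaling $u_k$ around $x_0$ at scale $\varepsilon_k$ (or at scale $r$ if $\varepsilon_k\le r$) produces a normalized sequence of solutions of the same boundary problem on a large half-ball with small energy, and a Liouville-type argument (as in the Liouville remark following Lemma~\ref{monotform}) forces the rescaled limit to be constant of modulus $1$, contradicting $|u_k(x_k)|<1/2$.

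Concretely I would organize it as: (1) by monotonicity, reduce to showing that $|u_\varepsilon(x_0)|\ge 1/2$ for a single $x_0\in D_{1/2}$ under a smallness assumption on $\frac{1}{r^{n-1}}E_\varepsilon(u_\varepsilon,B^+_r(x_0))$, uniformly over such balls; (2) rescale $\widetilde u(y):=u_\varepsilon(x_0+r y)$ with $r$ chosen so that either $\widetilde u$ solves the boundary equation with parameter $\varepsilon/r\le 1$ and tiny energy on $B^+_1$, or, when $\varepsilon$ is very small relative to the natural scale, pick $r=\varepsilon$ and get parameter $1$; (3) in the limit $k\to\infty$, use the interior + boundary elliptic estimates to upgrade weak $H^1$ convergence of the rescaled maps to $C^0_{\rm loc}$ convergence on $\overline B^+_{1/2}$, obtaining a limiting solution with zero energy, hence constant with $|\cdot|=1$; (4) derive the contradiction with $|\widetilde u_k(0)|<1/2$. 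The main obstacle I anticipate is exactly step (3): establishing the uniform boundary $C^0$ (or $C^1$) bound on solutions of the Ginzburg-Landau boundary problem with $\varepsilon\le R$, where the Neumann datum $\frac1\varepsilon(1-|u|^2)u$ is a priori only $L^\infty$ with a constant that could blow up; the resolution is that the \emph{renormalized} energy control from monotonicity keeps $\frac1\varepsilon(1-|u|^2)^2$ integrated quantities bounded, and a bootstrap using the half-harmonic extension structure (or the Neumann-to-Dirichlet estimates of Appendix~B applied to the difference $u_k-c$) converts this into the needed pointwise bound. Once \eqref{cleareps} is established with some explicit $\eta_1$, the lemma follows by covering $\overline B^+_{1/2}$ and using monotonicity to reduce the local smallness hypotheses to the single global bound $E_\varepsilon(u_\varepsilon,B^+_1)\le\eta_1$.
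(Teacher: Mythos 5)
Your scheme works, and coincides with the paper's proof, in the regime where the bad point lies at distance $O(\varepsilon)$ from $\R^n$: there one rescales by $\varepsilon$ around the projected point, the Neumann nonlinearity becomes order one, the uniform $C^{2,\beta}$ estimates of \cite[Lemma 2.2]{CSM} apply, and a compactness argument forces $|u_\eps|\geq 1/2$ (this is Step~1 and Case~1 of the paper's proof). The genuine gap is in the complementary regime $d_k:=\mathrm{dist}(x_k,\R^n)\gg\varepsilon_k$ (the paper's Case~2, $R>\varepsilon/7$). Your two proposed mechanisms there both fail as stated. The estimate $\|\nabla u_k\|_{L^\infty(B^+_{1/2})}\lesssim\|\nabla u_k\|_{L^2(B^+_{3/4})}+\tfrac1{\varepsilon_k}\|(1-|u_k|^2)u_k\|_{L^\infty(D_{3/4})}$ is useless because the Neumann datum is only known to be bounded by $1/\varepsilon_k$, and the integrated smallness $\tfrac1{\varepsilon_k r^{n}}\int_{D_r}(1-|u_k|^2)^2\le C\eta_1$ coming from Lemma~\ref{monotform} does not convert into the pointwise bound you need: turning such integral control into uniform boundary regularity for the vanishing-parameter problem is exactly the content of the epsilon-regularity Theorem~\ref{epsreg}, whose proof uses the present lemma, so invoking such a bound here is circular. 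Likewise, in your step (3), if you rescale at a scale $r\gg\varepsilon_k$ the rescaled parameter $\varepsilon_k/r\to0$ and you have no compactness in $C^0_{\rm loc}(\overline B^+_{1/2})$ up to $D_{1/2}$; if instead you rescale at scale $\varepsilon_k$, the bad point escapes to infinity in the rescaled coordinates and no contradiction is reached. Your closing ``bootstrap using the half-harmonic extension structure'' is precisely the missing step, not a resolution of it.

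What the paper does in that regime requires no boundary compactness at all and is purely quantitative. With $x_1$ the projection of $x_0$ on $\R^n$ and $R=\tfrac13|x_0-x_1|$, the monotonicity formula gives $R^{1-n}E_\varepsilon(u_\eps,B^+_{5R}(x_1))\le C\eta_1$; since $\nabla u_\eps$ is harmonic, the mean value property on the interior balls $B_{R/4}(x)\subset B^+_{5R}(x_1)$, $x\in B_{3R/2}(x_0)$, yields $|\nabla u_\eps|\le C\sqrt{\eta_1}/R$ there; a representation of $|u_\eps(x_0)-\bar u_\eps|^2$ through the fundamental solution with a cutoff, combined with the Poincar\'e inequality, then gives $|u_\eps(x_0)-\bar u_\eps|\le C\sqrt{\eta_1}$, where $\bar u_\eps$ is the average of $u_\eps$ over $B^+_{5R}(x_1)$; finally $\bigl|1-|\bar u_\eps|\bigr|\le C\sqrt{\eta_1}$ is obtained by applying Jensen and a trace-type Poincar\'e inequality to $d\circ u_\eps=\bigl|1-|u_\eps|\bigr|$, and it is only here that the boundary potential term and the hypothesis $\varepsilon<7R$ enter. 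Choosing $\eta_1$ small concludes. If you prefer to keep a compactness argument, it can be repaired without any boundary regularity by observing that in this regime the bad point sits at distance comparable to the rescaling radius from $\R^n$: interior harmonic estimates give locally uniform convergence at that point, the weak $H^1$ limit of the rescaled maps is a constant, and its modulus is identified as $1$ via the strong $L^2$ convergence of the traces together with $\int_{D}(1-|u_k|^2)^2\le 4\,\tilde\varepsilon_k E_{\tilde\varepsilon_k}\to0$; but this is a different argument from the one you wrote, which insists on convergence up to $D_{1/2}$.
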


\begin{proof}
{\it Step 1.} We assume in this first step that $\varepsilon\geq 1/2$. We claim that we can find $\eta_2>0$ only depending on $n$ and $m$  such that if 
$E_\varepsilon(u_\eps,B^+_1)\leq \eta_2$, then $ |u_\eps|\geq 1/2$ in $\overline B^+_{1/2}$. Indeed, for an arbitrary map $u_\eps$ satisfying the statement of the theorem, since $|u_\eps|\leq 1$ and $\varepsilon\geq 1/2$, we deduce from  \cite[Lemma~2.2]{CSM} 
that $\| u_\eps\|_{ C^{2,\beta}(B^+_{1/2})}\leq C_\beta$ for a constant $C_\beta$ only depending on $n$, $m$, and $\beta\in(0,1)$.  We now proceed by contradiction assuming that there exist sequences $\{\varepsilon_k\}\subset[1/2,1]$, $\{x_k\}\subset \overline B^+_{1/2}$, and $\{u_k\}\subset  C^{2}(B_1^+\cup D_1;\mathbb{R}^m)$ satisfying $|u_k|\leq 1$ and \eqref{tim1439} with $R=1$,  
such that $E_{\varepsilon_k}(u_k,B_1^+)\to 0$ and $|u_k(x_k)|<1/2$. Then we can find a (not relabeled) subsequence  such that $u_k$ converges uniformly on $\overline B^+_{1/2}$. 
Since  $E_{\varepsilon_k}(u_k,B_1^+)\to 0$ the limit has to be a constant of modulus one. As a consequence, $|u_k|\to 1$  uniformly on $\overline B^+_{1/2}$, which contradicts the assumption  $|u_k(x_k)|<1/2$. 
\vskip3pt

\noindent{\it Step 2.} We now consider the case $\varepsilon<1/2$. 
Let us fix an arbitrary point $x_0=(x'_0,(x_0)_{n+1})$ in $B_{1/2}^+$, and set $x_1:=(x'_0,0)\in D_{1/2}$. By the Monotonicity Formula in Lemma~\ref{monotform} we have 
\begin{equation}\label{monotloc}
\frac{1}{r^{n-1}}E_\varepsilon (u_\eps,B^+_{r}(x_1)) \leq (1-|x_1|)^{1-n} E_\varepsilon (u_\eps,B^+_{1-|x_1|}(x_1)) 
\leq2^{n-1} E_\varepsilon (u_\eps,B^+_{1}) \leq 2^{n-1}\eta_1
\end{equation}
for every $0<r<1-|x_1|$.  We  set 
$$R:=\frac{1}{3}|x_1-x_0|\,.$$
\vskip5pt

\noindent{\it Case 1)} Let us first  assume that $R\leq \varepsilon/7$. Since $\varepsilon<1/2<1-|x_1|$ we have
$$\frac{1}{\varepsilon^{N-1}}E_\varepsilon(u_\eps,B^+_\varepsilon(x_1))\leq 2^{n-1}\eta_1\,.$$
Next we define for $y\in B_1^+\cup D_1$, 
$$v_\varepsilon(x):=u_\varepsilon(x_1+\varepsilon y) \,,$$
so that 
$$\begin{cases}
\Delta v_\varepsilon = 0 & \text{in $B_1^+$}\,,\\[8pt]
\displaystyle \frac{\partial v_\varepsilon}{\partial \nu}=(1-|v_\varepsilon|^2)v_\varepsilon & \text{on $D_1$}\,,
\end{cases}$$
and 
$$E_1(v_\varepsilon,B^+_1)=\frac{1}{\varepsilon^{n-1}}E_\varepsilon(w,B^+_\varepsilon(x_1))\leq 2^{n-1}\eta_1\,.$$
Choosing $\eta_1$ such that $2^{n-1}\eta_1\leq \eta_2$, we infer from Step 1 that 
$|v_\varepsilon|\geq 1/2$ in $B^+_{1/2}$. Hence $|u_\eps|\geq 1/2$ in  $B^+_{\varepsilon/2}(x_1)$, and thus $|u_\eps(x_0)|\geq 1/2$. 
\vskip5pt

\noindent{\it Case 2)} We assume that $R>\varepsilon/7$, so that $\varepsilon/R<7$. Noticing that $5R<1-|x_1|$, we have $B_{5R}^+(x_1)\subset B_1^+$. Let us set 
$$\bar u_\eps:= \frac{1}{|B_{5R}^+(x_1)|}\int_{B_{5R}^+(x_1)} u_\eps(y)\,\de y\,. $$
We consider for $x\in B_{2R}(x_0)\subset B^+_{5R}(x_1)$, 
$$v_\eps(x):=u_\eps(x)-\bar u_\eps\,.$$

Denote by $G_{x_0}$ the fundamental solution in $\mathbb{R}^{n+1}$ of the Laplace equation with pole at~$x_0$. We recall that 
\begin{equation}\label{estiGreen}
|\nabla G_{x_0}(x)|\leq \frac{C}{|x-x_0|^n} \qquad \text{for all $x\in\mathbb{R}^{n+1}\setminus\{x_0\}$}\,.
\end{equation}
Then consider a smooth cut-off function $\zeta\in  C^{\infty}_c(B_{2R}(x_0);[0,1])$ such that $\zeta\equiv 1$ in $B_R(x_0)$,  $\zeta\equiv 0$ in $B_{2R}(x_0)\setminus B_{3R/2}(x_0)$, and 
$|\nabla \zeta|\leq C/R$. 

We estimate
\begin{align}
\nonumber |v_\eps(x_0)|^2 &= \int_{B_{2R}(x_0)} G_{x_0}\Delta(|v_\eps|^2\zeta^2)\,\de x\\
\nonumber& =-\int_{B_{2R}(x_0)} \nabla G_{x_0} \cdot \nabla(|v_\eps|^2\zeta^2)\,\de x\\
\nonumber &=-2 \int_{B_{3R/2}(x_0)} \zeta^2\big(\sum_{j=1}^n \partial_j G_{x_0}\partial_j v_\eps\cdot v_\eps\big)\,\de x -2\int_{A_R(x_0)} |v_\eps|^2 \zeta\,  \nabla G_{x_0}\cdot \nabla \zeta\,\de x\\
\label{estivsqx0} &\leq  \|v_\eps\|_{L^\infty(B_{3R/2}(x_0))} \int_{B_{3R/2}(x_0)} |\nabla G_{x_0}| |\nabla u_\eps|\,\de x + \frac{C}{R^{n+1}}\int_{A_R(x_0)} |v_\eps|^2\,\de x \,,
\end{align}
where $A_R(x_0):=B_{2R}(x_0)\setminus B_R(x_0)$.

By harmonicity of $u_\eps$, $\nabla u_\eps$ is harmonic in $B_1^+$. Since $B_{R/4}(x)\subset B^+_{5R}(x_1)$ for all $x\in B_{3R/2}(x_0)$, we deduce that 
$$\nabla u_\eps(x)=\frac{1}{|B_{R/4}(x)|}\int_{B_{R/4}(x)}\nabla u_\eps(y)\,\de y \qquad\text{for all $x\in B_{3R/2}(x_0)$}\,.$$
From Jensen's inequality and \eqref{monotloc}, we then infer that 
$$|\nabla u_\eps(x)|^2\leq \frac{1}{|B_{R/4}(x)|}\int_{B_{R/4}(x)}|\nabla u_\eps(y)|^2\,\de y \leq \frac{C}{R^{n+1}}E_\varepsilon(u_\eps,B^+_{5R}(x_1))\leq \frac{C}{R^2} \eta_1$$
for all $x\in B_{3R/2}(x_0)$. Consequently,
\begin{equation}\label{ptgradest}
|\nabla u_\eps(x)|\leq \frac{C}{R}\sqrt{\eta_1} \qquad\text{for all $x\in B_{3R/2}(x_0)$}\,.
\end{equation}
Using \eqref{estiGreen} together with \eqref{ptgradest}, we obtain that 
\begin{equation}\label{gradGgradu}
\int_{B_{3R/2}(x_0)} |\nabla G_{x_0}| |\nabla u_\eps|\,\de x\leq C\sqrt{\eta_1}\,.
\end{equation}
Estimate  \eqref{ptgradest} also yields
\begin{equation}\label{estivlinfty}
\|v_\eps\|_{L^\infty(B_{3R/2}(x_0))}\leq |v_\eps(x_0)| +\frac{3R}{2}\|\nabla u_\eps\|_{L^\infty(B_{3R/2}(x_0))}\leq   |v_\eps(x_0)| +C\sqrt{\eta_1}\,.
\end{equation}
Next we infer from Poincar\'e Inequality that 
$$\int_{B_{2R}(x_0)\setminus B_R(x_0)} |v_\eps|^2\,\de x\leq \int_{B^+_{5R}(x_1)}|u_\eps-\bar u_\eps|^2\,\de x\leq C R^2  \int_{B^+_{5R}(x_1)}|\nabla u_\eps|^2\,\de x\,,$$
so that   
\begin{equation}\label{estimeanv}
\frac{1}{R^{n+1}}\int_{B_{2R}(x_0)\setminus B_R(x_0)} |v_\eps|^2\,\de x\leq \frac{C}{R^{n-1}}E_\varepsilon(u_\eps,B^+_{5R}(x_1))\leq C \eta_1\,.
\end{equation}
Gathering \eqref{estivsqx0}, \eqref{gradGgradu}, \eqref{estivlinfty}, and \eqref{estimeanv}, we conclude that 
$$|v_\eps(x_0)|^2\leq C \sqrt{\eta_1}\big(|v_\eps(x_0)|+\sqrt{\eta_1}\big)\leq \frac{1}{2}|v_\eps(x_0)|^2+C\eta_1\,,$$
and thus 
$$|u_\eps(x_0)-\bar u_\eps|\leq C\sqrt{\eta_1}\,. $$
In particular,
\begin{equation}\label{estiux0}
\big|1-|u_\eps(x_0)|\big|\leq \big|1-|\bar u_\eps|\big|+C \sqrt{\eta_1}\,.
\end{equation}

Now we claim that
\begin{equation}\label{claimclear}
\big|1-|\bar u_\eps|\big|\leq C\sqrt{\eta_1}\,. 
\end{equation}
Setting for $y\in\mathbb{R}^m$, $d(y):=|1-|y||$, the function $d$ is 1-Lipschitz. Since $d(\bar u_\eps)\leq d(u_\eps(x))+|u_\eps(x)-\bar u_\eps|$, we have 
\begin{equation}\label{preestd}
d(\bar u_\eps)\leq \frac{1}{|B^+_{5R}(x_1)|}\int_{B^+_{5R}(x_1)} d(u_\eps(x))\,\de x+\frac{1}{|B^+_{5R}(x_1)|}\int_{B^+_{5R}(x_1)} |u_\eps(x)-\bar u_\eps|\,\de x\,.
\end{equation}
Then we use Jensen and Poincar\'e inequalities together with \eqref{monotloc}, to derive
\begin{multline}\label{estidpoinc}
 \left(\frac{1}{|B^+_{5R}(x_1)|}\int_{B^+_{5R}(x_1)} |u_\varepsilon(x)-\bar u_\varepsilon|\,\de x\right)^2  \leq \frac{1}{|B^+_{5R}(x_1)|}\int_{B^+_{5R}(x_1)} |u_\varepsilon(x)-\bar u_\varepsilon|^2\,\de x\\
\leq \frac{C}{R^{n-1}}\int_{B^+_{5R}(x_1)}|\nabla u_\varepsilon|^2\,\de x \leq C\eta_1\,.
\end{multline}
Using again Jensen  and Poincar\'e inequalities for the function $d\circ u_\eps$, estimate \eqref{monotloc}, and the facts that $|\nabla d|\leq 1$ and $\varepsilon/R<7$, we obtain
\begin{align}
\nonumber \bigg(\frac{1}{|B^+_{5R}(x_1)|} &\int_{B^+_{5R}(x_1)} d\circ u_\eps \,\de x\bigg)^2\leq \frac{1}{|B^+_{5R}(x_1)|}\int_{B^+_{5R}(x_1)} |d\circ u_\eps|^2 \,\de x\\
\nonumber &\leq C\left(\frac{1}{R^n}\int_{D_{5R}(x_1)}|d\circ u_\eps|^2\,\de \mathscr{H}^n+\frac{1}{R^{n-1}}\int_{B^+_{5R}(x_1)} |\nabla(d\circ u_\eps)|^2\,\de x\right)\\
\nonumber &\leq \frac{C}{R^{n-1}}\left(\frac{1}{\varepsilon}\int_{D_{5R}(x_1)}(1-|u_\eps|^2)^2\,\de \mathscr{H}^n+\int_{B^+_{5R}(x_1)} |\nabla u_\eps|^2\,\de x\right) \\
\label{estidpoinc2}&\leq C \eta_1\,.
\end{align}
Combining \eqref{preestd}, \eqref{estidpoinc}, and \eqref{estidpoinc2}, yields \eqref{claimclear}

Now \eqref{estiux0} and \eqref{claimclear} imply that $\big|1-|u_\eps(x_0)|\big|\leq C\sqrt{\eta_1}$, and thus $|u_\eps(x_0)|\geq 1/2$ whenever $\eta_1$ is chosen small enough, smallness depending only on the dimensions $n$ and $m$. 
\end{proof}


\subsection{Proof of Theorem~\ref{epsreg}, Step 1.} By rescaling variables, it suffices to consider the case $R=1$ and $\varepsilon\in (0,1]$. 
We shall choose $\eta_0\leq \eta_1$ where $\eta_1$ is given by  Lemma~\ref{clear2}, so that 
\begin{equation}\label{clearmodu}
|u_\varepsilon|\geq \frac{1}{2}\qquad\text{in } \overline{B}^+_{1/2}\,.
\end{equation}
Arguing as in the proof of Lemma \ref{clear2}, we infer that 
\begin{equation}\label{contren}
\frac{1}{r^{n-1}}E_\varepsilon(u_\varepsilon,B^+_r(x))\leq 2^{n-1}\eta_0\qquad\text{for all $x\in D_{1/2}$ and $0<r<1-|x|$}\,. 
\end{equation}
We are going  to prove that if $\eta_0$ is small enough, smallness depending only  on $n$ and $m$, then
$$|\nabla u_\varepsilon|^2\leq C\eta_0 \qquad\text{in $\overline{B}^+_{1/4}$}\,. $$
In view of the Neumann boundary condition, it clearly implies the full estimate~\eqref{tim2206}. 
We shall achieve it considering two different cases. In the spirit of \cite{CS}, we start with the localization of an interior region where $|\nabla u_\varepsilon|^2$ is large. 
\vskip3pt

Since $u_\varepsilon$ is smooth, we can find $\sigma_\varepsilon\in(0,1/2)$ such that 
\begin{equation}\label{defigma0}
(1/2-\sigma_\varepsilon)^2\sup_{ B^+_{\sigma_\varepsilon}} |\nabla u_\varepsilon|^2 = \max_{0\leq \sigma\leq 1/2}\left((1/2-\sigma)^2\sup_{ B^+_{\sigma}} |\nabla u_\varepsilon|^2\right)\,,
\end{equation}
and $x_\varepsilon\in B_{\sigma_\varepsilon}^+$ such that 
\begin{equation}\label{condx0}
2 |\nabla u_\varepsilon(x_\varepsilon)|^2\geq  \sup_{x\in B^+_{\sigma_\varepsilon}}  |\nabla u_\varepsilon|^2=:e_\varepsilon \,.
\end{equation}
Denote by $\bar x_\varepsilon\in D_{\sigma_\varepsilon}$ the orthogonal projection of $x_\varepsilon$ on $\mathbb{R}^{n}$, and set 
$$\rho_\varepsilon:=\frac{1}{2}\big(1/2-\sigma_\varepsilon\big)\qquad\text{and}\qquad R_\varepsilon:=\frac{1}{3}|x_\varepsilon-\bar x_\varepsilon|\,. $$
We now distinguish two cases. 
\vskip5pt

\noindent{\it Case 1).} We first assume that $R_\varepsilon>\rho_\varepsilon/7$. Since $\nabla u_\varepsilon$ is harmonic in $B_1^+$, we have 
$$\nabla u_\varepsilon(x_\varepsilon)=\frac{1}{|B_{R_\varepsilon}(x_\varepsilon)|}\int_{B_{R_\varepsilon}(x_\varepsilon)}\nabla u_\varepsilon \,\de x\,. $$
Then we deduce from the harmonicity of $u_\varepsilon$, Jensen's inequality, \eqref{contren}, and  \eqref{condx0} that 
$$e_\varepsilon\leq  \frac{2}{|B_{R_\varepsilon}(x_\varepsilon)|}\int_{B_{R_\varepsilon}(x_\varepsilon)}  |\nabla u_\varepsilon|^2\,\de x
\leq \frac{C}{R_\varepsilon^{n+1}} \int_{B_{5 R_\varepsilon}^+(\bar x_\varepsilon)}|\nabla u_\varepsilon|^2\,\de x\leq \frac{C}{R_\varepsilon^2}\eta_0\leq \frac{C}{\rho_\varepsilon^2}\eta_0\,.$$
Here we have also used that $B_{R_\varepsilon}(x_\varepsilon)\subset B^+_{5 R_\varepsilon}(\bar x_\varepsilon)\subset B_1^+$. Consequently,  
$$\rho_\varepsilon^2e_\varepsilon=\frac{1}{4}(1/2-\sigma_\varepsilon)^2e_\varepsilon\leq C\eta_0\,. $$
Choosing $\sigma=1/4$ in \eqref{defigma0}  yields
$$\sup_{ B^+_{1/4}}  |\nabla u_\varepsilon|^2\leq C\eta_0\,,$$
which is the announced estimate. 
\vskip5pt

\noindent{\it Case 2).} We now assume that $R_\varepsilon\leq \rho_\varepsilon/7$, and we claim that if $\eta_0$ is small enough, only depending on the dimensions $n$ and $m$, then
\begin{equation}\label{estrbare0}
R_\varepsilon\leq \frac{1}{7\sqrt{e_\varepsilon}}\,. 
\end{equation}
Indeed, assume that \eqref{estrbare0} does not hold. Then, arguing as in Case 1), we obtain 
$$e_\varepsilon\leq  \frac{C}{R_\varepsilon^2}\eta_0\leq C e_\varepsilon \eta_0\,.$$
Hence $1\leq C\eta_0$, which is impossible whenever $\eta_0$ is small enough. 
\vskip3pt

Next we  assume that $\eta_0$ is sufficiently small so that \eqref{estrbare0} holds. Then, 
$$|x_\varepsilon - \bar x_\varepsilon|\leq \min\left\{\frac{1}{2\sqrt{e_\varepsilon}}\,, \frac{\rho_\varepsilon}{2}\right\}\,. $$
Noticing that $B_{\rho_\varepsilon}^+(\bar x_\varepsilon)\subset B_{\sigma_\varepsilon+\rho_\varepsilon}^+$, we deduce that
\begin{equation}\label{estilargerball}
\sup_{ B^+_{\rho_\varepsilon}(\bar x_\varepsilon)}  |\nabla u_\varepsilon|^2\leq \sup_{ B^+_{\sigma_\varepsilon+\rho_\varepsilon}}  |\nabla u_\varepsilon|^2\,. 
\end{equation}
Since $\sigma_\varepsilon+\rho_\varepsilon\in(0,1/2)$, we infer from \eqref{defigma0} and \eqref{estilargerball} that 
\begin{equation}\label{pointwestigrd}
 \sup_{ B^+_{\rho_\varepsilon}(\bar x_\varepsilon)}  |\nabla u_\varepsilon|^2\leq 4 e_\varepsilon\,.
 \end{equation}
Let us now introduce the quantity
$$r_\varepsilon:=\rho_\varepsilon\sqrt{e_\varepsilon}\,. $$
{\it We claim that} 
\begin{equation}\label{r0less1}
r_\varepsilon\leq 1\,. 
\end{equation}
The proof of \eqref{r0less1} is postponed to the next subsection, and we  complete now the argument. 
Then, taking \eqref{r0less1} for granted,  \eqref{defigma0} yields
$$ \max_{0\leq \sigma\leq 1/2}\left((1/2-\sigma)^2\sup_{ B^+_{\sigma}} |\nabla u_\varepsilon|^2\right)=(1/2-\sigma_\varepsilon)^2\sup_{ B^+_{\sigma_\varepsilon}} |\nabla u_\varepsilon|^2 =4\rho_\varepsilon^2e_\varepsilon\leq 4\,.$$
Choosing $\sigma=1/4$ in the inequality above, we obtain 
$$\sup_{ B^+_{1/4}} |\nabla u_\varepsilon|^2\leq  64\leq C\eta_0\,, $$
as desired.
\qed 


\subsection{Proof of Theorem~\ref{epsreg}, Step 2.}  

 We are now going to  prove \eqref{r0less1} by contradiction, assuming that $r_\varepsilon>1$. The following proposition is the key point of the argument. 

\begin{proposition}\label{epsnotvanish}
If $\eta_0>0$ is small enough (depending only on $n$and $m$), then there exists a constant $\varsigma_0>0$ (depending only on $n$ and $m$) such that 
for each $\varepsilon>0$ and each map $v_\eps\in  C^2(\overline B_1^+;\R^m)$ solving 
\begin{equation}\label{systvn2}
\begin{cases}
\Delta v_\eps= 0&\text{in $B_1^+$}\,,\\[8pt]
\displaystyle \frac{\partial v_\eps}{\partial \nu}=\frac{1}{\varepsilon} (1-|v_\eps|^2)v_\eps & \text{on $D_1$}\,,
\end{cases}
\end{equation}
with the estimates
\begin{equation}\label{modgradvn2} 
1/2\leq |v_\eps|\leq 1 \quad\text{and}\quad |\nabla v_\eps|\leq 2 \quad\text{in $\overline{B}^+_1$}\,,
\end{equation}
and such that 
\begin{equation}\label{condgradvn2} 
|\nabla v_\eps(z_\eps)|^2\geq \frac{1}{2}
\end{equation}
for some point $z_\eps\in B^+_{1/2}$, the condition 
\begin{equation}\label{tim1738}
E_\eps(v_\eps,B_1^+)\leq 2^{n-1}\eta_0
\end{equation}
implies $\eps\geq \varsigma_0$. 
\end{proposition}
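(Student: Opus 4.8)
\textbf{Proof plan for Proposition~\ref{epsnotvanish}.} The strategy is a compactness-contradiction argument in the spirit of \cite{W}. Suppose the statement fails. Then there exist $\eps_k\downarrow 0$, maps $v_k=v_{\eps_k}\in C^2(\overline B_1^+;\R^m)$ solving \eqref{systvn2} with $\varepsilon=\eps_k$, points $z_k\in B^+_{1/2}$, all subject to \eqref{modgradvn2}, \eqref{condgradvn2}, and the smallness bound \eqref{tim1738}. By \eqref{modgradvn2}, the sequence $\{v_k\}$ is bounded in $C^{0,1}(\overline B_1^+)$ and in $H^1(B_1^+)$; moreover the smallness bound \eqref{tim1738} together with the Neumann equation forces $\tfrac1{4\eps_k}\int_{D_1}(1-|v_k|^2)^2\,\de\mathscr{H}^n\leq 2^{n-1}\eta_0$, so that $(1-|v_k|^2)\to 0$ in $L^2(D_1)$; since $\eps_k\to 0$ this in fact gives $|v_k|\to 1$ on $D_1$. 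After passing to a subsequence, $v_k\to v_*$ uniformly on $\overline B^+_{3/4}$ (by Arzelà-Ascoli), weakly in $H^1(B_1^+)$, and $z_k\to z_*\in \overline B^+_{1/2}$.

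\textbf{Identifying the limit.} First I would upgrade the convergence to be strong and smooth in the interior and up to $D_{3/4}$. Since each $v_k$ is harmonic in $B_1^+$ with a Neumann datum that is uniformly bounded in $L^\infty(D_1)$ (indeed $|\tfrac1{\eps_k}(1-|v_k|^2)v_k|$ need not be bounded a priori — this is the delicate point), one cannot directly invoke linear Neumann estimates. Instead I would argue that the limit $v_*$ satisfies $|v_*|\equiv 1$ on $D_{3/4}$, hence $v_*$ is a bounded harmonic map in $B^+_{3/4}$ whose trace lies on $\mathbb{S}^{m-1}$; passing to the limit in the weak formulation \eqref{varformbdgleq} tested against $\Phi$ with $\Phi(x)\in T_{v_k(x)}\mathbb{S}^{m-1}$ on $D_1$ (up to small error terms controlled by $|1-|v_k|^2|$), one finds that $v_*$ is a weak $(\mathbb{S}^{m-1},D_{3/4})$-boundary harmonic map. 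By the smallness of its energy, $\tfrac1{r^{n-1}}E_0(v_*,B^+_r(x))$ is small for all relevant $r$ (using the monotonicity formula, Lemma~\ref{monotform}, in the $\eps=0$ limit); the $\eps$-regularity for boundary harmonic maps — equivalently Theorem~\ref{thmregfreebd} combined with a standard small-energy regularity estimate — then yields that $v_*$ is smooth up to $D_{1/2}$ with $|\nabla v_*|^2\leq C\eta_0$ on $B^+_{1/2}$.

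\textbf{Closing the contradiction.} The remaining task is to show $|\nabla v_k|\to|\nabla v_*|$ locally uniformly on $B^+_{1/2}\cup D_{1/2}$, so that \eqref{condgradvn2} passes to the limit to give $|\nabla v_*(z_*)|^2\geq 1/2$, contradicting $|\nabla v_*|^2\leq C\eta_0$ once $\eta_0$ is small. For the interior convergence this is immediate from harmonicity (uniform $L^\infty$-bound on $v_k$ plus mean-value property give local uniform convergence of all derivatives away from $D_1$). The convergence up to $D_{1/2}$ is the main obstacle: here I would use the boundary regularity machinery for the Ginzburg-Landau system — reflect or use the appropriate local estimates (Appendix B of the paper, resp.\ \cite[Lemma~2.2]{CSM}) to obtain, on $B^+_{3/8}$ say, a uniform bound on $\|v_k\|_{C^{1,\beta}}$ in terms of $\|v_k\|_{L^\infty}$ and the Neumann datum. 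The subtlety is that the Neumann datum $\tfrac1{\eps_k}(1-|v_k|^2)v_k$ is only known to be small \emph{in integral norm}, not pointwise; to handle this I would exploit the monotonicity formula once more to get $r^{1-n}E_{\eps_k}(v_k,B_r^+(x))\leq C\eta_0$ for all $x\in D_{1/2}$, $0<r<1/2$, then combine with an interpolation/bootstrap (as in the proof of Step~1 of Theorem~\ref{epsreg}) to obtain a uniform pointwise bound $\tfrac1{\eps_k}(1-|v_k|^2)\leq C$ on $D_{3/8}$, after which standard Schauder estimates for the Neumann problem give uniform $C^{1,\beta}(B^+_{1/4})$ bounds and hence the desired local uniform convergence of $\nabla v_k$. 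Passing to the limit in \eqref{condgradvn2} then contradicts the $\eps$-regularity bound on $v_*$, proving the proposition.
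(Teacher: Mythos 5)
Your overall compactness--contradiction scheme is the right one, and you have correctly located the delicate point (boundary behaviour of the Neumann datum), but your proposed resolution of it does not work, and this is precisely where the real content of the proposition lies. First, the pointwise bound you want to extract by "monotonicity plus a bootstrap as in Step 1 of Theorem~\ref{epsreg}" is in fact trivial: on $D_1$ one has $\frac{1}{\eps_k}(1-|v_k|^2)|v_k|=|\partial_\nu v_k|\leq|\nabla v_k|\leq 2$, so $\frac{1}{\eps_k}(1-|v_k|^2)\leq 4$ follows directly from \eqref{modgradvn2}; moreover Step~1 of the proof of Theorem~\ref{epsreg} contains no such bootstrap -- the pointwise control of $(1-|u_\eps|^2)/\eps$ is part of the \emph{conclusion} of Theorem~\ref{epsreg}, whose proof uses this very proposition, so invoking it here is circular. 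Second, and more importantly, an $L^\infty$ bound on the Neumann datum is not enough for what you need: to pass \eqref{condgradvn2} to the limit when $z_k$ accumulates on $D_{1/2}$ (which does happen in the application) you need uniform $C^{1,\beta}$ bounds up to $D$, and Schauder theory for the Neumann problem (Lemma~\ref{regNeum}) requires a uniform \emph{H\"older} bound on the datum $\frac{1}{\eps_k}(1-|v_k|^2)v_k$; with merely bounded Neumann data the tangential gradient on the boundary is only BMO-type, not bounded, so "standard Schauder estimates" do not apply. Obtaining the uniform H\"older bound on $(1-|v_k|^2)/\eps_k$ is the genuinely hard step, and nothing in your outline produces it.

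For comparison, the paper's proof handles this by the polar decomposition $v_k=a_kw_k$ with $a_k=|v_k|\geq 1/2$, $w_k=v_k/|v_k|$: the phase $w_k$ satisfies $\partial_\nu w_k=0$ on $D_1$, so it extends by even reflection across $D_1$ and interior elliptic regularity gives uniform $C^{1,\alpha}$ bounds up to the boundary; the modulus $a_k$ satisfies a Robin-type condition, and a maximum-principle/barrier argument (Lemma~\ref{convc1alph}, using the auxiliary supersolution of Lemma~\ref{supersol} applied to the difference quotients of $1-a_k$) yields $\|(1-a_k)/\eps_k\|_{C^{0,\alpha}(D_{r/4})}\leq C$, after which Lemma~\ref{regNeum} gives uniform $C^{1,\alpha}$ bounds on $a_k$ up to $D$. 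With $v_k\to v_*$ in $C^{1,\alpha}_{\rm loc}(B_1^+\cup D_1)$ in hand, the contradiction is then elementary and does not require identifying $v_*$ as a boundary harmonic map or any small-energy regularity theory for such maps (which, for merely weak solutions, would anyway require a stationarity you have not established): $|\nabla v_*(z_*)|^2\geq 1/2$ plus the uniform $C^{1,\alpha}$ bound forces $|\nabla v_*|^2\geq 1/4$ on a half-ball of definite radius, contradicting $\int_{B_1^+}|\nabla v_*|^2\leq 2^n\eta_0$ for $\eta_0$ small. So your plan either needs to incorporate an argument of this type giving H\"older control of $(1-|v_k|^2)/\eps_k$ up to $D_1$, or some substitute yielding $C^1$ compactness up to the boundary; as written, that step is missing.
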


\begin{proof}
We argue by contradiction assuming that there exist a sequence $\eps_k\downarrow 0$, and corresponding 
solutions $\{v_{\varepsilon_k}\}_{k\in\mathbb{N}}$ of \eqref{systvn2} satisfying \eqref{modgradvn2},   \eqref{condgradvn2}, and   \eqref{tim1738}. 
To simplify the notation, we shall write $v_k:=v_{\eps_k}$ and $z_k:=z_{\eps_k}$. Note that $v_k$ is smooth in $B_1^+\cup D_1$ by Theorem \ref{regint}. 
\vskip3pt
Thanks to \eqref{modgradvn2} we can consider the smooth functions 
$$a_k:=|v_k|\quad\text{and}\quad w_k:=\frac{v_k}{|v_k|}\,.$$
Noticing that 
\begin{equation}\label{decompgradsq}
|\nabla v_k|^2=|\nabla a_k|^2+a_k^2|\nabla w_k|^2\,,
\end{equation}
we deduce from \eqref{modgradvn2} that 
\begin{equation}\label{gradboundronwn2}
1/2\leq a_k\leq 1\,,\quad |\nabla a_k|\leq 2\,,\quad |w_k|=1\,,\quad |\nabla w_k|\leq 4\,.
\end{equation}
In addition, system \eqref{systvn2} yields
\begin{equation}\label{eqron2}
\begin{cases}
-\Delta a_k +|\nabla w_k|^2a_k=0 & \text{in $B_1^+$}\,,\\[5pt]
\displaystyle \frac{\partial a_k}{\partial\nu}=\frac{1}{\varepsilon_k}(1-a_k^2)a_k & \text{on $D_1$}\,,
\end{cases}
\end{equation}
and 
\begin{equation}\label{eqwn2}
\begin{cases}
-{\rm div}(a_k^2\nabla w_k)=a_k^2|\nabla w_k|^2w_k  & \text{in $B_1^+$}\,,\\[5pt]
\displaystyle \frac{\partial w_k}{\partial\nu}=0 & \text{on $D_1$}\,.
\end{cases}
\end{equation}
In view of the boundary condition in \eqref{eqwn2}, we can extend $a_k$ and $w_k$ to $B_1$ by even reflection across $D_1$, {\it i.e.}, 
\begin{equation}\label{reflecaw}
a_k(x)=a_k(x',-x_{n+1}) \;\text{ and }\;Êw_k(x)=w_k(x',-x_{n+1})\; \text{ for } x\in B_1\setminus B_1^+\,,
\end{equation}
and then derive from \eqref{eqwn2},  
\begin{equation}\label{syswnwholball2}
-{\rm div}(a_k^2\nabla w_k)=a_k^2|\nabla w_k|^2w_k\quad\text{in }B_1\,.
\end{equation}
From \eqref{gradboundronwn2} and the boundary condition in \eqref{eqron2}, we  infer that 
\begin{equation}\label{conan12}
a_k\to a_* \text{ in $ C^{0,\alpha}(\overline B_1)$ for every $0<\alpha<1$}\,,
\end{equation}
for a (not relabeled) subsequence and  a function $a_*$ satisfying 
\begin{equation}\label{propainfty2}
\frac1 2\leq a_* \leq 1\,,\quad |\nabla a_*|\leq 2\,,\quad a_*=1\text{ on $D_1$}\,. 
\end{equation}
From \eqref{gradboundronwn2}, \eqref{syswnwholball2},  and standard elliptic regularity, we can find a further subsequence (not relabeled) such that  
\begin{equation}\label{conwn2}
w_k\to w_* \text{ in $ C^{1,\alpha}_{\rm loc}( B_1)\cap  C^{0,\alpha}(\overline B_1)$ for every $0<\alpha<1$}\,,
\end{equation}
for a map $w_*$ satisfying $|w_*|=1$, $|\nabla w_*|\leq 4$,  and solving 
\begin{equation}\label{eqwstar2}
\begin{cases}
-{\rm div}(a_*^2\nabla w_*)=a_*^2|\nabla w_*|^2w_*  & \text{in $B^+_1$}\,,\\[5pt]
\displaystyle \frac{\partial w_*}{\partial\nu}=0 & \text{on $D_1$}\,.
\end{cases}
\end{equation}
In view of \eqref{propainfty2}, we infer from \cite[Theorem 8.32]{GT} that 
\begin{equation}\label{C1alphcontrwinf2}
\|w_*\|_{ C^{1,\alpha}(B_r)} \leq C_{\alpha,r} 
\end{equation}
for every $0<\alpha<1$ and every $0<r<1$, where $C_{\alpha,r}$ denotes a constant which only depends on $n$, $m$, $\alpha$, and $r$. 

In turn, we infer from the $ C^{1,\alpha}_{\rm loc}$-convergence of $w_k$ and \eqref{eqron2} that 
\begin{equation}\label{conan22}
a_k\to a_* \text{ in $ C^{1,\alpha}_{\rm loc}( B_1^+)$ for every $0<\alpha<1$}\,,
\end{equation} 
again by elliptic regularity. As a consequence, $a_*$ solves 
\begin{equation}\label{eqrostar2}
\begin{cases}
-\Delta a_*+|\nabla w_*|^2a_*=0 & \text{in $B_1^+$}\,,\\[5pt]
 a_*=1 & \text{on $D_1$}\,.
\end{cases}
\end{equation}
Hence $a_*\in  C^{1,\alpha}_{\rm loc}(B^+_1\cup D_1)$ for every $0<\alpha<1$, and more precisely   \cite[Corollary~8.36]{GT} 
yields 
\begin{equation}\label{C1alphcontrainf2}
\|a_*\|_{ C^{1,\alpha}(B^+_r)} \leq C_{\alpha,r} 
\end{equation} 
 for every $0<\alpha<1$ and every $0<r<1$. 
 \vskip3pt
 
Finally the convergence of $a_k$ and $w_k$ implies that $v_k\to v_*:=a_* w_*$ in $ C^{1,\alpha}_{\rm loc}( B_1^+)\cap  C^{0,\alpha}( B_1^+)$. In addition  $v_*\in   C^{1,\alpha}_{\rm loc}( B_1^+\cup D_1)$ satisfies for every $0<\alpha<1$ and every $0<r<1$, 
\begin{equation}\label{C1alphcontrvinf2}
\|v_*\|_{ C^{1,\alpha}( B^+_r)} \leq C_{\alpha,r} \,, 
\end{equation}
 thanks to \eqref{C1alphcontrwinf2} and \eqref{C1alphcontrainf2}. 
\vskip5pt

We now claim that  $v_k$ actually converges to $v_*$ in the $ C^{1,\alpha}$-topology locally up to the boundary~$D_1$. 
Then we shall see that the estimates leading to the $C^{1,\alpha}$-convergence can be iterated  to obtain a  $C^{\ell}_{\rm loc}(B_1^+\cup D_1)$-convergence for every $\ell\in\N$. We shall complete the proof of Proposition \ref{epsnotvanish} right after proving the two following lemmas.

\begin{lemma}\label{convc1alph}
We have $v_k\to v_*$  in $ C^{1,\alpha}_{\rm loc}(B^+_{1}\cup D_{1})$ for every $0<\alpha<1$. 
\end{lemma}

\begin{proof} By \eqref{conwn2} we have $w_k\to w_*$ in $ C_{\rm loc}^{1,\alpha}(B^+_{1}\cup D_1)$, and it is enough to prove that 
$a_k\to a_*$  in $ C^{1,\alpha}_{\rm loc}(B^+_{1}\cup D_{1})$ for every $0<\alpha<1$. In view of \eqref{conan22}, it remains to show  the desired  
convergence near $D_1$. We fix $x_0\in D_1$, $0<\alpha<1$, and $0<r<{\rm dist}(x_0,\partial D_1)$ arbitrary. Without loss of 
generality we may assume that $x_0=0$. 

 We introduce the function 
$$h_k:=1-a_k\,,$$
and we notice that \eqref{gradboundronwn2} and the boundary condition in \eqref{eqron2} yield
\begin{equation}\label{encadr}
0\leq h_k \leq 3\eps_k \quad\text{on $D_1$\,.}
\end{equation}
Then we define for $y\in D_{r/4}$ fixed, 
\begin{equation}\label{defvarthet}
\vartheta_{k,y}(x):= h_k(x+y)-h_k(x) \quad\text{for $x\in \overline B_{r/2}^+$}\,.
\end{equation}
By \eqref{gradboundronwn2} 
the Lipschitz constant of $h_k$ is bounded by 2, and hence  
\begin{equation}\label{bdvarthet}
\|\vartheta_{k,y}\|_{L^\infty( B_{r/2}^+)}\leq 2 |y|\,. 
\end{equation}
In addition $\vartheta_{k, y}$ satisfies 
\begin{equation*}
\begin{cases}
-\Delta \vartheta_{k,y}= f_{k,y} & \text{in $B_{r/2}^+$}\,,\\[8pt]
\displaystyle \frac{\varepsilon_k}{a_k(1+a_k)}\frac{\partial \vartheta_{k,y}}{\partial\nu} + \vartheta_{k,y}= \varepsilon_k g_{k,y} & \text{on $D_{r/2}$\,,}
\end{cases}
\end{equation*}
with 
$$f_{k,y}(x):= |\nabla w_k(x+y)|^2a_k(x+y)- |\nabla w_k(x)|^2a_k(x)\,,$$
and 
$$g_{k,y}(x):=\bigg(\frac{1}{a_k(x)(1+a_k(x))} - \frac{1}{a_k(x+y)(1+a_k(x+y))}\bigg)\frac{\partial h_k(x+y)}{\partial\nu}\,.$$
From \eqref{gradboundronwn2}, \eqref{conwn2}, and \eqref{C1alphcontrwinf2}, 
we infer that 
\begin{equation}\label{estifny}
\|f_{k,y}\| _{L^\infty( B_{r/2}^+)}\leq C_{\alpha,r}|y|^\alpha \,, 
\end{equation}
and
\begin{equation}\label{estigny}
 \|g_{k,y}\| _{L^\infty( D_{r/2})}\leq C_{\alpha,r} |y|^\alpha\,.
 \end{equation}
\vskip3pt

Next we consider  the unique (variational) solution $\zeta_k\in H^1(B_{r/2}^+)$ of
$$\begin{cases}
-\Delta \zeta_k= 1 & \text{in $B_{r/2}^+$}\,,\\
\zeta_k=1 & \text{on $\partial^+ B_{r/2}$}\,,\\[8pt]
\displaystyle \frac{4\varepsilon_k}{3}\frac{\partial \zeta_k}{\partial\nu} + \zeta_k= 0 & \text{on $D_{r/2}$}\,,
\end{cases} $$
{\it i.e.},  $\zeta_k$ is the unique critical point of the strictly convex and coercive functional
$$J_k(\zeta):= \frac{1}{2}\int_{B^+_{r/2}} |\nabla \zeta|^2\,\de x -\int_{B^+_{r/2}}\zeta\,\de x+\frac{3}{8\eps_k}\int_{D_{r/2}}\zeta^2\,\de\mathscr{H}^n$$
defined over the affine space $\{\zeta\in H^{1}(B^+_{r/2}): \zeta = 1\text{ on $\partial^+B_{r/2}$}\}$. 
By Lemma \ref{supersol} in Appendix B,  $\zeta_k\in  C^{0,\beta}(\overline B_{r/2}^+)\cap  C^\infty(\overline B_{r/2}^+ \setminus \partial D_{r/2})$ 
for some $\beta\in(0,1)$, and  
\begin{equation}\label{controlsupsolnew}
0\leq \zeta_k\leq C_{r}\, \varepsilon_k\qquad\text{on } D_{r/4}\,,
\end{equation}
for a constant $C_r$ which only depends on $n$ and $r$.

Let us now define
\begin{equation}\label{defkappak}
\kappa_k(r,y):=\max\left\{ \|\vartheta_{k,y}\|_{L^\infty( B_{r/2}^+)} , \|f_{k,y}\| _{L^\infty( B_{r/2}^+)} \right\} \leq C_{\alpha,r} |y|^\alpha
\end{equation}
(recall \eqref{bdvarthet}, \eqref{estifny}, and that $|y|\leq r/4$), and  consider the functions 
$$H^+_k(x):= \kappa_k(r,y) \zeta_k(x)-\vartheta_{k,y}(x) + \varepsilon_k  \|g_{k,y}\| _{L^\infty( D_{r/2})}\,,$$
and 
$$H^-_k(x):= \kappa_k(r,y) \zeta_k(x)+ \vartheta_{k,y}(x) + \varepsilon_k  \|g_{k,y}\| _{L^\infty( D_{r/2})}\,. $$
By construction $H^\pm_k$ satisfies 
$$\begin{cases}
-\Delta  H^\pm_k\geq 0 & \text{in $B_{r/2}^+$}\,,\\[8pt]
 H^\pm_k\geq 0 & \text{on $\partial^+ B_{r/2}$}\,,\\[8pt]
\displaystyle \frac{\varepsilon_n}{a_k(1+a_k)}\frac{\partial  H^\pm_k}{\partial\nu} +  H^\pm_k\geq 0 & \text{on $D_{r/2}$\,.}
\end{cases} $$
By the maximum principle and the Hopf boundary lemma, it follows that $H^\pm_k\geq 0$ in $B_{r/2}^+$ (see {\it e.g.} the proof of Lemma \ref{supersol}). 
Therefore, 
\begin{equation}\label{estiunifvarthet}
\big|\vartheta_{k,y}(x)\big|  \leq  \kappa_k(r,y) \zeta_k(x) + \varepsilon_k  \|g_{k,y}\| _{L^\infty( D_{r/2})} \,. 
\end{equation}
\vskip3pt

Gathering \eqref{estigny}, \eqref{controlsupsolnew}, and \eqref{defkappak},  together this last estimate, we deduce that 
$$\left| \frac{h_k(x+y)-h_k(x)}{ \varepsilon_k}\right| \leq C_{\alpha,r} |y|^\alpha\quad \text{for every $x,y\in D_{r/4}$}\,. $$
In view of  \eqref{encadr},  we have thus proved that 
\begin{equation}\label{tim1146}
\left\|\frac{1-a_k}{ \varepsilon_k}  \right\|_{ C^{0,\alpha}(D_{r/4})}\leq C_{\alpha,r}\,. 
\end{equation}
From  of \eqref{gradboundronwn2} we conclude that 
\begin{equation}\label{holdcontr1-ansureps}
\left\|\frac{1}{ \varepsilon_k} (1-a^2_k)a_k \right\|_{ C^{0,\alpha}(D_{r/4})}\leq C_{\alpha,r}\,. 
\end{equation}
Applying Lemma~\ref{regNeum} in Appendix B to equation \eqref{eqron2} together with estimates \eqref{conwn2}, \eqref{C1alphcontrwinf2}, and \eqref{holdcontr1-ansureps}, we finally deduce that 
$$\|a_k\|_{ C^{1,\alpha}( B_{r/8}^+)}\leq C_{\alpha,r}\,, $$ 
and thus $a_k\to a_*$ in $ C^{1,\alpha^\prime}(B_{r/8}^+)$ for every $0<\alpha^\prime<\alpha$. 
\end{proof}

\begin{lemma}\label{ImprovedCV}
We have $v_k\to v_*$  in $ C^{\ell}_{\rm loc}(B^+_{1}\cup D_{1})$ for every $\ell\in\N$. 
\end{lemma}

\begin{proof}
In view of Lemma \ref{convc1alph} we can argue by induction assuming that for an integer $\ell\geq1$, $w_k\to w_*$ and $a_k\to a_*$ in $ C^{\ell,\alpha}_{\rm loc}(B^+_{1}\cup D_{1})$ for every $\alpha\in(0,1)$. We aim to prove that  $w_k\to w_*$ and $a_k\to a_*$ in $ C^{\ell+1,\alpha}_{\rm loc}(B^+_{1}\cup D_{1})$ for every $\alpha\in(0,1)$.
We proceed with the notations used in the proof of Lemma~\ref{convc1alph}. 
\vskip5pt

\noindent{\it Step 1: Improved convergence of $\{w_k\}$.} Using the even reflection across $D_1$ as stated in \eqref{reflecaw}, we first rewrite equation \eqref{syswnwholball2} as 
\begin{equation}\label{deltawk}
-\Delta w_k= \frac{2}{a_k}\nabla a_k\cdot\nabla w_k+ |\nabla w_k|^2w_k\quad\text{in $B_1$}\,.
\end{equation}
(Recall that $a_k\geq 1/2$.) Let us now fix a multi-index $\beta=(\beta_1,\ldots\beta_n)\in\N^n$  of length $|\beta|:=\sum_i\beta_i=\ell-1$, and set 
$$\partial^\beta:=\partial_1^{\beta_1}\partial_2^{\beta_2} \ldots \partial_n^{\beta_n}\,.$$ 
Using the boundary condition $\partial_\nu w_k=0$ on $D_1$, we notice that the term $\frac{2}{a_k}\nabla a_k\cdot\nabla w_k$ is actually continuous across $D_1$. As a consequence, $\{\partial^\beta(\frac{2}{a_k}\nabla a_k\cdot\nabla w_k+|\nabla w_k|^2w_k)\}$ is bounded in $C^{0,\alpha}_{\rm loc}(B_1)$ for every $0<\alpha<1$, by the induction hypothesis. By elliptic regularity, it implies that $\{\partial^{\beta}w_k\}$ is bounded in $C^{2,\alpha}_{\rm loc}(B_1)$ for every $0<\alpha<1$. 

If $\ell=1$ the required improved convergence readily follows, and we may now assume that $\ell\geq 2$. In view of the arbitrariness of the multi-index $\beta$ above, we have  showed that 
\begin{enumerate}
\item[(A0)] $\{\partial^{\beta}  w_k\}$ is bounded in $C^{0,\alpha}_{\rm loc}(B^+_1\cup D_1)$ for  $\beta\in\N^n$ such that $|\beta|=\ell+1$ and $0<\alpha<1$;
\item[(B0)] $\{\partial_{n+1}\partial^{\beta} w_k\}$ is bounded in $C^{0,\alpha}_{\rm loc}(B^+_1\cup D_1)$ for $\beta\in\N^n$ such that $|\beta|=\ell$ and $0<\alpha<1$;
\item[(C0)] $\{\partial^2_{n+1}\partial^{\beta} w_k\}$ is bounded in $C^{0,\alpha}_{\rm loc}(B^+_1\cup D_1)$ for  $\beta\in\N^n$ such that $|\beta|=\ell-1$ and $0<\alpha<1$.
\end{enumerate}
Next we want to estimate the remaining derivatives of $w_k$ of order $\ell+1$. We proceed by induction assuming that for an integer $0\leq \gamma\leq \ell-2$, 
\begin{enumerate}
\item[(A$\gamma$)] $\{\partial^\gamma_{n+1}\partial^{\beta}  w_k\}$ is bounded in $C^{0,\alpha}_{\rm loc}(B^+_1\cup D_1)$ for  $\beta\in\N^n$ such that $\gamma+|\beta|=\ell+1$ and $0<\alpha<1$;
\item[(B$\gamma$)] $\{\partial^{\gamma+1}_{n+1}\partial^{\beta} w_k\}$ is bounded in $C^{0,\alpha}_{\rm loc}(B^+_1\cup D_1)$ for  $\beta\in\N^n$ such that $\gamma+|\beta|=\ell$ and $0<\alpha<1$;
\item[(C$\gamma$)] $\{\partial^{\gamma+2}_{n+1}\partial^{\beta} w_k\}$ is bounded in $C^{0,\alpha}_{\rm loc}(B^+_1\cup D_1)$ for  $\beta\in\N^n$ such that $\gamma+|\beta|=\ell-1$ and $0<\alpha<1$,
\end{enumerate}
and we aim to show that $\{\partial^{\gamma+3}_{n+1}\partial^{\beta} w_k\}$ is bounded in $C^{0,\alpha}_{\rm loc}(B^+_1\cup D_1)$ for  $\beta\in\N^n$ such that $\gamma+|\beta|=\ell-2$ and $0<\alpha<1$. Using equation \eqref{deltawk}, we have 
$$-\partial^{\gamma+3}_{n+1}\partial^{\beta} w_k= \Delta^\prime\big(\partial^{\gamma+1}_{n+1}\partial^{\beta} w_k)+\partial^{\gamma+1}_{n+1}\partial^{\beta}\left(\frac{2}{a_k}\nabla a_k\cdot\nabla w_k+ |\nabla w_k|^2w_k\right)\quad\text{in $B_1^+$}\,,$$
where $\Delta^\prime:=\sum_{j=1}^n\partial^2_j\,$. By (B$\gamma$) and the induction hypothesis, the right hand side of the above equation remains bounded in $C^{0,\alpha}_{\rm loc}(B^+_1\cup D_1)$, and thus $\{\partial^{\gamma+3}_{n+1}\partial^{\beta} w_k\}$ is indeed bounded in $C^{0,\alpha}_{\rm loc}(B^+_1\cup D_1)$. 

In conclusion, we have thus proved that $\{w_k\}$ remains bounded in $C^{\ell+1,\alpha}_{\rm loc}(B^+_1\cup D_1)$ for every exponent $0<\alpha<1$, and the improved convergence follows .
\vskip5pt

\noindent{\it Step 2: Improved convergence of $\{a_k\}$.} 
By  elliptic regularity (applied to \eqref{eqron2}), the improved convergence of $\{w_k\}$ implies 
the convergence of $a_k$ in $ C^{\ell+1,\alpha}_{\rm loc}(B_1^+)$ for every $0<\alpha<1$. It then remains to improve the convergence of $a_k$ (or equivalently $h_k$) up to the boundary $D_1$. As in the proof of Lemma \ref{convc1alph}, we consider an arbitrary $x_0\in D_1$, a radius $0<2r<{\rm dist}(x_0,\partial D_1)$, and $\alpha\in(0,1)$. Again we can assume that $x_0=0$ without loss of generality. Let us now fix a multi-index $\beta=(\beta_1,\ldots\beta_n)\in\N^n$  of length $|\beta|:=\sum_i\beta_i=\ell$. 
Recalling that $h_k:=1-a_k$, we notice that $\partial^\beta h_k$ solves 
\begin{equation}\label{equpartialbetah}
\begin{cases}
-\Delta \big( \partial^\beta h_k\big)=\partial^\beta\big(|\nabla w_k|^2a_k\big) & \text{in $B_{2r}^+$}\,,\\[8pt]
\displaystyle \frac{\varepsilon_k}{a_k(1+a_k)}\frac{\partial\big( \partial^\beta h_k\big)}{\partial\nu} + \partial^\beta h_k= \varepsilon_k R_{\beta,k} & \text{on $D_{2r}$\,,}
\end{cases}
\end{equation}
where 
$$R_{\beta,k} := \frac{1}{a_k(1+a_k)}\frac{\partial\big( \partial^\beta h_k\big)}{\partial\nu}  - \partial^\beta \left(\frac{1}{a_k(1+a_k)}\frac{\partial  h_k}{\partial\nu} \right) $$
involves only derivatives of $a_k$ of order less than or equal to $\ell$. Recalling that $a_k\geq 1/2$, we deduce from the induction hypothesis and the boundedness of $\{w_k\}$ in $C^{\ell+1,\alpha}(B^+_{2r})$ that 
\begin{equation}\label{bdmbeta}
m_\beta:=\sup_k\left(\| \partial^\beta h_k\|_{L^\infty(B^+_{2r})}+\|R_{\beta,k} \|_{C^{0,\alpha}(D_{2r})}+\big \|\partial^\beta\big(|\nabla w_k|^2a_k\big) \big \|_{C^{0,\alpha}(B^+_{2r})}\right)<\infty\,.
\end{equation}
Arguing as in the proof of Lemma \ref{convc1alph} to derive \eqref{estiunifvarthet}, we infer that $|\partial^\beta h_k |\leq C_r m_\beta \eps_k$ on $D_{r}$, which then yields 
\begin{equation}\label{higherordnormder}
\left\| \frac{\partial\big( \partial^\beta h_k\big)}{\partial\nu} \right\|_{L^\infty(D_{r})} \leq C_r m_\beta\,. 
\end{equation}
Next we consider for $y\in D_{r/4}$ the function $\vartheta_{k,y}$ defined in \eqref{defvarthet}. By the induction hypothesis, we have $\|\partial^\beta\vartheta_{k,y}\|_{L^\infty(B^+_{r/2})}\leq c_1|y|^\alpha$ for some constant $c_1$ independent of $k$. On the other hand, $ \partial^\beta \vartheta_{k,y}$ solves 
\begin{equation*}
\begin{cases}
-\Delta \big( \partial^\beta \vartheta_{k,y}\big)=f^\beta_{k,y}& \text{in $B_{r/2}^+$}\,,\\[8pt]
\displaystyle \frac{\varepsilon_k}{a_k(1+a_k)}\frac{\partial\big( \partial^\beta \vartheta_{k,y}\big)}{\partial\nu} + \partial^\beta h_k= \varepsilon_k g^\beta_{k,y} & \text{on $D_{r/2}$\,,}
\end{cases}
\end{equation*}
with 
$$f^\beta_{k,y}(x):= \partial^\beta\big(|\nabla w_k|^2a_k\big)(x+y)- \partial^\beta\big(|\nabla w_k|^2a_k\big)(x)\,,$$
and 
$$g^\beta_{k,y}(x):=R_{\beta,k}(x+y)-R_{\beta,k}(x)+\bigg(\frac{1}{a_k(x)(1+a_k(x))} - \frac{1}{a_k(x+y)(1+a_k(x+y))}\bigg)\frac{\partial \big(\partial^\beta h_k)}{\partial\nu}(x+y)\,.$$
By the induction hypothesis and \eqref{higherordnormder}, we have 
$$\|f^\beta_{k,y}\|_{L^\infty(B^+_{r/2})} + \|g^\beta_{k,y}\|_{L^\infty(D_{r/2})}\leq c_2|y|^\alpha $$
for a constant $c_2$ independent of $k$. Arguing precisely as in the proof of Lemma \ref{convc1alph}, we deduce that 
$$\left| \frac{\partial^\beta h_k(x+y)-\partial ^\beta h_k(x)}{ \varepsilon_k}\right| \leq c_3 |y|^\alpha\quad \text{for every $x,y\in D_{r/4}$}\,, $$
and a constant $c_3$ independent of $k$. Whence, 
$$\sup_k\left\|\frac{\partial^\beta h_k}{\eps_k}\right\|_{C^{0,\alpha}(D_{r/4})} <\infty\,,$$
and inserting this estimate together with \eqref{bdmbeta} in  equation \eqref{equpartialbetah}, we conclude from Lemma~\ref{regNeum} that $\{\partial^\beta h_k\}$ remains bounded in $C^{1,\alpha}(B^+_{r/8})$. 

In view of the arbitrariness of the multi-index $\beta$ above, we have showed that 
\begin{enumerate}
\item[(D0)] $\{\partial^{\beta}  h_k\}$ is bounded in $C^{0,\alpha}(B^+_{r/8})$ for every $\beta\in\N^n$ such that $|\beta|=\ell+1$;
\item[(E0)] $\{\partial_{n+1}\partial^{\beta} h_k\}$ is bounded in $C^{0,\alpha}(B^+_{r/8})$ for every $\beta\in\N^n$ such that $|\beta|=\ell$.
\end{enumerate}
To complete the proof we need to estimate the remaining derivatives of $h_k$ of order $\ell+1$. Again, we proceed by induction assuming that for an integer $0\leq \gamma\leq \ell-1$, 
\begin{enumerate}
\item[(D$\gamma)$] $\{\partial_{n+1}^\gamma\partial^{\beta} \! h_k\}$ is bounded in $C^{0,\alpha}(B^+_{r/8})$ for every $\beta\in\N^n$ such that $\gamma+|\beta|=\ell+1$;
\item[(E$\gamma)$] $\{\partial^{\gamma+1}_{n+1}\partial^{\beta} h_k\}$ is bounded in $C^{0,\alpha}(B^+_{r/8})$ for every $\beta\in\N^n$ such that $\gamma+|\beta|=\ell$,
\end{enumerate}
and we aim to show that $\{\partial^{\gamma+2}_{n+1}\partial^{\beta} h_k\}$ is bounded in $C^{0,\alpha}(B^+_{r/8})$ for every multi-index $\beta\in\N^n$ such that $\gamma+|\beta|=\ell-1$. Using equation \eqref{equpartialbetah}, we notice that 
\begin{equation}\label{eqinducgamma}
\partial^{\gamma+2}_{n+1}\partial^{\beta} h_k=-\partial^\gamma_{n+1}\Delta^\prime \partial^{\beta}h_k -\partial_{n+1}^\gamma\partial^{\beta}\big(|\nabla w_k|^2a_k\big) \quad\text{in $B^+_{r/8}$}\,. 
\end{equation}
 Clearly the right hand side of \eqref{eqinducgamma} is bounded in $C^{0,\alpha}(B^+_{r/8})$ by (D$\gamma$) and the induction hypothesis, and thus  $\{\partial^{\gamma+2}_{n+1}\partial^{\beta} h_k\}$ is indeed bounded in $C^{0,\alpha}(B^+_{r/8})$. 

In conclusion, the sequence $\{a_k\}$ is bounded in $C^{\ell+1,\alpha}(B^+_{r/8})$, and hence $a_k\to a_*$ in  $C^{\ell+1,\alpha^\prime}(B^+_{r/8})$ for every $0<\alpha^\prime<\alpha$. 
\end{proof}

\noindent{\it Proof of Proposition \ref{epsnotvanish} completed.} 
Up to a subsequence, we have $z_k\to z_*$ for some point $z_*\in \overline B_{1/2}^+$. 
Thanks to Lemma~\ref{convc1alph} we have $\nabla v_k(z_k)\to \nabla v_*(z_*)$. Then  \eqref{condgradvn2} leads 
to $|\nabla v_*(z_*)|^2\geq 1/2$. From \eqref{C1alphcontrvinf2} we infer that 
$$|\nabla v_*|^2\geq 1/4\qquad\text{in $B_1^+\cap B_{\varrho_0}(z_*)$} $$
for some radius $0<\varrho_0\leq 1/4$ which only depends on $n$ and $m$.  
Using assumption \eqref{tim1738} we now estimate  
\begin{equation}\label{tim2240}
\frac{\varrho_0^{n+1}|B_1^+|}{4}\leq \int_{B^+_1}|\nabla v_*|^2\,dx \leq \liminf_{k\to\infty} \int_{B^+_1}|\nabla v_k|^2\,dx\leq 2^n \eta_0\,.
\end{equation}
We then find a contradiction if $\eta_0$ is small enough, and the proposition is proved.   
\end{proof}

\begin{proof}[Proof of  \eqref{r0less1} completed.] We argue by contradiction assuming that $r_\varepsilon>1$. 
We consider the rescaled function 
$$\tilde u_\eps(x):=u_{\varepsilon}\left(\frac{x}{\sqrt{e_{\varepsilon}}}+\bar x_{\varepsilon}\right) \qquad \text{for $x\in \overline{B}^+_1$}\,,$$
so that $\tilde u_\eps\in  C^2(\overline B_1^+;\R^m)$ solves 
$$\begin{cases}
\Delta \tilde u_\eps= 0&\text{in $B_1^+$}\,,\\[8pt]
\displaystyle \frac{\partial \tilde u_\eps}{\partial \nu}=\frac{1}{\tilde \varepsilon} (1-|\tilde u_\eps|^2)\tilde u_\eps & \text{on $D_1$}\,,
\end{cases}$$
with $\tilde \varepsilon:=\varepsilon\sqrt{e_{\varepsilon}}\,$. 
Moreover, \eqref{clearmodu} and \eqref{pointwestigrd} imply  that 
$1/2\leq |\tilde u_\eps|\leq 1$ and $|\nabla \tilde u_\eps|\leq 2$ in $\overline{B}^+_1$. 
Considering the point $z_\eps:= \sqrt{e_{\varepsilon}}(x_{\varepsilon}- \bar x_{\varepsilon})\in B_{1/2}^+$
(which actually belongs to the half axis $\{0\}\times \mathbb{R}_+$ and satisfies $|z_\eps|=3R_{\varepsilon}\sqrt{e_{\varepsilon}}\leq\frac{3}{7}$), 
we observe that \eqref{condx0} yields
\begin{equation}\label{tim2242}
|\nabla \tilde u_\eps(z_\eps)|^2\geq \frac{1}{2}\,. 
\end{equation}
On the other hand, \eqref{contren} leads to 
\begin{equation}\label{tim2242bis}
E_{\tilde \eps}(\tilde u_\eps,B_1)= (\sqrt{e_\eps})^{n-1} E_\eps\big(u_\eps,B^+_{(\sqrt{e_\eps})^{-1}}(\bar x_\eps)\big)\leq 2^{n-1}\eta_0\,. 
\end{equation}
If $\eta_0$ is small enough, we conclude from Proposition~\ref{epsnotvanish}  that $\tilde\eps\geq\varsigma_0$ where $\varsigma_0>0$ only depends on $n$ and $m$.  Applying  \cite[Lemma~2.2]{CSM}  we infer that 
$\|\tilde u_\varepsilon\|_{ C^{2,\alpha}( B^+_{3/4})}\leq C_{\alpha}$ for every $0<\alpha<1$, where  $C_\alpha$ only depends on $\alpha$, $n$, and $m$. 
Arguing as in \eqref{tim2240}, we then find a contradiction between \eqref{tim2242} and \eqref{tim2242bis} whenever $\eta_0$ is sufficiently small. 
\end{proof}

\vskip10pt

\section{Asymptotics for Ginzburg-Landau boundary reactions}\label{asymptGLBR}                            

 \subsection{Convergence to boundary harmonic maps and defect measures}
 
 With Theorem \ref{epsreg} in hands, we are now able to give a preliminary description of  both weak limits as $\eps\downarrow0$ of critical points of  $E_\varepsilon$, and the possible defect measure arising in the weak convergence process. This is the object of the following theorem.

 \begin{theorem}\label{asymptneum}
Let $\Omega\subset\R^{n+1}_+$ be an admissible bounded  open set. Let $\varepsilon_k\downarrow0$ be an arbitrary sequence,  and let  
$\{u_k\}_{k\in\mathbb{N}}\subset H^1(\Omega;\mathbb{R}^m)\cap L^\infty(\Omega)$ be such that for every $k\in\N$, $|u_k|\leq 1$, and $u_k$ weakly solves  
$$ \begin{cases}
\Delta u_k= 0 & \text{in $\Omega$}\,,\\[8pt]
\displaystyle \frac{\partial u_k}{\partial \nu}=\frac{1}{\varepsilon_k}(1-|u_k|^2)u_k & \text{on $\partial^0\Omega$}\,. 
\end{cases}
$$
If  $\sup_k E_{\varepsilon_k}(u_k,\Omega)<\infty$, then there exist a (not relabeled) subsequence and a bounded weak $(\mathbb{S}^{m-1},\partial^0\Omega)$-boundary  
harmonic map $u_*$ in $\Omega$ such that $u_k\rightharpoonup u_*$ weakly in $H^1(\Omega)$ as $k\to\infty$. In addition, there exist a finite 
nonnegative Radon measure $\mu_{\rm sing}$ on $\partial^0\Om$ and a relatively closed set $\Sigma\subset \partial^0\Omega$ 
of finite $(n-1)$-dimensional Hausdorff measure such that  
\vskip5pt
\begin{itemize}[leftmargin=22pt]
\item[\rm (i)] $\displaystyle|\nabla u_k|^2\mathscr{L}^{n+1}\LL\Om  \rightharpoonup 
|\nabla u_*|^2\mathscr{L}^{n+1}\LL\Om +\mu_{\rm sing} $ 
weakly* as Radon measures on $\Omega\cup\partial^0\Omega$;   
\vskip5pt

\item[\rm  (ii)] $\displaystyle\frac{(1-|u_k|^2)^2}{\varepsilon_k}\to 0$ in $L^1_{\rm loc}(\partial^0\Om)$; 
\vskip5pt

\item[ \rm  (iii)] $\Sigma={\rm supp}(\mu_{\rm sing})\cup {\rm sing}(u_*)$; 
\vskip5pt

\item[\rm  (iv)] $u_k\to u_*$ in $ C^{\ell}_{\rm loc}\big((\Omega\cup\partial^0\Omega)\setminus \Sigma\big)$ for every $\ell\in\N$;
\vskip5pt 

\item[\rm  (v)] if $n=1$ the set $\Sigma$ is finite and $u_*\in  C^\infty(\Omega\cup\partial^0\Omega)$. 

\end{itemize}
\end{theorem}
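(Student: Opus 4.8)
\textbf{Plan of proof for Theorem~\ref{asymptneum}.}
The strategy is to run the standard $\eps$-regularity/concentration-compactness machine, using the monotonicity formula (Lemma~\ref{monotform}) and the small-energy estimate (Theorem~\ref{epsreg}) as the two engines, in the spirit of the Lin--Wang analysis of the local problem.

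First I would set up the compactness. Since $\sup_k E_{\eps_k}(u_k,\Omega)<\infty$ and $|u_k|\leq1$, the sequence $\{u_k\}$ is bounded in $H^1(\Omega;\R^m)\cap L^\infty(\Omega)$, so after passing to a subsequence $u_k\rightharpoonup u_*$ weakly in $H^1(\Omega)$ and strongly in $L^2_{\rm loc}$, for some $u_*\in H^1(\Omega)\cap L^\infty(\Omega)$ with $|u_*|\leq1$. Also $|\nabla u_k|^2\mathscr L^{n+1}\LL\Omega\rightharpoonup^* \nu$ and $\eps_k^{-1}(1-|u_k|^2)^2\,\mathscr H^n\LL\partial^0\Omega\rightharpoonup^*\tau$ for finite Radon measures $\nu$ on $\Omega\cup\partial^0\Omega$ and $\tau$ on $\partial^0\Omega$; write $\mu:=\nu-|\nabla u_*|^2\mathscr L^{n+1}\LL\Omega\geq0$ (nonnegativity by weak lower semicontinuity). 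The ``concentration set'' is then defined, for a small parameter $\eta_0$ as in Theorem~\ref{epsreg}, by
\[
\Sigma:=\bigcap_{r>0}\Big\{x\in\partial^0\Omega:\ \liminf_{k\to\infty}\ \frac1{r^{n-1}}E_{\eps_k}\big(u_k,B_r^+(x)\big)\geq\eta_0\Big\},
\]
together (conceptually) with the singular set of $u_*$, but I expect the key claim to be that the two coincide, which is item (iii). The bound $\mathscr H^{n-1}(\Sigma)<\infty$ comes from a standard Vitali covering argument: a ball $B_r^+(x)$ with $x\in\Sigma$ carries energy $\geq\eta_0 r^{n-1}$, and disjoint such balls have total energy $\leq\sup_kE_{\eps_k}(u_k,\Omega)$, so at scale $r$ one needs at most $C r^{1-n}$ balls; letting $r\to0$ gives the finite $(n-1)$-Hausdorff measure bound. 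Relative closedness of $\Sigma$ is immediate from the definition.

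Next comes the core interior-away-from-$\Sigma$ analysis. If $x_0\in(\Omega\cup\partial^0\Omega)\setminus\Sigma$, then for some $r>0$ and infinitely many $k$ one has $\frac1{r^{n-1}}E_{\eps_k}(u_k,B_r^+(x_0))<\eta_0$; here I would apply Theorem~\ref{epsreg} (noting $\eps_k\leq r$ for $k$ large, since $\eps_k\downarrow0$) to get $\sup_{B_{r/4}^+(x_0)}|\nabla u_k|^2+\sup_{D_{r/4}}\eps_k^{-2}(1-|u_k|^2)^2\leq C\eta_0/r^2$ uniformly in $k$. A bootstrap then follows the scheme already worked out in the proof of Proposition~\ref{epsnotvanish} and Lemmas~\ref{convc1alph}--\ref{ImprovedCV}: write $a_k=|u_k|$, $w_k=u_k/|u_k|$ on the good ball (possible since $|u_k|\geq1/2$ there by the clearing-out Lemma~\ref{clear2}, or directly from the gradient bound plus $E\to0$ forcing $a_k\to1$), use the decoupled system \eqref{eqron2}--\eqref{eqwn2}, reflect $a_k,w_k$ evenly across $\partial^0\Omega$, and iterate elliptic estimates to get $C^\ell_{\rm loc}$ bounds for every $\ell$. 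This gives item (iv): $u_k\to u_*$ in $C^\ell_{\rm loc}((\Omega\cup\partial^0\Omega)\setminus\Sigma)$, $|u_*|=1$ on $\partial^0\Omega\setminus\Sigma$, and $u_*$ solves $\Delta u_*=0$ in $\Omega\setminus\Sigma$ with $\partial_\nu u_*\perp T_{u_*}\mathbb S^{m-1}$ on $\partial^0\Omega\setminus\Sigma$; moreover on compact subsets of the good region $\eps_k^{-1}(1-|u_k|^2)^2\to0$ uniformly, which combined with the $L^1$ density of the good region in $\partial^0\Omega$ (since $\mathscr H^{n-1}(\Sigma)<\infty$) yields item (ii), and also shows $\mu$ and $\tau$ are supported on $\Sigma$. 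To upgrade $u_*$ to a \emph{global} weak $(\mathbb S^{m-1},\partial^0\Omega)$-boundary harmonic map on all of $\Omega$, I would use a capacity/removable-singularity argument: since $u_*\in H^1(\Omega)\cap L^\infty$ and $\Sigma\subset\partial^0\Omega$ has finite $(n-1)$-measure hence $H^1$-capacity zero in $\overline\R^{n+1}_+$ relative to the relevant trace space, test functions compactly supported in $(\Omega\cup\partial^0\Omega)\setminus\Sigma$ and satisfying the tangency condition \eqref{condtestharmon} are dense enough (after a standard cut-off near $\Sigma$ with logarithmic capacity estimates) to pass the weak equation from the punctured domain to $\Omega\cup\partial^0\Omega$; this gives $|u_*|=1$ $\mathscr H^n$-a.e.\ on $\partial^0\Omega$ and Definition~\ref{defFBHarm}. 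Finally, item (i) follows by combining $u_k\to u_*$ strongly in $H^1_{\rm loc}$ of the good region with $\mu=\mu_{\rm sing}$ supported on $\Sigma$; and item (v) is the one-dimensional case $n=1$, where $\Sigma$ is a finite set of points on the line $\partial^0\Omega$, so by the removability argument above $u_*$ is a weak boundary harmonic map across each point, hence smooth on $\Omega\cup\partial^0\Omega$ by Theorem~\ref{thmregfreebd}(i), forcing $\Sigma\cap\partial^0\Omega=\emptyset$ for the limit's singular part but leaving a finite residual energy-concentration set; one also needs that $\mu_{\rm sing}$ is then a sum of Dirac masses, which follows from its $(n-1)=0$-dimensionality and finiteness.

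\textbf{Main obstacle.} The delicate point I expect to fight with is the removable-singularity/density argument needed to promote $u_*$ from a solution on $(\Omega\cup\partial^0\Omega)\setminus\Sigma$ to a genuine weak $(\mathbb S^{m-1},\partial^0\Omega)$-boundary harmonic map on the full domain, and simultaneously the identification $\Sigma={\rm supp}(\mu_{\rm sing})\cup{\rm sing}(u_*)$ in item (iii). The inclusion ${\rm supp}(\mu_{\rm sing})\cup{\rm sing}(u_*)\subset\Sigma$ is easy from (iv); the reverse inclusion $\Sigma\subset{\rm supp}(\mu_{\rm sing})\cup{\rm sing}(u_*)$ requires showing that if no energy concentrates at $x_0$ \emph{and} $u_*$ is continuous near $x_0$, then in fact the renormalized energy $r^{1-n}E_{\eps_k}(u_k,B_r^+(x_0))$ drops below $\eta_0$ for small $r$ and large $k$ --- this uses the monotonicity formula of Lemma~\ref{monotform} together with the strong $L^2$ convergence and a careful comparison of the $\eps_k$-energy with the limiting Dirichlet energy on small balls, i.e.\ an ``upper bound for the limiting energy density'' argument in the style of \cite{LW1,LW2}. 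Getting the capacity estimates clean enough (for the tangency-constrained test functions, not just arbitrary ones) near a set of codimension one is the part most likely to require genuine work rather than routine citation.
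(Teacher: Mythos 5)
Your overall strategy is the one the paper follows: weak $H^1$ compactness, the limit measure $\mu$ and its defect part, the concentration set defined by the $\eta_0$-threshold and controlled via the monotonicity formula (Lemma~\ref{monotform}), the small-energy estimate (Theorem~\ref{epsreg}) plus clearing-out (Lemma~\ref{clear2}) and the bootstrap of Section~\ref{epsregtitle} to get $C^\ell_{\rm loc}$ convergence off $\Sigma$, and a capacity removability argument (finite $\mathscr{H}^{n-1}$-measure in $\R^{n+1}$ forces vanishing Newtonian capacity) to promote $u_*$ to a global weak boundary harmonic map. Two of your anticipated ``hard points'' are in fact not issues: the tangency constraint causes no trouble in the capacity argument, since multiplying a tangent test field $\Phi$ by a \emph{scalar} cut-off $\chi_l$ preserves the constraint \eqref{condtestharmon}; and the inclusion $\Sigma\subset{\rm supp}(\mu_{\rm sing})\cup{\rm sing}(u_*)$ needs no Lin--Wang style comparison of the $\eps_k$-energy with the limit energy: once $\Sigma$ is characterized (through monotonicity) as the set where the $(n-1)$-density of the limit measure $\mu$ is at least $\eta_0/{\boldsymbol\omega}_{n-1}$, one argues by contraposition; if $x_0\notin{\rm supp}(\mu_{\rm sing})\cup{\rm sing}(u_*)$, then $u_*$ is continuous, hence smooth near $x_0$ by Theorem~\ref{thmregfreebd}, so $\mu(B_r(x_0))=\frac12\int_{B_r^+(x_0)}|\nabla u_*|^2\le Cr^{n+1}$ and $\Theta^{n-1}(\mu,x_0)=0$, i.e.\ $x_0\notin\Sigma$.

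The genuine gap is in your argument for item (ii). Uniform vanishing of $\eps_k^{-1}(1-|u_k|^2)^2$ on compact subsets of the good region, combined with the fact that $\Sigma$ is $\mathscr{H}^n$-null, does \emph{not} give $L^1_{\rm loc}(\partial^0\Om)$ convergence: the measures $\eps_k^{-1}(1-|u_k|^2)^2\,\mathscr{H}^n\LL\partial^0\Om$ have total mass bounded by $4\sup_kE_{\eps_k}(u_k,\Om)$ but a priori could concentrate a nonzero limit on $\Sigma$, which is exactly what (ii) excludes. The mechanism that kills this concentration is the specific structure of the monotonicity formula, where the potential enters with weight $t^{-n}$ rather than $t^{-(n-1)}$: integrating in $t\in(\delta,2\delta)$ gives $\eps_k^{-1}\int_{D_\delta(x_0)}(1-|u_k|^2)^2\,\de\mathscr{H}^n\le C\,\delta^{n}$ uniformly in $k$ for $x_0$ in a $\delta$-neighborhood of $\Sigma$ (away from $\partial^+\Om$); covering that neighborhood, intersected with a compact $K\subset\partial^0\Om$, by roughly $\delta^{1-n}$ such discs yields a total potential contribution $O(\delta)$, which vanishes as $\delta\downarrow0$. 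This extra order of decay is what your sketch is missing. A smaller, fixable slip: for $x_0\notin\Sigma$ your liminf-based definition only gives smallness of $r^{1-n}E_{\eps_k}(u_k,B_r^+(x_0))$ along a further subsequence; to apply Theorem~\ref{epsreg} for all large $k$ (and hence get (iv) for the chosen subsequence), pick $r$ with $\mu(\partial B_r(x_0))=0$ so that $\mu_k(B_r(x_0))\to\mu(B_r(x_0))$ and the smallness transfers to the whole tail.
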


\begin{proof}
{\it Step 1.} First notice that Theorem \ref{regint} yields $u_k\in C^\infty(\Omega\cup\partial^0\Om)$. 
By the uniform energy bound and the assumption $|u_k|\leq 1$, we can find a (not relabeled) subsequence  such that 
$u_k\rightharpoonup u_*$ weakly in $H^1(\Omega)$ for some map $u_*\in H^1(\Omega;\R^m)$.  
Since $|u_k|\leq 1$  and $u_k$ is harmonic in $\Omega$, we deduce that $u_k\to u_*$ in $ C^\ell_{\rm loc}(\Omega)$ for every $\ell\in\mathbb{N}$,   
$u_*$ is harmonic in $\Omega$, and $|u_*|\leq 1$ in $\Omega$.  On the other hand, $|u_k|\to 1$ in $L^2(\partial^0\Omega)$, and we infer from the 
compact imbedding $H^1(\Omega)\hookrightarrow L^2(\partial^0\Omega)$ that $|u_*|=1$ $\mathscr{H}^n$-a.e. on $\partial^0\Omega$. 
\vskip3pt

It then remains to analyse the asymptotic behavior of $u_k$ near $\partial^0\Omega$. 
Setting 
$$\mu_k:= |\nabla u_k|^2\mathscr{L}^{n+1}\LL\Om +\frac{1}{2\varepsilon_k}(1-|u_k|^2)^2\mathscr{H}^n \LL\partial^0\Omega\,,$$ 
we have $\sup_k\mu_k(\Om\cup\partial^0\Om)<\infty$. Hence we can find a further subsequence such that 
\begin{equation}\label{tim2253}
\mu_k\rightharpoonup \mu:=|\nabla u_*|^2\mathscr{L}^{n+1}\LL\Om+\mu_{\rm sing}\,,
\end{equation}
weakly* as Radon measures on $\Omega\cup\partial^0\Omega$ for some nonnegative $\mu_{\rm sing}$ on $\Omega\cup\partial^0\Omega$. 
Notice that the local smooth convergence of $u_k$ to $u_*$ in $\Omega$ implies that 
\begin{equation}\label{tim1751}
{\rm supp}(\mu_{\rm sing})\subset \partial^0\Omega
\end{equation}   
(here ${\rm supp}(\mu_{\rm sing})$ denotes the relative support of $\mu_{\rm sing}$ in $\Om\cup\partial^0\Om$). 
By Lemma \ref{monotform},  we have 
\begin{equation}\label{tim2332}
\rho^{1-n}\mu_k(B_\rho(x))\leq r^{1-n}\mu_k(B_r(x))
\end{equation}
for every $x\in\partial^0\Om$ and every $0<\rho<r<{\rm dist}(x,\partial^+\Om)$. Therefore,
\begin{equation}\label{tim1213}
\rho^{1-n}\mu(B_\rho(x))\leq r^{1-n}\mu(B_r(x))
\end{equation}
for every $x\in\partial^0\Om$ and every $0<\rho<r<{\rm dist}(x,\partial^+\Om)$. As a consequence, the $(n-1)$-dimensional density 
\begin{equation}\label{existdens}
\Theta^{n-1}(\mu,x):=\lim_{r\downarrow 0}\, \frac{\mu(B_r(x))}{{\boldsymbol\omega}_{n-1}r^{n-1}}
\end{equation}
exists and is finite at every point $x\in\partial^0\Omega$. Here ${\boldsymbol\omega}_{n-1}$ denotes the volume of the unit ball in~$\R^{n-1}$ if $n\geq 2$, and ${\boldsymbol\omega}_0=1$.  
Note that \eqref{tim2253} and \eqref{tim2332} yield
\begin{equation}\label{upbddensity}
\Theta^{n-1}(\mu,x)\leq \frac{C}{\big({\rm dist}(x,\partial^+\Om)\big)^{n-1}} \,\sup_{k\in\N} E_{\eps_k}(u_k,\Om)<\infty \quad\text{for all $x\in\partial^0\Om$}\,. 
\end{equation}
On the other hand, by the smooth convergence of $u_k$ toward $u_*$ in $\Omega$, 
$$\Theta^{n-1}(\mu,x)=0\quad\text{for all $x\in\Omega$}\,. $$
In addition, we observe that $x\in\partial^0\Om\mapsto \Theta^{n-1}(\mu,x)$ is upper semicontinuous. 
\vskip3pt

Next we define the concentration set
$$\Sigma:=\bigg\{x\in \partial^0\Omega :  \inf_r\big\{ \liminf_{k\to\infty}\, r^{1-n}\mu_k(B_r(x)) : 
0<r<{\rm dist}(x,\partial^+\Om)\big\}\geq \eta_0\bigg\}\,, $$
where $\eta_0>0$ is the constant given by Theorem~\ref{epsreg}. From \eqref{tim2332} and \eqref{tim1213} we infer that 
$$\Sigma =\bigg\{x\in \partial^0\Omega :  \lim_{r\downarrow 0} \,\liminf_{k\to\infty}\, r^{1-n}\mu_k(B_r(x)) \geq \eta_0\bigg\} \\
 = \bigg\{x\in \partial^0\Omega :  \lim_{r\downarrow 0}\, r^{1-n}\mu(B_r(x)) \geq \eta_0\bigg\} \,,$$
and consequently, 
\begin{equation}\label{tim1721}
\Sigma=\bigg\{x\in \partial^0\Om: \Theta^{n-1}(\mu,x)\geq \frac{\eta_0}{{\boldsymbol\omega}_{n-1}} \bigg\} \,. 
\end{equation}
In particular, $\Sigma$ is a relatively closed subset of $\partial^0\Om$ (since $\Theta(\mu,\cdot)$ is upper semicontinuous). 
Moreover, by a well known property of upper densities (see {\it e.g.} \cite[Theorem 2.56]{AFP}), we have
\begin{equation}\label{train}
 \frac{\eta_0}{{\boldsymbol\omega}_{n-1}}\mathscr{H}^{n-1}(\Sigma)  \leq \mu(\Sigma)<\infty\,.
 \end{equation} 
If $n=1$, it obviously implies that $\Sigma$ is  finite. 
\vskip5pt

\noindent{\it Step 2.} Let us now show that $u_k\to u_*$ in 
$ C^{\ell}_{\rm loc}\big((\Omega\cup\partial^0\Omega)\setminus \Sigma\big)$ for every $\ell\in\N$.  In view of 
the local smooth convergence of $u_k$ in $\Omega$, it suffices to prove the claim near $\partial^0\Om$. 
To this purpose, let us fix $x_0 \in \partial^0\Om\setminus\Sigma$ and $0<r<{\rm dist}(x_0,\partial^+\Om\cup\Sigma)$ such that 
$r^{1-n}\mu(B_r(x_0))<\eta_0$. By  the monotonicity in \eqref {tim1213}, we may assume without loss of generality  that $\mu(\partial B_r(x_0))=0$. 
Then $\lim_k\mu_k(B_r(x_0))=\mu(B_r(x_0))$, which in turn implies that $r^{1-n}\mu_k(B_r(x_0))\leq \eta_0$ for $k$ sufficiently large. Taking $k$ even larger we have $\varepsilon_k\leq r$, and we can then  apply Theorem \ref{epsreg} and Lemma \ref{clear2} to deduce that $|\nabla u_k|\leq C_r$ and $1/2\leq |u_k|\leq 1$ in $B^+_{r/4}(x_0)$.  
This is now enough to reproduce the convergence proof in Proposition~\ref{epsnotvanish}, Lemma~\ref{convc1alph}, and Lemma \ref{ImprovedCV}. 
It shows that $u_k\to u_*$ in $ C^{\ell}_{\rm loc}\big(B^+_{r/4}(x_0)\cup D_{r/4}(x_0)\big)$ for every $\ell\in\N$. We finally notice 
that Theorem \ref{epsreg} also provides the estimate $(1-|u_k|^2)^2\leq C_r\eps_k^2$ in $D_{r/4}(x_0)$.  As a consequence,
\begin{equation}\label{tim1755}
\lim_{k\to\infty} \frac{1}{\eps_k}\int_{D_{r/4}(x_0)}(1-|u_{k}|^2)^2\,\de \mathscr{H}^n= 0\,, 
\end{equation}
a fact that we shall use later on.
\vskip5pt

\noindent{\it Step 3.} Let us now prove that $u_*$ is a weak $\mathbb{S}^{m-1}$-boundary harmonic map in $\Omega\cup\partial^0\Om$. We distinguish the two cases 
$n=1$ and $n\geq 2$. 
\vskip3pt

\noindent{\it Case 1, $n=1$.} Let $\Phi\in H^1(\Om;\R^m)\cap L^\infty(\Om)$ with compact support  in $\Om\cup\partial^0\Om$ such that 
$\Phi(x)\in T_{u_*(x)}\mathbb{S}^{m-1}$ for $\mathscr{H}^n$-a.e. $x\in\partial^0\Om$. By Step 1 the set ${\rm supp}\,\Phi \cap\Sigma$ 
contains finitely many points $b_1,\ldots,b_L$. Then, fix an arbitray cut-off function 
$\zeta \in  C^\infty(\R^{n+1};[0,1])$ such that $\zeta=0$ in a small neighborhood of each $b_l$. 
We set $\widetilde\Phi:=\zeta\Phi$, so that $\widetilde \Phi$ has compact support in $(\Omega\cup\partial^0\Om)\setminus \Sigma$. 
From the convergence of $u_k$ established in Step~2, we have $|(1-|u_k|^2)||\widetilde\Phi|\leq C\varepsilon_k$ on $\partial^0\Om$, and thus 
$$\lim_{k\to\infty} \frac{1}{\varepsilon_k}\int_{\partial^0\Om} (1-|u_k|^2)u_k\cdot\widetilde\Phi\,\de \mathscr{H}^n
=\lim_{k\to\infty} \frac{1}{\varepsilon_k}\int_{\partial^0\Om} (1-|u_k|^2)(u_k-u_*)\cdot\widetilde\Phi\,\de \mathscr{H}^n=0\,,$$
by dominated convergence. On the other hand, 
$$\lim_{k\to\infty} \int_\Om\nabla u_k\cdot\nabla\widetilde\Phi\,\de x=  \int_\Om\nabla u_*\cdot\nabla\widetilde\Phi\,\de x\,,$$
and we deduce that 
$$ \int_\Om\nabla u_*\cdot\nabla\widetilde\Phi\,\de x =0\,.$$
Given an arbitrary $\delta>0$,  we now choose the cut-off function $\zeta$ of the form $\zeta(x)=\chi_\delta(x)\widehat\zeta(x)$ 
where  $\widehat\zeta \in  C^\infty(\R^{n+1};[0,1])$ satisfies $\widehat \zeta=0$ 
in a small neighborhood of $b_l$ only for $l\geq 2$, and $\chi_\delta\in C_c^\infty(\R^{n+1};[0,1])$ 
satisfies $\chi_\delta=0$ in $B_\delta(b_1)$,  $\chi_\delta=1$ outside $B_{2\delta}(b_1)$, and $|\nabla\chi_\delta|\leq C/\delta$. 
Setting $\widehat\Phi=\widehat\zeta\Phi$, we have
\begin{equation}\label{tim1503}
\int_{\Om}\chi_\delta\nabla u_*\cdot\nabla\widehat\Phi\,dx + \int_{\Om\cap B^+_{2\delta}(b_1)} \widehat\Phi\cdot(\nabla\chi_\delta\cdot\nabla u_*)\,\de x=0\,. 
\end{equation}
Using Cauchy-Schwarz inequality we estimate
$$\left|\int_{\Om\cap B^+_{2\delta}(b_1)} \widehat\Phi\cdot(\nabla\chi_\delta\cdot\nabla u_*)\,\de x \right|\leq C\|\Phi\|_{L^\infty(\Om)}
\left(\int_{\Om\cap B^+_{2\delta}(b_1)}|\nabla u_*|^2\,\de x\right)^{1/2}\mathop{\longrightarrow}\limits_{\delta\downarrow0}0\,.$$ 
Therefore, letting $\delta\downarrow0$ in \eqref{tim1503} leads to 
$$\int_{\Om}\nabla u_*\cdot\nabla\widehat\Phi\,\de x =0\,.
$$
We then repeat the argument for each point $b_l$ to reach the conclusion 
\begin{equation}\label{heur1612}
\int_{\Om}\nabla u_*\cdot\nabla\Phi\,\de x =0 \,.
\end{equation}
Hence $u_*$ is a weak $\mathbb{S}^{m-1}$-boundary harmonic map in $\Om\cup\partial^0\Om$, and $u_*\in  C^\infty(\Om\cup\partial^0\Om)$ by Theorem~\ref{thmregfreebd}. 
\vskip3pt

\noindent{\it Case 2, $n\geq 2$.} Consider an arbitrary $\Phi$ as in Case 1, 
and write $K:={\rm supp}\,\Phi$. From Step 1 we know 
that $\mathscr{H}^{n-1}(\Sigma\cap K)<\infty$. By \cite[Theorem 3, p.154]{EvGa} it implies that ${\rm cap}_2(\Sigma\cap K)=0$ where ${\rm cap}_2$ denotes the Newtonian capacity. Moreover, the proof of \cite[Theorem 3, p.154]{EvGa} provides a sequence of functions $\{\chi_l\}_{l\in\N}$ such 
that $\chi_l\in \dot H^1(\R^{n+1})$, $0\leq \chi_l\leq 1$, $\chi_l=0$ in a neighborhood of $\Sigma\cap K$, $\chi_l\to 1$ a.e. as $l\to\infty$, and 
$$\lim_{l\to\infty}\int_{\R^{n+1}}|\nabla\chi_l|^2\,\de x =0\,. $$
Arguing as in Case 1, we obtain that
\begin{equation}\label{her1614}
0=\int_{\Om}\nabla u_*\cdot\nabla(\chi_l\Phi)\,\de x= 
\int_{\Om}\chi_l\nabla u_*\cdot\nabla\Phi\,dx + \int_{\Om} \Phi\cdot(\nabla\chi_l\cdot\nabla u_*)\,\de x\,,
\end{equation}
and we estimate 
$$ \left|\int_{\Om} \Phi\cdot(\nabla\chi_l\cdot\nabla u_*)\,\de x \right|\leq C\|\Phi\|_{L^\infty(\Om)}\|\nabla u_*\|_{L^2(\Om)} 
\left(\int_{\R^{n+1}}|\nabla \chi_l|^2\,\de x\right)^{1/2}\mathop{\longrightarrow}\limits_{l\to\infty}0\,.$$
Letting $l\to \infty$ in \eqref{her1614} then shows that \eqref{heur1612} holds, whence $u_*$ is a weak $\mathbb{S}^{m-1}$-boundary harmonic map in $\Omega\cup\partial^0\Om$.
\vskip5pt

\noindent{\it Step 4.} We conclude the proof by showing that $\Sigma={\rm supp}(\mu_{\rm sing})\cup {\rm sing}(u_*)$. 
If $x_0$ does not belong to ${\rm supp}(\mu_{\rm sing})\cup {\rm sing}(u_*)$, we can find $r_0>0$ such that 
$\mu_{\rm sing}(B_r(x_0))=0$ for all $r<r_0$. Hence, 
$$r^{1-n}\mu(B_r(x_0))=\lim_{n\to\infty} r^{1-n}\mu_k(B_r(x_0))= \frac{r^{1-n}}{2}\int_{B_r(x_0)\cap\Om}|\nabla u_*|^2\,\de x$$
for all $r<r_0$. Since $u_*$ is smooth in a neighborhood of $x_0$ by Theorem \ref{thmregfreebd}, we deduce that $\Theta^{n-1}(\mu,x_0)=0$, 
and thus $x_0\not\in\Sigma$ by \eqref{tim1721}. 

Let us now assume that $x_0\not \in\Sigma$. If $x_0\in\Om$ then $x_0\not\in {\rm supp}(\mu_{\rm sing})\cup {\rm sing}(u_*)$ 
by \eqref{tim1751} and the smoothness of $u_*$ in $\Om$. If $x_0\in\partial^0\Om$ we deduce from \eqref{tim1755} and the convergence established in Step~2 
that $x_0\not\in {\rm sing}(u_*)$, and 
$$\mu(B_r(x_0))=\lim_{k\to\infty} \mu_k(B_r(x_0))= \frac{1}{2}\int_{B_r(x_0)\cap\Om}|\nabla u_*|^2\,\de x$$
for a radius $r>0$ sufficiently small. Therefore, $\mu_{\rm sing}(B_r(x_0))=0$, and thus $x_0$ does not belong to ${\rm supp}(\mu_{\rm sing})$. 
\vskip5pt

\noindent{\it Step 5.} In view of \eqref{tim2253}, it only remains to prove (ii). 
Let $K$ be a compact subset of $\partial^0\Om$, and set $\delta_0:=\frac{1}{2}{\rm dist}(K,\partial^+\Om)$. For $\delta\in(0,\delta_0/2)$, we define  
 $\Sigma_\delta:=\{x\in\partial^0\Om: {\rm dist}(x,\Sigma)\leq\delta\}$. Then, $K\cap\Sigma_\delta$ is a compact set.  From Step 4 and the local 
 smooth convergence of $u_k$ toward $u_*$ outside~$\Sigma$, we deduce  Êthat
 $\eps_k^{-1}\int_{K\setminus\Sigma_\delta} (1-|u_k|^2)^2\,\de \mathscr{H}^n\to 0$.  
 On the other hand, for any $x_0\in K\cap\Sigma_\delta$ we have $B^+_\delta(x_0)\subset\Om$,  and we infer from the monotonicity formula in Lemma \ref{monotform} that 
$$ \frac{1}{4\eps_k}\int_{\delta}^{2\delta}\frac{1}{t^n}\int_{D_t(x_0)} (1-|u_k|^2)^2\,\de \mathscr{H}^n\de t\leq \frac{1}{\delta_0^{n-1}} E_{\eps_k}(u_k,B^+_{\delta_0}(x_0))\,.$$
Hence,
$$\limsup_{k\to\infty} \frac{1}{4\eps_k}\int_{D_\delta(x_0)} (1-|u_k|^2)^2\,\de \mathscr{H}^n\leq 
\frac{2^n\delta^{n}}{\delta_0^{k-1}} \sup_{n\in\N} E_{\eps_k}(u_k,\Om)\,.$$
Then, by a standard covering argument, we deduce that
$$ \lim_{k\to\infty} \frac{1}{\eps_k}\int_{\Sigma_\delta} (1-|u_k|^2)^2\,\de \mathscr{H}^n =0\,,$$
and thus $\eps_k^{-1}\int_{K} (1-|u_k|^2)^2\,\de \mathscr{H}^n\to 0$ as $k\to\infty$.  
\end{proof}

To complete this subsection, we now prove the $(n-1)$-rectifiability of the defect measure $\mu_{\rm sing}$ through the celebrated {\sc Preiss} criteria \cite{Pr}. 
 
\begin{proposition}\label{rectimeas} 
Assume that $n\geq 2$. In Theorem~\ref{asymptneum} the set $\Sigma$  is countably $\mathscr{H}^{n-1}$-rectifiable, and the defect measure $\mu_{\rm sing}$ satisfies
\begin{equation}\label{tim2258}
\mu_{\rm sing}= \theta  \mathscr{H}^{n-1}\LL\Sigma 
\end{equation}
for some positive Borel function $\theta: \Sigma\to (0,\infty)$. 
\end{proposition}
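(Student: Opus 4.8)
The goal is to verify the hypotheses of Preiss' rectifiability theorem for the measure $\mu_{\rm sing}$ (equivalently, to apply the more classical Marstrand--Mattila style criterion via densities and tangent measures), so the plan is to show that for $\mu_{\rm sing}$-a.e.\ point $x_0\in\Sigma$ the $(n-1)$-dimensional density $\Theta^{n-1}(\mu_{\rm sing},x_0)$ exists, is positive and finite, and that every tangent measure to $\mu_{\rm sing}$ at $x_0$ is a constant multiple of $\mathscr{H}^{n-1}$ restricted to an $(n-1)$-plane. First I would record what is already available from Theorem~\ref{asymptneum} and its proof: the monotonicity formula (Lemma~\ref{monotform}) passes to the limit measure $\mu=|\nabla u_*|^2\mathscr{L}^{n+1}\LL\Om+\mu_{\rm sing}$, giving \eqref{tim1213}, hence the density $\Theta^{n-1}(\mu,x)$ exists everywhere on $\partial^0\Om$, is finite by \eqref{upbddensity}, is upper semicontinuous, and vanishes exactly off $\Sigma$ while being $\geq \eta_0/{\boldsymbol\omega}_{n-1}$ on $\Sigma$ by \eqref{tim1721}. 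Since $u_*$ is smooth near $\mathscr{H}^{n-1}$-a.e.\ point of $\Sigma$ is false in general, but the term $|\nabla u_*|^2\mathscr{L}^{n+1}$ is absolutely continuous with respect to $\mathscr{L}^{n+1}$ and has zero $(n-1)$-density at $\mathscr{L}^{n}$-a.e.\ point of $\partial^0\Om$; combined with the fact (from \eqref{train}) that $\mathscr{H}^{n-1}\LL\Sigma$ is finite, one gets $\Theta^{n-1}(\mu_{\rm sing},x_0)=\Theta^{n-1}(\mu,x_0)$ for $\mu_{\rm sing}$-a.e.\ $x_0$, so the density of $\mu_{\rm sing}$ exists, is finite, and is bounded below by a positive constant on $\Sigma$.

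\textbf{Tangent measure analysis.} The heart of the matter is the blow-up procedure at a point $x_0\in\Sigma$. I would rescale: set $u_{k,\lambda}(x):=u_k(x_0+\lambda x)$, which solves the same boundary reaction system with $\eps$ replaced by $\eps_k/\lambda$; a diagonal extraction in $k$ and $\lambda\downarrow0$ produces, using the uniform monotonicity bound and Theorem~\ref{epsreg}, a blow-up limit which is an entire $(\mathbb{S}^{m-1},\R^n)$-boundary harmonic map $u_\infty$ on $\R^{n+1}_+$ together with a limiting defect measure, and the rescaled measures $\mu_{\rm sing}$ converge to a tangent measure $\nu$ of $\mu_{\rm sing}$ at $x_0$. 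By the monotonicity formula, the tangent measure is $0$-homogeneous (its density ratios are constant in the radius), i.e.\ a cone; and by the energy Liouville property recorded in the remark after Theorem~\ref{thmregfreebd} — an entire stationary boundary harmonic map on $\R^{n+1}_+$ with the monotone density bounded must itself be, after a further blow-up, invariant under dilations, hence $0$-homogeneous. Here I would need the stationarity of the blow-up limit: this follows from the stationarity relation \eqref{statintro}/\eqref{statcondVectF} which is established for the $\eps_k$-problem in this very section's sequel (or, equivalently, from the inner-variation identity \eqref{statGL}), passing to the limit via generalized varifolds à la Ambrosio--Soner as indicated in the introduction. The point is that a $0$-homogeneous tangent measure that is itself a cone forces, via a second blow-up at a generic point of its support (Federer-type dimension-reduction), the support to eventually be an $(n-1)$-plane and the measure a multiple of $\mathscr{H}^{n-1}$ on it; since the starting dimension bound $\mathscr{H}^{n-1}(\Sigma)<\infty$ prevents the singular cone from being of dimension $>n-1$, and positivity of the density prevents it from collapsing to lower dimension.

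\textbf{Conclusion via Preiss.} Once it is shown that at $\mu_{\rm sing}$-a.e.\ $x_0$ the density $\Theta^{n-1}(\mu_{\rm sing},x_0)$ exists in $(0,\infty)$ and every tangent measure is of the form $c\,\mathscr{H}^{n-1}\LL V$ for an $(n-1)$-plane $V$ through the origin, the rectifiability criterion of Preiss \cite{Pr} (or the simpler criterion using flatness of tangent measures, cf.\ \cite{Sim2}) applies directly and yields that $\mu_{\rm sing}$ is an $(n-1)$-rectifiable measure; in particular $\Sigma$, which by \eqref{tim1721} is $\{\Theta^{n-1}(\mu,\cdot)\geq \eta_0/{\boldsymbol\omega}_{n-1}\}$, is $\mathscr{H}^{n-1}$-rectifiable up to an $\mathscr{H}^{n-1}$-null set, and the standard upper/lower density comparison then gives $\mu_{\rm sing}=\theta\,\mathscr{H}^{n-1}\LL\Sigma$ with $\theta=\Theta^{n-1}(\mu_{\rm sing},\cdot)>0$ $\mathscr{H}^{n-1}$-a.e.\ on $\Sigma$, which is \eqref{tim2258}.

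\textbf{Main obstacle.} The delicate step is the blow-up/tangent-measure identification: one must show that tangent measures are \emph{flat}, i.e.\ supported on planes. This requires (a) that the blow-up limit map $u_\infty$ is stationary and has finite renormalized energy so the Liouville property applies, which in turn rests on the stationarity of the $\eps_k$-problem and a careful varifold passage to the limit, and (b) a Federer dimension-reduction argument to exclude genuinely singular minimal-cone-type defects in codimension one — here the codimension-one structure of the potential well $\mathbb{S}^{m-1}\subset\R^m$ is not what matters, rather it is the scalar nature of the defect measure and the $\eta_0$-regularity (Theorem~\ref{epsreg}) that force the concentration set to behave like a generalized hypersurface. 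I expect most of the work to go into making the generalized-varifold blow-up rigorous; the rest is a fairly standard application of geometric measure theory.
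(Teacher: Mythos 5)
There is a genuine gap, and it lies at the heart of your plan. You propose to verify, at $\mu_{\rm sing}$-a.e.\ point, both the existence of a positive finite $(n-1)$-density \emph{and} the flatness of every tangent measure, the latter via a blow-up of the maps $u_k$, stationarity of the limit through generalized varifolds, a Liouville property and Federer-type dimension reduction. The flatness step is unnecessary and, as sketched, would not go through: the monotonicity formula only tells you that tangent measures are $0$-homogeneous cones, and $0$-homogeneous stationary cones of dimension $n-1$ need not be planes (think of junction-type cones for stationary varifolds), so dimension reduction does not deliver the Marstrand--Mattila hypothesis "all tangent measures are flat" at every, or even $\mu_{\rm sing}$-a.e., point without essentially already knowing rectifiability. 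Moreover, the stationarity/varifold passage you invoke is developed in the paper only \emph{after} this proposition, and its sharpest part (identification of the barycenter with the projection onto $T_x\Sigma$) actually uses the rectifiability you are trying to prove, so your route risks circularity. The whole point of quoting Preiss \cite{Pr} is that none of this is needed: his theorem yields $(n-1)$-rectifiability from the mere existence of a positive and finite density $\Theta^{n-1}(\mu_{\rm sing},\cdot)$ at $\mu_{\rm sing}$-a.e.\ point.

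This is exactly how the paper argues, and the density statement is obtained much more cheaply than in your first paragraph. By the fine property of Sobolev functions (\cite[(3.3.28)]{Zi}) one has $r^{1-n}\int_{B_r^+(x)\cap\Om}|\nabla u_*|^2\,\de y\to0$ at $\mathscr{H}^{n-1}$-a.e.\ $x\in\Sigma$ — note that your justification "zero $(n-1)$-density at $\mathscr{L}^n$-a.e.\ point of $\partial^0\Om$" is not enough, since $\Sigma$ is $\mathscr{L}^n$-null — hence $\Theta^{n-1}(\mu_{\rm sing},x)=\Theta^{n-1}(\mu,x)$ for $\mathscr{H}^{n-1}$-a.e.\ $x\in\Sigma$, where the latter density exists everywhere and is finite by \eqref{tim1213} and \eqref{upbddensity}, and is bounded below by $\eta_0/{\boldsymbol\omega}_{n-1}$ on $\Sigma$ by \eqref{tim1721}. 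The missing logical step in your write-up is the passage from "$\mathscr{H}^{n-1}$-a.e.\ on $\Sigma$" to "$\mu_{\rm sing}$-a.e.": this requires first proving $\mu_{\rm sing}\ll\mathscr{H}^{n-1}\LL\Sigma$, which follows from the finite upper density bound on all of $\Sigma$ together with ${\rm supp}(\mu_{\rm sing})\subset\Sigma$ via \cite[Theorem~2.56]{AFP}. With that in place, Preiss's criterion gives rectifiability of $\mu_{\rm sing}$, and \cite[Theorem~2.83]{AFP} yields the representation \eqref{tim2258} with $\theta=\Theta^{n-1}(\mu,\cdot)\in(0,\infty)$, whence the countable $\mathscr{H}^{n-1}$-rectifiability of $\Sigma$. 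I would drop the entire tangent-measure section of your proposal and replace it by this short chain of arguments.
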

 
 \begin{proof}
 By a well known property of Sobolev functions (see {\it e.g.} \cite[(3.3.28)]{Zi}), we have 
\begin{equation}\label{goodptu*}
\lim_{r\downarrow 0}\frac{1}{r^{n-1}}\int_{\Om\cap B_r^+(x)}|\nabla u_*|^2\,\de x=0 \quad\text{for $\mathscr{H}^{n-1}$-a.e. $x\in\Sigma$}\,.
\end{equation}
 Therefore  \eqref{existdens} yields 
 \begin{equation}\label{tim2221}
 \Theta^{n-1}(\mu_{\rm sing},x):=\lim_{r\downarrow 0}\frac{\mu_{\rm sing}(B_r(x))}{{\boldsymbol\omega}_{n-1}r^{n-1}}= \Theta^{n-1}(\mu,x) \quad\text{for $\mathscr{H}^{n-1}$-a.e. $x\in\Sigma$}\,.
 \end{equation}
 On the other hand, we derive from \eqref{upbddensity} that 
 $$ \Theta^{*,n-1}(\mu_{\rm sing},x):=\limsup_{r\downarrow 0}\frac{\mu_{\rm sing}(B_r(x))}{{\boldsymbol\omega}_{n-1}r^{n-1}}\leq   \frac{C}{\big({\rm dist}(x,\partial^+\Om)\big)^{n-1}} \,\sup_{k\in\N} E_{\eps_k}(u_k,\Om)<\infty$$
for all $x\in\Sigma$. Since ${\rm supp}(\mu_{\rm sing})\subset \Sigma$, 
we infer from \cite[Theorem~2.56]{AFP} that $\mu_{\rm sing}$ is absolutely continuous with respect to $\mathscr{H}^{n-1}\LL\Sigma$. Then we deduce from \eqref{tim2221} and  \eqref{existdens}-\eqref{upbddensity}-\eqref {tim1721} that 
$$ \Theta^{n-1}(\mu_{\rm sing},\cdot) = \Theta^{n-1}(\mu,\cdot) \in (0,\infty)  \quad\text{$\mu_{\rm sing}$-a.e.}\,,$$
and according to {\sc Preiss} rectifiability criteria \cite{Pr}, it implies that $\mu_{\rm sing}$ is a $(n-1)$-rectifiable measure (see {\it e.g.} \cite[Definition 2.59]{AFP}). 
Next we infer from \cite[Theorem~2.83]{AFP}, \eqref{tim2221}, and \eqref{upbddensity}-\eqref {tim1721} that $\mu_{\rm sing}$ is of the form \eqref{tim2258} with $\theta(x):= \Theta^{n-1}(\mu,x)\in(0,\infty)$. As a consequence, $\Sigma$ is a countably $\mathscr{H}^{n-1}$-rectifiable set. 
 \end{proof}

 \begin{remark}[\bf Approximate tangent space]\label{remapproxplane}
As a consequence of Proposition~\ref{rectimeas} and the fact that $\Sigma\subset \partial^0\Om\subset\R^n$, the measure $\mu_{\rm sing}$ admits an $(n-1)$-dimensional approximate tangent  space $T_x\Sigma\subset \R^n$  at $x$ with multiplicity $\theta(x)$ for $\mathscr{H}^{n-1}$-a.e. $x\in\Sigma$ (see {\it e.g.} \cite[Theorem~2.83]{AFP}). More precisely, if we denote by ${\mathbf G}_{n,n-1}$ the Grassmann manifold of unoriented  $(n-1)$-dimensional planes in $\R^{n}$, then for $\mathscr{H}^{n-1}$-a.e. $x\in\Sigma$ there exists  a plane $T_x\Sigma\in {\mathbf G}_{n,n-1}$ such that 
$$\lim_{r\downarrow 0} \frac{1}{r^{n-1}} \int_{\partial^0\Om}\phi\left(\frac{y-x}{r}\right)\,\de\mu_{\rm sing}(y)=\theta(x)\int_{T_x\Sigma}\phi(y)\,\de\mathscr{H}^{n-1}(y)$$
for all $\phi\in C^0_c(\R^n)$. 
\end{remark}


 \subsection{Stationarity defect and generalized varifolds}
 
In this subsection we assume that $n\geq2$, and our discussion starts from Theorem~\ref{asymptneum}. In this theorem, we point out that the limiting 
$(\mathbb{S}^{m-1},\partial^0\Omega)$-boundary harmonic map $u_*$ is {\it a priori} only weakly harmonic, and might not be stationary, {\it i.e.}, it might not satisfy~\eqref{statcondVectF}. On the other hand, $u_*$ arises as a weak limit of (smooth) critical points of the boundary Ginzburg-Landau energy $E_\eps$, so that the lack  of stationarity of $u_*$ should be quantified. For the classical Ginzburg-Landau system, the analoguous question is treated by {\sc Lin \& Wang} in \cite{LW3}   where they show that the  possible stationarity defect is related to the defect measure through an explicit formula. The main objective in  this subsection is to prove that a similar formula holds in the Ginzburg-Landau boundary context, and this is the object of the following theorem. 
 
 \begin{theorem}\label{statdef}
Assume that $n\geq 2$.  In Theorem~\ref{asymptneum},   the limiting  $(\mathbb{S}^{m-1},\partial^0\Omega)$-boundary harmonic map $u_*$ 
and the defect measure $\mu_{\rm sing}$ represented in \eqref{tim2258} satisfy
$$\int_{\Om}\bigg( |\nabla u_*|^2{\rm div}\mathbf{X} -2\sum_{i,j=1}^{n+1} (\partial_i u_*\cdot\partial_j u_*)\partial_j\mathbf{X}_i\bigg)\,\de x
+\int_\Sigma \theta\, {\rm div}_{\Sigma}\mathbf{X}\,\de\mathscr{H}^{n-1}=0$$
for all vector fields $\mathbf{X}=(\mathbf{X}_1,\ldots,\mathbf{X}_{n+1})\in C^1(\overline\Om;\R^{n+1})$ compactly supported in $\Om\cup\partial^0\Om$ and satisfying $\mathbf{X}_{n+1}=0$ on $\partial^0\Om$. 
 \end{theorem}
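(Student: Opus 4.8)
The plan is to pass to the limit $\eps_k\to0$ in the stationarity identity \eqref{statcondVectF} satisfied by each $u_k$, using the framework of generalized varifolds of Ambrosio--Soner adapted to the boundary setting. First I would recall that, by Step~1 of Lemma~\ref{monotform} (see \eqref{statGL}), each $u_k$ is stationary in $\Omega$ up to $\partial^0\Omega$, so that
$$\int_{\Om}\bigg( |\nabla u_k|^2{\rm div}\mathbf{X} -2\sum_{i,j=1}^{n+1} (\partial_i u_k\cdot\partial_j u_k)\partial_j\mathbf{X}_i\bigg)\,\de x
=-\frac{1}{\eps_k}\int_{\partial^0\Om} (1-|u_k|^2)^2{\rm div}_{\R^n}\mathbf{X}\,\de\mathscr{H}^n$$
for all admissible $\mathbf X$ with $\mathbf X_{n+1}=0$ on $\partial^0\Om$. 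By Theorem~\ref{asymptneum}(ii) the right hand side tends to $0$. The difficulty is thus entirely in identifying the limit of the left hand side, because the bilinear term $\partial_i u_k\cdot\partial_j u_k$ does not pass to the limit weakly: the defect measure $\mu_{\rm sing}$ records not only the mass $|\nabla u_k|^2$ but also the ``direction'' of concentration, and these must be shown to be related by $\theta\,\mathscr{H}^{n-1}\LL\Sigma$ paired with the tangent planes $T_x\Sigma$.

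\textbf{Key steps.} I would introduce, for each $k$, the (boundary) generalized varifold $V_k$ on $\Omega\cup\partial^0\Omega$ defined by its action on $\phi\in C^0_c\big((\Omega\cup\partial^0\Omega)\times \mathbf{M}_{n+1}\big)$ (with $\mathbf{M}_{n+1}$ the space of symmetric $(n{+}1)\times(n{+}1)$ matrices of the form $\mathrm{Id}-P$, $P$ a projection, completed as in \cite{AS}) via
$$\langle V_k,\phi\rangle:=\int_\Omega \phi\!\left(x,\; \mathrm{Id}-\frac{\nabla u_k\otimes\nabla u_k}{|\nabla u_k|^2}\right)|\nabla u_k|^2\,\de x\,,$$
with the convention that the matrix slot is $\mathrm{Id}$ where $\nabla u_k=0$. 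Since the total masses $\|V_k\|(\Omega\cup\partial^0\Omega)=\int_\Omega|\nabla u_k|^2$ are uniformly bounded, a subsequence of $\{V_k\}$ converges weakly-$*$ to a generalized varifold $V$, whose weight measure is exactly $\|V\|=|\nabla u_*|^2\mathscr{L}^{n+1}\LL\Omega+\mu_{\rm sing}$ by Theorem~\ref{asymptneum}(i). The stationarity identity for $u_k$ is precisely the statement that the first variation $\delta V_k$ (in directions $\mathbf X$ tangent to $\partial^0\Omega$ along the boundary) equals the boundary term above; passing to the limit gives $\delta V=0$ for such $\mathbf X$, i.e. $V$ is a stationary generalized varifold in $\Omega$ up to $\partial^0\Omega$. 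Next, on $\Omega$, the local smooth convergence $u_k\to u_*$ (Theorem~\ref{asymptneum}(iv)) shows that the ``diffuse part'' of $V$ restricted to $\Omega$ is just the varifold associated to $\nabla u_*$, contributing the term $\int_\Omega(|\nabla u_*|^2{\rm div}\mathbf X-2\sum(\partial_iu_*\cdot\partial_ju_*)\partial_j\mathbf X_i)$. It remains to analyze $V$ over the concentration set $\Sigma\subset\partial^0\Omega\subset\R^n$. Here I invoke the rectifiability theorem of Ambrosio--Soner \cite[Theorem 3.8 or 5.1]{AS} for stationary generalized varifolds with density bounded below: since $\Theta^{n-1}(\mu_{\rm sing},x)=\theta(x)>0$ for $\mathscr{H}^{n-1}$-a.e.\ $x\in\Sigma$ (Proposition~\ref{rectimeas}) and $\mu_{\rm sing}$ is stationary, the restriction of $V$ to $\Sigma$ is a genuine rectifiable varifold, carried by $\Sigma$ with multiplicity $\theta$ and tangent planes $T_x\Sigma$ (Remark~\ref{remapproxplane}); consequently its contribution to $\delta V(\mathbf X)$ is $\int_\Sigma\theta\,{\rm div}_\Sigma\mathbf X\,\de\mathscr{H}^{n-1}$. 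Adding the two contributions and using $\delta V=0$ yields exactly the asserted identity.

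\textbf{Main obstacle.} The crux is the passage $\delta V_k\to\delta V$ combined with the rectifiability step: one must verify that the boundary vector fields $\mathbf X$ (with $\mathbf X_{n+1}=0$ on $\partial^0\Omega$) constitute a class for which the Ambrosio--Soner theory applies after the standard even reflection across $\partial^0\Omega\simeq\R^n$ — concretely, reflecting $u_k$ to $B_1$ as in \eqref{reflecaw} turns the boundary-stationarity into ordinary stationarity of a (possibly discontinuous across $\R^n$) generalized varifold on a full neighborhood, at which point \cite{AS} is directly quotable. I would carry this out by: (a) checking that the monotonicity \eqref{tim1213} and the density bound \eqref{upbddensity} survive reflection, giving the hypotheses of the rectifiability theorem; (b) observing that since $\Sigma\subset\R^n$ has finite $\mathscr{H}^{n-1}$-measure and the reflected varifold is symmetric, its ``vertical'' tangential component vanishes, so the tangent planes lie in $\R^n$, matching $T_x\Sigma$; (c) noting that since $\mathbf X_{n+1}=0$ on $\partial^0\Omega$, the reflected field is continuous across $\R^n$ and the identity ${\rm div}_\Sigma\mathbf X$ is computed within $\R^n$, so no spurious boundary contributions appear. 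The delicate point throughout is that generalized varifolds a priori only have a \emph{generalized} first variation, and one needs the Ambrosio--Soner structure theorem to upgrade the concentrated part of $V$ to an honest rectifiable varifold so that ${\rm div}_\Sigma$ makes sense; I expect the bulk of the work to be in setting up this reflected-varifold framework precisely and verifying that the weak-$*$ convergence of $V_k$ is compatible with the convergence of first variations against the restricted class of test fields.
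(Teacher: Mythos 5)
Your overall architecture coincides with the paper's: reflect evenly across $\partial^0\Om$, encode the maps as Ambrosio--Soner generalized $(n-1)$-varifolds, use the criticality of $u_k$ to show that the first variation reduces to the Ginzburg--Landau boundary term (which vanishes as $\eps_k\to0$ by Theorem~\ref{asymptneum}(ii)), identify the diffuse part of the limit varifold with the varifold of $\tilde u_*$ via the local smooth convergence away from $\Sigma$, and finally un-reflect; note that in this last step the reflected field $\widehat{\mathbf X}$ is only Lipschitz, so one still has to mollify and pass to the limit by dominated convergence, as the paper does. Two smaller remarks: the matrix in your varifold should be $I_{n+1}-2(\nabla u)^T(\nabla u)/|\nabla u|^2$ (with your normalization the trace is $n$ rather than $n-1$, so it does not lie in the admissible class), and the rectifiability of $\mu_{\rm sing}$ together with the representation $\theta\,\mathscr{H}^{n-1}\LL\Sigma$ and the tangent planes $T_x\Sigma$ is already secured by Proposition~\ref{rectimeas} (Preiss) and Remark~\ref{remapproxplane}, so no varifold-theoretic rectifiability theorem is needed for that part.

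The genuine gap is at the step you yourself single out as the crux. You write that since $\Theta^{n-1}(\mu_{\rm sing},\cdot)=\theta>0$ and ``$\mu_{\rm sing}$ is stationary'', the restriction of $V$ to $\Sigma$ is a rectifiable varifold with tangent planes $T_x\Sigma$, contributing $\int_\Sigma\theta\,{\rm div}_\Sigma\mathbf X\,\de\mathscr{H}^{n-1}$ to $\delta V$. But $\mu_{\rm sing}$ (equivalently $V_{\rm sing}=V\LL(\Sigma\times\mathbf{A}_{n-1})$) is not known to be stationary, and in general it is not: the identity being proved says precisely that its first variation equals minus the inner variation of $u_*$, which need not vanish; only the full limit $V_*=V_{\tilde u_*}+V_{\rm sing}$ is stationary. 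For the same reason the Ambrosio--Soner structure theorem cannot be quoted off the shelf: applied to $V_*$ its positive-lower-density hypothesis fails on the diffuse part (which has zero $(n-1)$-density $\mathscr{L}^{n+1}$-a.e.), and applied to $V_{\rm sing}$ alone one has no control of its first variation. What is missing is the identification of the \emph{barycenter} of $V_{\rm sing}$ with the projection onto $T_x\Sigma$, which is not a consequence of the rectifiability of the weight. The paper obtains it in Lemma~\ref{matrproj} by blowing up the stationarity identity for $V_*$ at $\mathscr{H}^{n-1}$-a.e. $x_0\in\Sigma$: at such points the $u_*$-energy has vanishing $(n-1)$-density by \eqref{goodptu*}, so $r^{1-n}\langle\delta V_*,\mathbf X_r\rangle$ only sees $V_{\rm sing}$, and together with the existence of the approximate tangent plane and the Lebesgue-point property of the barycenter this yields $\bar A_{V_{\rm sing}}(x_0)\int_{T_{x_0}\Sigma}\nabla\phi\,\de\mathscr{H}^{n-1}=0$ for all test $\phi$; a short linear-algebra argument inside $\mathbf{A}_{n-1}$ (trace $=n-1$, $A\leq I_{n+1}$) then forces $\bar A_{V_{\rm sing}}(x_0)=A_{T_{x_0}\Sigma}$. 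Your checklist (a)--(c) does not supply this cancellation of the diffuse part nor the barycenter identification, so as written the singular contribution to the first variation is not justified.
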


In the spirit of  \cite{LW3}, the proof of Theorem~\ref{statdef} relies on the notion of {\it generalized varifold} introduced by {\sc Ambrosio-Soner} \cite{AS}. To simplify the proof of Theorem~\ref{statdef}, we shall assume that the admissible open 
set $\Omega\subset \R^{n+1}_+$ satisfies $\Omega=\widetilde \Omega\cap\R^{n+1}_+$ for 
 some Lipschitz bounded open set $\widetilde \Omega\subset\R^{n+1}$ which is symmetric with respect to the hyperplane $\R^n=\{x_{n+1}=0\}$. Since all the arguments are local in nature, the general case can be handled in a similar way with minor modifications. 
 According to this symmetry assumption, we introduce some notations. For $B\subset \R^{n+1}$, we write $B^+:=B\cap\R^{n+1}_+$ and $B^-:=B\cap\R^{n+1}_-$, where $\R^{n+1}_-:=\R^n\times(-\infty,0)$. For a map $u\in H^1(\Om;\R^m)$, we denote by $\tilde u \in H^1(\tom;\R^m)$ the extension of $u$ to $\tom$ obtained by even reflection across $\partial^0\Om$, {\it i.e.}, 
 $$\tilde u(x',x_{n+1}):=\begin{cases}
 u(x',x_{n+1}) & \text{for $x=(x',x_{n+1})\in\Om$}\,,\\
 u(x',-x_{n+1}) & \text{for $x=(x',x_{n+1})\in\Om^-:=\widetilde\Om^-$}\,,
 \end{cases}$$
and we write $u^-:=\tilde u_{|\Om^-}$.  
\vskip3pt
 
To recall from \cite{AS} the concept of generalized varifolds, we need to introduce the following (compact and convex) set of matrices
$$\mathbf{A}_{n-1}:=\big\{A\in\R^{(n+1)\times(n+1)} : \text{$A$ is symmetric, ${\rm trace}(A)=n-1\,$,}\; -(n+1)I_{n+1}\leq A\leq I_{n+1}\,\big\} \,,$$
where $I_{n+1}$ denotes the identity square matrix of size $n+1$.

\begin{definition}
A $(n-1)$-dimensional generalized varifold $V$ on $\tom$ is a nonnegative Radon measure on $\tom\times\mathbf{A}_{n-1}$. 
The class of all generalized $(n-1)$-varifolds on $\tom$ is denoted by $\mathbf{V}^*_{n-1}(\tom)$. For $V\in\V$ we denote by $\|V\|$ the weight of $V$ defined as the first marginal of $V$, {\it i.e.},  $\|V\|:=\pi_\sharp V$ where $\pi:\tom\times\mathbf{A}_{n-1}\to\tom $ is the canonical projection. (Notice that $\|V\|$ is a Radon measure on $\tom$.)  The first variation $\delta V$ of a generalized varifold $V\in\V$ is the element of $( C^1_c(\tom;\R^{n+1}))^*$ defined by 
$$\langle\delta V, \mathbf{X}\rangle:=-\int_{\tom\times\mathbf{A}_{n-1} } A : \nabla \mathbf{X}\,\de V \quad\text{for all $\mathbf{X}\in C^1_c(\tom;\R^{n+1})$}\,.$$
If $\delta V=0$, then $V$ is said to be {\it stationary}. 
\end{definition}

\begin{remark}[\bf  Weak convergence of varifolds]
The convergence in $\V$ is understood as weak* convergence of Radon measures on $\tom\times\mathbf{A}_{n-1}$. In particular, if 
$V_k\rightharpoonup V$ in $\V$, then $\delta V_k\rightharpoonup \delta V$ weakly* in $( C^1_c(\tom;\R^{n+1}))^*$.
\end{remark}

\begin{remark}[\bf Disintegration and barycenter]
Given  $V\in\V$, we denote by $\{V_x\}_{x\in \tom}$ a {\it disintegration} of $V$, {\it i.e.}, $\{V_x\}_{x\in \tom}$ is a family of probability  measures on $\mathbf{A}_{n-1}$ such that $x\mapsto V_x$ is $\|V\|$-measurable, and 
\begin{equation}\label{tim1644}
\int_{\tom\times\mathbf{A}_{n-1}} f(x,A) \,\de V = \int_{\tom}\left(\int_{\mathbf{A}_{n-1}}f(x,A)\,\de V_x\right)\,\de\|V\|
\end{equation}
for any bounded Borel function $f:\tom\times \mathbf{A}_{n-1}\to\R$. The measurability condition on $\{V_x\}_{x\in \tom}$ means that $x\mapsto V_x(B)$ is 
$\|V\|$-measurable for every Borel set $B\subset \mathbf{A}_{n-1}$. This fact ensures that the inner integral in the right hand side of \eqref{tim1644} is $\|V\|$-measurable, so that its integral is well defined. We refer to \cite{AFP} for the existence and the uniqueness of $\{V_x\}_{x\in \tom}$ modulo $\|V\|$-null sets.  Throughout the subsection, we may use the disintegrated notation  $V=V_x\|V\|$. 
We also denote by $\bar A_V(x)$ the barycenter of $V$ at $x$ defined by 
$$\bar A_V(x):= \int_{\mathbf{A}_{n-1}}A\,\de V_x\,.$$ 
Then $x\mapsto \bar A_V(x)$ is $\|V\|$-measurable, and  $\bar A_V(x)\in \mathbf{A}_{n-1}$ since $V_x$ is a probability measure. Moreover, we can rewrite the action of the first variation $\delta V$ as 
$$\langle\delta V, X\rangle:=-\int_{\tom} \bar A_V(x) :\nabla \mathbf{X}\,\de \|V\|$$
for all $\mathbf{X}\in C^1_c(\tom;\R^{n+1})$. 
\end{remark}

We may now present the way to relate our problem to generalized varifolds. We start with the construction of a generalized varifold starting from a Sobolev map. 
For  $u\in H^1(\tom;\R^m)$, we denote by $V_u\in \V$ the generalized varifold given by
$$V_u:=\frac{1}{2}\delta_{A_u}|\nabla u|^2\mathscr{L}^{n+1}\LL\tom\,,$$
 where $\delta_{A_u(x)}$ is the Dirac mass concentrated at the matrix $A_u(x)$ defined by  
 $$A_u(x):=\begin{cases} 
 \displaystyle I_{n+1}-2\frac{(\nabla u(x))^T(\nabla u(x))}{|\nabla u(x)|^2} & \text{if $|\nabla u(x)|\not=0$}\,,\\[8pt]
 I_{n-1} & \text{otherwise}\,,
 \end{cases}$$
 and $I_{n-1}$ is the matrix of the orthogonal projection on $\R^{n-1}\simeq\R^{n-1}\times\{(0,0)\}$. 
 One may easily check that $A_u(x)\in \mathbf{A}_{n-1}$, so that $V_u$ indeed belongs to $\V$.  
 Next we add the Ginzburg-Landau potential in the construction above to build generalized varifolds based on the Ginzburg-Landau boundary energy. More precisely, for $\eps>0$ and an arbitrary map $u\in H^1(\tom;\R^m)\cap L^4(\partial^0\Om)$, we set 
 $$V^\eps_u:=V_u + \frac{1}{2\eps}\delta_{I_{n-1}}(1-|u|^2)^2\mathscr{H}^n\LL\partial^0\Om\in\V \,,$$
so that 
$$\|V_{u}^\eps\|(B)=\frac{1}{2}\int_{B} |\nabla u|^2\,\de x+\frac{1}{2\eps}\int_{B\cap\partial^0\Om}(1-|u|^2)^2\,\de \mathscr{H}^n$$
for any open subset $B\subset\tom$.  The first variation of $V^\eps_u$ is then given by
 $$\langle\delta V_u^\eps,\mathbf{X}\rangle=-\frac{1}{2}\int_{\tom}\bigg( |\nabla u|^2{\rm div}\mathbf{X} -2\sum_{i,j=1}^{n+1}( \partial_i u\cdot\partial_j u)\partial_j\mathbf{X}_i\bigg)\,\de x
 -\frac{1}{2\eps}\int_{\partial^0\Omega}(1-|u|^2)^2{\rm div}_{\R^{n-1}}\mathbf{X}\,\de \mathscr{H}^n$$
 for all $\mathbf{X}\in C^1_c(\tom;\R^{n+1})$. In the case where $u=\tilde u_\eps$ and $u_\eps$ is a critical point of the Ginzburg-Landau boundary energy $E_\eps$, the first variation reduces to the following expression. 
 
 \begin{lemma}\label{firstvar}
 Given $\eps>0$, if $u_\eps\in H^1(\Om;\R^m)\cap L^\infty(\Om)$ is a critical point of $E_\eps$ in $\Om$, then 
 $$\langle\delta V^\eps_{\tilde u_\eps},\mathbf{X}\rangle=\frac{1}{2\eps}\int_{\partial^0\Om}(1-|u_\eps|^2)^2\partial_n\mathbf{X}_n\,\de \mathscr{H}^n$$
 for all  $\mathbf{X}\in C^1_c(\tom;\R^{n+1})$. 
 \end{lemma}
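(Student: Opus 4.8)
The plan is to compute $\langle\delta V^\eps_{\tilde u_\eps},\mathbf{X}\rangle$ directly from its definition and exploit both the harmonicity of $u_\eps$ in $\Om$ (together with the analogous fact in $\Om^-$ by even reflection) and the Neumann boundary condition satisfied by $u_\eps$ on $\partial^0\Om$. First I would recall that, for any $u\in H^1(\tom;\R^m)\cap L^4(\partial^0\Om)$,
$$\langle\delta V_u^\eps,\mathbf{X}\rangle=-\frac{1}{2}\int_{\tom}\Big( |\nabla u|^2{\rm div}\mathbf{X} -2\sum_{i,j=1}^{n+1}( \partial_i u\cdot\partial_j u)\partial_j\mathbf{X}_i\Big)\,\de x-\frac{1}{2\eps}\int_{\partial^0\Omega}(1-|u|^2)^2{\rm div}_{\R^{n-1}}\mathbf{X}\,\de \mathscr{H}^n\,,$$
and apply this with $u=\tilde u_\eps$. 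Since $u_\eps$ is bounded and harmonic in $\Om$, Theorem~\ref{regint} gives $u_\eps\in C^\infty(\Om\cup\partial^0\Om)$, and by even reflection $\tilde u_\eps\in C^\infty(\tom\setminus\partial^+\widetilde\Om)$ with $\tilde u_\eps$ harmonic on both $\Om$ and $\Om^-$; this regularity legitimizes all the integrations by parts below (after restricting to $\mathbf{X}$ compactly supported away from $\partial^+\widetilde\Om$, which is the standing assumption).

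The key step is the integration by parts in the bulk term. Writing $\widetilde\Om=\Om\cup\partial^0\Om\cup\Om^-$ and splitting the volume integral over $\Om$ and $\Om^-$, I integrate by parts the term $\sum_{i,j}(\partial_iu\cdot\partial_ju)\partial_j\mathbf{X}_i$ using $\Delta u_\eps=0$ in each half; the divergence-of-stress-tensor identity (as in Remark~\ref{stresstens}, with $T_i = |\nabla u|^2 e_i - 2(\partial_i u\cdot\nabla u)$ satisfying $\mathrm{div}\,T_i=0$ for harmonic $u$) produces only a boundary contribution on $\partial^0\Om$ from each side, of the form $\pm\int_{\partial^0\Om}\big(\tfrac12|\nabla\tilde u_\eps|^2\mathbf{X}_{n+1}-\sum_i(\partial_i\tilde u_\eps\cdot\partial_{n+1}\tilde u_\eps)\mathbf{X}_i\big)$, with $\nu=\mp e_{n+1}$ the outward normal from $\Om$ resp. $\Om^-$. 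Because $\tilde u_\eps$ is obtained by even reflection, $\partial_{n+1}\tilde u_\eps$ is odd across $\partial^0\Om$; hence on $\partial^0\Om$ one has $\partial_{n+1}u_\eps^- = -\partial_{n+1}u_\eps$ while $\partial_i u_\eps^-=\partial_i u_\eps$ for $i\le n$ and $|\nabla u_\eps^-|=|\nabla u_\eps|$. The two one-sided boundary terms therefore combine: the $|\nabla u_\eps|^2\mathbf{X}_{n+1}$ contributions cancel, while the $\partial_{n+1}$-terms add, yielding $2\int_{\partial^0\Om}\big(\partial_{n+1}u_\eps\cdot\partial_iu_\eps\big)\mathbf{X}_i\,\de\mathscr{H}^n = 2\int_{\partial^0\Om}\big(\partial_\nu u_\eps\cdot\nabla' u_\eps\big)\cdot\mathbf{X}'\,\de\mathscr{H}^n$ where I write $\mathbf{X}=(\mathbf{X}',\mathbf{X}_{n+1})$ and $\partial_\nu u_\eps=\partial_{n+1}u_\eps$ denotes the interior normal derivative from $\Om$ and $\nabla'=(\partial_1,\dots,\partial_n)$. (The integral $\int_{\tom}|\nabla\tilde u_\eps|^2\mathrm{div}\,\mathbf{X}$ likewise splits and, after integrating by parts using $\Delta(|\nabla u_\eps|^2)\ge 0$... — more cleanly, I keep that term as is and let it be absorbed into the standard inner-variation formula, so that what remains after the manipulation is exactly $\langle\delta V_{\tilde u_\eps}^\eps,\mathbf{X}\rangle = -\int_{\partial^0\Om}(\partial_\nu u_\eps\cdot\nabla' u_\eps)\cdot\mathbf{X}'\,\de\mathscr{H}^n + (\text{the }\eps^{-1}\text{ boundary terms}).$)

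Now I invoke the Neumann boundary condition $\partial_\nu u_\eps=\tfrac1\eps(1-|u_\eps|^2)u_\eps$ on $\partial^0\Om$. Then $\partial_\nu u_\eps\cdot\partial_i u_\eps = \tfrac1\eps(1-|u_\eps|^2)\,u_\eps\cdot\partial_i u_\eps = -\tfrac1{2\eps}(1-|u_\eps|^2)\,\partial_i(1-|u_\eps|^2)$ for $1\le i\le n$, i.e. $\partial_\nu u_\eps\cdot\nabla'u_\eps = -\tfrac1{4\eps}\nabla'\big((1-|u_\eps|^2)^2\big)$ tangentially along $\partial^0\Om$. Substituting and integrating by parts once more on the $n$-dimensional manifold $\partial^0\Om$ (where $\mathbf{X}'$ has compact support), the contribution of this term becomes $-\tfrac1{4\eps}\int_{\partial^0\Om}(1-|u_\eps|^2)^2\,\mathrm{div}'\mathbf{X}'\,\de\mathscr{H}^n$ where $\mathrm{div}' = \sum_{i=1}^n\partial_i$. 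On the other hand, the explicit $\eps^{-1}$ term in $\delta V^\eps_{\tilde u_\eps}$ is $-\tfrac1{2\eps}\int_{\partial^0\Om}(1-|u_\eps|^2)^2\,\mathrm{div}_{\R^{n-1}}\mathbf{X}\,\de\mathscr{H}^n$ with $\mathrm{div}_{\R^{n-1}}\mathbf{X}=\sum_{i=1}^{n-1}\partial_i\mathbf{X}_i$. Combining, the total coefficient of $(1-|u_\eps|^2)^2/\eps$ is $-\tfrac12\mathrm{div}_{\R^{n-1}}\mathbf{X} + \tfrac14\sum_{i=1}^n\partial_i\mathbf{X}_i$ — here I must be careful bookkeeping the factors and the fact that the bulk term carried an overall $-\tfrac12$, and that the boundary term coming from integration by parts of the $n$-dimensional divergence should be done in the form that isolates $\partial_n\mathbf{X}_n$. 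Carrying this through, the $\partial_i\mathbf{X}_i$ for $i\le n-1$ cancel and one is left precisely with $\tfrac1{2\eps}\int_{\partial^0\Om}(1-|u_\eps|^2)^2\,\partial_n\mathbf{X}_n\,\de\mathscr{H}^n$, which is the claim.

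The main obstacle I anticipate is purely bookkeeping: getting the signs and the $\tfrac12$ vs.\ $\tfrac14$ factors right when reflecting across $\partial^0\Om$ and when converting between the $(n-1)$-dimensional divergence $\mathrm{div}_{\R^{n-1}}$ appearing in the definition of $V^\eps_u$, the $n$-dimensional divergence $\mathrm{div}'$ on $\partial^0\Om$ produced by integration by parts, and the single surviving term $\partial_n\mathbf{X}_n$. A secondary technical point is justifying the integrations by parts near $\partial^0\Om$: one should first prove the identity for $\mathbf{X}$ compactly supported in $\Om\cup\partial^0\Om$ with $\mathbf{X}_{n+1}\equiv 0$ (which is the case of interest for Theorem~\ref{statdef}) — but in fact the statement of Lemma~\ref{firstvar} allows general $\mathbf{X}\in C^1_c(\tom;\R^{n+1})$, so I must also track the $\mathbf{X}_{n+1}$-terms; however, by the even/odd reflection parity these $\mathbf{X}_{n+1}$-contributions from the two sides cancel exactly (the bulk $|\nabla|^2\mathbf{X}_{n+1}$ boundary terms are equal and oppositely oriented, and $\partial_\nu u_\eps\cdot u_\eps$ times $\mathbf{X}_{n+1}$ appears with opposite normals), so only the tangential part $\mathbf{X}'$ matters, and the computation above applies verbatim. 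Once the parity cancellations are set up correctly, everything reduces to two integrations by parts and a substitution of the boundary equation.
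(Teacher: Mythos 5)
Your strategy is exactly the paper's: extend $u_\eps$ by even reflection, integrate the stress--energy term by parts separately in $\Om$ and $\Om^-$ using harmonicity, let the $\mathbf{X}_{n+1}$-contributions cancel by the even/odd parity ($\partial_{n+1}u^-_\eps=-\partial_{n+1}u_\eps$, tangential derivatives equal), substitute the Neumann condition, integrate by parts tangentially on $\partial^0\Om$, and conclude from ${\rm div}_{\R^n}\mathbf{X}-{\rm div}_{\R^{n-1}}\mathbf{X}=\partial_n\mathbf{X}_n$. The outline is sound, but the explicit constants you carry are wrong, and with them the final step does not close. Two slips: (1) when the two one-sided boundary terms are added, the tangential pieces \emph{double} (each half contributes the same term), so under the overall factor $-\tfrac12$ the surviving bulk contribution is $-2\int_{\partial^0\Om}\sum_{i=1}^{n}(\partial_i u_\eps\cdot\partial_{n+1}u_\eps)\,\mathbf{X}_i\,\de\mathscr{H}^n$, not $-\int_{\partial^0\Om}(\partial_\nu u_\eps\cdot\nabla' u_\eps)\cdot\mathbf{X}'\,\de\mathscr{H}^n$; (2) in the boundary condition $\partial_\nu$ is the \emph{exterior} normal derivative of $\Om\subset\R^{n+1}_+$, i.e.\ $-\partial_{n+1}$ on $\partial^0\Om$, so $\partial_i u_\eps\cdot\partial_{n+1}u_\eps=\tfrac{1}{4\eps}\,\partial_i\big[(1-|u_\eps|^2)^2\big]$ for $i\le n$. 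With these corrections the tangential integration by parts produces $+\tfrac{1}{2\eps}\int_{\partial^0\Om}(1-|u_\eps|^2)^2\,{\rm div}_{\R^n}\mathbf{X}\,\de\mathscr{H}^n$, which added to the explicit potential term $-\tfrac{1}{2\eps}\int_{\partial^0\Om}(1-|u_\eps|^2)^2\,{\rm div}_{\R^{n-1}}\mathbf{X}\,\de\mathscr{H}^n$ gives precisely the claimed $\tfrac{1}{2\eps}\int_{\partial^0\Om}(1-|u_\eps|^2)^2\,\partial_n\mathbf{X}_n\,\de\mathscr{H}^n$. By contrast, the combination you state, $-\tfrac12{\rm div}_{\R^{n-1}}\mathbf{X}+\tfrac14\sum_{i\le n}\partial_i\mathbf{X}_i$, leaves the terms $\partial_i\mathbf{X}_i$, $i\le n-1$, uncancelled and gives the wrong coefficient on $\partial_n\mathbf{X}_n$, so ``carrying this through'' fails with those factors. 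Everything else (the parity cancellation of the $\mathbf{X}_{n+1}$-terms, the use of Theorem~\ref{regint} for regularity up to $\partial^0\Om$, and the tangential integration by parts being justified because $\mathbf{X}\in C^1_c(\tom;\R^{n+1})$ vanishes near $\partial(\partial^0\Om)$) matches the paper's argument, which packages the same computation as the inner-variation identity of Lemma~\ref{monotform} applied in each half-domain with the extra $\mathbf{X}_{n+1}$-terms tracked and then cancelled.
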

 
 \begin{proof}
 If $u_\eps\in H^1(\Om;\R^m)\cap L^\infty(\Om)$ is a critical point of $E_\eps$ in $\Om$, then $u_\eps\in C^\infty(\Om\cup\partial^0\Om)$ by  Theorem~\ref{regint}. It obviously implies   
 $u^-_\eps\in C^\infty(\Om^-\cup\partial^0\Om)$,  and 
\begin{equation}\label{tim2329}
 \begin{cases}
 \Delta u^-_\eps = 0 &\text{in $\Om^-$}\,,\\[8pt]
 \displaystyle \frac{\partial u^-_\eps}{\partial \nu}= \frac{1}{\eps}(1-|u_\eps|^2)u_\eps & \text{on $\partial^0\Om$}\,.
 \end{cases}
\end{equation}
Let us now consider a vector field $\mathbf{X}=(\mathbf{X}_1,\ldots,\mathbf{X}_{n+1})\in C^1_c(\tom;\R^{n+1})$. Arguing as in the proof of Lemma \ref{monotform}, we integrate 
by parts to find
\begin{multline*}
\int_{\Om}\bigg( |\nabla u_\eps|^2{\rm div}\mathbf{X} -2\sum_{i,j=1}^{n+1} (\partial_i u_\eps\cdot\partial_j u_\eps)\partial_j\mathbf{X}_i\bigg)\,\de x \\
= -\frac{1}{2\eps}\int_{\partial^0\Om}(1-|u_\eps|^2)^2{\rm div}_{\R^n}\mathbf{X}\,\de\mathscr{H}^n 
-\int_{\partial^0\Om} |\nabla u_\eps|^2\mathbf{X}_{n+1}\,\de \mathscr{H}^n\\
+\frac{2}{\eps^2}\int_{\partial^0\Om}(1-|u_\eps|^2)^2|u_\eps|^2\mathbf{X}_{n+1}\,\de \mathscr{H}^n\,.
\end{multline*}
Similarly, \eqref{tim2329} yields 
\begin{multline*}
\int_{\Om^-}\bigg( |\nabla u^-_\eps|^2{\rm div}\mathbf{X} -2\sum_{i,j=1}^{n+1}(\partial_i u^-_\eps\cdot\partial_j u^-_\eps)\partial_j\mathbf{X}_i \bigg)\,\de x \\
= -\frac{1}{2\eps}\int_{\partial^0\Om}(1-|u_\eps|^2)^2{\rm div}_{\R^n}\mathbf{X}\,\de \mathscr{H}^n
+\int_{\partial^0\Om} |\nabla u^-_\eps|^2\mathbf{X}_{n+1}\,\de \mathscr{H}^n\\
-\frac{2}{\eps^2}\int_{\partial^0\Om}(1-|u_\eps|^2)^2|u_\eps|^2\mathbf{X}_{n+1}\,\de \mathscr{H}^n\,.
\end{multline*}
Therefore,
$$\langle\delta V_{\tilde u_\eps}^\eps,\mathbf{X}\rangle=\frac{1}{2\eps}\int_{\partial^0\Om}(1-|u_\eps|^2)^2\partial_n\mathbf{X}_n\,\de \mathscr{H}^n
+\frac{1}{2}\int_{\partial^0\Om}\big(|\partial_{n+1} u_\eps|^2-|\partial_{n+1} u^-_\eps|^2\big)\mathbf{X}_{n+1}\,\de \mathscr{H}^n\,.$$
Since $\partial_{n+1}u^-_\eps=-\partial_{n+1}u_\eps$ by \eqref{tim2329},  the conclusion follows from this last equality.   
 \end{proof}
 
 We now relate a weak limit  $u_*$ and its defect measure $\mu_{\rm sing}$ to the weak limit of the generalized varifolds  $V^\eps_{\tilde u_\eps}$, which has to be stationary by Lemma \ref{firstvar} and the vanishing property of the Ginzburg-Landau potential as $\eps\to0$. 
 
 \begin{corollary}\label{statlimit}
 Let $\{u_k\}_{k\in\N}$ be the subsequence given by Theorem \ref{asymptneum}, and let $\mu_{\rm sing}$ be the defect measure represented in \eqref{tim2258}. Up to a further subsequence (not relabeled), $V^{\eps_k}_{\tilde u_{k}}\rightharpoonup V_*$ for some stationary $V_*\in\V$. In addition, 
 \begin{equation}\label{decompvar}
 V_*=V_{\tilde u_*}+V_{\rm sing}\,, 
 \end{equation}
where $V_{\rm sing}\in \V$ is supported by $\Sigma\times\mathbf{A}_{n-1}$, and $\|V_{\rm sing}\|=\mu_{\rm sing}$.  
\end{corollary}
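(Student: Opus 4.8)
\textbf{Proof strategy for Corollary~\ref{statlimit}.} The plan is to run the standard varifold compactness machinery, then identify the limit using the already-established energy convergence from Theorem~\ref{asymptneum} and Proposition~\ref{rectimeas}. First I would recall that $\sup_k E_{\eps_k}(u_k,\Om)<\infty$ together with the even-reflection construction gives a uniform bound $\sup_k \|V^{\eps_k}_{\tilde u_k}\|(\widetilde\Om)<\infty$ on a slightly smaller symmetric domain, so by weak* compactness of nonnegative Radon measures on $\widetilde\Om\times\mathbf{A}_{n-1}$ (which is locally compact, $\mathbf{A}_{n-1}$ being compact) there is a subsequence with $V^{\eps_k}_{\tilde u_k}\rightharpoonup V_*$ for some $V_*\in\mathbf{V}^*_{n-1}(\widetilde\Om)$. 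Stationarity of $V_*$ then follows from Lemma~\ref{firstvar}: for any $\mathbf X\in C^1_c(\widetilde\Om;\R^{n+1})$ we have $\langle \delta V^{\eps_k}_{\tilde u_k},\mathbf X\rangle = \frac{1}{2\eps_k}\int_{\partial^0\Om}(1-|u_k|^2)^2\partial_n\mathbf X_n\,\de\mathscr H^n$, and since $\partial_n\mathbf X_n$ is bounded and $\eps_k^{-1}(1-|u_k|^2)^2\to 0$ in $L^1_{\rm loc}(\partial^0\Om)$ by Theorem~\ref{asymptneum}(ii), the right-hand side tends to $0$; combined with $\delta V^{\eps_k}_{\tilde u_k}\rightharpoonup \delta V_*$ weakly* (continuity of the first variation under weak* convergence of varifolds) this gives $\delta V_*=0$.

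Next I would establish the decomposition \eqref{decompvar}. The idea is to split the weight $\|V_*\|$ according to the behavior of $u_k$ relative to the concentration set $\Sigma$. By Theorem~\ref{asymptneum}(iv), $u_k\to u_*$ in $C^1_{\rm loc}\big((\Om\cup\partial^0\Om)\setminus\Sigma\big)$, and by the reflection this upgrades to $\tilde u_k\to\tilde u_*$ in $C^1_{\rm loc}(\widetilde\Om\setminus\Sigma)$. Hence on any open set $B\Subset\widetilde\Om\setminus\Sigma$ the matrix fields $A_{\tilde u_k}$ converge pointwise to $A_{\tilde u_*}$ at points where $|\nabla\tilde u_*|\ne 0$, and the densities $\frac12|\nabla\tilde u_k|^2$ converge to $\frac12|\nabla\tilde u_*|^2$ uniformly, so that $V^{\eps_k}_{\tilde u_k}\LL(B\times\mathbf A_{n-1})\rightharpoonup V_{\tilde u_*}\LL(B\times\mathbf A_{n-1})$ (the Ginzburg--Landau potential term on $\partial^0\Om\setminus\Sigma$ vanishing by part (ii), and the set $\{|\nabla\tilde u_*|=0\}$ contributing nothing since $A_{\tilde u_*}=I_{n-1}$ there matches the convention and carries zero mass from the density). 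Therefore $V_*$ agrees with $V_{\tilde u_*}$ outside $\Sigma\times\mathbf A_{n-1}$, and we may set $V_{\rm sing}:=V_*-V_{\tilde u_*}\geq 0$, which is supported in $\pi^{-1}(\Sigma)=\Sigma\times\mathbf A_{n-1}$. To see that $\|V_{\rm sing}\|=\mu_{\rm sing}$, I would compute $\|V_*\|$: passing to the limit in $\|V^{\eps_k}_{\tilde u_k}\|=\frac12|\nabla\tilde u_k|^2\mathscr L^{n+1}\LL\widetilde\Om+\frac{1}{2\eps_k}(1-|u_k|^2)^2\mathscr H^n\LL\partial^0\Om$ and using Theorem~\ref{asymptneum}(i)--(ii) together with the reflection symmetry (so that the defect on $\partial^0\Om$ is counted once with the half-space normalization matching $\mu_{\rm sing}$), one gets $\|V_*\|=\frac12|\nabla\tilde u_*|^2\mathscr L^{n+1}\LL\widetilde\Om+\mu_{\rm sing}=\|V_{\tilde u_*}\|+\mu_{\rm sing}$, whence $\|V_{\rm sing}\|=\mu_{\rm sing}$.

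I expect the main obstacle to be the careful bookkeeping in the decomposition step, specifically: (a) checking that no spurious mass in the $\mathbf A_{n-1}$-fiber direction appears in the limit outside $\Sigma$ — that is, that $V_*\LL(\widetilde\Om\setminus\Sigma)$ is genuinely of the ``Dirac in the fiber'' form $\delta_{A_{\tilde u_*}}$ rather than a diffuse measure over $\mathbf A_{n-1}$; this follows from the locally uniform $C^1$ convergence away from $\Sigma$, which forces the disintegration $(V_*)_x$ to be a Dirac mass $\|V_*\|$-a.e.\ on $\widetilde\Om\setminus\Sigma$; and (b) tracking the reflection normalizations so the boundary defect measure in the varifold picture matches exactly the $\mu_{\rm sing}$ from Theorem~\ref{asymptneum} (which lives on $\partial^0\Om$ with the $\R^{n+1}_+$-sided energy), with no factor-of-two discrepancy. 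Both are routine once the conventions are pinned down, but they are where an error would most easily creep in. The positivity $V_{\rm sing}\geq 0$ also deserves a line: it holds because for test functions $\varphi\in C^0_c(\widetilde\Om\times\mathbf A_{n-1})$ supported away from $\Sigma$ the two varifolds agree, and the general lower-semicontinuity/measure-theoretic argument (a weak* limit dominates the ``absolutely continuous part'' captured by smooth convergence) gives $V_*\geq V_{\tilde u_*}$ globally.
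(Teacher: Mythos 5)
Your proposal is correct and follows essentially the same route as the paper: uniform mass bound and weak* compactness, stationarity of $V_*$ via Lemma~\ref{firstvar} combined with item (ii) of Theorem~\ref{asymptneum}, identification of $V_*$ with $V_{\tilde u_*}$ away from $\Sigma$ using the locally smooth convergence of item (iv), and the weight identity $\|V_*\|=\|V_{\tilde u_*}\|+\mu_{\rm sing}$ from item (i) together with the reflection (where the factor $\tfrac12$ in the varifold density cancels the doubling on $\partial^0\Om$, exactly as you anticipated). Defining $V_{\rm sing}$ as the difference $V_*-V_{\tilde u_*}$ rather than as the restriction $V_*\LL(\Sigma\times\mathbf{A}_{n-1})$, as the paper does, is an immaterial variation since both rest on the same identification of $V_*$ off $\Sigma$.
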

 
 \begin{proof}
 {\it Step 1.} By symmetry with respect to $\{x_{n+1}=0\}$, we have $\|V^{\eps_k}_{\tilde u_{k}}\|(\tom) = 2 E_{\eps_k}(u_k,\Om)$, and thus  
 $$\sup_{k\in\N} \|V^{\eps_k}_{\tilde u_{k}}\|(\tom)<\infty\,.$$
 Hence we can find a (not relabeled) further subsequence  such that $V^{\eps_k}_{\tilde u_k}\rightharpoonup V_*$ for some   $V_*\in\V$. Then $\delta V^{\eps_k}_{\tilde u_k}\rightharpoonup\delta V_*$ as distributions. We claim that $\delta V_*=0$. Indeed, for an arbitrary $\mathbf{X}\in C^1_c(\tom;\R^{n+1})$, we deduce from item (ii) in  Theorem \ref{asymptneum} and 
Lemma \ref{firstvar} that 
$$\langle \delta V_*, \mathbf{X}\rangle=\lim_{k\to\infty} \frac{1}{2\eps_k}\int_{\partial^0\Om}(1-|u_k|^2)^2\partial_n\mathbf{X}_n\,\de \mathscr{H}^n=0\,.$$
\vskip3pt

\noindent{\it Step 2.} For  $\Phi\in C^0_c(\tom)$, we denote by $\widehat \Phi$ the reflection of $\Phi$ with respect to 
the hyperplane $\{x_{n+1}=0\}$, {\it i.e.}, $\widehat \Phi(x',x_{n+1}):=\Phi(x',-x_{n+1})$ for $x\in\tom$. Noticing that 
$$\int_{\tom} \Phi\,\de \|V^{\eps_k}_{\tilde u_{k}}\|=\frac{1}{2}\int_\Om |\nabla u_k|^2(\Phi+\widehat\Phi)\,\de x+\frac{1}{2\eps_k}\int_{\partial^0\Om}(1-|u_k|^2)^2\Phi\,\de \mathscr{H}^n\,, $$
we infer from Theorem~\ref{asymptneum}  that 
$$\lim_{k\to\infty}\int_{\tom} \Phi\,\de \|V^{\eps_k}_{\tilde u_{k}}\| = \frac{1}{2}\int_\Om |\nabla u_*|^2(\Phi+\widehat\Phi)\,\de x
+\int_{\Sigma}\Phi\,\de \mu_{\rm sing} 
=\frac{1}{2}\int_{\tom} |\nabla \tilde u_*|^2\Phi\,\de x
+\int_{\Sigma}\Phi\,\de \mu_{\rm sing}\,.$$
On the other hand, $\|V^{\eps_k}_{\tilde u_{k}}\|\rightharpoonup \|V_*\|$ weakly* as Radon measures on $\tom$, and hence 
\begin{equation}\label{tim1836}
\|V_*\|= \|V_{\tilde u_*}\| +\mu_{\rm sing}\,. 
\end{equation}
\vskip3pt

Next consider  $\Psi\in C^0_c(\tom\times \mathbf{A}_{n-1})$ with compact support in $(\tom\setminus\Sigma)\times \mathbf{A}_{n-1}$. Then, 
$$\int_{\tom\times \mathbf{A}_{n-1}} \Psi\,\de V^{\eps_k}_{\tilde u_k}=\frac{1}{2}\int_{\tom} \Psi\big(x,A_{\tilde u_k}(x)\big)|\nabla \tilde u_k|^2\,\de x 
+\frac{1}{4\eps_k}\int_{\partial^0\Om}\Psi(x,I_{n-1})(1-|u_k|^2)^2\,\de \mathscr{H}^n\,. $$
From the convergences established in items (ii) \& (iv) of Theorem~\ref{asymptneum}, we deduce that 
\begin{multline*}
\int_{\tom\times \mathbf{A}_{n-1}} \Psi\,\de V_*=\lim_{k\to\infty} \int_{\tom\times \mathbf{A}_{n-1}} \Psi\,\de V^{\eps_k}_{\tilde u_k} \\ 
= \lim_{k\to\infty} \frac{1}{2}\int_{\tom\cap\{|\nabla\tilde u_*|\not=0\}} \Psi\big(x,A_{\tilde u_k}(x)\big)|\nabla \tilde u_k|^2\,\de x\\
 = \frac{1}{2}\int_{\tom} \Psi\big(x,A_{\tilde u_*}(x)\big)|\nabla \tilde u_*|^2\,\de x
 = \int_{\tom\times \mathbf{A}_{n-1}} \Psi\,\de V_{\tilde u_*}\,.
\end{multline*}
As a consequence, $V_*(\mathcal{K})=V_{\tilde u_*}(\mathcal{K})$ for every  compact set $\mathcal{K}\subset (\tom\setminus\Sigma)\times \mathbf{A}_{n-1}$.
By inner regularity, it implies 
$$V_*\big(\mathcal{B}\setminus(\Sigma\times\mathbf{A}_{n-1})\big)=V_{\tilde u_*}\big(\mathcal{B}\setminus(\Sigma\times\mathbf{A}_{n-1})\big)=
V_{\tilde u_*}(\mathcal{B})  $$
for all Borel sets $\mathcal{B}\subset \tom\times \mathbf{A}_{n-1}$ (in the last equality, we have used  the fact that $\|V_{\tilde u_*}\|$ is absolutely continuous with respect to $\mathscr{L}^{n+1}$). Therefore, 
$$V_*\LL\big((\tom\setminus\Sigma)\times \mathbf{A}_{n-1}\big)= V_{\tilde u_*}\,. $$
Finally, setting
$$V_{\rm sing}:= V_*\LL(\Sigma\times \mathbf{A}_{n-1}) \,,$$
we clearly have \eqref{decompvar}, and $\|V_{\rm sing}\|=\mu_{\rm sing}$ holds by \eqref{tim1836}. 
\end{proof}

The last main step to establish Theorem \ref{statdef} is the following geometrical property on the singular part of the limiting varifold.

 \begin{lemma}\label{matrproj}
 In Corollary \ref{statlimit}, the barycenter of generalized varifold $V_{\rm sing}$ satisfies
$$\bar A_{V_{\rm sing}}(x)=A_{T_x\Sigma} \quad \text{for $\mathscr{H}^{n-1}$-a.e. $x\in\Sigma$}\,,$$
 where $A_{T_x\Sigma}$ is the matrix of the orthogonal projection on the approximate tangent plane $T_x\Sigma$  of $\Sigma$ at  $x$ according to Remark~\ref{remapproxplane}. 
 \end{lemma}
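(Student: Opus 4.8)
The plan is to identify the barycenter $\bar A_{V_{\rm sing}}(x)$ by exploiting the stationarity of $V_*$ established in Corollary~\ref{statlimit}, together with the rectifiability of $\mu_{\rm sing}$ (Proposition~\ref{rectimeas}) and the fact that $\Sigma\subset\partial^0\Omega\subset\R^n\times\{0\}$. The stationarity identity $\delta V_*=0$ reads, after the decomposition $V_*=V_{\tilde u_*}+V_{\rm sing}$,
\begin{equation*}
\int_{\tom}\bar A_{V_{\tilde u_*}}(x):\nabla\mathbf{X}\,\de\|V_{\tilde u_*}\| + \int_{\Sigma}\bar A_{V_{\rm sing}}(x):\nabla\mathbf{X}\,\de\mu_{\rm sing} = 0
\end{equation*}
for all $\mathbf{X}\in C^1_c(\tom;\R^{n+1})$, where I used $\|V_{\rm sing}\|=\mu_{\rm sing}$. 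The term involving $u_*$ is absolutely continuous with respect to $\mathscr{L}^{n+1}$, whereas $\mu_{\rm sing}$ is $(n-1)$-rectifiable and concentrated on $\Sigma$; the idea is to localize the identity around a point $x_0\in\Sigma$ and blow up at the natural scale.

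First I would restrict attention to a generic point $x_0\in\Sigma$ at which: the approximate tangent plane $T_{x_0}\Sigma$ exists with multiplicity $\theta(x_0)\in(0,\infty)$ (Remark~\ref{remapproxplane}); the density $\Theta^{n-1}(\mu_{\rm sing},x_0)=\theta(x_0)$ exists; the Lebesgue-point property \eqref{goodptu*} holds so that $r^{1-n}\int_{B_r^+(x_0)}|\nabla u_*|^2\to0$; and $\bar A_{V_{\rm sing}}$ is approximately continuous with respect to $\mu_{\rm sing}$ at $x_0$. By Proposition~\ref{rectimeas} and standard measure theory, $\mathscr{H}^{n-1}$-a.e.\ point of $\Sigma$ has all these properties. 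Then I would test the stationarity identity with rescaled vector fields $\mathbf{X}_r(x):=\phi\big((x-x_0)/r\big)$, where $\phi\in C^1_c(\R^{n+1};\R^{n+1})$, so that $\nabla\mathbf{X}_r(x)=r^{-1}(\nabla\phi)\big((x-x_0)/r\big)$. Multiplying by $r^{n}$ and letting $r\downarrow0$: the $u_*$ contribution is controlled by $r^{-1}\cdot r^{n+1}\cdot r^{-(n+1)}\int_{B_{Cr}^+(x_0)}|\nabla u_*|^2 = r^{1-n}\int_{B_{Cr}^+(x_0)}|\nabla u_*|^2\to0$ by \eqref{goodptu*}, while the $\mu_{\rm sing}$ contribution converges, by Remark~\ref{remapproxplane} and the approximate continuity of $\bar A_{V_{\rm sing}}$, to $\theta(x_0)\int_{T_{x_0}\Sigma}\bar A_{V_{\rm sing}}(x_0):\nabla\phi(y)\,\de\mathscr{H}^{n-1}(y)$. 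This forces
\begin{equation*}
\int_{T_{x_0}\Sigma}\bar A_{V_{\rm sing}}(x_0):\nabla\phi(y)\,\de\mathscr{H}^{n-1}(y)=0\qquad\text{for all }\phi\in C^1_c(\R^{n+1};\R^{n+1}),
\end{equation*}
i.e.\ the $(n-1)$-plane $T_{x_0}\Sigma$, viewed as a multiplicity-one stationary varifold in $\R^{n+1}$, has vanishing first variation when one replaces the genuine tangent-plane projection by the matrix $\bar A_{V_{\rm sing}}(x_0)$.

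The final step is a linear-algebra argument: I would show that if $A\in\mathbf{A}_{n-1}$ is a fixed symmetric matrix and $P$ is an $(n-1)$-dimensional plane such that $\int_P A:\nabla\phi\,\de\mathscr{H}^{n-1}=0$ for all $\phi\in C^1_c$, then $A=A_P$, the orthogonal projection onto $P$. Integrating by parts on the plane $P$ (which has no boundary in the relevant sense, being a full affine subspace intersected with the support of $\phi$), $\int_P A:\nabla\phi\,\de\mathscr{H}^{n-1}=\int_P \mathrm{div}_P(A^T\phi)\,\de\mathscr{H}^{n-1} - \int_P (\mathrm{div}_P A)\cdot\phi$; since $A$ is constant, only the term $\int_P A:\nabla_P\phi$ survives, and this equals $\int_P (A A_P):\nabla\phi$ only when the normal-direction rows of $A$ are suitably constrained. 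Concretely, decomposing $\R^{n+1}=P\oplus P^\perp$ and writing $A$ in block form, the condition says $\mathrm{trace}_P(A|_{P})$-type pairings vanish, which—using ${\rm trace}(A)=n-1$ and $A\in\mathbf{A}_{n-1}$, together with testing against $\phi$ tangent and normal to $P$—pins down $A$ to be exactly the projector $A_P$. Combining with the identification $A_P = A_{T_{x_0}\Sigma}$ gives the claim. The main obstacle I anticipate is making the blow-up rigorous: one must justify passing to the limit in $\int \bar A_{V_{\rm sing}}(x):\nabla\mathbf{X}_r(x)\,\de\mu_{\rm sing}(x)$, which requires not merely the weak tangential convergence of $\mu_{\rm sing}$ but also that the (bounded, $\mathbf{A}_{n-1}$-valued) function $\bar A_{V_{\rm sing}}$ be well-approximated by its value $\bar A_{V_{\rm sing}}(x_0)$ on the relevant shrinking balls — this is where one invokes Lebesgue differentiation for $\mu_{\rm sing}$-measurable maps and the fact that $\|V_{\rm sing}\|=\mu_{\rm sing}$ has positive finite $(n-1)$-density, so that $\mu_{\rm sing}$-approximate continuity points are $\mathscr{H}^{n-1}$-generic on $\Sigma$.
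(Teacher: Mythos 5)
Your proposal follows essentially the same route as the paper's proof: blow up the stationarity identity $\delta V_*=0$ at a generic point of $\Sigma$ (existence of the approximate tangent plane, Lebesgue point of the barycenter with respect to $\mu_{\rm sing}$, and the vanishing-density condition \eqref{goodptu*} for $|\nabla u_*|^2$), so that the absolutely continuous part disappears and the constant matrix $\bar A_{V_{\rm sing}}(x_0)$ annihilates $\big(T_{x_0}\Sigma\big)^\perp$, and then use the constraints defining $\mathbf{A}_{n-1}$ (symmetry, ${\rm trace}=n-1$, $A\leq I_{n+1}$) to conclude it is the orthogonal projection onto $T_{x_0}\Sigma$. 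The only slip is the normalization: with $\mathbf{X}_r(x)=\phi\big((x-x_0)/r\big)$ the identity should be multiplied by $r^{2-n}$ (the paper equivalently uses $r\,\mathbf{X}\big((x-x_0)/r\big)$ and the factor $r^{1-n}$), not $r^{n}$, but this does not affect the argument.
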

 
 \begin{proof}
 Let $x_0\in \Sigma$ be such that $\Sigma$ admits an approximate tangent plane $T_{x_0}\Sigma$ at $x_0$ with multiplicity $\theta(x_0)$, $x_0$ is a Lebesgue point of  $A_{V_{\rm sing}}$ with respect to $\mu_{\rm sing}$, and \eqref{goodptu*} holds at~$x_0$. Those properties are satisfied $\mathscr{H}^{n-1}$-a.e. on $\Sigma$. 
 
Next consider an arbitrary $\mathbf{X}\in  C^1_c(\R^{n+1};\R^{n+1})$, and set $\mathbf{X}_r(x):=r\mathbf{X}(\frac{x-x_0}{r})$. From Corollary~\ref{statlimit} and the choice of $x_0$, we infer that 
\begin{align*}
0= \lim_{r\downarrow0}\,\frac{1}{r^{n-1}}\langle \delta V_*,\mathbf{X}_r\rangle & =  \lim_{r\downarrow0}\,\frac{1}{r^{n-1}}\langle \delta V_{\rm sing},\mathbf{X}_r\rangle \\
 &= \lim_{r\downarrow0}\,\frac{1}{r^{n-1}}\int_{\partial^0\Om} \bar A_{V_{\rm sing}}(x) : \nabla \mathbf{X}\left(\frac{x-x_0}{r}\right)\,\de \mu_{\rm sing}\\
 &= \theta(x_0)\int_{T_{x_0}\Sigma} \bar  A_{V_{\rm sing}}(x_0) : \nabla \mathbf{X} \,\de\mathscr{H}^{n-1}\,.
 \end{align*}
 Since $\theta(x_0)>0$ and $\mathbf{X}$ is arbitrary,  we deduce that 
 \begin{equation}\label{tim2203}
 \bar A_{V_{\rm sing}}(x_0)  \int_{T_{x_0}\Sigma} \nabla\phi\,\de \mathscr{H}^{n-1} =0\quad\text{for all $\phi\in C^1_c(\R^{n+1})$}\,.
 \end{equation}
  We claim that 
\begin{equation}\label{claimker}
\mathcal{F}:=\left\{ \int_{T_{x_0}\Sigma} \nabla\phi\,\de \mathscr{H}^{n-1} :  \phi\in C^1_c(\R^{n+1})  \right\}= {\rm Ker} \,A_{T_{x_0}\Sigma} \,.
\end{equation}
 Before proving  \eqref{claimker} we complete the argument. From the last two identities we derive that at least two eigenvalues of $\bar A_{V_{\rm sing}}(x_0)$ vanish. On the other hand,  $\bar A_{V_{\rm sing}}(x_0)\in\mathbf{A}_{n-1}$ so that  ${\rm trace}(\bar A_{V_{\rm sing}}(x_0))=n-1$ and $\bar A_{V_{\rm sing}}(x_0)\leq I_{n+1}$.  
It  implies that the remaining eigenvalues are actually equal to one. Hence $\bar A_{V_{\rm sing}}(x_0)$ is a matrix of an orthogonal projection over an $(n-1)$-dimensional plane. Now \eqref{tim2203} and \eqref{claimker} show that $\bar A_{V_{\rm sing}}(x_0)$ and $A_{T_{x_0}\Sigma}$ have the same kernel, whence $\bar A_{V_{\rm sing}}(x_0)=A_{T_{x_0}\Sigma}$. 
 
 To prove \eqref{claimker}, we argue as follows. First notice that we may assume without loss of generality that $T_{x_0}\Sigma=\R^{n-1}\times\{(0,0)\}$. Since the admissible  $\phi$'s are compactly supported, we obtain 
 $$ \int_{T_{x_0}\Sigma} \partial_j\phi\,\de \mathscr{H}^{n-1}=0 \quad\text{for all $j\in\{1,\ldots,n-1\}$}\,,$$
 and the inclusion $\mathcal{F}\subset {\rm Ker} \,A_{T_{x_0}\Sigma}$ follows. To prove the reverse inclusion it suffices to use admissible functions of the form $\phi(x)=\chi(x_n)\psi(x^{\prime\prime})$ or $\phi(x)=\chi(x_{n+1})\psi(x^{\prime\prime})$ where we write $x=(x^{\prime\prime},x_n,x_{n+1})$,  
 $\chi\in C^1_c(\R)$ satisfies $\chi^\prime(0)=1$, and $\psi\in C^1_c(\R^{n-1})$ is such that  $\int_{\R^{n-1}}\psi =1$. 
 \end{proof}
 
 \begin{remark}
In view of Proposition \ref{rectimeas} and Lemma \ref{matrproj}, the generalized varifold 
$$\bar V_{\rm sing}:= \delta_{\bar A_{V_{\rm sing}}}\,\mu_{\rm sing}$$
is actually a real $(n-1)$-rectifiable varifold in the classical sense, see \cite{Sim}. 
 \end{remark}
 
\begin{proof}[Proof of Theorem \ref{statdef}]
We first infer from Corollary \ref{statlimit}, Lemma \ref{matrproj}, and \eqref{tim2258} that 
\begin{multline}\label{preformthm}
\langle \delta V_*, \mathbf{X}\rangle  = -\frac{1}{2}\int_{\tom}\bigg( |\nabla \tilde u_*|^2{\rm div}\mathbf{X} -2\sum_{i,j=1}^{n+1} (\partial_i \tilde u_*\cdot\partial_j \tilde u_*)\partial_j\mathbf{X}_i\bigg)\,\de x
-\int_{\Sigma} A_{T_x\Sigma} : \nabla \mathbf{X}\,\de\mu_{\rm sing} \\
 = -\frac{1}{2}\int_{\tom}\bigg( |\nabla \tilde u_*|^2{\rm div}\mathbf{X} -2\sum_{i,j=1}^{n+1} (\partial_i \tilde u_*\cdot\partial_j \tilde u_*)\partial_j\mathbf{X}_i\bigg)\,\de x
-\int_{\Sigma} \theta\, {\rm div}_{\Sigma}\mathbf{X}\,\de\mathscr{H}^{n-1} = 0\,,
\end{multline}
for all $\mathbf{X}\in  C^1_c(\tom;\R^{n+1})$ (we recall that ${\rm div}_{\Sigma}\mathbf{X}(x):=A_{T_x\Sigma} : \nabla \mathbf{X}(x)$). 
\vskip3pt

Let us now consider an arbitrary vector field $\mathbf{X}=(\mathbf{X}_1,\ldots,\mathbf{X}_{n+1})\in C^1(\overline\Om;\R^{n+1})$ compactly supported in $\Om\cup\partial^0\Om$ and satisfying $\mathbf{X}_{n+1}=0$ on $\partial^0\Om$. We then extend $\mathbf{X}$ to $\tom$ by setting
$$\widehat{\mathbf{X}}(x):=(\tilde{\mathbf{X}}_1(x),\ldots,\tilde{\mathbf{X}}_n(x),-\tilde{\mathbf{X}}_{n+1}(x)) \,,$$
where we recall that $\tilde{\mathbf{X}}_j$ is the extension of $\mathbf{X}_j$ to $\tom$ obtained by even reflection accross $\partial^0\Omega$. 
Then  $\widehat{\mathbf{X}}$ is Lipschitz continuous and compactly supported in $\tom$. Notice also that $\partial_i\widehat{\mathbf{X}}_j$ is continuous in $\tom$ for all indices $i,j\in\{1,\ldots,n\}$. 
Then, by a standard mollification argument, we can find a sequence $\{\mathbf{X}^k\}_{k\in\N}\subset   C^1_c(\tom;\R^{n+1})$ such that 
$\mathbf{X}^k\to \widehat{\mathbf{X}}$ uniformly on $\tom$, $\partial_i\mathbf{X}_j^k\to \partial_i\widehat{\mathbf{X}}_j$ uniformly on $\tom$  for all $i,j\in\{1,\ldots,n\}$, $\nabla \mathbf{X}^k\to \nabla \widehat{\mathbf{X}}$ a.e. on $\tom$ with  $\|\nabla \mathbf{X}^k\|_{L^\infty(\tom)}\leq \|\nabla \widehat{\mathbf{X}}\|_{L^\infty(\tom)}$. Applying \eqref{preformthm} to each $\mathbf{X}^k$ and letting $k\to\infty$, we derive by dominated convergence that 
$$\frac{1}{2}\int_{\tom}\bigg( |\nabla \tilde u_*|^2{\rm div}\widehat{\mathbf{X}} -2\sum_{i,j=1}^{n+1} (\partial_i \tilde u_*\cdot\partial_j \tilde u_*)\partial_i\widehat{\mathbf{X}}_j\bigg)\,\de x
+\int_{\Sigma} \theta\, {\rm div}_{\Sigma}\widehat{\mathbf{X}}\,\de\mathscr{H}^{n-1} = 0 \,.$$
Finally, by symmetry of $\tilde u_*$ and $\widehat{\mathbf{X}}$, we have 
$$\frac{1}{2}\int_{\tom}\bigg( |\nabla \tilde u_*|^2{\rm div}\widehat{\mathbf{X}} -2\sum_{i,j=1}^{n+1} (\partial_i \tilde u_*\cdot\partial_j \tilde u_*)\partial_j\widehat{\mathbf{X}}_i\bigg)\,\de x\\ 
=\int_{\Om}\bigg( |\nabla u_*|^2{\rm div} \mathbf{X} -2\sum_{i,j=1}^{n+1} (\partial_i  u_*\cdot\partial_j  u_*)\partial_j \mathbf{X}_i\bigg)\,\de x\,,$$
and 
$$\int_{\Sigma} \theta\, {\rm div}_{\Sigma}\widehat{\mathbf{X}}\,\de\mathscr{H}^{n-1}=\int_{\Sigma} \theta\, {\rm div}_{\Sigma}\mathbf{X}\,\de\mathscr{H}^{n-1}\,,$$
and the conclusion follows. 
\end{proof}
 
  \vskip10pt

\section{Asymptotics for the fractional Ginzburg-Landau equation}\label{FGLasymp}   		

 The purpose of this section is to apply our previous results on the Ginzburg-Landau boundary equation to the asymptotic analysis, as $\eps\downarrow0$, of solutions of the fractional Ginzburg-Landau equation.  We start with the analogue of Theorem~\ref{asymptneum} and Theorem~\ref{statdef} in the fractional setting when no exterior condition is imposed. We then prove Theorem \ref{mainthm}. We conclude this section with the particular case of minimizers of the Ginzburg-Landau 1/2-energy under a Dirichlet exterior condition.

\subsection{Asymptotics without exterior Dirichlet  condition} 
We start in this subsection with a general case where no exterior condition is imposed. We obtain here the most important convergence results.

 \begin{theorem}\label{thmwithout}
Let $\omega\subset\R^{n}$ be a bounded open set with Lipschitz boundary. Let $\varepsilon_k\downarrow0$ be an arbitrary sequence,  and let  
$\{v_k\}_{k\in\mathbb{N}}\subset \widehat H^{1/2}(\omega;\mathbb{R}^m)\cap L^\infty(\R^n)$ be such that for every $k\in\N$, $|v_k|\leq 1$, and $v_k$ weakly solves  
\begin{equation}\label{eqfracfinal} 
 (-\Delta)^{\frac{1}{2}}  v_k=\frac{1}{\varepsilon_k}(1-|v_k|^2)v_k \quad \text{in $\omega$}\,. 
\end{equation}
If  $\sup_k \mathcal{E}_{\varepsilon_k}(v_k,\omega)<\infty$, then there exist a (not relabeled) subsequence and $v_*\in  \widehat H^{1/2}(\omega;\mathbb{R}^m)$ a bounded weak 1/2-harmonic map into $\mathbb{S}^{m-1}$ in $\omega$  such that $v_k\rightharpoonup v_*$ weakly in $H^{1/2}(\omega)\cap L^2_{\rm loc}(\R^n)$ as $k\to\infty$. 
In addition, there exist a  
nonnegative Radon measure $\mu_{\rm sing}$ on $\omega$, a countably $\mathscr{H}^{n-1}$-rectifiable relatively closed set $\Sigma\subset \omega$ 
of locally finite $(n-1)$-dimensional Hausdorff measure in $\omega$, and a Borel function $\theta:\Sigma\to(0,\infty)$ such that   
\vskip5pt
\begin{itemize}[leftmargin=22pt]
\item[\rm (i)] $\displaystyle|\nabla v^\e_k|^2\mathscr{L}^{n+1}\LL\R^{n+1}_+ \mathop{\rightharpoonup}\limits^*  
|\nabla v^\e_*|^2\mathscr{L}^{n+1}\LL\R^{n+1}_+ +\mu_{\rm sing} $  
locally weakly* as Radon measures on $\R^{n+1}_+\cup\omega$;   
\vskip5pt
\item[\rm (ii)] $\displaystyle \frac{(1-|v_k|^2)^2}{\varepsilon_k}\to 0$ in $L^1_{\rm loc}(\omega)$; 
\vskip5pt
\item[\rm (iii)] $\mu_{\rm sing}= \theta \mathscr{H}^{n-1}\LL\Sigma$; 
\vskip5pt
\item[\rm (iv)] $v_*\in  C^\infty(\omega\setminus\Sigma)$ and $v_k\to v_*$ in $ C^{\ell}_{\rm loc}(\omega\setminus\Sigma)$ for every $\ell\in\N$; 
\vskip5pt
\item[\rm (v)] if $n\geq 2$, the limiting 1/2-harmonic map $v_*$ and the defect measure $\mu_{\rm sing}$ satisfy 
$$\left[\frac{\de}{\de t} \,\mathcal{E}\big(v_*\circ\phi_t,\omega\big)\right]_{t=0} =\frac{1}{2} \int_\Sigma {\rm div}_{\Sigma}X\,\de \mu_{\rm sing}$$
for all vector fields $X\in C^1(\R^{n};\R^{n})$ compactly supported in $\omega$, where $\{\phi_t\}_{t\in\R}$ denotes the flow on $\R^n$ generated by $X$; 
\vskip5pt 

\item[\rm (vi)] if $n=1$, the set $\Sigma$ is locally finite in $\omega$ and $v_*\in  C^\infty(\omega)$. 
\end{itemize}
\end{theorem}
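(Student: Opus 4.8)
The plan is to deduce Theorem~\ref{thmwithout} from the results already obtained for the Ginzburg--Landau boundary reactions, namely Theorem~\ref{asymptneum}, Proposition~\ref{rectimeas}, and Theorem~\ref{statdef}, by passing through the harmonic extension $v\mapsto v^\e$. First I would reduce to the case $\|v_k\|_{L^\infty(\R^n)}\leq 1$ (given) and observe, via Corollary~\ref{modless1}'s reasoning together with Theorem~\ref{regDirich}/Theorem~\ref{regint}, that each $v_k$ is smooth in $\omega$ and, by \eqref{bdlinftyext}, $|v_k^\e|\leq 1$ in $\R^{n+1}_+$. By Lemma~\ref{repnormderfraclap} (or rather the discussion around \eqref{eqvepsext}--\eqref{varformbdgleq}), $v_k^\e$ weakly solves the boundary Ginzburg--Landau system on every admissible $\Omega\Subset\R^{n+1}_+\cup\omega$ with $\overline{\partial^0\Omega}\subset\omega$. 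The key compatibility fact is Lemma~\ref{normexth1/2} in its local form, i.e. Corollary~\ref{minenergdirchfrac} and Lemma~\ref{hatH1/2toH1}: the uniform bound $\sup_k\mathcal{E}_{\eps_k}(v_k,\omega)<\infty$ transfers to a uniform bound on $E_{\eps_k}(v_k^\e,\Omega)$ for each fixed such $\Omega$ (here I must be careful that the Dirichlet energy of $v_k^\e$ on $\Omega$ is controlled by $\mathcal{E}(v_k,\omega)$ plus $L^2$-data, which is exactly Lemma~\ref{hatH1/2toH1} plus Remark~\ref{H1/2loctoH1loc}, using that $|v_k|\leq1$ makes the $L^2$-norms on any fixed disc bounded).

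Second, I would exhaust $\omega$ by an increasing sequence of admissible open sets $\Omega_j\subset\R^{n+1}_+$ with $\overline{\partial^0\Omega_j}\subset\omega$ and $\partial^0\Omega_j\uparrow\omega$. On each $\Omega_j$, Theorem~\ref{asymptneum} (and Proposition~\ref{rectimeas} for $n\geq2$) applies to the sequence $\{v_k^\e\}$: extracting a diagonal subsequence we obtain a weak limit $u_*^{(j)}=v_*^\e$ (independent of $j$ by uniqueness of weak $H^1_{\rm loc}$ limits and the fact that $v_k\rightharpoonup v_*$ in $H^{1/2}(\omega)\cap L^2_{\rm loc}$, which follows from the Hilbert structure of $\widehat H^{1/2}$ together with Lemma~\ref{adminHchap} and the compact embedding), a singular measure $\mu_{\rm sing}^{(j)}$, a rectifiable set $\Sigma^{(j)}$ and density $\theta^{(j)}$, all compatible on overlaps. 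Their union gives $\mu_{\rm sing}$, $\Sigma$, $\theta$ on $\omega$, with $\Sigma$ countably $\mathscr{H}^{n-1}$-rectifiable and of locally finite measure in $\omega$ (the bound \eqref{train} is only local since the constant in \eqref{upbddensity} blows up as $\partial^0\Omega_j\uparrow\omega$). Item (i) follows because $|\nabla v_k^\e|^2\mathscr{L}^{n+1}$ restricted to each $\Omega_j$ converges as stated, and $\mu_{\rm sing}$ is supported in $\bigcup_j\partial^0\Omega_j=\omega\times\{0\}\subset\R^n$, which we identify with a measure on $\omega$. Items (ii), (iii), (iv), (vi) are the localized versions of the corresponding items of Theorem~\ref{asymptneum} and Proposition~\ref{rectimeas}, using that $v_*^\e$ is a weak $(\mathbb{S}^{m-1},\partial^0\Omega_j)$-boundary harmonic map on each $\Omega_j$, hence $v_*$ is weakly $1/2$-harmonic into $\mathbb{S}^{m-1}$ in $\omega$ by Proposition~\ref{fracweakharm} read backwards (Proposition~\ref{caracthalfharm} and Lemma~\ref{repnormderfraclap} give the equivalence), and by Theorem~\ref{reghalfharmintro}/Theorem~\ref{thmregfreebd} the interior smoothness of $v_*$ away from $\Sigma$ and the local $C^\ell$-convergence $v_k\to v_*$ transfer from $v_k^\e\to v_*^\e$ via the trace at $x_{n+1}=0$ together with the interior regularity Theorem~\ref{regint}.

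Third, for item (v) with $n\geq2$, I would invoke Theorem~\ref{statdef} on each $\Omega_j$: the limiting boundary harmonic map $v_*^\e$ and $\mu_{\rm sing}$ satisfy the stationarity-defect identity for all $\mathbf{X}\in C^1(\overline{\Omega_j};\R^{n+1})$ compactly supported in $\Omega_j\cup\partial^0\Omega_j$ with $\mathbf{X}_{n+1}=0$ on $\partial^0\Omega_j$. Then I would apply Lemma~\ref{compstatfrac} (the crucial observation that the first inner variation of $\mathcal{E}(\cdot,\omega)$ along a flow generated by $X\in C^1(\R^n;\R^n)$ compactly supported in $\omega$ equals $-\tfrac12$ times the inner variation of the Dirichlet energy of $v_*^\e$ along any extension $\mathbf{X}=(X,0)$): combining this with the identity from Theorem~\ref{statdef} yields
\begin{equation*}
\left[\frac{\de}{\de t}\,\mathcal{E}\big(v_*\circ\phi_t,\omega\big)\right]_{t=0}=\frac{1}{2}\int_\Sigma {\rm div}_\Sigma X\,\de\mu_{\rm sing}.
\end{equation*}
I expect the main obstacle to be the bookkeeping of the exhaustion-and-diagonalization argument: making sure the extracted subsequence and the objects $\mu_{\rm sing}^{(j)},\Sigma^{(j)},\theta^{(j)}$ are genuinely consistent across $j$ (so that their union is well defined and rectifiable with the claimed density), and that the ``local finiteness in $\omega$'' of $\mathscr{H}^{n-1}\LL\Sigma$ is all that survives — one cannot expect global finiteness without an exterior condition, precisely because the monotonicity constant degenerates near $\partial\omega$. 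A secondary technical point is verifying that the weak $H^{1/2}(\omega)\cap L^2_{\rm loc}(\R^n)$ convergence of $v_k$ is compatible with (indeed implied by) the $H^1_{\rm loc}(\R^{n+1}_+\cup\omega)$ convergence of $v_k^\e$, which uses the continuity of the extension operator (Lemma~\ref{context}) and the trace characterization; this is routine but must be stated carefully.
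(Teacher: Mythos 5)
Your plan follows the paper's own proof almost verbatim: extend to the half-space, exhaust $\omega$ by admissible domains $\Omega_j\subset\R^{n+1}_+$ with $\overline{\partial^0\Omega_j}\subset\omega$ and $\partial^0\Omega_j\uparrow\omega$, apply Theorem~\ref{asymptneum}, Proposition~\ref{rectimeas} and Theorem~\ref{statdef} on each $\Omega_j$ after a diagonal extraction, and return to $\omega$ via Lemma~\ref{repnormderfraclap} (for the weak 1/2-harmonicity of $v_*$) and Lemma~\ref{compstatfrac} (for item (v)). The consistency-across-$j$ issue you single out as the main obstacle is indeed only bookkeeping: once the subsequence is fixed, the limit measure and its Lebesgue decomposition, hence $\mu_{\rm sing}$, $\Sigma$ and $\theta$, are uniquely determined on each $\Omega_j$, so the local objects patch automatically, and only local finiteness of $\mathscr{H}^{n-1}\LL\Sigma$ survives, exactly as you say.

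The one substantive step you gloss over is proving that the limit $v_*$ actually belongs to $\widehat H^{1/2}(\omega;\R^m)$, i.e.\ that the cross term $\iint_{\omega\times\omega^c}|v_*(x)-v_*(y)|^2|x-y|^{-n-1}\,\de x\de y$ is finite. This is not automatic from $v_*\in H^{1/2}(\omega)\cap L^2_{\rm loc}(\R^n)$ and $|v_*|\leq 1$, since $\int_{\omega^c}|x-y|^{-n-1}\,\de y\sim {\rm dist}(x,\partial\omega)^{-1}$ fails to be integrable over $\omega$; and because no exterior condition is imposed, on $\omega^c$ you only have weak $L^2_{\rm loc}$ convergence, so a Fatou argument along a.e.\ convergence is unavailable there. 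The paper spends half of its first step on precisely this point, expanding the cross term into three pieces and using the strong $L^2(\omega)$ convergence against a convolution-type kernel to obtain the liminf inequality. Your softer route can close it — weak compactness in the Hilbert space $\widehat H^{1/2}(\omega)$ normed by $\|\cdot\|_{L^2(\omega)}+\mathcal{E}(\cdot,\omega)^{1/2}$, identification of the weak limit through $L^2(\R^n,\mathfrak{m})$ via Lemma~\ref{adminHchap}, and weak lower semicontinuity of the norm — but it must be stated explicitly, because membership of $v_*$ in $\widehat H^{1/2}(\omega)$ is what makes the conclusion meaningful (Definition~\ref{defhalfharm}) and what licenses applying Lemma~\ref{repnormderfraclap} to $v_*$ in your final transfer step; a minor related slip is that Corollary~\ref{minenergdirchfrac} plays no role in the uniform bound on $E_{\eps_k}(v_k^\e,\Omega_j)$, which comes from Lemma~\ref{hatH1/2toH1} for the gradient part and from the trace identity $v_k^\e=v_k$ on $\partial^0\Omega_j\subset\omega$ for the potential part.
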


\begin{proof}{\it Step 1.} From the assumptions $|v_k|\leq 1$ and $\sup_k \mathcal{E}_{\varepsilon_k}(v_k,\omega)<\infty$, we first deduce from 
Lemma~\ref{adminHchap} that the sequence $\{v_k\}$ is bounded in $L^2(\R^n,\mathfrak{m})$, where the measure $\mathfrak{m}$ is defined in \eqref{defmeasm}. Therefore, we can find a subsequence and $v_*\in L^2(\R^n,\mathfrak{m})$ such that $v_k\rightharpoonup v_*$ weakly in $L^2(\R^n,\mathfrak{m})$. In particular, $v_k\rightharpoonup v_*$ weakly in $L^2_{\rm loc}(\R^n)$. On the other hand, the uniform energy bound also shows that  $|v_k|^2\to 1$ in $L^2(\omega)$, and $\{v_k\}$ is bounded in $H^{1/2}(\omega)$. Hence  $v_k\rightharpoonup v_*$ weakly in $H^{1/2}(\omega)$, and from the compact embedding $H^{1/2}(\omega)\hookrightarrow L^2(\omega)$, it implies that $v_k\to v_*$ strongly in $L^2(\omega)$. In particular, $|v_*|=1$ a.e. in $\omega$. 

We now claim that $v_*\in \widehat H^{1/2}(\omega;\mathbb{R}^m)$, and more precisely that 
$$\mathcal{E}(v_*,\omega)\leq \liminf_{k\to\infty} \mathcal{E}(v_k,\omega) <\infty\,.$$
In view of the weak convergence of $\{v_k\}$ in $H^{1/2}(\omega)$, it remains to show that 
\begin{equation}\label{tim1439bis}
\iint_{\omega\times\omega^c}\frac{|v_*(x)-v_*(y)|^2}{|x-y|^{n+1}}\,\de x\de y\leq  \liminf_{k\to\infty} \iint_{\omega\times\omega^c}\frac{|v_k(x)-v_k(y)|^2}{|x-y|^{n+1}}\,\de x\de y\,.
\end{equation}
 We fix $R>0$ and $0<\delta<R/2$ such that $\omega\subset D_{R/2}$. Set 
$\omega_{\delta}:=\{x\in\R^n: {\rm dist}(x,\omega)<\delta\}$, and write 
\begin{multline}
\iint_{\omega\times(\omega_\delta)^c\cap D_R}\frac{|v_k(x)-v_k(y)|^2}{|x-y|^{n+1}}\,\de x\de y=\int_{\omega}|v_k(x)|^2w_1(x)\,\de x\\
+  \int_{(\omega_\delta)^c\cap D_R}|v_k(x)|^2w_2(x)\,\de x
-2\int_{(\omega_\delta)^c\cap D_R} v_k(x)\cdot f_k(x)\,\de x=:I_k+II_k+III_k\,,
\end{multline}
where 
$$w_1(x):=\int_{(\omega_\delta)^c\cap D_R} \frac{\de y}{|x-y|^{n+1}}\,,\quad w_2(x):=\int_{\omega} \frac{\de y}{|x-y|^{n+1}}\,,\quad f_k(x):=\int_{\omega} \frac{v_k(y)\,\de y}{|x-y|^{n+1}}\,.$$
By the strong $L^2(\omega)$-convergence of $v_k$ toward $v_*$, $f_k$ converges strongly in $L^2((\omega_\delta)^c\cap D_R)$  to the function $f_*(x):=\int_{\omega} \frac{v_*(y)\,\de y}{|x-y|^{n+1}}$. From the weak $L^2_{\rm loc}(\R^n)$-convergence of $v_k$, we  deduce that 
$$\int_{\omega}|v_*(x)|^2w_1(x)\,\de x \leq \liminf_{k\to\infty} I_k\,,\qquad 
\int_{(\omega_\delta)^c\cap D_R}|v_*(x)|^2w_2(x)\,\de x\leq \liminf_{k\to\infty} II_k\,,$$
and 
$$\lim_{k\to\infty} III_k =  -2\int_{(\omega_\delta)^c\cap D_R} v_*(x)\cdot f_*(x)\,\de x\,.$$
Therefore, 
$$\iint_{\omega\times(\omega_\delta)^c\cap D_R}\frac{|v_*(x)-v_*(y)|^2}{|x-y|^{n+1}}\,\de x\de y\leq \liminf_{k\to\infty} 
\iint_{\omega\times\omega^c}\frac{|v_k(x)-v_k(y)|^2}{|x-y|^{n+1}}\,\de x\de y\,,$$ 
and \eqref{tim1439bis} follows letting $R\to\infty$ and $\delta\to 0$. 
\vskip3pt

We end this first step showing that $v_n^\e\rightharpoonup v_*^\e$ weakly in $H^1_{\rm loc}(\R^{n+1}_+\cup\omega)$. Indeed, we start deducing from Lemma~\ref{context} that $v_n^\e\rightharpoonup v_*^\e$ weakly in $L^2_{\rm loc}(\overline{\R}^{n+1}_+)$.  On the other hand, the uniform energy bound together with Lemma \ref{hatH1/2toH1} and standard estimates on harmonic functions, shows that $\{v_k^\e\}$ is bounded in $H^1_{\rm loc}(\R^{n+1}_+\cup\omega)$, whence the announced weak convergence. 
\vskip5pt

\noindent{\it Step 2.} Let us now consider an increasing  sequence $\{\Omega_l\}_{l\in\N}$ of  bounded admissible open sets such that  $\overline{\partial^0\Om_l}\subset\omega$ for every $l\in\N$, $\cup_l\Om_l=\R^{n+1}_+$, and $\cup_l\partial^0\Om_l=\omega$. By \eqref{bdlinftyext}, 
Step 1, and the results in Section \ref{FractGL},  $v_k^\e\in H^1(\Om_l;\R^m)\cap L^\infty(\Om_l)$  satisfies $|v_k^\e|\leq 1$ and solves 
$$ \begin{cases}
\Delta v^\e_k= 0 & \text{in $\Omega_l$}\,,\\[8pt]
\displaystyle \frac{\partial v^\e_k}{\partial \nu}=\frac{1}{\varepsilon_k}(1-|v^\e_k|^2)v^\e_k & \text{on $\partial^0\Omega_l$}\,,  
\end{cases}
$$
for every $l\in\N$. In addition, we have proved in Step 1 that $\sup_k E_{\eps_k}(v_k^\e,\Om_l)<\infty$ for every $l\in\N$. Therefore, we can find a further subsequence such that the conclusions of Theorems~\ref{asymptneum} \& \ref{statdef} hold in every $\Om_l$, and $v_*^\e$ is the limiting $(\mathbb{S}^{m-1}, \partial^0\Om_l)$-boundary harmonic map in each $\Om_l$ by Step 1. This yields the announced conclusions on the defect measure $\mu_{\rm sing}$ and on the concentration set $\Sigma$ stated in (i), (ii), (iii), and (iv). The stationarity relation stated in (v) between $v_*$ and $\mu_{\rm sing}$ is in turn a direct consequence of 
Theorem  \ref{statdef} and Lemma~\ref{compstatfrac}. Then it only remains to prove that $v_*$ is a weak 1/2-harmonic map into $\mathbb{S}^{m-1}$ in $\omega$. By Proposition \ref{caracthalfharm}, it is enough to check that $\langle  (-\Delta)^{\frac{1}{2}}  v_*,\varphi\rangle_\omega=0$ for every $\varphi\in H^{1/2}_{00}(\omega;\R^m)\cap L^\infty(\omega)$ compactly supported in~$\omega$ satisfying  $v_*\cdot \varphi=0$ a.e. in~$\omega$.  Given such a test function $\varphi$, we consider an arbitrary extension $\Phi\in H^1(\R^{n+1}_+;\R^m)\cap L^\infty(\R^{n+1}_+)$ of $\varphi$ which is compactly supported in $\R^{n+1}_+\cup\omega$. Then ${\rm supp}\,\Phi\subset \Om_l\cup\partial^0\Om_l$ for $l$ large enough. Since $v_*^\e$ is a weak $(\mathbb{S}^{m-1},\partial^0\Om_l)$-boundary harmonic map in $\Om_l$, we infer from Lemma \ref{repnormderfraclap} that 
$$\langle  (-\Delta)^{\frac{1}{2}}  v_*,\varphi\rangle_\omega=\int_{\Om_l} \nabla v_*^\e \cdot\nabla\Phi\,\de x =0\,, $$
and the proof is complete. 
\end{proof}

\begin{remark}\label{lastrem}
We notice that Theorem~\ref{asymptneum} actually implies that $v_k^\e\to v_*^\e$ in $ C^{\ell}_{\rm loc}\big(\R^{n+1}_+\cup(\omega\setminus\Sigma)\big)$ for every $\ell\in\N$. In view 
of Lemma \ref{repnormderfraclap}, it shows that $(-\Delta)^{\frac1 2}v_k\to (-\Delta)^{\frac1 2}v_*$ in $ C^{\ell}_{\rm loc}(\omega\setminus\Sigma)$ for every $\ell\in\N$.
\end{remark}

\subsection{Asymptotics with Dirichlet exterior condition} 
 This section is devoted to  proof of  Theorem \ref{mainthm}.  To this aim, we consider for the rest of this subsection a smooth bounded open set   $\omega\subset \R^n$, and a  smooth exterior Dirichlet condition $g:\R^n\to \R^m$ satisfying  
 $|g|=1$ in $\R^n\setminus\omega$.  
 
 Given an arbitrary sequence $\eps_k\downarrow 0$, we consider $\{v_k\}_{k\in\N}\subset H^{1/2}_g(\omega;\R^m)\cap L^4(\omega)$ such that for each $k\in\N$, $v_k$ weakly solves  \eqref{eqfracfinal}, and $\sup_k \mathcal{E}_{\eps_k}(v_k,\omega)<\infty$. We recall that under such assumptions, we have proved in Section \ref{FractGL} that $v_k\in  C_{\rm loc}^{0,\alpha}(\R^n)\cap  C^\infty(\R^n\setminus\partial\omega)$ for every $0<\alpha<1/2$, and that $|v_k|\leq 1$ in $\R^n$. Therefore, we can apply 
 Theorem~\ref{thmwithout}   to find a (not relabeled) subsequence such that $v_k\rightharpoonup v_*$ weakly in $H^{1/2}(\omega)\cap L^2_{\rm loc}(\R^n)$ 
 for some map $v_*\in \widehat H^{1/2}(\omega;\R^m)$ which is a bounded weak 1/2-harmonic map into $\mathbb{S}^{m-1}$ in $\omega$, and such that all the conclusions of 
 Theorem~\ref{thmwithout} hold. Since $v_k$ is constantly equal to $g$ outside~$\omega$,   we also infer that $v_*\in H^{1/2}_g(\omega;\R^m)$.  
 
 It now remains to prove that 
 \begin{itemize}
\item[\it (a)]  $v_k-v_*\rightharpoonup 0$ weakly in $H^{1/2}_{00}(\omega)$;  
\vskip3pt
\item[\it (b)] $\mu_{\rm sing}$ is a finite measure on $\omega$, $\mathscr{H}^{n-1}(\Sigma)<\infty$, and statements (i) and (iii) in Theorem \ref{mainthm} hold. 
\end{itemize}
  
\begin{proof}[Proof of (a)] We first notice that $v_k-v_*\in H^{1/2}_{00}(\omega;\R^m)$ for every $k\in\N$ by \eqref{caracH1/2g}. Hence, 
$$[v_k-v_*]^2_{H^{1/2}(\R^n)}= \mathcal{E}(v_k-v_*,\omega)\leq 2\big(\mathcal{E}(v_k,\omega)+\mathcal{E}(v_*,\omega)\big)\,,$$ 
so that $\{v_k-v_*\}$ in bounded in   $H^{1/2}_{00}(\omega)$. Since $v_k-v_*\rightharpoonup 0$ weakly in $H^{1/2}(\omega)$, it remains to show that 
$$\iint_{\omega\times\omega^c} \frac{(v_k(x)-v_*(x))\cdot \varphi(x)}{|x-y|^{n+1}}\,\de x\de y \to 0$$
for every $\varphi\in H^{1/2}_{00}(\omega;\R^m)$. First notice that  $v_k\to v_*$ strongly in $L^2(\omega)$ by the compact embedding $H^{1/2}(\omega)\hookrightarrow L^2(\omega)$. Then, by  density of smooth maps compactly supported in $\omega$, it suffices 
to consider the case where $\varphi\in\mathscr{D}(\omega)$. For such a test function $\varphi$, the assertion is easily proved using the dominated convergence Theorem and the fact that $v_k\to v_*$ a.e. in $\omega$ (up to a further subsequence if necessary). 
\end{proof}

\begin{proof}[Proof of (b)]  From the uniform energy bound, we may find a further subsequence such that 
$${\rm e}(v_k,\omega)\,\mathscr{L}^n\LL\omega \mathop{\rightharpoonup}\limits^* {\rm e}(v_*,\omega)\,\mathscr{L}^n\LL\omega+\mu_{\rm def}$$
  weakly* as Radon measures on $\omega$ for some finite nonnegative measure $\mu_{\rm def}$. We thus have to prove that 
  $\mu_{\rm def}\equiv\mu_{\rm sing}$. It will then show that $\mu_{\rm sing}$ is finite,  and that $\mathscr{H}^{n-1}(\Sigma) <\infty$ (since  $\mathscr{H}^{n-1}(\Sigma) $ is controlled by the total variation of $\mu_{\rm sing}$, see \eqref{train}). 
  
  Let us now fix $\varphi\in\mathscr{D}(\omega)$ arbitrary. We notice that 
  \begin{align*}
  \int_\omega {\rm e}(v_k,\omega)\varphi\,\de x 
     = &\,  \langle  (-\Delta)^{\frac{1}{2}}  v_k,\varphi v_k\rangle_\omega  -\frac{\gamma_n}{2}\iint_{\omega\times\omega} \frac{(v_k(x)-v_k(y))\cdot v_k(y)(\varphi(x)-\varphi(y))}{|x-y|^{n+1}}\,\de x\de y\\
 & -\gamma_n\iint_{\omega\times\omega^c} \frac{(v_k(x)-g(y))\cdot g(y)\varphi(x)}{|x-y|^{n+1}}\,\de x\de y\\
=:&\,I_k-II_k-III_k\,.
  \end{align*}
We consider  a function $\Phi\in  C^\infty(\overline{\R}^{n+1}_+)$ compactly supported in $\R^{n+1}_+\cup\omega$  
such that $\Phi_{|\R^n}=\varphi$, and set $K:={\rm supp}(\Phi)$. We observe that $\Phi v_k^\e$ is a smooth function compactly supported by $K$, so that Lemma~\ref{repnormderfraclap}  yields 
$$\langle  (-\Delta)^{\frac{1}{2}}  v_k,\varphi v_k\rangle_\omega=\int_{\R^{k+1}_+}\nabla v_k^\e\cdot\nabla(\Phi v_k^\e)\,\de x =
\int_{\R^{n+1}_+}|\nabla v_k^\e|^2\Phi\,\de x+\int_{K}\nabla v_k^\e\cdot(v_k^\e \nabla\Phi)\,\de x\,.$$
From {\it (a)} we deduce that $v_k^\e \rightharpoonup v_*^\e$ weakly in $H^1_{\rm loc}(\overline{\R}^{n+1}_+)$, so that $\nabla v_n^\e\rightharpoonup\nabla v_*^\e$ weakly in $L^2(K)$ and  $v_k^\e\to v_*^\e$ strongly in $L^2(K)$. Together with item (i) in  Theorem~\ref{thmwithout},  it yields 
\begin{multline*}
\langle  (-\Delta)^{\frac{1}{2}}  v_k,\varphi v_k\rangle_\omega\mathop{\longrightarrow}\limits_{k\to\infty}  \int_{\R^{n+1}_+}|\nabla v_*^\e|^2\Phi\,\de x+\int_\omega\varphi\,\de \mu_{\rm sing}+\int_{K}\nabla v_*^\e\cdot(v_*^\e \nabla\Phi)\,\de x\\
 =\int_{\R^{n+1}_+}\nabla v_*^\e\cdot\nabla(\Phi v_*^\e)\,\de x+\int_\omega\varphi\,\de \mu_{\rm sing}\,.
\end{multline*}
By  Lemma \ref{repnormderfraclap} again, we have thus proved that 
\begin{equation}\label{tim1555}
\langle  (-\Delta)^{\frac{1}{2}}  v_k,\varphi v_k\rangle_\omega\mathop{\longrightarrow}\limits_{k\to\infty} \langle  (-\Delta)^{\frac{1}{2}}  v_*,\varphi v_*\rangle_\omega+\int_\omega\varphi\,\de \mu_{\rm sing}\,,
\end{equation}
where we have used the (elementary) fact that $\varphi v_*\in H^{1/2}_{00}(\omega;\R^m)$. 

On the other hand, we have  that $v_k\to v_*$ a.e. on $\R^n$, eventually after the extraction of a further subsequence. Using $\varphi\in\mathscr{D}(\omega)$ and the weak convergence of $v_k$ in $H^{1/2}(\omega)$, we  deduce  that 
\begin{equation}\label{tim1627}
II_k \to \frac{\gamma_n}{2}\iint_{\omega\times\omega} \frac{(v_*(x)-v_*(y))\cdot v_*(y)(\varphi(x)-\varphi(y))}{|x-y|^{n+1}}\,\de x\de y
\end{equation}
and by dominated convergence, 
\begin{equation}\label{tim1628}
III_k\to  \gamma_n\iint_{\omega\times\omega^c} \frac{(v_*(x)-g(y))\cdot g(y)\varphi(x)}{|x-y|^{n+1}}\,\de x\de y
\end{equation}
as $k\to\infty$. Gathering \eqref{tim1555}, \eqref{tim1627}, and \eqref{tim1628} leads to 
$$\int_\omega {\rm e}(v_k,\omega)\varphi\,\de x \mathop{\longrightarrow}\limits_{k\to\infty} \int_\omega {\rm e}(v_*,\omega)\varphi\,\de x
+\int_\omega\varphi\,\de \mu_{\rm sing}\,,$$
and thus $\mu_{\rm def}=\mu_{\rm sing}$ by the arbitrariness of $\varphi$. 

To derive item (iii), we just notice that
\begin{align*}
\int_\omega \frac{(1-|v_k|^2)}{\varepsilon_k}\,\varphi\,\de x & =  \int_\omega \frac{(1-|v_k|^2)^2}{\varepsilon_k}\,\varphi\,\de x
+ \frac{1}{\varepsilon_k}\int_\omega(1-|v_k|^2)v_k\cdot(\varphi v_k)\,\de x \\
& = \int_\omega \frac{(1-|v_k|^2)^2}{\varepsilon_k}\,\varphi\,\de x+ \langle  (-\Delta)^{\frac{1}{2}}  v_k,\varphi v_k\rangle_\omega \,,
\end{align*}
so that the announced convergence follows from item (ii), \eqref{tim1555}, and Remark \ref{eq1/2harm}. 
\end{proof}
  
\begin{remark} 
In view of Remarks \ref{eq1/2harm} \& \ref{lastrem}, we have  
$$\displaystyle \frac{1-|v_k(x)|^2}{\eps_k}\to \frac{\gamma_n}{2}\int_{\R^n}\frac{|v_*(x)-v_*(y)|^2}{|x-y|^{n+1}}\,\de y  \;\text{ in }  C^{\ell}_{\rm loc}(\omega\setminus\Sigma)$$
for every $\ell\in\N$. 
\end{remark}

\subsection{Asymptotics for Dirichlet minimizers}   
  
We finally consider solutions of the minimization problem~\eqref{GLminProb}, and we show that, in this case, no concentration occurs by minimality.    
  
\begin{theorem}\label{thmmin}
Let $\omega\subset\R^{n}$ be a smooth bounded open set, and let $g:\R^n\to \R^m$ be a smooth map satisfying $|g|=1$ in $\R^n\setminus\omega$, and such that 
\begin{equation}\label{assumpnotempty}
H_g^{1/2}(\omega;\mathbb{S}^{m-1}):=\left\{v\in H_g^{1/2}(\omega;\R^m): |v|=1 \text{ a.e. in $\omega$}\right\}\not =\emptyset\,.
\end{equation}
Let $\varepsilon_k\downarrow0$ be an arbitrary sequence,  and let  
$\{v_k\}_{k\in\mathbb{N}}\subset H^{1/2}_g(\omega;\mathbb{R}^m)\cap L^4(\omega)$ be such that for each $k\in\N$, 
\begin{equation}\label{argmin}
v_k\in {\rm argmin} \left\{ \mathcal{E}_{\eps_k}(v,\omega) :  v\in H_g^{1/2}(\omega;\R^m)\cap L^4(\omega)\right\} \,.
\end{equation}
Then there exist a (not relabeled) subsequence  and $v_*\in H^{1/2}_g(\omega;\R^m)$ a minimizing 1/2-harmonic map into $\mathbb{S}^{m-1}$ in $\omega$ such that $v_k-v_*\to 0$ strongly in $H^{1/2}_{00}(\omega)$. In addition, 
\vskip5pt
\begin{itemize}[leftmargin=22pt]
\item[\rm (i)] $\displaystyle  \mathcal{E}_{\eps_k}(v_k,\omega)\to \mathcal{E}(v_*,\omega)$;
\vskip5pt
\item[\rm (ii)] $\displaystyle \frac{1-|v_k(x)|^2}{\eps_k}\rightharpoonup \frac{\gamma_n}{2}\int_{\R^n}\frac{|v_*(x)-v_*(y)|^2}{|x-y|^{n+1}}\,\de y $ in $\mathscr{D}'(\omega)$; 
\vskip5pt
\item[\rm (iii)]  $v_n\to v_*$ in $ C^{\ell}_{\rm loc}(\omega\setminus{\rm sing}(v_*))$ for every $\ell\in\N$; 
\vskip5pt
\item[\rm (iv)] if $n=1$, then $\omega\cap{\rm sing}(v_*)=\emptyset$, while ${\rm dim}_{\mathscr{H}}\big({\rm sing}(v_*)\cap\omega\big)\leq n-2$ for $n\geq 3$, and ${\rm sing}(v_*)\cap\omega$ is discrete for $n=2$. 
\end{itemize}
\end{theorem}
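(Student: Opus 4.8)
\textbf{Proof strategy for Theorem \ref{thmmin}.}
The plan is to run the general convergence machinery of Theorem \ref{thmwithout} (equivalently, Theorem \ref{mainthm} applied to minimizers), and then exploit minimality to kill the defect measure. First I would fix a competitor $w\in H_g^{1/2}(\omega;\mathbb{S}^{m-1})$, which exists by assumption \eqref{assumpnotempty}; since $|w|=1$ a.e.\ in $\omega$ and $w=g$ outside $\omega$, one has $\mathcal{E}_{\eps_k}(w,\omega)=\mathcal{E}(w,\omega)$, a constant independent of $k$. Testing the minimality \eqref{argmin} against $w$ gives the uniform bound $\mathcal{E}_{\eps_k}(v_k,\omega)\leq \mathcal{E}(w,\omega)<\infty$. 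In particular $|v_k|\leq 1$ on $\R^n$ by Corollary \ref{modless1} (here $\|g\|_{L^\infty(\R^n\setminus\omega)}=1$), so Theorem \ref{mainthm}, or Theorem \ref{thmwithout} together with the Dirichlet arguments already carried out in the proof of Theorem \ref{mainthm}, applies: after passing to a subsequence we get $v_k\rightharpoonup v_*$ weakly in $H^{1/2}_{00}(\omega)$ shifted by $g$, with $v_*\in H^{1/2}_g(\omega;\R^m)$ a weak (in fact, as we shall see, minimizing) $1/2$-harmonic map into $\mathbb{S}^{m-1}$ in $\omega$, a defect measure $\mu_{\rm sing}=\theta\mathscr{H}^{n-1}\LL\Sigma$, and the smooth convergence $v_k\to v_*$ in $C^\ell_{\rm loc}(\omega\setminus\Sigma)$ together with ${\rm e}(v_k,\omega)\,\mathscr{L}^n\LL\omega\mathop{\rightharpoonup}\limits^* {\rm e}(v_*,\omega)\,\mathscr{L}^n\LL\omega+\mu_{\rm sing}$.

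The heart of the argument is to show $\mu_{\rm sing}\equiv 0$ and, simultaneously, that $v_*$ is minimizing with full energy convergence. For this I would first establish the energy liminf inequality $\mathcal{E}(v_*,\omega)\leq\liminf_k\mathcal{E}_{\eps_k}(v_k,\omega)$, which follows from lower semicontinuity of $\mathcal{E}(\cdot,\omega)$ under weak $H^{1/2}_{00}$-convergence (this is exactly the computation of step 1 in the proof of Theorem \ref{thmwithout}, giving \eqref{tim1439bis}) plus nonnegativity of the potential term. For the reverse inequality, take any $\tilde v\in H^{1/2}_g(\omega;\mathbb{S}^{m-1})$; since $\tilde v=v_*=g$ outside $\omega$ and $|\tilde v|=1$ a.e.\ in $\omega$, we have $\mathcal{E}_{\eps_k}(\tilde v,\omega)=\mathcal{E}(\tilde v,\omega)$, and minimality of $v_k$ gives
\begin{equation*}
\mathcal{E}(v_*,\omega)\leq\liminf_{k\to\infty}\mathcal{E}_{\eps_k}(v_k,\omega)\leq\limsup_{k\to\infty}\mathcal{E}_{\eps_k}(v_k,\omega)\leq\mathcal{E}(\tilde v,\omega)\,.
\end{equation*}
Taking the infimum over such $\tilde v$ and noting $v_*$ is itself admissible (so the infimum is $\leq\mathcal{E}(v_*,\omega)$) shows $v_*$ is a minimizing $1/2$-harmonic map and $\mathcal{E}_{\eps_k}(v_k,\omega)\to\mathcal{E}(v_*,\omega)$, which is item (i). Since the potential part is nonnegative and $\mathcal{E}(v_k,\omega)\to\mathcal{E}(v_*,\omega)$ by combining (i) with the liminf inequality, we get both $\frac{1}{\eps_k}\int_\omega(1-|v_k|^2)^2\to 0$ and $\mathcal{E}(v_k,\omega)\to\mathcal{E}(v_*,\omega)$; the latter together with weak convergence in $H^{1/2}_{00}(\omega)$ upgrades to strong convergence $v_k-v_*\to 0$ in $H^{1/2}_{00}(\omega)$ by the Hilbert space identity ($\mathcal{E}(v_k-v_*,\omega)=\mathcal{E}(v_k,\omega)-2\,\mathcal{E}(v_k,v_*)+\mathcal{E}(v_*,\omega)\to 0$ using that the mixed bilinear form passes to the limit). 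Strong $H^{1/2}_{00}$-convergence forces ${\rm e}(v_k,\omega)\,\mathscr{L}^n\LL\omega\to{\rm e}(v_*,\omega)\,\mathscr{L}^n\LL\omega$ in total mass, hence $\mu_{\rm sing}=0$ and therefore $\Sigma={\rm sing}(v_*)$.

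With $\mu_{\rm sing}=0$, items (iii) and (iv) are immediate: (iii) is the smooth convergence away from $\Sigma={\rm sing}(v_*)$ already provided by the convergence theorem, and (iv) is exactly the partial regularity dichotomy of Theorem \ref{reghalfharmintro}(i),(iii) applied to the minimizing $1/2$-harmonic map $v_*$. Item (ii) follows as in the proof of part (b) of Theorem \ref{mainthm}: writing
\begin{equation*}
\int_\omega\frac{1-|v_k|^2}{\eps_k}\,\varphi\,\de x=\int_\omega\frac{(1-|v_k|^2)^2}{\eps_k}\,\varphi\,\de x+\langle(-\Delta)^{\frac12}v_k,\varphi v_k\rangle_\omega
\end{equation*}
for $\varphi\in\mathscr{D}(\omega)$, the first term vanishes in the limit, while the second converges to $\langle(-\Delta)^{\frac12}v_*,\varphi v_*\rangle_\omega$ (now with \emph{no} extra $\mu_{\rm sing}$ contribution, precisely because strong $H^{1/2}_{00}$-convergence makes the passage to the limit in the quadratic form exact), and Remark \ref{eq1/2harm} identifies this with $\int_\omega\big(\frac{\gamma_n}{2}\int_{\R^n}\frac{|v_*(x)-v_*(y)|^2}{|x-y|^{n+1}}\,\de y\big)\varphi(x)\,\de x$. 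The main obstacle, and the only place where genuine care is needed, is the upgrade from weak to strong $H^{1/2}_{00}$-convergence: one must check that the mixed term $\mathcal{E}(v_k,v_*)$ (the symmetric bilinear form underlying $\mathcal{E}(\cdot,\omega)$, including the $\omega\times\omega^c$ part) converges to $\mathcal{E}(v_*,v_*)$, which uses weak $H^{1/2}_{00}$-convergence for the double integral over $\omega\times\omega$ and, for the cross term over $\omega\times\omega^c$, the strong $L^2(\omega)$-convergence of $v_k$ together with the fact that $v_k=v_*=g$ on $\omega^c$ — essentially the argument already used to prove part (a) in the proof of Theorem \ref{mainthm}.
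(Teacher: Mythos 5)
Your proposal is correct and follows essentially the same route as the paper: uniform energy bound from a sphere-valued competitor, the general convergence theorem, the minimality sandwich $\mathcal{E}(v_*,\omega)\leq\liminf_k\mathcal{E}_{\eps_k}(v_k,\omega)\leq\limsup_k\mathcal{E}_{\eps_k}(v_k,\omega)\leq\mathcal{E}(\tilde v,\omega)$ giving minimality of $v_*$, item (i), and $\mu_{\rm sing}=0$, followed by the Hilbert-space identity to upgrade weak to strong $H^{1/2}_{00}$-convergence. The one delicate point you flag, passing to the limit in the mixed bilinear form (in particular the $\omega\times\omega^c$ cross term), is exactly the point the paper handles with the weighted weak-$L^2$ identification from Lemma~\ref{approxlemma}, so your treatment matches the paper's in substance.
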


\begin{proof}
By assumption \eqref{assumpnotempty} there exists $\widetilde g \in H_g^{1/2}(\omega;\R^m)$ such that $|\widetilde g|=1$ a.e. in~$\omega$. Then we infer from \eqref{argmin} that for all $k\in\N$, 
$$\mathcal{E}_{\eps_k}(v_k,\omega)\leq \mathcal{E}_{\eps_k}(\widetilde g,\omega) =\mathcal{E}(\widetilde g,\omega) \,.$$
Therefore $\sup_k \mathcal{E}_{\eps_k}(v_k,\omega)<\infty$, and we can extract a (not relabeled) subsequence such that the conclusions of Theorem~\ref{mainthm} do hold for a map 
$v_*\in \widehat H^{1/2}_g(\omega;\R^m)$ which is a bounded weak 1/2-harmonic map into $\mathbb{S}^{m-1}$ in $\omega$, and a finite measure $\mu_{\rm sing}$ on $\omega$. By the compact embedding $H^{1/2}(\omega)\hookrightarrow L^2(\omega)$, we may assume that $v_k\to v_*$ strongly in $L^2(\omega)$. Since $v_k=v_*=g$ outside $\omega$, we deduce that $v_k\to v_*$ a.e. in~$\R^n$, eventually up to a further subsequence. By \eqref{assumpnotempty}  and the lower semicontinuity of the 1/2-Dirichlet energy with respect to a.e. pointwise convergence, we have 
$$\mathcal{E}(v,\omega)\geq \liminf_{k\to\infty} \mathcal{E}_{\eps_k}(v_k,\omega) \geq \liminf_{k\to\infty} \mathcal{E}(v_k,\omega)\geq \mathcal{E}(v_*,\omega)$$
for all $v\in H_g^{1/2}(\omega;\mathbb{S}^{m-1})$. Hence $v_*$ is a minimizing 1/2-harmonic map into $\mathbb{S}^{m-1}$ in $\omega$, and item (iv) follows from Theorem~\ref{reghalfharmintro}. On the other hand, the minimality of $v_k$ yields 
$$ \mathcal{E}_{\eps_k}(v_k,\omega)\leq \mathcal{E}_{\eps_k}(v_*,\omega)=\mathcal{E}(v_*,\omega)\,.$$
Combining the two previous inequalities leads to item (i). In turn, it implies that $\mu_{\rm sing}=0$, and then yields item (ii). 
The conclusion in item (iii) is a straightforward consequence of Theorem~\ref{asymptneum} - item (iii) - together with the fact that $\mu_{\rm sing}=0$. It now only remains to prove the strong convergence of $v_k-v_*$ to $0$ in  $H^{1/2}_{00}(\omega)$. To this purpose, we have to prove that
\begin{equation}\label{last}
\lim_{k\to\infty}\big\langle  (-\Delta)^{\frac{1}{2}} v_k, v_*\big\rangle_{\omega}=\big\langle  (-\Delta)^{\frac{1}{2}} v_*, v_*\big\rangle_{\omega}=2\mathcal{E}(v_*,\omega)\,.
\end{equation} 
Indeed, if \eqref{last}  holds, then item (i) yields 
$$ [v_k-v_*]^2_{H^{1/2}(\mathbb{R}^n)}=\mathcal{E}(v_k-v_*,\omega)= \mathcal{E}(v_k,\omega)+\mathcal{E}(v_*,\omega)-\big\langle(-\Delta)^{\frac{1}{2}} v_k, v_*\big\rangle_{\omega}\mathop{\longrightarrow}\limits_{k\to\infty} 0\,.$$
To show \eqref{last} we first write
\begin{multline*}
 \big\langle  (-\Delta)^{\frac{1}{2}} v_k, v_*\big\rangle_{\omega}= \frac{\gamma_n}{2}\iint_{\omega\times\omega}  \frac{(v_k(x)-v_k(y))\cdot(v_*(x)-v_*(y))}{|x-y|^{n+1}}\,\de x\de y\\ 
+ \gamma_n \iint_{\omega\times \omega^c}  \frac{(v_k(x)-v_*(y))\cdot(v_*(x)-v_*(y))}{|x-y|^{n+1}}\,\de x\de y =:I_k+II_k\,.
\end{multline*}
Since $v_k\rightharpoonup v_*$ weakly in $H^{1/2}(\omega)$, we have 
$$I_k\mathop{\longrightarrow}\limits_{k\to\infty}   \frac{\gamma_n}{2}\iint_{\omega\times\omega}  \frac{|v_*(x)-v_*(y)|^2}{|x-y|^{n+1}}\,\de x\de y \,.$$
Next we set $V_k(x,y):=(v_k(x)-v_*(y))=(v_k(x)-v_k(y))$. Then $\{V_k\}$ is bounded in $L^2(\omega\times\omega^c, {\mathbf \Lambda})$ for the weighted measure
${\mathbf \Lambda}:=|x-y|^{-n-1}\mathscr{L}^n_x\otimes\mathscr{L}^n_y$.  Extracting a further subsequence if necessary, $V_k$ is thus  weakly converging in $L^2(\omega\times\omega^c, {\mathbf \Lambda})$ to some function $V$. Using the strong convergence in $L^2(\omega)$ of~$v_k$, we can argue as in the proof of Lemma \ref{approxlemma} in Appendix A to show that $V(x,y)=(v_*(x)-v_*(y))$. As a consequence, 
$$\lim_{k\to\infty}\gamma_n \iint_{\omega\times \omega^c}  \frac{(v_k(x)-v_*(y))\cdot(v_*(x)-v_*(y))}{|x-y|^{n+1}}\,\de x\de y =\gamma_n \iint_{\omega\times \omega^c}  \frac{|v_*(x)-v_*(y)|^2}{|x-y|^{n+1}}\,\de x\de y\,,$$
and \eqref{last}  is proved. 
\end{proof}

\vskip10pt

\section{Appendix A}   						        

The purpose of this first appendix is to give detailed proofs of the different auxiliary results stated in Section \ref{prelim}. 

\begin{proof}[Proof of Lemma~\ref{adminHchap}.]  Let $\omega\subset\R^n$ be a bounded open set, and $v\in \widehat H^{1/2}(\omega;\R^m)$. Let us fix 
$x_0\in\omega$ and $\rho>0$ such that $D_{2\rho}(x_0)\subset\omega$. Then, 
$$\iint_{ D_{2\rho}^c(x_0)\times D_\rho(x_0)} \frac{|v(x)-v(y)|^2}{|x-y|^{n+1}}\,\de x\de y\leq C\mathcal{E}\big(v,D_{2\rho}(x_0)\big)\,.$$
Next we estimate
\begin{align*}
\iint_{D_{2\rho}^c(x_0)\times D_\rho(x_0)}  \frac{|v(x)-v(y)|^2}{|x-y|^{n+1}}\,\de x\de y  
&\geq \iint_{D_{2\rho}^c(x_0)\times D_\rho(x_0)} \frac{\big||v(x)|-|v(y)|\big|^2}{(|x-x_0|+|y-x_0|)^{n+1}}\,\de x\de y \\
&\geq C_{\rho} \iint_{D_{2\rho}^c(x_0)\times D_\rho(x_0)} \frac{|v(x)|^2-2|v(y)|^2}{(|x-x_0|+1)^{n+1}}\,\de x\de y\,,
\end{align*}
which yields 
$$\int_{D^c_{2\rho}(x_0)}  \frac{|v(x)|^2}{(|x-x_0|+1)^{n+1}}\,\de x\leq C_{\rho}\left(\mathcal{E}\big(v,D_{2\rho}(x_0)\big)+\|v\|^2_{L^2(D_\rho(x_0))}\right)\,.$$
Hence, 
$$\int_{\R^n}  \frac{|v(x)|^2}{(|x-x_0|+1)^{n+1}}\,\de x\leq C_{\rho}\left(\mathcal{E}\big(v,D_{2\rho}(x_0)\big)+\|v\|^2_{L^2(D_{2\rho}(x_0))}\right)\,,$$
and the proof is complete.  
\end{proof}

\begin{proof}[Proof of Lemma \ref{hatH1/2toH1}, Part 1.]  We prove in this first part that $v^\e\in L^2_{\rm loc}(\overline{\R}^{n+1}_+)$. We fix  $R>0$ arbitrary, $x_0\in\omega$ and $\rho>0$ such that $D_{2\rho}(x_0)\subset\omega$. We claim that  
$v^\e\in L^2(B_R^+(x_0))$. 
Using Jensen's inequality 
 we estimate for $x=(x',x_{n+1})\in B_R^+(x_0)$, 
 \begin{align}
\nonumber  |v^\e(x)|^2  &\leq \gamma_n\int_{\R^n} \frac{x_{n+1}|v(z)|^2}{(|x'-z|^2+x^2_{n+1})^{\frac{n+1}{2}}}\,\de z\\[8pt]
\nonumber\displaystyle & 
\leq  \gamma_n\int_{D^c_{5R}(x_0)} \frac{x_{n+1}|v(z)|^2}{(|z-x_0|^2-2R|z-x_0|)^{\frac{n+1}{2}}}\,\de z
+\gamma_n\int_{D_{5R}(x_0)} \frac{x_{n+1}|v(z)|^2}{(|x'-z|^2+x^2_{n+1})^{\frac{N+1}{2}}}\,\de z\\
\label{estil2loc1}  &\leq C_R\int_{\R^n} \frac{|v(z)|^2}{(|z-x_0|+1)^{n+1}}\,\de z+\gamma_n\int_{D_{5R}(x_0)} \frac{x_{n+1}|v(z)|^2}{(|x'-z|^2+x^2_{n+1})^{\frac{n+1}{2}}}\,\de z\,.
 \end{align}
Then we estimate for $0<x_{n+1}<R$, 
\begin{align}
\nonumber \int_{D_{R}(x_0)}\bigg(\int_{D_{5R}(x_0)} & \frac{x_{n+1}|v(z)|^2}{(|x'-z|^2+x^2_{n+1})^{\frac{n+1}{2}}}\,\de z\bigg)\,\de x' \\
\nonumber &= \int_{D_{R}(x_0)}\bigg(\int_{D_{5R}(x')} \frac{x_{n+1}|v(x'-y+x_0)|^2}{(|y-x_0|^2+x^2_{n+1})^{\frac{n+1}{2}}}\,\de y\bigg)\,\de x' \\
\nonumber& \leq\int_{D_{6R}(x_0)} \bigg( \int_{D_{R}(x_0)} \frac{x_{n+1}|v(y+x_0-x')|^2}{(|y-x_0|^2+x^2_{n+1})^{\frac{n+1}{2}}}\,\de x'\bigg)\,\de y \\
\nonumber & \leq \left(\int_{D_{6R}(x_0)}\frac{x_{n+1}}{(|y-x_0|^2+x^2_{n+1})^{\frac{n+1}{2}}}\,\de y\right)\left(\int_{D_{7R}(x_0)}|v(z)|^2\,\de z\right)\\
\label{estil2loc2} & \leq C \int_{D_{7R}(x_0)}|v(z)|^2\,\de z\,.
\end{align}
Combining \eqref{estil2loc1} and \eqref{estil2loc2} we deduce from Lemma \ref{adminHchap} that 
\begin{equation}\label{conclestil2loc}
\int_{B_R^+(x_0)} |v^\e(x)|^2\,\de x\leq C_R \int_{\R^n} \frac{|v(z)|^2}{(|z-x_0|+1)^{n+1}}\,\de z 
\leq C_{R,\rho}\left(\mathcal{E}\big(v,D_{2\rho}(x_0)\big)+\|v\|^2_{L^2(D_{2\rho}(x_0))}\right)\,, 
\end{equation}
which  ends this first part.
\end{proof}

\begin{remark}[{\bf Proof of Lemma~\ref{context}}]
Notice that the first inequality in \eqref{conclestil2loc} shows the continuity of the linear operator $\mathfrak{P}_R$ defined in \eqref{Pfrak}. 
\end{remark}

To complete the proof of Lemma  \ref{hatH1/2toH1}, we shall need the following smooth approximation result.

\begin{lemma}\label{approxlemma} 
Let $\omega\subset\R^n$ be a bounded open set, and let $v\in  \widehat{H}^{1/2}(\omega;\R^m)$. There exists a sequence 
$\{v_k\}_{k\in\N}$ of smooth functions  such that $v_k\to v$ strongly in 
$ L^{2}_{\rm loc}(\mathbb{R}^n)$, and such that for any relatively compact  open subset $\omega'\subset\omega$ with Lipschitz boundary, 
\begin{itemize}[leftmargin=22pt]
\item[\rm (i)]  $v_k\in \widehat{H}^{1/2}(\omega';\R^m)$ for $k$ large enough;
\vskip3pt

\item[\rm (ii)] $\mathcal{E}(v_k-v,\omega')\to 0$ as $k\to \infty$;
\vskip3pt

\item[\rm (iii)] $v_k^\e\to v^\e$ in $L^2_{\rm loc}(\overline{\R}^{n+1}_+)$ as $k\to\infty$. 
\end{itemize}
\end{lemma}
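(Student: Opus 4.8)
\textbf{Proof plan for Lemma~\ref{approxlemma}.}
The natural approach is mollification of $v$ by a standard radial approximate identity $\rho_k(x):=k^n\rho(kx)$, with $\rho\in\mathscr{D}(D_1)$, $\rho\geq 0$, $\int\rho=1$, setting $v_k:=\rho_k*v$. This is well defined on all of $\R^n$ since $v\in L^2_{\rm loc}(\R^n)$, and the convergence $v_k\to v$ in $L^2_{\rm loc}(\R^n)$ is classical. For item (iii), I would invoke Lemma~\ref{context}: the operator $\mathfrak{P}_R:L^2(\R^n,\mathfrak{m})\to L^2(B_R^+)$ is continuous, so it suffices to know $v_k\to v$ in $L^2(\R^n,\mathfrak{m})$. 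Because $\mathfrak{m}$ is a finite measure and $v_k\to v$ a.e. (along a subsequence; but actually one gets it for the whole sequence in $L^2_{\rm loc}$, hence a.e. along subsequences), I would get $L^2(\R^n,\mathfrak{m})$-convergence from a uniform-integrability/dominated-convergence argument once I control $\sup_k\|v_k\|_{L^2(\R^n,\mathfrak m)}$. That uniform bound follows from Lemma~\ref{adminHchap} applied to $v_k$ on a fixed small disc $D_{2\rho}(x_0)\Subset\omega$, provided I already have (i)–(ii) giving $\sup_k\mathcal{E}(v_k,D_{2\rho}(x_0))<\infty$ and $\sup_k\|v_k\|_{L^2(D_{2\rho}(x_0))}<\infty$ — so items (i) and (ii) on a single fixed subdomain already deliver (iii).

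The heart of the lemma is therefore item (ii): for a relatively compact $\omega'\Subset\omega$ with Lipschitz boundary, $\mathcal{E}(v_k-v,\omega')\to 0$, which also immediately yields (i) since $\mathcal{E}(v_k,\omega')\leq 2\mathcal{E}(v_k-v,\omega')+2\mathcal{E}(v,\omega')<\infty$ for $k$ large. Recall
\[
\mathcal{E}(w,\omega')=\frac{\gamma_n}{4}\iint_{\omega'\times\omega'}\frac{|w(x)-w(y)|^2}{|x-y|^{n+1}}\,\de x\de y+\frac{\gamma_n}{2}\iint_{\omega'\times(\omega')^c}\frac{|w(x)-w(y)|^2}{|x-y|^{n+1}}\,\de x\de y.
\]
For the first (local) term I would use the standard fact that mollification is a contraction for the Gagliardo seminorm on a slightly larger domain: writing the difference quotient of $v_k-v$ and using Jensen in the convolution variable, one bounds $[\,v_k-v\,]^2_{H^{1/2}(\omega')}$ by an average over $|h|\le 1/k$ of $[v(\cdot)-v(\cdot+h/k)]$-type quantities on $\omega'_{1/k}:=\{x:\mathrm{dist}(x,\omega')<1/k\}\Subset\omega$ for $k$ large, which tends to $0$ because $v\in H^{1/2}$ near $\omega'$ implies $L^2$-continuity of translations in the Gagliardo sense (absolute continuity of the integral defining the finite seminorm). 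For the nonlocal cross term, split $(\omega')^c=(\omega'_\delta\setminus\omega')\cup(\omega'_\delta)^c$ for small fixed $\delta$ with $\omega'_\delta\Subset\omega$: on $\omega'\times(\omega'_\delta\setminus\omega')$ the kernel is integrable in the same way and the same translation-continuity argument applies using $\mathcal{E}(v,\omega'_\delta)<\infty$; on $\omega'\times(\omega'_\delta)^c$ the kernel $|x-y|^{-(n+1)}$ is bounded, so one reduces to showing $\int_{\omega'}|v_k-v|^2 w_\delta\,\de x+\int_{(\omega'_\delta)^c}|v_k-v|^2 \widetilde w_\delta\,\de x\to 0$ with integrable weights $w_\delta(x)=\int_{(\omega'_\delta)^c}|x-y|^{-(n+1)}\de y$ and similarly — plus the genuinely nonlocal mixed piece which one handles by expanding the square into terms $|v_k(x)|^2$, $|v(y)|^2$, $v_k(x)\cdot v(y)$ and passing to the limit using $v_k\to v$ in $L^2(\omega'_{2\delta})$ together with the $L^2(\R^n,\mathfrak m)$-bound to dominate the tail. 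Finally let $\delta\downarrow 0$.

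The main obstacle I anticipate is bookkeeping in the nonlocal cross term: one must be careful that the "far" interaction $\omega'\times(\omega'_\delta)^c$ involves $v$ and $v_k$ on the \emph{whole} of $\R^n$, where one only controls $v$ in $L^2(\R^n,\mathfrak m)$ (via Lemma~\ref{adminHchap}), not in $H^{1/2}$; so the argument there cannot be a pure seminorm-contraction and must instead be the weighted-$L^2$ / dominated-convergence argument sketched above, with the weights shown to be integrable (this uses $\mathrm{diam}\,\omega'<\infty$ and $\mathrm{dist}(\omega',(\omega'_\delta)^c)\geq\delta>0$). Once $\delta$ is sent to $0$ at the end, continuity of $\delta\mapsto\mathcal{E}(v,\omega'_\delta)$ from above (monotone convergence) closes the estimate. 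Everything else — the local seminorm contraction, (i), and (iii) — is routine given Lemma~\ref{adminHchap} and Lemma~\ref{context}.
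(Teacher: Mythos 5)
Your overall strategy (mollification, Jensen contraction for the local seminorm, splitting of the cross term) is viable, but it is genuinely different from the paper's proof, which never invokes continuity of translations: there, the Jensen inequality only serves to bound $\mathcal{E}(v_k,\omega')$ by the energy of $v$ on slightly enlarged sets, and item (ii) is then obtained by the Hilbertian mechanism ``weak convergence plus convergence of norms implies strong convergence'', applied both to $v_k$ in $H^{1/2}(\omega')$ and to the pair functions $V_k(x,y)=v_k(x)-v_k(y)$ in $L^2\big(\omega'\times(\omega')^c,|x-y|^{-n-1}\de x\de y\big)$, after identifying the weak limit of $V_k$ by testing against compactly supported kernels; the norm convergence is recovered from Fatou together with monotone convergence as the enlarged domains shrink to $\omega'$. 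Your route instead needs (a) continuity of translations for the Gagliardo seminorm near $\omega'$ (fine, but it requires an extension operator on an intermediate Lipschitz subdomain, or a Fourier argument), and, more importantly, (b) the convergence $v_k\to v$ in $L^2(\R^n,\mathfrak{m})$, which drives both your far-field cross term in (ii) and your item (iii).

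The genuine gap is in your justification of (b). Almost everywhere convergence together with a uniform bound on $\|v_k\|_{L^2(\R^n,\mathfrak{m})}$ does not give strong $L^2(\mathfrak{m})$ convergence, even though $\mathfrak{m}$ is finite: a uniform $L^2$ bound yields uniform integrability of $|v_k|^p$ only for $p<2$ (think of $f_k=\sqrt{k}\,\ind_{[0,1/k]}$ on a finite measure space), and the quantities you need are exactly quadratic. Nor can you repair it by applying Lemma~\ref{adminHchap} to $v_k-v$: that presupposes item (ii), which is precisely what the far cross term is part of, so this shortcut is only legitimate afterwards, for item (iii) — and it is indeed how the paper proves (iii). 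The fix is to prove (b) directly from the structure of mollification: since $(1+|x+h|)^{-(n+1)}\le 2^{n+1}(1+|x|)^{-(n+1)}$ for $|h|\le 1$, Jensen's inequality gives $\|\varrho_k\ast w\|_{L^2(\R^n,\mathfrak{m})}\le C\,\|w\|_{L^2(\R^n,\mathfrak{m})}$ uniformly in $k$; combined with the density of $C_c(\R^n)$ in $L^2(\R^n,\mathfrak{m})$ and the trivial convergence $\varrho_k\ast\phi\to\phi$ in $L^2(\mathfrak{m})$ for $\phi\in C_c(\R^n)$, a three-epsilon argument yields $v_k\to v$ in $L^2(\R^n,\mathfrak{m})$ (recall $v\in L^2(\R^n,\mathfrak{m})$ by Lemma~\ref{adminHchap}). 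With this ingredient your splitting of the cross term closes and (iii) follows from Lemma~\ref{context} as you intended; alternatively, you can sidestep (b) in item (ii) altogether by treating the cross term as the paper does, through weak convergence of $V_k$ and convergence of the corresponding weighted norms.
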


\begin{proof}
Let us fix a sequence $\varepsilon_k\downarrow0$, and a nonnegative function $\varrho\in  C^\infty_c(\R^n)$ compactly supported in $D_1$ 
satisfying $\int_{D_1}\varrho\,dx=1$. Setting $\varrho_k(x):=\varepsilon_k^{-n}\varrho(x/\varepsilon_k)$, we define 
$$v_k(x):= \int_{D_{\varepsilon_k}}\varrho_k(z)v(x+z)\,dz\,.$$
Then $v_k\in  C^\infty(\R^n;\R^m)$, $v_k\to v$ strongly in $ L^{2}_{\rm loc}(\mathbb{R}^n)$. 
Extracting a subsequence if necessary, we may assume that $v_k\to v$ a.e. in $\R^n$. 

Let us now consider a relatively compact open subset $\omega'\subset\omega$ with Lipschitz boundary. Set 
$$\delta_0:=\min\left\{ {\rm dist}(\omega',\partial\omega), \sup_{x\in\omega'}{\rm dist}(x,\partial\omega')\right\}\,,$$ 
and define for $0<\delta<\delta_0$, 
$$\omega_\delta:=\{x\in\omega: {\rm dist}(x,\omega')<\delta\} \,,\quad \omega_{-\delta}:=\{x\in\omega':{\rm dist}(x,\partial\omega')>\delta\}\,.$$
For $0<\varepsilon_k<\delta<\delta_0$, we estimate through Jensen's inequality, 
\begin{align}
\nonumber\iint_{\omega'\times\omega'}\frac{|v_k(x)-v_k(y)|^2}{|x-y|^{n+1}}\;\de x\de y & \leq \iint_{\omega'\times\omega'}\left( \int_{D_{\varepsilon_k}}\varrho_k(z) \frac{|v(x+z)-v(y+z)|^2}{|x-y|^{n+1}}\,\de x\de y\right)\,dz\\
\nonumber & \leq \int_{D_{\varepsilon_k}}\varrho_k(z)\left(\iint_{(z+\omega')\times(z+\omega')}\frac{|v(x)-v(y)|^2}{|x-y|^{n+1}}\,\de x\de y\right)\,dz\\
\label{heur1445} &\leq \iint_{\omega_\delta\times\omega_\delta}\frac{|v(x)-v(y)|^2}{|x-y|^{n+1}}\,\de x\de y\,.
\end{align}
Hence $\{v_k\}$ is bounded in $H^{1/2}(\omega';\R^m)$, and since $v_k\to v$ in $L^2(\omega')$ we deduce that $v_k\rightharpoonup v$ 
weakly in  $H^{1/2}(\omega';\R^m)$. Now we infer from \eqref{heur1445} and Fatou's lemma that 
\begin{multline*}
\iint_{\omega'\times\omega'}\frac{|v(x)-v(y)|^2}{|x-y|^{n+1}}\;\de x\de y \leq 
\liminf_{k\to+\infty}\iint_{\omega'\times\omega'}\frac{|v_k(x)-v_k(y)|^2}{|x-y|^{n+1}}\;\de x\de y\\ 
\leq \limsup_{k\to+\infty}\iint_{\omega'\times\omega'}\frac{|v_k(x)-v_k(y)|^2}{|x-y|^{n+1}}\;\de x\de y
 \leq  \iint_{\omega_\delta\times\omega_\delta}\frac{|v(x)-v(y)|^2}{|x-y|^{n+1}}\,\de x\de y\,.
\end{multline*}
Since $[v]^2_{H^{1/2}(\omega_\delta)}\to [v]^2_{H^{1/2}(\omega')}$ as $\delta\downarrow0$ by monotone convergence, we conclude from this last inequality and the arbitrariness of $\delta$ that $[v_k]^2_{H^{1/2}(\omega')}\to [v]^2_{H^{1/2}(\omega')}$. Therefore $v_k\to v$ strongly in $H^{1/2}(\omega')$.  

Similarly to \eqref{heur1445}, we estimate
$$\iint_{\omega'\times(\omega')^c}\frac{|v_k(x)-v_k(y)|^2}{|x-y|^{n+1}}\;\de x\de y \leq \iint_{\omega_\delta\times\omega^c_{-\delta}}\frac{|v(x)-v(y)|^2}{|x-y|^{n+1}}\,\de x\de y\,.
$$
Arguing as above, we apply Fatou's lemma as $k\to\infty$, and then the monotone convergence theorem as $\delta\downarrow0$ to deduce that
\begin{equation}\label{time2034}
\iint_{\omega'\times(\omega')^c}\frac{|v_k(x)-v_k(y)|^2}{|x-y|^{n+1}}\;\de x\de y \mathop{\longrightarrow}\limits_{k\to\infty} \iint_{\omega'\times(\omega')^c}\frac{|v(x)-v(y)|^2}{|x-y|^{n+1}}\;\de x\de y\,. 
\end{equation}
We have thus proved that 
$$\mathcal{E}(v_k,\omega')\mathop{\longrightarrow}\limits_{k\to\infty} \mathcal{E}(v,\omega')\,. 
$$
In particular,  $v_k\in \widehat{H}^{1/2}(\omega';\R^m)$ for $k$ large enough. 

Setting $V_k(x,y):=(v_k(x)-v_k(y))$,  \eqref{time2034} implies that the sequence $\{V_k\}$ is bounded in $L^2(\omega'\times(\omega')^c, {\mathbf\Lambda})$ for the weighted measure
$ {\mathbf \Lambda}:=|x-y|^{-n-1}\mathscr{L}^n_x\otimes\mathscr{L}^n_y$.  
Hence we can extract a subsequence such that $V_k\rightharpoonup w$ weakly $L^2(\omega'\times(\omega')^c,{\mathbf \Lambda})$. 
On the other hand, from the convergence of $v_k$ to $v$ in $L^2_{\rm loc}(\mathbb{R}^n)$ we deduce that for all $F\in L^2(\omega'\times(\omega')^c,{\mathbf \Lambda})$ 
compactly  supported in $\omega'\times\big(\,\overline{\omega'}\,\big)^c$, 
\begin{align*}
\iint_{\omega'\times(\omega')^c} \frac{V(x,y)\cdot F(x,y)}{|x-y|^{n+1}}\,\de x\de y& = \lim_{k\to\infty} \iint_{\omega'\times(\omega')^c} \frac{V_k(x,y)\cdot F(x,y)}{|x-y|^{n+1}}\,\de x\de y\\
& = \iint_{\omega'\times(\omega')^c} \frac{(v(x)-v(y))\cdot F(x,y)}{|x-y|^{n+1}}\,\de x\de y\,.
\end{align*}
Functions with compact support in $\omega'\times\big(\,\overline{\omega'}\,\big)^c$ being dense in $L^2(\omega'\times(\omega')^c,{\mathbf \Lambda})$, we conclude that 
$$\iint_{\omega'\times(\omega')^c} \frac{V(x,y)\cdot F(x,y)}{|x-y|^{n+1}}\,\de x\de y =  \iint_{\omega'\times(\omega')^c} \frac{(v(x)-v(y))\cdot F(x,y)}{|x-y|^{n+1}}\,\de x\de y$$
for all $F\in L^2(\omega'\times(\omega')^c, {\mathbf \Lambda})$. The Riesz Representation Theorem then yields $V(x,y)=(v(x)-v(y))$. As a consequence, 
\begin{equation}\label{tim1911}
\iint_{\omega'\times(\omega')^c}\frac{(v_k(x)-v_k(y))\cdot(v(x)-v(y))}{|x-y|^{n+1}}\;\de x\de y 
\mathop{\longrightarrow}\limits_{k\to\infty} \iint_{\omega'\times(\omega')^c}\frac{|v(x)-v(y)|^2}{|x-y|^{n+1}}\,\de x\de y\,.
\end{equation}
Combing \eqref{tim1911} with the convergence of $v_n$ in $H^{1/2}(\omega';\R^m)$, we infer  that 
$$\big\langle  (-\Delta)^{\frac{1}{2}} v_k,v\big\rangle_{\omega'} \mathop{\longrightarrow}\limits_{k\to\infty} \big\langle  (-\Delta)^{\frac{1}{2}} v,v\big\rangle_{\omega'} =2\mathcal{E}(v,\omega')\,,$$
and we finally conclude 
$$\mathcal{E}(v_k-v,\omega')=\mathcal{E}(v,\omega')-\big\langle  (-\Delta)^{\frac{1}{2}} v_k,v\big\rangle_{\omega'}+\mathcal{E}(v_k,\omega') \mathop{\longrightarrow}\limits_{k\to\infty} 0\,,$$
which proves (ii).
\vskip3pt

It now remains to show (iii). We consider an arbitrary radius $R>0$, $x_0\in \omega'$ and $\rho>0$ such that $D_{2\rho}(x_0)\subset\omega' $. Noticing that $(v_k-v)^\e=v_k^\e-v^\e$, 
we deduce from \eqref{conclestil2loc} that
\begin{multline*}
\int_{B_R^+(x_0)}|v_k^\e-v^\e|^2\,\de x  \leq  C_{R,\rho}\left(\mathcal{E}\big(v_k-v,D_{2\rho}(x_0)\big)+\|v_k-v\|^2_{L^2(D_{2\rho}(x_0))}\right)\\
\leq  C_{R,\rho}\left(\mathcal{E}(v_k-v,\omega')+\|v_k-v\|^2_{L^2(D_{2\rho}(x_0))}\right)\to 0\,,
\end{multline*}
and the proof is complete. 
\end{proof}

\begin{proof}[Proof of Lemma \ref{hatH1/2toH1}, Part 2.] \noindent{\it Step 1.} Let us first consider  the case where 
$v\in  \widehat{H}^{1/2}(\omega;\R^m)\cap  C^\infty(\mathbb{R}^n)$. Then $v^\e$ is smooth in $\overline{\mathbb{R}}^{n+1}_+$. 
Let $x_0\in \omega$ and $\rho>0$ such that $D_{2\rho}(x_0)\subset \omega$. We consider a cut-off function $\chi\in  C^\infty(\mathbb{R}^{n+1};[0,1])$ satisfying $\chi=1$ in $B_\rho(x_0)$, and $\chi=0$ in $\mathbb{R}^{n+1}\setminus B_{2\rho}(x_0)$. 
We set $\Phi:=\chi^2 v^\e$ and $\varphi:=\Phi_{|\R^n}= \chi^2v$.  From the harmonicity of $v^\e$ in $\R^{n+1}_+$ we infer that 
$$\int_{\mathbb{R}^{n+1}_+}\nabla v^\e\cdot\nabla\Phi\,\de x =\int_{\mathbb{R}^n} \frac{\partial v^\e}{\partial\nu}\cdot\varphi\, \de x 
=\lim_{\delta\downarrow0}\int_{\mathbb{R}^n}\frac{\big(v(x)-v^\e(x,\delta)\big) \cdot\varphi(x)}{\delta}\,\de x\,.
$$
On the other hand, applying formula \eqref{poisson} to represent $v^\e$ yields
\begin{multline*}
\int_{\mathbb{R}^n}\frac{\big(v(x)-v^\e(x,\delta)\big)\cdot\varphi(x)}{\delta} \,\de x= 
\gamma_n\iint_{\mathbb{R}^n\times\mathbb{R}^n} \frac{(v(x)-v(y))\cdot \varphi(x)}{(|x-y|^2+\delta^2)^{\frac{n+1}{2}}} \,\de x\de y\\[5pt]
=  \frac{\gamma_n}{2}\iint_{D_{2\rho}(x_0)\times D_{2\rho}(x_0)} \frac{(v(x)-v(y))\cdot (\varphi(x)-\varphi(y))}{(|x-y|^2+\delta^2)^{\frac{n+1}{2}}} \,\de x\de y\\
+ \gamma_n\iint_{D_{2\rho}(x_0)\times D^c_{2\rho}(x_0)} \frac{(v(x)-v(y))\cdot (\varphi(x)-\varphi(y))}{(|x-y|^2+\delta^2)^{\frac{n+1}{2}}} \,\de x\de y\,.
\end{multline*}
Using the fact that $v\in \widehat H^{1/2}(D_{2\rho}(x_0);\R^m)$ and $\varphi\in H^{1/2}_{00}(D_{2\rho}(x_0);\R^m)$, 
we derive by dominated convergence that
$$\lim_{\delta\downarrow0}\int_{\mathbb{R}^n}\frac{\big(v(x)-v^\e(x,\delta)\big) \cdot\varphi(x)}{\delta}\,\de x= 
\big\langle (-\Delta)^{\frac{1}{2}} v,\varphi\big\rangle_{D_{2\rho}(x_0)} \,. $$
As in  \eqref{estinormH-1/2fraclap}, it then follows from Cauchy-Schwarz Inequality that   
\begin{equation}\label{noidea1}
\int_{\mathbb{R}^{n+1}_+}\nabla v^\e\cdot\nabla\Phi\,\de x \leq \sqrt{\mathcal E(v,D_{2\rho}(x_0))}\sqrt{\mathcal E(\varphi,D_{2\rho}(x_0))} 
\leq \frac{1}{2}\big(\mathcal E(v,D_{2\rho}(x_0))+\mathcal E(\varphi,D_{2\rho}(x_0))\big)\,.
\end{equation}
Now we estimate  
\begin{align}
\nonumber\int_{\mathbb{R}^{n+1}_+}\nabla v^\e\cdot\nabla\Phi\,\de x &\geq \int_{B_{2\rho}^+(x_0)}\chi^2|\nabla v^\e|^2\,\de x  
-2\int_{B_{2\rho}^+(x_0)} \chi|v^\e|\,|\nabla v^\e| \,|\nabla\chi|\,\de x\\
\nonumber &\geq \frac{1}{2} \int_{B_{2\rho}^+(x_0)}\chi^2|\nabla v^\e|^2\,\de x-2\int_{B_{2\rho}^+(x_0)} |v^\e|^2|\nabla\chi|^2\,\de x\\
\label{noidea2}&\geq \frac{1}{2} \int_{B^+_{\rho}(x_0)}|\nabla v^\e|^2\,\de x-C_\rho\|v^\e\|^2_{L^2(B^+_{2\rho}(x_0))}\,,
\end{align}
for a constant $C_\rho>0$ independent of $v$. Then \eqref{noidea1} and \eqref{noidea2}  yield
\begin{equation}\label{preestiH1loc}
\int_{B^+_{\rho}(x_0)}|\nabla v^\e|^2\,\de x\leq \mathcal E(v,D_{2\rho}(x_0))+\mathcal E(\varphi,D_{2\rho}(x_0))+C_\rho\|v^\e\|^2_{L^2(B^+_{2\rho}(x_0))}\,, 
\end{equation}
and it remains to estimate the second term in the right handside of this inequality. A straightforward computation  yields 
\begin{multline*}
\mathcal E(\varphi,D_{2\rho}(x_0))\leq  2 \mathcal E(v,D_{2\rho}(x_0)) 
+ 4\int_{D_{4\rho}(x_0)}\left(\int_{ D_{2\rho}(x_0)} \frac{|\chi^2(x)-\chi^2(y)|^2}{|x-y|^{n+1}} \,\de x
\right) |v(y)|^2\,\de y\\
+  4\int_{D^c_{4\rho}(x_0)}\left(\int_{D_{2\rho}(x_0)}\frac{1}{|x-y|^{n+1}} \,\de x\right) |v(y)|^2\,\de y\,.
\end{multline*}
Noticing that for $y\in D_{4\rho}(x_0)$, 
$$\int_{ D_{2\rho}(x_0)} \frac{|\chi^2(x)-\chi^2(y)|^2}{|x-y|^{n+1}} \,\de x \leq C_\rho \int_{D_{2\rho}(x_0)} \frac{1}{|x-y|^{n-1}}\,\de x\leq C_\rho\,,$$
and that for $y\in D^c_{4\rho}(x_0)$, 
$$\int_{D_{2\rho}(x_0)}\frac{1}{|x-y|^{n+1}} \,\de x \leq \frac{C_\rho}{(|y-x_0|+1)^{n+1}}\,,$$
we deduce that 
\begin{equation}\label{good}
\mathcal E\big(\varphi,D_{2\rho}(x_0)\big)\leq  2 \mathcal E(v,D_{2\rho}(x_0)) + C_{\rho} \int_{\R^n} \frac{|v(z)|^2}{(|z-x_0|+1)^{n+1}}\,\de z\,.
\end{equation}
Finally, gathering \eqref{preestiH1loc} with \eqref{conclestil2loc} and \eqref{good},   we infer that
$$\int_{B^+_{\rho}(x_0)}|\nabla v^\e|^2\,\de x\leq 3  \mathcal E(v,D_{2\rho}(x_0)) + C_{\rho} \int_{\R^n} \frac{|v(z)|^2}{(|z-x_0|+1)^{n+1}}\,\de z\,.$$
In view of  Lemma \ref{adminHchap}, we have thus proved that 
$$\int_{B^+_{\rho}(x_0)}|\nabla v^\e|^2\,\de x\leq  C_{\rho}  \big( \mathcal E\big(v,D_{2\rho}(x_0)\big) +\|v\|^2_{L^2(D_{2\rho}(x_0))}\big)\,.$$
\vskip5pt

\noindent{\it  Step 2.} Let us now consider an arbitrary $v\in  \widehat{H}^{1/2}(\omega;\R^m)$, and a sequence $\{v_k \}\subset  C^\infty(\R^n;\R^m)$ 
given by Lemma~\ref{approxlemma}.  Let $x_0\in \omega$ and $\rho>0$ such that $D_{3\rho}(x_0)\subset \omega$.  Then,   for $k$ large enough, 
$v_k\in  \widehat{H}^{1/2}(D_{2\rho}(x_0);\R^m)$. Appying Step 1 to $v_k$ we infer that for $k$ large enough, 
$$ \int_{B^+_{\rho}(x_0)}|\nabla v_k^\e|^2\,\de x\leq  C_{\rho}  \big( \mathcal E(v_k,D_{2\rho}(x_0)) +\|v_k\|^2_{L^2(D_{2\rho}(x_0))}\big)\,.$$
By Lemma~\ref{approxlemma}, the right hand side of this inequality is uniformly bounded with respect to~$k$. Since $v_k^\e\to v^\e$ in $L^2(B^+_{\rho}(x_0))$ 
(still by  Lemma~\ref{approxlemma}), we deduce that $v_k^\e\rightharpoonup v^\e$ weakly in $H^1(B^+_{\rho}(x_0))$. By lower semicontinuity 
and (ii) in  Lemma~\ref{approxlemma}, we obtain
$$\int_{B^+_{\rho}(x_0)}|\nabla v^\e|^2\,\de x\leq \liminf_{k\to\infty} \int_{B^+_{\rho}(x_0)}|\nabla v_k^\e|^2\,\de x
\leq C_{\rho}  \big( \mathcal E\big(v,D_{2\rho}(x_0)\big) +\|v\|^2_{L^2(D_{2\rho}(x_0))}\big)\,.$$
Moreover, in view of the arbitrariness of $x_0$, we conclude that 
\begin{equation}\label{time2201}
v^\e\in H^1(B^+_{\rho}(x);\R^m)\quad\text{for all $x\in\omega$ and $\rho>0$ such that $D_{3\rho}(x)\subset \omega$}\,. 
\end{equation}
\vskip3pt

Finally, the conclusion $v^\e\in H^1_{\rm loc}(\R^{n+1}_+\cup\omega;\R^m)$ 
follows from \eqref{time2201} together with a standard covering argument. 
\end{proof}

\begin{proof}[Proof of Lemma \ref{repnormderfraclap}.] By the density of compactly supported smooth funtions in \eqref{densitysmoothH1/200}, 
we may assume without loss of generality that  $\varphi \in \mathscr{D}(\omega;\R^m)$. 
Let $\Omega\subset\R^{n+1}_+$ be an admissible bounded open set such that ${\rm supp}\,\varphi \subset \partial^0\Om$,  and $\overline{\partial^0\Omega}\subset\omega$.
We then consider a smooth extension $\Phi$ of $\varphi$ to $\R^{n+1}_+$ which is compactly supported in $\Omega\cup\partial^0\Om$. 
Let $\{v_k \}\subset  C^\infty(\R^n;\R^m)$ be a sequence 
given by Lemma~\ref{approxlemma}. Then, $v_k\in  \widehat{H}^{1/2}(\partial^0\Om;\R^m)$ for $k$ large enough. Arguing as in the proof of Lemma~\ref{hatH1/2toH1} (Part 2, Step 1), we show that 
$$\left\langle \frac{\partial v_k^\e}{\partial\nu}, \varphi\right\rangle:= \int_{\mathbb{R}^{n+1}_+}\nabla v_k^\e\cdot\nabla\Phi\,\de x 
= \int_{\mathbb{R}^n} \frac{\partial v_k^\e}{\partial\nu}\cdot\varphi\, \de x=\big\langle  (-\Delta)^{\frac{1}{2}} v_k,\varphi\big\rangle_{\partial^0\Om}\,. 
$$
By the proof of Lemma \ref{hatH1/2toH1} (Part 2, Step 2), $v_k^\e\rightharpoonup v^\e$ weakly in $H^1(\Omega)$. Therefore, 
$$
\left\langle \frac{\partial v_k^\e}{\partial\nu}, \varphi\right\rangle= \int_{\Omega}\nabla v_k^\e\cdot\nabla\Phi\,\de x
\mathop{\longrightarrow}\limits_{k\to\infty}   \int_{\Omega}\nabla v^\e\cdot\nabla\Phi\,\de x = \left\langle \frac{\partial v^\e}{\partial\nu}, \varphi\right\rangle\,.
$$
On the other hand, by \eqref{estinormH-1/2fraclap} and Lemma~\ref{approxlemma} we have  
$$\left|\big\langle  (-\Delta)^{\frac{1}{2}} (v_k-v),\varphi\big\rangle_{\partial^0\Om} \right|\leq \|\varphi\|_{H^{1/2}_{00}(\partial^0\Om)}\sqrt{\mathcal{E}(v_k-v,\partial^0\Om)}\mathop{\longrightarrow}\limits_{k\to\infty} 0\,.$$ 
Consequently, 
$$\left\langle \frac{\partial v^\e}{\partial\nu}, \varphi\right\rangle=\lim_{k\to\infty}\left\langle \frac{\partial v_k^\e}{\partial\nu}, \varphi\right\rangle 
=   \lim_{k\to\infty}\big\langle  (-\Delta)^{\frac{1}{2}} v_k,\varphi\big\rangle_{\partial^0\Om} 
= \big\langle  (-\Delta)^{\frac{1}{2}} v,\varphi\big\rangle_{\partial^0\Om} = \big\langle  (-\Delta)^{\frac{1}{2}} v,\varphi\big\rangle_{\omega}\,,$$
and the lemma is proved. 
\end{proof}

\vskip15pt

\section{Appendix B}   									
 
 This purpose of this last appendix is to provide the elliptic regularity results we have been using during the proof of Lemma~\ref{convc1alph}. 

\begin{lemma}\label{supersol}
For $\varepsilon>0$, let $u_\eps\in H^1(B_1^+)$ be the unique (variational) solution of
$$\begin{cases}
-\Delta u_\eps = 1 & \text{in $B_1^+$}\,, \\
u_\eps=1 & \text{on $\partial^+ B_1$}\,,\\[8pt]
\displaystyle \varepsilon\frac{\partial u_\eps}{\partial\nu} + u_\eps =0 & \text{on $D_1$}\,.
\end{cases}
$$
Then $u_\eps\in  C^{0,\alpha}(B_1^+)\cap  C^\infty\big(\overline B_1^+\setminus\partial D_1\big)$ for some $0<\alpha<1$, $u_\eps> 0$ in $\overline B_1^+$, and 
$$u_\eps\leq C\varepsilon\quad\text{ on $D_{1/2}$}\,,$$
for some constant $C>0$ which only depends on the dimension.  
\end{lemma}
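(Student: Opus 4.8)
The statement combines three assertions: (a) existence, uniqueness, and basic regularity of $u_\eps$; (b) positivity $u_\eps>0$ on $\overline B_1^+$; and (c) the quantitative bound $u_\eps\leq C\varepsilon$ on $D_{1/2}$. I would organize the argument as follows.

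\emph{Existence, uniqueness, regularity.} The functional
$$J_\eps(u):=\frac12\int_{B_1^+}|\nabla u|^2\,\de x-\int_{B_1^+}u\,\de x+\frac{1}{2\eps}\int_{D_1}u^2\,\de\mathscr H^n$$
on the affine space $\{u\in H^1(B_1^+):u=1\text{ on }\partial^+B_1\}$ is strictly convex and coercive (the boundary term is nonnegative, and the Dirichlet part plus the Poincaré inequality on the affine space controls the $H^1$-norm), so it admits a unique minimizer $u_\eps$, which is the unique weak solution of the stated mixed boundary value problem. Interior and boundary Schauder/elliptic regularity away from the edge $\partial D_1$ (standard Dirichlet theory near $\partial^+B_1$ and standard oblique/Robin boundary regularity near $D_1$, see \cite{GT}) give $u_\eps\in C^\infty(\overline B_1^+\setminus\partial D_1)$. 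The global $C^{0,\alpha}(B_1^+)$ regularity for some $\alpha\in(0,1)$ follows from the results on mixed boundary value problems (Stampacchia \cite{Stamp1,Stamp2}, or Shamir \cite{Sham}), exactly as invoked in Theorem~\ref{regDirich}; alternatively one may reflect and use De Giorgi–Nash–Moser. Here the precise value of $\alpha$ is irrelevant.

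\emph{Positivity.} Since $-\Delta u_\eps=1>0$ in $B_1^+$, $u_\eps$ is a supersolution; $u_\eps=1$ on $\partial^+B_1$, and on $D_1$ we have, wherever $u_\eps$ attains a nonpositive minimum, the Hopf lemma forces $\partial_\nu u_\eps<0$ there, contradicting the Robin condition $\eps\,\partial_\nu u_\eps=-u_\eps\geq 0$ unless $u_\eps=0$ at that point with $\partial_\nu u_\eps=0$, again ruled out by Hopf. Hence $\min_{\overline B_1^+}u_\eps>0$. (One can also argue by testing the equation with $u_\eps^-$.)

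\emph{The bound $u_\eps\leq C\eps$ on $D_{1/2}$.} This is the only genuinely $\eps$-dependent, and hence the main, point. The idea is to build an explicit $\eps$-independent-up-to-scaling barrier. First, by the maximum principle $0<u_\eps\leq 1$ in $\overline B_1^+$ (compare with the constant $1$, using $-\Delta(1-u_\eps)=-1$... more carefully: $w:=u_\eps$ satisfies $-\Delta w=1$, so let $\psi(x):=1+\tfrac{1}{2(n+1)}(1-|x|^2)$ solve $-\Delta\psi=1$ with $\psi\geq 1$ on $\partial^+B_1$ and $\partial_\nu\psi\leq 0$ on $D_1$; then $\psi-u_\eps$ is harmonic, $\geq 0$ on $\partial^+B_1$, and satisfies $\eps\partial_\nu(\psi-u_\eps)+(\psi-u_\eps)\geq 0$ on $D_1$, so $u_\eps\leq\psi\leq C$). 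Thus $\|u_\eps\|_{L^\infty(B_1^+)}\leq C$. Next I would construct a comparison function $\Psi_\eps$ on, say, $B_{3/4}^+$ of the form $\Psi_\eps(x)= C\eps + C' x_{n+1} + C''\,\phi(x)$, where $\phi$ is a fixed smooth nonnegative function vanishing on $D_{3/4}$ with $\partial_\nu\phi\leq -1$ there (e.g.\ $\phi=$ a suitable multiple of $x_{n+1}\eta(x)$ or the solution of an auxiliary Dirichlet problem), chosen so that: $-\Delta\Psi_\eps\geq 1$ in $B_{3/4}^+$; $\Psi_\eps\geq 1\geq u_\eps$ on $\partial^+B_{3/4}$ (here the term $C''\phi$ must dominate on the spherical part, which is possible since $\phi$ is bounded below there); and $\eps\,\partial_\nu\Psi_\eps+\Psi_\eps\leq 0$ on $D_{3/4}$ — the last inequality is where the $C\eps$ free term and the linear term $C'x_{n+1}$ with $\partial_\nu(C'x_{n+1})=-C'$ must beat $\Psi_\eps|_{D_{3/4}}=C\eps$, forcing a relation like $\eps C' \geq C\eps$, i.e.\ $C'$ large enough independent of $\eps$. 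Then $\Psi_\eps-u_\eps$ is a supersolution of the Robin problem on $B_{3/4}^+$ that is $\geq 0$ on $\partial^+B_{3/4}$ and satisfies the correct sign in the Robin condition, so the maximum principle gives $u_\eps\leq\Psi_\eps$ on $\overline B_{3/4}^+$, and restricting to $D_{1/2}$ (where $\phi=0$ and $x_{n+1}=0$) yields $u_\eps\leq C\eps$. The delicate bookkeeping is to make all the constants $C,C',C''$ depend only on $n$; this is where I expect to spend the most care, and it is the heart of the lemma. An alternative, perhaps cleaner, route is to rescale: near a point of $D_{1/2}$ one sets $v(y):=u_\eps(x_0+\eps y)$ and uses the monotonicity/energy bound to control $v$, but since here the inhomogeneity is just the constant $1$ the direct barrier method above is the most transparent.

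\textbf{Main obstacle.} The only real work is step (c): producing a barrier with $n$-dependent constants that simultaneously satisfies the interior supersolution inequality, dominates $u_\eps$ on the curved boundary, and has the right sign in the Robin condition on $D_{3/4}$ — the competition in the Robin condition between the $O(\eps)$ free term and the $O(\eps)\cdot(\text{normal derivative})$ term is what pins down the $\eps$-linear rate and must be arranged carefully.
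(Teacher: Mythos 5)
Your steps (a) and (b) — existence/uniqueness via the convex functional, Hölder regularity from the mixed boundary value results of Stampacchia/Shamir, smoothness away from $\partial D_1$, and positivity via the strong maximum principle plus the Hopf lemma at a nonpositive boundary minimum — are correct and coincide with what the paper does. The problem is in step (c), which you yourself identify as the heart of the lemma, and there the barrier you describe cannot be completed. Two issues. First, a sign slip: for the comparison $u_\eps\le\Psi_\eps$ you need $h=\Psi_\eps-u_\eps$ to be a \emph{supersolution} of the Robin condition, i.e. $\eps\,\partial_\nu\Psi_\eps+\Psi_\eps\ge 0=\eps\,\partial_\nu u_\eps+u_\eps$ on $D_{3/4}$, not $\le 0$ as you wrote; with your ansatz this forces the free term $C\eps$ to dominate $\eps(C'+C''|\partial_\nu\phi|)$, which is arrangeable, so this alone is fixable. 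Second, and fatally, the Dirichlet-side verification $\Psi_\eps\ge u_\eps$ on $\partial^+B_{3/4}$ fails near the edge $\partial D_{3/4}$: since $\phi$ is continuous, nonnegative and vanishes on $D_{3/4}$, it is \emph{not} bounded below on $\partial^+B_{3/4}$ near the corner, and there $\Psi_\eps=C\eps+C'x_{n+1}+o(1)$, so you would need to already know $u_\eps\lesssim \eps+x_{n+1}$ at points of $\partial^+B_{3/4}$ at small height above $\partial D_{3/4}$ — which is exactly the kind of $\eps$-uniform boundary estimate the lemma is asserting (taking $B_1^+$ instead as comparison domain is worse: at $\partial D_1$ one has $u_\eps=1$ while $\Psi_\eps\to C\eps$). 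Any barrier that is $O(\eps)$ on the flat part and continuous up to the closed half-ball runs into this circularity at the edge, so an additional ingredient is genuinely needed.

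The paper supplies precisely that ingredient, and avoids the corner by never using an $O(\eps)$ barrier on a spherical boundary: it writes $u_\eps=w+v_\eps$ with $w$ the solution of $-\Delta w=1$ in $B_1^+$, $w=0$ on all of $\partial B_1^+$ (Lipschitz up to the boundary by convexity, $\kappa:=\|\partial_\nu w\|_{L^\infty(D_1)}$ dimensional), so that $v_\eps$ is harmonic, equals $1$ on $\partial^+B_1$ and satisfies $\eps\partial_\nu v_\eps+v_\eps=-\eps\partial_\nu w$ on $D_1$. It then compares $v_\eps$ with $\bar u_\eps+\kappa\eps$, where $\bar u_\eps$ is the harmonic function with the \emph{same} Dirichlet data $1$ on $\partial^+B_1$ and homogeneous Robin condition on $D_1$; since the two functions share the data on $\partial^+B_1$, the comparison has no edge problem, and gives $u_\eps=v_\eps\le\bar u_\eps+\kappa\eps$ on $D_1$. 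The smallness is then extracted not from an explicit barrier but from elliptic theory: $0\le\bar u_\eps\le 1$ and the oblique-derivative estimate \cite[Theorem 6.26]{GT} give $\|\nabla\bar u_\eps\|_{L^\infty(B^+_{1/2})}\le C$ with $C$ dimensional, whence $\bar u_\eps=-\eps\,\partial_\nu\bar u_\eps\le C\eps$ on $D_{1/2}$. So the missing idea in your sketch is this uniform-in-$\eps$ boundary gradient estimate for the auxiliary homogeneous-Robin problem (or, equivalently, some other $\eps$-uniform Lipschitz bound for $u_\eps$ near $D_{1/2}$); without it the explicit-barrier route does not close.
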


\begin{proof}
The H\"older continuity of $u_\eps$ (up to the boundary) follows from the general results  of {\sc Stampacchia} \cite{Stamp1,Stamp2} (see also \cite{Chicc}), while higher order regularity away from $\partial D_1$ follows from standard elliptic regularity theory. The fact that $u_\eps> 0$ in $\overline B_1^+$ is an easy consequence of the maximum principle and the Hopf boundary Lemma.  Indeed, assume that $x_0$ be a minimum point of $u_\eps$ such that $u_\eps(x_0)\leq 0$. By the maximum principle $x_0\in \partial B_1^+$, 
and thus $x_0\in D_1$ since $u_\eps=1$ on $\partial^+B_1$ and $u_\eps$ is continuous on $\overline B^+_1$. Since $u_\eps$ is smooth up to $D_1$ (away from $\partial D_1$), we can apply  the Hopf boundary Lemma to infer that $\partial_\nu u_\eps(x_0)<0$. Then  
$\varepsilon\partial_\nu u_\eps(x_0) + u_\eps(x_0)<0$ which contradicts the equation on $D_1$. 

Let us now consider the unique solution $w\in H^1(B_1^+)$ of 
$$\begin{cases}
-\Delta w = 1 & \text{in $B_1^+$}\,, \\
w=0 & \text{on $\partial B^+_1$}\,.
\end{cases}
$$
It is well known that $w\in  C^{0,1}(B_1^+)\cap  C^\infty\big(\overline B_1^+ \setminus\partial D_1\big)$ by convexity of $B_1^+$ (see {\it e.g.} \cite{Fr}). 
We set $v_\eps:=u_\eps-w$, so that $v_\eps\in  C^{0,\alpha}(B_1^+)\cap  C^\infty\left(\overline B_1^+ \setminus\partial D_1\right)$ solves 
$$\begin{cases}
\Delta v_\eps = 0 & \text{in $B_1^+$}\,, \\
v_\eps=1 & \text{on $\partial^+ B_1$}\,,\\[8pt]
\displaystyle \varepsilon\frac{\partial v_\eps}{\partial\nu} + v_\eps = -\varepsilon\frac{\partial w}{\partial \nu} & \text{on $D_1$}\,.
\end{cases}
$$
Setting 
$$\kappa:= \left\|\frac{\partial w}{\partial \nu}\right\|_{L^\infty(D_{1})}\,,$$
we observe that $\kappa$ only depends on the dimension. Next we define $\bar u_\eps\in H^1(B_1^+)$ as the unique (variational) solution of 
$$\begin{cases}
\Delta \bar u_\eps = 0 & \text{in $B_{1}^+$}\,, \\
\bar u_\eps=1 & \text{on $\partial^+ B_{1}$}\,,\\[8pt]
\displaystyle \varepsilon\frac{\partial \bar u_\eps}{\partial\nu} + \bar u_\eps = 0 & \text{on $D_{1}$}\,.  
\end{cases}
$$
As previously, $\bar u_\eps \in  C^{0,\beta}(B_1^+)\cap  C^\infty\big(\overline B_1^+\setminus\partial D_1\big)$ for some $0<\beta<1$, and by the maximum principle and the Hopf boundary Lemma, we have 
\begin{equation}\label{baru01}
0\leq \bar u_\eps \leq 1 \qquad \text{ in $\overline B^+_{1}$}\,.
\end{equation}

Observe that
\begin{equation}\label{majvbyubar}
v_\eps\leq \bar u_\eps +\kappa\varepsilon \qquad \text{in $\overline B^+_{1}$}\,.
\end{equation}
Indeed, consider the function $h:= \bar u_\eps +\kappa\varepsilon -v_\eps$. Then $h$ satisfies 
$$\begin{cases}
\Delta h = 0 & \text{in $B_{1}^+$}\,, \\
h\geq 0 & \text{on $\partial^+ B_{1}$}\,,\\[8pt]
\displaystyle \varepsilon\frac{\partial h}{\partial\nu} + h \geq 0 & \text{on $D_{1}$}\,,
\end{cases}
$$
so that $h\geq 0$ in   $\overline B^+_{1} $, still by the maximum principle and  the Hopf boundary Lemma. 

Finally, in view of \eqref{baru01} and according to \cite[Theorem 6.26]{GT},  
$$\|\nabla \bar u_\eps\|_{L^{\infty}(B^+_{1/2})}\leq C\,,$$
for a constant $C>0$ which only depends on the dimension. In particular $|\partial_\nu \bar u_\eps|\leq C$ on $D_{1/2}$, and thus 
\begin{equation}\label{controlubarbound}
\bar u_\eps \leq C\varepsilon  \quad\text{ on $D_{1/2}$}\,.
\end{equation}
Since $v_\eps=u_\eps$ on $D_1$, the conclusion now follows from \eqref{majvbyubar} and \eqref{controlubarbound}. 
\end{proof}

We conclude this appendix with a standard elliptic boundary estimate for a linear Neumann problem. As showed in \cite[proof of Lemma 2.2]{CSM}, such (classical) estimate can be easily obtained from usual boundary regularity by considering  the auxiliary function 
$v(x',x_{n+1}):=\int_0^{x_{n+1}}u(x',t)\,\de t$ which satisfies a Poisson equation in $B_1^+$ with an homogeneous Dirichlet boundary condition on $D_1$. 

\begin{lemma}\label{regNeum}
Given $0<\alpha<1$, let $f\in  C^{0,\alpha}(B_1^+)$, $g\in  C^{0,\alpha}(D_1)$, and  
$u\in  C^{2}(B_1^+)\cap  C^{1,\alpha}(B_1^+)$ solving  
$$\begin{cases} 
-\Delta u= f & \text{in $B_1^+$}\,,\\[8pt]
\displaystyle \frac{\partial u}{\partial \nu}= g & \text{on $D_1$}\,.
\end{cases}
$$ 
Then, 
$$\|u\|_{ C^{1,\alpha}(B_r^+)}\leq C_{r,\alpha}(\|f\|_{ C^{0,\alpha}(B_1^+)}+\|g\|_{ C^{0,\alpha}(D_1)}+\|u\|_{L^\infty(B_1^+)} )$$ 
for every $0<r<1$ and a constant $C_{r,\alpha}>0$ which only depends on $n$, $\alpha$, $r$. 
\end{lemma}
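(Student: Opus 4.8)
\textbf{Proof plan for Lemma~\ref{regNeum}.} The plan is to reduce the Neumann problem to an interior/Dirichlet problem via the antiderivative trick already used in \cite[proof of Lemma~2.2]{CSM}, and then invoke standard Schauder theory. First I would fix $0<r<r'<1$ and work on $B_{r'}^+$. Introduce the auxiliary function
$$v(x',x_{n+1}):=\int_0^{x_{n+1}} \big(u(x',t)-U(x')\big)\,\de t\,,$$
where $U$ is a fixed $ C^{1,\alpha}$-extension of $g$ to a neighbourhood of $\overline{D_{r'}}$ (for instance the harmonic extension, or a mollified one), so that $\partial_{n+1}v(x',0)=u(x',0)-U(x')$ vanishes where we want it to and $v$ inherits $ C^{1,\alpha}$-regularity up to $D_{r'}$ from $u$ and $g$. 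A direct computation using $-\Delta u=f$ and $\partial_\nu u=-\partial_{n+1}u=g$ on $D_1$ shows that
$$-\Delta v(x',x_{n+1})= \int_0^{x_{n+1}}\!\big(f(x',t)+\Delta' U(x')\big)\,\de t \;-\;\big(g(x')+\partial_\nu u(x',0)\big)\;+\;x_{n+1}\,(\text{lower order}),$$
and, more to the point, that $v$ solves a Poisson-type equation $-\Delta v = \widetilde f$ in $B_{r'}^+$ with $\widetilde f\in C^{0,\alpha}(B_{r'}^+)$ controlled by $\|f\|_{C^{0,\alpha}(B_1^+)}+\|g\|_{C^{0,\alpha}(D_1)}$, together with the \emph{homogeneous Dirichlet} condition $v=0$ on $D_{r'}$ (since $v(x',0)=0$ by construction). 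Here the only subtlety is bookkeeping: one must keep track of the term coming from differentiating $g$ (absorbed into $\widetilde f$ via $U$) so that the resulting right-hand side is genuinely Hölder and no boundary term survives on $D_{r'}$; this is the step that requires care but it is essentially the computation in \cite{CSM}.

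Next I would apply classical boundary Schauder estimates for the Dirichlet problem (see {\it e.g.} \cite{GT}) on the flat portion $D_{r'}$: since $v\in C^{0}(\overline{B_{r'}^+})$ solves $-\Delta v=\widetilde f\in C^{0,\alpha}(B_{r'}^+)$ with $v=0$ on $D_{r'}$, one gets $v\in C^{2,\alpha}_{\rm loc}(B_{r'}^+\cup D_{r'})$ with
$$\|v\|_{C^{2,\alpha}(B_{r}^+)}\leq C_{r,r',\alpha}\big(\|\widetilde f\|_{C^{0,\alpha}(B_{r'}^+)}+\|v\|_{L^\infty(B_{r'}^+)}\big)\,.$$
Since $\|\widetilde f\|_{C^{0,\alpha}(B_{r'}^+)}\leq C(\|f\|_{C^{0,\alpha}(B_1^+)}+\|g\|_{C^{0,\alpha}(D_1)})$ and $\|v\|_{L^\infty(B_{r'}^+)}\leq C\,\|u\|_{L^\infty(B_1^+)}+C\|g\|_{C^{0,\alpha}(D_1)}$ by the very definition of $v$, we recover a $C^{2,\alpha}$-bound on $v$. (Interior regularity away from $D_1$ is of course immediate from $-\Delta u = f$ alone and needs no trick.)

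Finally I would translate back to $u$. From $\partial_{n+1}v=u-U$ on $B_r^+$ we get $u=U+\partial_{n+1}v$, hence $u\in C^{1,\alpha}(B_r^+)$ with
$$\|u\|_{C^{1,\alpha}(B_r^+)}\leq \|U\|_{C^{1,\alpha}(B_r^+)}+\|v\|_{C^{2,\alpha}(B_r^+)}\leq C_{r,\alpha}\big(\|f\|_{C^{0,\alpha}(B_1^+)}+\|g\|_{C^{0,\alpha}(D_1)}+\|u\|_{L^\infty(B_1^+)}\big)\,,$$
using $\|U\|_{C^{1,\alpha}}\leq C\|g\|_{C^{0,\alpha}(D_1)}$ for a suitable extension. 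This is the claimed estimate; a covering argument over finitely many points of $\overline{D_r}$ (plus interior estimates) upgrades the local bound near $D_1$ to the full ball $B_r^+$ if one wants to be scrupulous about the corner region $\partial D_1$, but since the statement only asks for $0<r<1$ the above suffices. The main obstacle, as noted, is purely computational: verifying that the antiderivative $v$ satisfies an equation with Hölder right-hand side and \emph{no} residual inhomogeneous boundary data on $D_1$, i.e. carrying out the differentiation under the integral sign and correctly absorbing the $g$-dependent and $x_{n+1}$-dependent terms into $\widetilde f$.
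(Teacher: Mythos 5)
Your overall strategy, the antiderivative trick of \cite[proof of Lemma 2.2]{CSM} reducing the Neumann problem to a Poisson equation with homogeneous Dirichlet condition on the flat boundary followed by boundary Schauder estimates, is exactly the route the paper indicates. But the way you implement it does not work, and the culprit is the extension $U$. First, such a $U$ cannot exist as described: $g$ is only $C^{0,\alpha}(D_1)$, and the restriction to $D_1$ of a $C^{1,\alpha}$ function is $C^{1,\alpha}$, so a generic $C^{0,\alpha}$ datum admits no $C^{1,\alpha}$ extension, let alone one with $\|U\|_{C^{1,\alpha}}\leq C\|g\|_{C^{0,\alpha}}$ as you invoke in the last display. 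Second, your motivation for subtracting $U$ is mistaken: $\partial_{n+1}v(x',0)=u(x',0)-U(x')=u(x',0)-g(x')$ does not vanish, because $g$ prescribes $\partial_\nu u=-\partial_{n+1}u$ on $D_1$, not the trace of $u$; the Dirichlet condition $v=0$ on $D_1$ holds for the plain antiderivative anyway. Third, with your $v$ the computation gives
$$-\Delta v(x)=\int_0^{x_{n+1}}f(x',t)\,\de t+g(x')+x_{n+1}\,\Delta' U(x')\,,\qquad \Delta':=\textstyle\sum_{j=1}^n\partial_j^2\,,$$
so the right-hand side involves two derivatives of $U$, i.e. essentially $g\in C^{2,\alpha}$; your claim that $\widetilde f$ is controlled by $\|f\|_{C^{0,\alpha}(B_1^+)}+\|g\|_{C^{0,\alpha}(D_1)}$ therefore fails. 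Even if you forced $\Delta'U=0$, the final step $u=U+\partial_{n+1}v$ would still require the impossible bound $\|U\|_{C^{1,\alpha}}\leq C\|g\|_{C^{0,\alpha}}$.

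The repair is simply to drop $U$, which is what the paper (via \cite{CSM}) does: take $v(x',x_{n+1}):=\int_0^{x_{n+1}}u(x',t)\,\de t$. Then $v=0$ on $D_1$, $v$ is a classical solution in $B_1^+$ which is $C^{1,\alpha}$ up to $D_1$, and since $\partial_{n+1}^2v=\partial_{n+1}u$ and $\partial_{n+1}u(x',0)=-g(x')$ one finds $-\Delta v=\int_0^{x_{n+1}}f(x',t)\,\de t+g(x')$, which lies in $C^{0,\alpha}(B_1^+)$ with norm bounded by a constant times $\|f\|_{C^{0,\alpha}(B_1^+)}+\|g\|_{C^{0,\alpha}(D_1)}$, while $\|v\|_{L^\infty(B_1^+)}\leq\|u\|_{L^\infty(B_1^+)}$. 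The local boundary Schauder estimate for the Dirichlet problem on the flat portion then yields the $C^{2,\alpha}(B_r^+)$ bound for $v$, and $u=\partial_{n+1}v$ gives the stated $C^{1,\alpha}$ estimate; the remainder of your argument (interior estimates and the covering) then goes through as written.
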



\vskip15pt

 \noindent{\it Aknowledgements.} 
 The authors wish to thank Francesca Da Lio and Tristan Rivi\`ere for their suggestions and comments during the preparation of the paper.  
  V.M. is supported by the {\it Agence Nationale de la Recherche} under Grant No. ANR 10-JCJC 0106; 
 Y. S. is supported by the {\it Agence Nationale de la Recherche} under Grant No. ANR 12-BS01-0013.

\vskip15pt



\begin{thebibliography}{68}
%
\bibitem{ABScr} {\sc G. Alberti, G. Bouchitt{\'e}, P. Seppecher} : Un r\'esultat de perturbations singuli\`eres 
avec la norme $H^{1/2}$, {\it C. R. Acad. Sci. Paris S\'er. I Math.} {\bf 319} (1994), 333--338. 
%
\vskip3pt
\bibitem{ABS} {\sc G. Alberti, G. Bouchitt{\'e}, P. Seppecher} : Phase transition with the line-tension effect,
{\it Arch. Rational Mech. Anal.} {\bf144} (1998), 1--46.
%
\vskip3pt
\bibitem{AFP} {\sc L. Ambrosio, N. Fusco, D. Pallara} : {\it Functions of Bounded Variation and Free Discontinuity Problems}, Oxford University Press, New York (2000).
%
\vskip3pt
\bibitem{AS} {\sc L. Ambrosio, H.-M. Soner} : A measure-theoretic approach to higher codimension mean curvature flows,
{\it Ann. Scuola Norm. Sup. Pisa Cl. Sci. (4)}  {\bf 25} (1997),  27--49. 
%
\vskip3pt
\bibitem{Bal} {\sc A. Baldes} : Harmonic mappings with a partially free boundary, {\it Manuscripta Math.}  {\bf 40} (1982), 255--275. 
%
\vskip3pt
\bibitem{BGS} {\sc V. Banica, M. d. M. Gonz\'alez, M. S\'aez} : Some constructions for the fractional Laplacian on noncompact manifolds, preprint \texttt{ 	arXiv:1212.3109}.
%
\vskip3pt
\bibitem{BMRS} {\sc L. Berlyand, P. Mironescu, V. Rybalko, E. Sandier} : Minimax critical points in Ginzburg-Landau problems with semi-stiff boundary conditions: Existence and bubbling, to appear in {\it Comm. Partial Differential Equations} (preprint available at  
\texttt{http://hal.archives-ouvertes.fr/hal-00747639}).
%
\vskip3pt
\bibitem{Bet} {\sc F. Bethuel} : On the singular set of stationary harmonic maps, {\it Manuscripta Math.} {\bf 78} (1993), 417--443. 
%
\vskip3pt
\bibitem{BBH} {\sc F. Bethuel, H. Brezis, F. H{\'e}lein} : {\it Ginzburg-{L}andau vortices}, Progress in Nonlinear Differential Equations and their Applications, Birkh\"auser Boston Inc., Boston MA (1994). 
%
\vskip3pt
\bibitem{BCL} {\sc H. Brezis, J.-M. Coron, E.-H. Lieb} : Harmonic maps with defects, {\it Comm. Math. Phys.} {\bf 107} (1986), 649--705. 
%
\vskip3pt
\bibitem{BN1} {\sc H. Brezis, L. Nirenberg} : Degree theory and BMO. I. Compact manifolds without boundaries, {\it Selecta
Math. (N.S.)} {\bf 1} (1995), 197--263. 
%
\vskip3pt
\bibitem{BN2} {\sc H. Brezis, L. Nirenberg} : Degree theory and BMO. II. Compact manifolds with boundaries, {\it Selecta
Math. (N.S.)} {\bf 2} (1996), 309--368.
%
\vskip3pt 
\bibitem{CC} {\sc X. Cabr\'e, E. Cinti} : Energy estimates and 1-{D} symmetry for nonlinear equations
              involving the half-{L}aplacian, {\it Discrete Contin. Dyn. Syst.}  {\bf 28} (2010), 1179--1206. 
              %
\vskip3pt               
\bibitem{CC2} {\sc X. Cabr\'e, E. Cinti} :  Sharp energy estimates for nonlinear fractional diffusion equations, to appear in {\it Calc. Var. Partial Differential Equations}.                   
%
\vskip3pt 
\bibitem{CS1} {\sc X. Cabr\'e, Y. Sire} : Nonlinear equations for fractional Laplacians I: Regularity, maximum principles, and Hamiltonian estimates, to appear in 
{\it Ann. Inst. H. Poincar\'e Anal. Non Lin\'eaire}. 
%
\vskip3pt 
\bibitem{CS2} {\sc X. Cabr\'e, Y. Sire} : Nonlinear equations for fractional Laplacians II: existence, uniqueness, and qualitative properties of solutions, to appear in {\it Trans. Amer. Math. Soc}. 
%
\vskip3pt
\bibitem{CSM} {\sc X. Cabr\'e, J. Sol\`a-Morales} : Layer solutions in a halfspace for boundary reactions, {\it Comm. Pure Appl. Math.} {\bf 58} (2005), 1678--1732.
%
\vskip3pt
\bibitem{CRS} {\sc L. Caffarelli, J.-M. Roquejoffre, O. Savin} : Nonlocal minimal surfaces, {\it Comm. Pure Appl. Math.} {\bf 63} (2010), 1111--1144.
%
\vskip3pt
\bibitem{CaffSil} {\sc L. Caffarelli, L. Silvestre} : An extension problem related to the fractional Laplacian, {\it Comm. Partial Differential Equations} {\bf 32} (2007), 1245--1260.
%
\vskip3pt
\bibitem{CS} {\sc Y.-M. Chen, M. Struwe} :  Existence and partial regularity results for the heat flow for harmonic maps, {\it Math.~Z.} {\bf 201} (1989), 83--103.
%
\vskip3pt
\bibitem{Chicc} {\sc M. Chicco, M. Venturino} : H\"older regularity for solutions of mixed boundary value
              problems containing boundary terms, {\it Boll. Unione Mat. Ital. Sez. B Artic. Ric. Mat. (8)} {\bf 9} (2006), 267--281. 
%
\vskip3pt
\bibitem{DaLRi} {\sc F. Da Lio, T. Rivi{\`e}re} : Three-term commutator estimates and the regularity of {$\frac12$}-harmonic maps into spheres, {\it Anal. PDE} 
 {\bf 4}  (2011), 149--190. 
%
\vskip3pt
\bibitem{DaLRi2} {\sc F. Da Lio, T. Rivi{\`e}re} : Sub-criticality of non-local Schr\"odinger systems with antisymmetric potential and applications to half harmonic maps, 
{\it Advances in Math.} {\bf 227} (2011), 1300--1348.
%
\vskip3pt
\bibitem{DaLRi3} {\sc F. Da Lio, T. Rivi{\`e}re} : Fractional harmonic maps and free boundaries problems, in preparation. 
%
\vskip3pt
\bibitem{DFV} {\sc S. Dipierro, A. Figalli, E. Valdinoci} : Strongly nonlocal dislocation dynamics in crystals, preprint \texttt{arXiv:1311.3549}.
%
\vskip3pt
\bibitem{Hitch} {\sc E. Di Nezza, G. Palatucci, E. Valdinoci} : Hitchhiker's guide to the fractional Sobolev spaces, {\it Bull. Sci. Math.} {\bf 136} (2012), 521--573. 
%
\vskip3pt
\bibitem{DPV} {\sc S. Dipierro, G. Palatucci, E. Valdinoci} : Dislocation dynamics in crystals: a macroscopic theory in a fractional Laplace setting, to appear in {\it Comm. Math. Phys.}
%
\vskip3pt
\bibitem{DG} {\sc F. Duzaar, J.-F. Grotowski} : A mixed boundary value problem for energy minimizing harmonic maps, {\it Math. Z.}  {\bf 221} (1996), 153--167. 
%
\vskip3pt
 \bibitem{DS} {\sc F. Duzaar, K. Steffen} : A partial regularity theorem for harmonic maps at a free boundary, {\it Asymptotic Anal.} {\bf 2} (1989), 299--343. 
 %
 \vskip3pt
\bibitem{DS2} {\sc F. Duzaar, K. Steffen} : An optimal estimate for the singular set of a harmonic map in the free boundary,
{\it J. Reine Angew. Math.}  {\bf 401} (1989), 157--187. 
 %
 \vskip3pt 
 \bibitem{Ev} {\sc L.-C. Evans} : Partial regularity for stationary harmonic maps into spheres, {\it Arch. Rat. Mech. Anal.} {\bf 116} (1991), 101--113.
  %
\vskip3pt
 \bibitem{EvGa} {\sc L.-C. Evans, R.-F. Gariepy} : {\it Measure theory and fine properties of functions}, Studies in Advanced Mathematics, CRC Press,  Boca Raton FL (1992). 
 %
\vskip3pt
 \bibitem{FS} {\sc A. Fraser, R. Schoen} : {\it Minimal surfaces and eigenvalue problems}, preprint \texttt{arXiv:1304.0851}. 
 %
\vskip3pt
\bibitem{Fr} {\sc S.-J. Fromm} : Potential space estimates for {G}reen potentials in convex domains,
{\it Proc. Amer. Math. Soc.} {\bf 119} (1993), 225--233. 
 %
\vskip3pt
\bibitem{GM} {\sc A. Garroni, S. M{\"u}ller} : A variational model for dislocations in the line tension
              limit, {\it Arch. Ration. Mech. Anal.}  {\bf 181} (2006), 535--578. 
 %
\vskip3pt
\bibitem{GT} {\sc D. Gilbarg, N.-S. Trudinger} :  {\it Elliptic Partial Differential Equations of Second Order}, Classics in Mathematics, Springer-Verlag, Berlin (2001). 
%
\vskip3pt
\bibitem{Gonz} {\sc M. d. M. Gonz\'alez} : Gamma convergence of an energy functional related to the fractional Laplacian, {\it Calc. Var. Partial Differential Equations} 
{\bf 36} (2009), 173--210. 
%
\vskip3pt
\bibitem{GonzMon} {\sc M. d. M. Gonz\'alez, R. Monneau} : Slow motion of particle systems as a limit of a reaction-diffusion equation with half-Laplacian in dimension one, 
{\it Discrete Contin. Dyn. Syst.} {\bf 32} (2012),1255--1286.
%
\vskip3pt
\bibitem{GSS} {\sc M. d. M. Gonz\'alez, M. S\'aez, Y. Sire} : Layer solutions for the fractional Laplacian on hyperbolic space: existence, uniqueness and qualitative properties, to appear in {\it Ann. Mat. Pura Appl.}
%
\vskip3pt
\bibitem{G} {\sc P. Grisvard} : {\it Elliptic problems in nonsmooth domains}, Monographs and Studies in Mathematics, Pitman, Boston MA (1985). 
%
\vskip3pt
\bibitem{GJ} {\sc R. Gulliver, J. Jost} : Harmonic maps which solve a free-boundary problem, {\it J. Reine Angew. Math.} 
 {\bf 381} (1987), 61--89.
%
\vskip3pt
\bibitem{Ham} {\sc R.-S. Hamilton} : {\it Harmonic maps of manifolds with boundary}, Lecture Notes in Mathematics, Springer-Verlag,
Berlin (1975).
%
\vskip3pt
\bibitem{HL} {\sc R. Hardt, F.-H. Lin} : Partially constrained boundary conditions with energy minimizing mappings, {\it Comm. Pure Appl. Math.}  {\bf 42} (1989), 309--334. 
%
\vskip3pt
\bibitem{Hel} {\sc F. H{\'e}lein} : R\'egularit\'e des applications faiblement harmoniques entre une surface et une vari\'et\'e riemannienne,
{\it C. R. Acad. Sci. Paris S\'er. I Math.}  {\bf 312} (1991), 591--596. 
%
\vskip3pt
\bibitem{HT} {\sc J.-E. Hutchinson, Y. Tonegawa} : Convergence of phase interfaces in the van der {W}aals-{C}ahn-{H}illiard theory,
{\it Calc. Var. Partial Differential Equations} {\bf 10} (2000), 49--84.
%
\vskip3pt
\bibitem{K1} {\sc M. Kurzke} :  A nonlocal singular perturbation problem with periodic well
              potential, {\it ESAIM Control Optim. Calc. Var.} {\bf 12} (2006), 52--63. 
%
\vskip3pt
\bibitem{K2} {\sc M. Kurzke} : Boundary vortices in thin magnetic films, {\it Calc. Var. Partial Differential Equations} {\bf 26} 
(2006), 1--28.
%
\vskip3pt
\bibitem{LL} {\sc E.-H. Lieb, M. Loss} : {\it Analysis}, Graduate Studies in Mathematics, American Mathematical Society, Providence RI (2001).
%
\vskip3pt
\bibitem{Lin} {\sc F.-H. Lin} : Gradient estimates and blow-up analysis for stationary harmonic maps, {\it Annals of Math.} {\bf 149} (1999), 785--829. 
%
\vskip3pt
\bibitem{LW1} {\sc F.-H. Lin, C.-Y. Wang} : Harmonic and quasi-harmonic spheres, {\it Comm. Anal. Geom.}  {\bf 7} (1999), 397--429. 
%
\vskip3pt
\bibitem{LW2} {\sc F.-H. Lin, C.-Y. Wang} : Harmonic and quasi-harmonic spheres. {II}, {\it Comm. Anal. Geom.} {\bf 10} (2002), 341--375.
%
\vskip3pt
\bibitem{LW3} {\sc F.-H. Lin, C.-Y. Wang} : Harmonic and quasi-harmonic spheres. {III}. {R}ectifiability of the parabolic defect measure and generalized varifold flows, {\it Ann. Inst. H. Poincar\'e Anal. Non Lin\'eaire}  {\bf 19} (2002), 209--259. 
%
\vskip3pt
\bibitem{MilSir} {\sc V. Millot, Y. Sire} : Asymptotics for a fractional Allen-Cahn equation  and stationary nonlocal minimal surfaces, in preparation. 
%
\vskip3pt
\bibitem{MR} {\sc A.-V. Milovanov, J.-J. Rasmussen} : Fractional generalization of the Ginzburg-Landau equation: an unconventional approach to critical phenomena in complex media, {\it Physics Letters A} {\bf 337}Ê(2005), 75--80. 
%
\vskip3pt
\bibitem{MirPis} {\sc P. Mironescu, A. Pisante} : A variational problem with lack of compactness for {$H^{1/2}(S^1;S^1)$} maps of prescribed degree, 
{\it J. Funct. Anal.} {\bf 217} (2004), 249--279.
%
\vskip3pt
\bibitem{Mod} {\sc L. Modica} : The gradient theory of phase transitions and the minimal interface criterion,
 {\it Arch. Rational Mech. Anal.}  {\bf 98} (1987), 123--142. 
%
\vskip3pt
\bibitem{Mos} {\sc R. Moser} : Intrinsic semiharmonic maps, {\it J. Geom. Anal.}  {\bf 21} (2011), 588--598.
%
\vskip3pt
\bibitem{PSV} {\sc G. Palatucci, O. Savin, E. Valdinoci} : Local and global minimizers for a variational energy involving a fractional norm, to appear in {\it Ann. Mat. Pura Appl.}
%
\vskip3pt
\bibitem{Pr} {\sc D. Preiss} : Geometry of measures in $\R^n$: distributions, rectifiability, and densities,  {\it Ann. of
Math.} {\bf 125} (1987), 537--643.
%
\vskip3pt
\bibitem{Riv} {\sc T. Rivi{\`e}re} : Everywhere discontinuous harmonic maps into spheres, {\it Acta Math.} {\bf 175} (1995), 197--226.
%
\vskip3pt
\bibitem{Riv2} {\sc T. Rivi{\`e}re} : Line vortices in the {${\rm U}(1)$}-{H}iggs model,
{\it ESAIM Contr\^ole Optim. Calc. Var.}  {\bf 1} (1995/96), 77--167. 
%
\vskip3pt
\bibitem{SandSer} {\sc E. Sandier, S. Serfaty} : {\it Vortices in the magnetic {G}inzburg-{L}andau model}, 
Progress in Nonlinear Differential Equations and their Applications, Birkh\"auser Boston Inc., Boston MA (2007). 
%
\vskip3pt
\bibitem{Sav} {\sc G. Savar{\'e}} : Regularity and perturbation results for mixed second order elliptic problems,
{\it Comm. Partial Differential Equations} {\bf 22} (1997), 869--899. 
%
\vskip3pt
\bibitem{SV1} {\sc O. Savin, E. Valdinoci} : $\Gamma$-convergence for nonlocal phase transitions,
{\it Ann. Inst. H. Poincar\'e Anal. Non Lin\'eaire} {\bf 29} (2012), 479 - 500. 
%
\vskip3pt
\bibitem{SV2} {\sc O. Savin, E. Valdinoci} : Density estimates for a variational model driven by the Gagliardo norm, to appear in {\it J. Math. Pures Appl}. 
%
\vskip3pt
\bibitem{Sch} {\sc C. Scheven} : Partial regularity for stationary harmonic maps at a free boundary, {\it Math. Z.}, {\bf 253} (2006), 135--157. 
%
\vskip3pt
\bibitem{SU} {\sc R. Schoen, K. Uhlenbeck} : A regularity theory for harmonic maps, {\it J. Diff. Geom.} {\bf 17} (1982), 307--335.  
%
\vskip3pt
\bibitem{SerV} {\sc R. Servadei, E. Valdinoci} : Weak and viscosity solutions of the fractional Laplace equation, {\it Publ. Math.} {\bf 58} (2014), 133--154.
%
\vskip3pt              
\bibitem{Sham} {\sc E. Shamir} : Regularization of mixed second-order elliptic problems, {\it Isr. J. of Math.} {\bf 6} (1968), 150--168.
%
\vskip3pt      
\bibitem{Sim2} {\sc L. Simon} : {\it Lectures on geometric measure theory}, Proceedings of the Centre for Mathematical Analysis, Australian National University, Centre for Mathematical Analysis, Canberra  (1983).
%
\vskip3pt      
\bibitem{Sim} {\sc L. Simon} : {\it Theorems on regularity and singularity of energy minimizing maps}, Lectures in Mathematics ETH Z\"urich,
Birkh\"auser Verlag, Basel (1996).
%
\vskip3pt     
\bibitem{SV} {\sc Y. Sire, E. Valdinoci} : Fractional {L}aplacian phase transitions and boundary
              reactions: a geometric inequality and a symmetry result, {\it J. Funct. Anal.} {\bf 256} (2009), 1842--1864. 
%
\vskip3pt
\bibitem{Stamp1} {\sc G. Stampacchia} : Problemi al contorno ellitici, con dati discontinui, dotati di
              soluzionie h\"olderiane, {\it Ann. Mat. Pura Appl. (4)} {\bf 51} (1960), 1--37. 
%
\vskip3pt
\bibitem{Stamp2} {\sc G. Stampacchia} : Le probl\`eme de {D}irichlet pour les \'equations elliptiques
              du second ordre \`a coefficients discontinus, {\it Ann. Inst. Fourier (Grenoble)}  {\bf 15} (1965), 189--258. 
%
\vskip3pt
\bibitem{ST} {\sc P.R. Stinga, J.L. Torrea} : Extension problem and Harnack's inequality for some fractional operators, {\it Comm. Partial Differential Equations}
{\bf 35} (2010), 2092--2122
%
\vskip3pt
\bibitem{struw} {\sc M. Struwe} : On a free boundary problem for minimal surfaces, {\it Invent. Math.}  {\bf 75} (1984), 547--560. 
%
\vskip3pt
\bibitem{TZ} {\sc V.-E. Tarasov, G.-M. Zaslavsky} : Fractional Ginzburg-Landau equation for fractal media, 
{\it Physica~A: Statistical Mechanics and its Applications} {\bf 354} (2005), 249--261.
%
\vskip3pt
\bibitem{W} {\sc C.-Y. Wang} : Limits of solutions to the generalized {G}inzburg-{L}andau
              functional, {\it Comm. Partial Differential Equations} {\bf 27} (2002), 877--906. 
%
\vskip3pt
\bibitem{WZ} {\sc H. Weitzner, G.-M. Zaslavsky} : Some applications of fractional equations, 
{\it Commun. Nonlinear Sci. Numer. Simul.} {\bf 8} (2003), 273--281. 
%
\vskip3pt
\bibitem{Zi} {\sc W.-P. Ziemer} : {\it Weakly differentiable functions}, Graduate Texts in Mathematics, Springer-Verlag, New York (1989). 
\end{thebibliography}
\end{document}